\documentclass[a4paper,11pt,reqno,twoside]{amsart}
\usepackage{amsthm,amssymb}
\usepackage{url}
\usepackage{mathtools}
\usepackage{mathdots}
\newcommand{\BigFig}[1]{\parbox{12pt}{\Huge #1}}
\newcommand{\BigZero}{\BigFig{0}}

\addtolength{\textheight}{2cm}
\addtolength{\topmargin}{-1.5cm}
\addtolength{\textwidth}{2cm}
\addtolength{\oddsidemargin}{-1cm}
\addtolength{\evensidemargin}{-1cm}

\numberwithin{equation}{section}

\newtheorem{theorem}{Theorem}[section]
\newtheorem{proposition}{Proposition}[section] 
\newtheorem{lemma}[proposition]{Lemma}
\newtheorem{corollary}[proposition]{Corollary}
\newtheorem{remark}[proposition]{Remark}

\newcommand*{\C}{\mathbb{C}}
\newcommand*{\R}{\mathbb{R}}
\newcommand*{\Q}{\mathbb{Q}}
\newcommand*{\Z}{\mathbb{Z}}

\newcommand{\comment}[1]{}


%
\setcounter{tocdepth}{3}
\setcounter{secnumdepth}{3}
\title[Inverse problems for canonical systems]%
      {An inverse problem for a class of canonical systems and its applications to self-reciprocal polynomials} 
\author[M. Suzuki]{Masatoshi Suzuki}
%
\subjclass[2000]{30C15, 34A55, 34L40}
\keywords{}
\AtBeginDocument{%
\mathtoolsset{showonlyrefs,mathic = true}
\begin{abstract}
A canonical system is a kind of first-order system of ordinary differential equations on an interval of the real line parametrized by complex numbers. 
It is known that any solution of a canonical system generates an entire function of the Hermite-Biehler class. 
In this paper, we deal with the inverse problem to recover a canonical system from a given entire function of the Hermite-Biehler class satisfying appropriate conditions. 
This inverse problem was solved by de Branges in 1960s. However his results are often not enough to investigate a Hamiltonian of recovered canonical system. 
In this paper, we present an explicit way to recover a Hamiltonian from a given exponential polynomial belonging to the Hermite-Biehler class. After that, we apply it to study distributions of roots of self-reciprocal polynomials.
\end{abstract}
\maketitle
}
\begin{document}

\section{Introduction} \label{section_1}

Let $H(a)$ be a $2\times 2$ real symmetric matrix-valued function 
defined almost everywhere on a finite interval $I=[a_1,a_0)$ $(0<a_1<a_0 < \infty)$ 
with respect to Lebesgue measure $da$. 
We refer to a first-order system of differential equations 
\begin{equation} \label{can_0}
-a\frac{d}{da}
\begin{bmatrix}
A(a,z) \\ B(a,z)
\end{bmatrix}
= z 
\begin{bmatrix}
0 & -1 \\ 1 & 0
\end{bmatrix}
H(a)
\begin{bmatrix}
A(a,z) \\ B(a,z)
\end{bmatrix}, 
\quad 
\lim_{a \nearrow a_0}
\begin{bmatrix}
A(a,z) \\ B(a,z)
\end{bmatrix}
= 
\begin{bmatrix}
1 \\ 0
\end{bmatrix}
\end{equation}
on $I$ parametrized by $z \in \C$ as a {\bf quasi-canonical system} (on $I$). 
A column vector-valued function $(A(\cdot,z),B(\cdot,z)):I \to \C^{2\times 1}$ is called a {\bf solution} 
if it consists of absolutely continuous functions and satisfies \eqref{can_0} almost everywhere on $I$ for every fixed $z \in \C$. 

A quasi-canonical system \eqref{can_0} is called a {\bf canonical system} if 
\begin{enumerate}
\item[(H1)] $H(a)$ is a real positive-semidefinite symmetric matrix for almost every $a \in I$, 
\item[(H2)] $H(a)\not\equiv 0$ on any subset of $I$ with positive Lebesgue measure,  
\item[(H3)] $H(a)$ is locally integrable on $I$ with respect to $da/a$,  
\end{enumerate}
and $H(a)$ is called the {\bf Hamiltonian} of the system. 
Abusing language, if it cause no confusion, we often call a matrix-valued function $H(a) $ a Hamiltonian 
if a quasi-canonical system \eqref{can_0} is not a canonical system. 

A number of different second-order differential equations,  
such as Schr{\"o}dinger equations and Sturm--Liouville equations of appropriate form, 
and systems of first-order differential equations 
such as Dirac type systems of appropriate form 
are reduced to a canonical system. 
Fundamental results on the spectral theory of canonical systems were established in works of 
Gohberg--Kre{\u\i}n~\cite{GoKr67}, de Branges~\cite{deBranges68}  
and many other authors; see the survey articles Winkler~\cite{Win14},  Woracek~\cite{Wo} and references there in 
for historical details on canonical systems. 
Note that the variable $a$ in \eqref{can_0} is the variable on the multiplicative group $\R_{>0}$. 
By the change of variable $a=e^{-x}$, \eqref{can_0} is transformed into the equation for the variable $x$ on the additive group $\R$, 
and the right endpoint of $a$ for the initial value is transformed into the left endpoint of $x$ for the initial value. 
The transformed equation is the one treated by de Branges, Kre{\u\i}n, Kac, and others; 
see the final paragraph of the introduction for the reason we use a multiplicative variable.
\medskip

The subject of the present paper is an inverse spectral problem for canonical systems. 
In order to state the problem, we review the theory of the Hermite--Biehler class. 
\smallskip

We use the notation $F^\sharp(z)=\overline{F(\bar{z})}$ for functions of the complex variable $z$,  
and denote by $\C_+$ the (open) upper half-plane $\{z=x+iy \in \C\,:\, y>0\}$. 
An entire function $E(z)$ satisfying 
\begin{equation} \label{HB}
|E^\sharp(z)| < |E(z)| \quad \text{for every $z \in \C_+$}
\end{equation}
and having no real zeros 
is said to be a function of the {\bf Hermite--Biehler class}, or the {\bf class HB}, for short. 
(This definition of the class HB is equivalent to the definition of Levin~\cite[Section 1 of Chapter VII]{Levin80} 
if ``the upper half-plane'' is replaced by ``the lower half-plane'', 
because \eqref{HB} implies that $E(z)$ has no zeros in $\C_+$. 
We adopt the above definition for the convenience of using the theory of canonical systems via the theory of de Branges spaces.) 
\smallskip

Suppose that the system \eqref{can_0} is a canonical system 
endowed with a solution $(A(a,z)$, $B(a,z))$. 
Then $E(a,z):=A(a,z)-iB(a,z)$ is a function of the class HB 
for every fixed point $a \in I$. 
In particular, $\lim_{a \nearrow a_0}E(a,z)=1$ and $E(a_1,z)$ is a function of the class HB. 
Therefore, an inverse problem for canonical systems 
is to recover their Hamiltonians from given entire functions of the class HB satisfying appropriate conditions. 
Usually, such an inverse problem is difficult to solve in general, 
because it is an inverse spectral problem (\cite[Section 2]{Lagarias06}, \cite[Section 7]{Remling02}). 
However it was already solved by the theory of de Branges in 1960s (\cite{deBranges68}). 
For example, for fixed $I$ and an entire function $E(z)$ of exponential type 
belonging to the class HB 
and satisfying $\int_{-\infty}^{\infty}(1+x^2)^{-1}|E(x)|^{-2}\,dx< \infty$ and $E(0)=1$,  
there exists a canonical system (that is, there exists a Hamiltonian $H(a)$ on $I$) having 
\[
A(a_1,z)=\frac{1}{2}(E(z)+E^\sharp(z)), \quad 
B(a_1,z)=\frac{i}{2}(E(z)-E^\sharp(z)) 
\]
as its solution 
(see \cite[Theorem 7.3]{Remling02} with \cite[Lemma 3.3]{Dym70}). 
Moreover, such a canonical system is uniquely determined by $E(z)$ and $I$ under appropriate normalizations. 
More general situation is treated in Kac~\cite{Kats07}; see also \cite{Win14} and \cite{Wo}. 

As above, de Bragnes's theory ensures the existence of canonical systems or Hamiltonians for given functions of the class HB, 
but it does not provide explicit or useful expressions of Hamiltonians. 
In fact, an explicit form of $H(a)$ is not known except for a few examples of $E(z)$, 
as in \cite[Chapter 3]{deBranges68}, \cite[Section 8]{Dym70},  
and some additional examples constructed from such known examples 
using transformation rules for Hamiltonians and Weyl functions \cite{Win95}.  
\medskip

In this paper, we deal with the above inverse problem for a special class of exponential polynomials, 
together with the problem of explicit constructions of $H(a)$,  
and apply the results to the study of the distribution of roots of self-reciprocal polynomials. 
\medskip

Let $\R^\ast:=\R \setminus \{0\}$, $g \in \Z_{>0}$, and $q>1$. 
We denote by $\underline{C}$ a vector of length $2g+1$ of the form
$
\underline{C}= 
(C_g,C_{g-1},\cdots,C_{-g}) \in \R^\ast \times \R^{2g-1} \times \R^\ast
$, 
and consider exponential polynomials  
\begin{equation} \label{0418_2}
E(z):=E_q(z;\underline{C}):=\sum_{m=-g}^{g}C_m q^{imz}
\end{equation}
along with the associated functions 
\begin{equation} \label{def_A}
A(z) := A_q(z;\underline{C}) := \frac{1}{2}(E_q(z;\underline{C})+E_q^\sharp(z;\underline{C})), 
\end{equation}
\begin{equation} \label{def_B}
B(z) := B_q(z;\underline{C}) := \frac{i}{2}(E_q(z;\underline{C})-E_q^\sharp(z;\underline{C})).
\end{equation}
A basic fact (see, e.g., \cite[Chapter VII, Theorem 6]{Levin80}) 
is that an exponential polynomial $E(z)$ of \eqref{0418_2} belongs to the class HB 
if and only if it has no zeros in the closed upper half-plane $\C_+ \cup \R=\{z=x+iy \in \C \,:\, y \geq 0\}$. 
Therefore, there exists a Hamiltonian $H(a)$ of a canonical system corresponding to $E(z)$ if $E(z)$ has no zeros in $\C_+ \cup \R$. 
Beyond such results standing on a general theory, 
we show that there exists a real symmetric matrix-valued function $H(a)$ of a quasi-canonical system corresponding to an exponential polynomial $E(z)$ 
if $E(z)$ satisfies a weaker condition assumed in Theorem \ref{thm_01}. 
It is constructed explicitly as follows.  

We define lower triangular matrices $E^+$ and $E^-$ of size $2g+1$ by  
\[
E^{\pm}=
E^{\pm}(\underline{C}) := 
\left[
\begin{array}{lllllll}
C_{\mp g} &   &  &  &  &  &  \\
C_{\mp (g-1)} & C_{\mp g} & & & & & \\
\vdots & \ddots & \ddots & & & & \\
C_{0} & C_{\mp 1} & \ddots & C_{\mp g} & & & \\
\vdots & \ddots & \ddots & \ddots & \ddots & &  \\
C_{\pm (g-1)} & C_{\pm (g-2)} & \ddots & \ddots & \ddots & C_{\mp g} & \\
C_{\pm g}   & C_{\pm (g-1)} & \cdots & C_{0} & \cdots & C_{\mp (g-1)} & C_{\mp g} \\
\end{array}
\right]
\] 
and define square matrices $J_n$ of size $2g+1$  by
\[
J_n=J_{n}^{(2g+1)}:=
\left[ 
\begin{array}{cc|cc}
 & & &\\
 \multicolumn{2}{c|}{\raisebox{1.5ex}[0pt]{$J^{(n)}$}}& \multicolumn{2}{c}{\raisebox{1.5ex}[0pt]{\BigZero}}\\ \hline
 &  & & \\
\multicolumn{2}{c|}{\raisebox{1.5ex}[0pt]{\BigZero}} & \multicolumn{2}{c}{\raisebox{1.5ex}[0pt]{\BigZero}}  \\
\end{array}
\right], \quad 
J^{(n)} := 
\begin{bmatrix}
 & & 1 \\
 & \iddots & \\
1 & & 
\end{bmatrix}
\]
for $1 \leq n \leq 2g$, where $J^{(n)}$ is the antidiagonal matrix of size $n$ with $1$'s on the antidiagonal line. 
Using the above matrices, we define the numbers $\Delta_n$ by 
\begin{equation} \label{0423_7}
\Delta_n=\Delta_n(\underline{C}) := 
\frac{\det(E^{+}(\underline{C})+E^{-}(\underline{C})J_{n})}
{\det(E^{+}(\underline{C})-E^{-}(\underline{C})J_{n})} 
\end{equation}
for $1 \leq n \leq 2g$ and $\Delta_0=\Delta_0(\underline{C}) := 1$. 
We write $\Delta_n=\infty$ if $\det(E^+-E^-J_n)=0$. 
In addition, we define the real-valued locally constant function $\gamma:[1,q^g) \to \R$ by 
\begin{equation} \label{0425_2}
\gamma(a) =\gamma(a;\underline{C}) := \Delta_{n-1}(\underline{C})\Delta_n(\underline{C}) 
\quad \text{for} \quad q^{(n-1)/2} \leq a < q^{n/2}.
\end{equation}
Then, we obtain the following results 
for the inverse problem associated with the exponential polynomial \eqref{0418_2}.
\begin{theorem} \label{thm_01}
Let $q>1$ and $\underline{C} \in \R^\ast \times \R^{2g-1} \times \R^\ast$. 
Let $E(z)$ be the exponential polynomial defined by \eqref{0418_2}. 
Define the square matrix of size $2n$ 
\begin{equation*} 
D_n(\underline{C}) :=\left[
\begin{array}{llll|llll}
C_{-g} & & & & C_{g} & & & \\
C_{-g+1} & C_{-g} & & & C_{g-1} & C_{g} & & \\ 
\vdots & \ddots & \ddots & & \vdots & \ddots & \ddots & \\
C_{-g+n-1} & C_{-g+n-2} & \cdots & C_{-g} & C_{g-n+1} & C_{g-n+2} & \cdots & C_{g} \\ \hline 
C_{g} & C_{g-1} & \cdots & C_{g-n+1} & C_{-g} & C_{-g+1} & \cdots & C_{-g+n-1} \\
 & C_{g} &  \cdots & C_{g-n+2} &  & C_{-g} &  \cdots & C_{-g+n-2}\\ 
 & & \ddots & \vdots &  & & \ddots &  \vdots \\
  & & & C_{g} &   & & & C_{-g} \\
\end{array}\right].
\end{equation*}
Suppose that $\det D_n(\underline{C})\not=0$ for every $1 \leq n \leq 2g$.
Then
\begin{enumerate}
\item $\det(E^{+} \pm E^{-}J_{n}) \not=0$ for every $1 \leq n \leq 2g$; 
\item the pair of functions $(A(a,z),B(a,z))$ defined in \eqref{0330_10} below satisfies 
\begin{equation} \label{0330_1}
-a\frac{d}{da}
\begin{bmatrix}
A(a,z) \\ B(a,z)
\end{bmatrix}
= z 
\begin{bmatrix}
0 & -1 \\ 1 & 0
\end{bmatrix}
H(a)
\begin{bmatrix}
A(a,z) \\ B(a,z)
\end{bmatrix} \quad (a \in [1,q^g),\,z \in \C)
\end{equation}
together with the boundary conditions 
\begin{equation} \label{0418_4}
\aligned 
\begin{bmatrix}
A(1,z) \\ B(1,z)
\end{bmatrix}
=
\begin{bmatrix}
A(z) \\ B(z)
\end{bmatrix}, \\ \quad 
\lim_{a \nearrow  q^g}
\begin{bmatrix}
A(a,z) \\ B(a,z)
\end{bmatrix}
= 
\begin{bmatrix}
E(0) \\ 0
\end{bmatrix},
\endaligned 
\end{equation}
where $A(z)$ and $B(z)$ are functions of \eqref{def_A} and \eqref{def_B}, respectively, and 
\begin{equation} \label{def_Hq}
H(a) = H(a;\underline{C}) := 
\begin{bmatrix}
\gamma(a;\underline{C})^{-1} & 0 \\ 0 & \gamma(a;\underline{C})
\end{bmatrix}.
\end{equation}
\end{enumerate}
\end{theorem}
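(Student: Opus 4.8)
The plan is to build the solution $(A(a,z),B(a,z))$ explicitly on each dyadic-power subinterval $[q^{(n-1)/2},q^{n/2})$ and verify that it satisfies both the differential equation \eqref{0330_1} with the diagonal Hamiltonian \eqref{def_Hq} and the boundary conditions \eqref{0418_4}. The natural starting observation is that since $E(z)=\sum_{m=-g}^{g}C_m q^{imz}$ is an exponential polynomial, so are $A(z)$ and $B(z)$, and the same should be true of $A(a,z),B(a,z)$ once we track the dependence on $a=q^{t/2}$; writing things in the variable $w=q^{iz}$ turns everything into Laurent polynomials, and the matrices $E^{\pm}$ are exactly the lower-triangular Toeplitz matrices encoding multiplication by $E(z)$ and $E^\sharp(z)$. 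I would first prove part (1): the quantities $\det(E^{+}\pm E^{-}J_n)$ must be shown nonzero whenever $E(z)$ has no real zeros. The idea is that a vanishing of one of these determinants would produce a nonzero vector in the kernel, which translates (via the Toeplitz/Hankel structure of $E^{\pm}$ and $J_n$) into a nonzero trigonometric polynomial identity forcing $E(z)$ and $\pm E^{\sharp}(z)$ to share a common factor of a prescribed degree; evaluating on the real line, where $|E(x)|=|E^\sharp(x)|$, this common factor would have to have a real zero of $E$, a contradiction. Making this translation precise — matching the linear-algebra kernel condition to a genuine polynomial-divisibility statement — is the first technical hurdle.

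With part (1) in hand, the numbers $\Delta_n$ in \eqref{0423_7} are all finite and nonzero, hence $\gamma(a)$ in \eqref{0425_2} is a well-defined, nowhere-vanishing locally constant function, so the Hamiltonian \eqref{def_Hq} makes sense and is positive (after possibly noting sign issues) on $[1,q^g)$. The core of part (2) is then a direct construction: on the interval where $\gamma\equiv\Delta_{n-1}\Delta_n$ the system \eqref{0330_1} decouples into two scalar first-order ODEs in $t=\log a/\log q^{1/2}$ with constant coefficients $\mp z\,\gamma^{\pm1}$-type terms — actually, because $\gamma$ is piecewise constant, on each piece \eqref{0330_1} has the explicit fundamental solution given by a matrix exponential, and the transfer matrix across $[q^{(n-1)/2},q^{n/2})$ is an elementary rotation-type matrix scaled by $\Delta_{n-1},\Delta_n$. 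I would define $(A(a,z),B(a,z))$ by \eqref{0330_10} (the formula referenced but not shown in the excerpt) as the appropriate product of these transfer matrices applied to the terminal vector $(E(0),0)^{t}$, and then check two things: first, that this product telescopes at $a=1$ to $(A(z),B(z))^{t}$, which is where the determinant ratios $\Delta_n$ — and ultimately the Toeplitz determinants built from $E^{\pm}$ — enter, since the entries of the partial products must reproduce the Laurent coefficients of $E(z)$; second, that the matching of the piecewise solutions at each breakpoint $a=q^{n/2}$ is continuous, which is automatic from the way the transfer matrices are composed.

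The step I expect to be the main obstacle is proving that the explicitly constructed transfer-matrix product actually has boundary value $(A(z),B(z))$ at $a=1$ — i.e. that the combinatorial identity relating the iterated $2\times2$ transfer matrices (with entries $\gamma^{\pm 1}$) to the Toeplitz determinants $\det(E^{+}\pm E^{-}J_n)$ holds. This is essentially a statement that a continued-fraction / Schur-type algorithm applied to the polynomial $E(z)$ produces precisely the $\Delta_n$, and verifying it will require an induction on $n$ in which the inductive step manipulates the determinant in \eqref{0423_7} under a rank-type update as $n\mapsto n+1$ (adding one more antidiagonal row/column to $J_n$), using a Schur-complement or Jacobi-type identity for the bordered determinants. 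Once that identity is established, continuity at the breakpoints, the limit at $a\nearrow q^g$ (where only the top coefficient $C_0=E(0)$ survives), and the verification of \eqref{0330_1} on each piece are all routine. I would organize the write-up as: (a) the linear-algebra lemma giving part (1) and the interpretation of the $\Delta_n$; (b) the piecewise solution of \eqref{0330_1} for a diagonal piecewise-constant Hamiltonian; (c) the telescoping/boundary identity; (d) assembling (a)–(c) into the theorem.
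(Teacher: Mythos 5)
Your outline is not a proof of either half of the theorem, and the one place where you commit to an actual argument it is wrong. For part (1) you propose that a kernel vector of $E^{+}\pm E^{-}J_n$ forces $E$ and $\pm E^\sharp$ to share a common factor, and that such a common factor must produce a real zero of $E$. The second implication is false: the zeros of $E^\sharp(z)=\overline{E(\bar z)}$ are the reflections of the zeros of $E$ across the real axis, so $E$ and $E^\sharp$ have a common zero whenever $E$ has a conjugate pair of non-real zeros $z_0,\bar z_0$ --- which is perfectly compatible with $E$ having no real zeros at all (already for $g=1$, $E(z)=q^{iz}+Cq^{-iz}+c_0$ with suitable coefficients). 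So ``common factor $\Rightarrow$ real zero'' cannot be the mechanism. Moreover, because of the truncation and reversal built into $J_n$, the condition $\det(E^{+}\pm E^{-}J_n)=0$ is not a resultant-type condition on $E$ and $E^\sharp$ in the first place; these are Schur--Cohn-type minors, and the paper does not prove their non-vanishing by any divisibility argument. Instead it realizes them (Lemmas \ref{0501_1}, \ref{0418_10}, \ref{0330_22}) as the determinants governing the solvability of the operator equations \eqref{LE1} on the spaces $V_{a,n}$, and proves invertibility of ${\mathsf E}\pm{\mathsf E}^\sharp{\mathsf J}{\mathsf P}_n$ by showing that $\Theta{\mathsf J}{\mathsf P}_n$ is Hilbert--Schmidt and that $\pm1$ cannot be eigenvalues: the unimodularity $|\Theta|=1$ on $\R$ forces any putative eigenvector to be finitely supported, after which a rank computation kills it. You would need to reconstruct something equivalent to this; your sketch does not.

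For part (2), the transfer-matrix picture you describe is genuinely present in the paper (Corollary \ref{cor_602} and Section \ref{section_7}), but you have inverted the logic in a way that leaves the entire mathematical content unproven. You propose to \emph{define} $(A(a,z),B(a,z))$ as a backward product of rotation-type matrices with parameters $\gamma_n=\Delta_{n-1}\Delta_n$ applied to $(E(0),0)^{t}$, and then to verify that this product equals $(A(z),B(z))^{t}$ at $a=1$; you correctly identify this telescoping identity as the main obstacle, but you offer only the assertion that it should follow from ``an induction using a Schur-complement or Jacobi-type identity.'' That identity \emph{is} the theorem: in the paper it occupies Propositions \ref{prop_0313_1}, \ref{0419_1} and \ref{0501_3} together with the chain of determinant lemmas \ref{0319_1}--\ref{0418_7}, and it is obtained not by guessing the transfer matrices but by constructing $\phi_n^{\pm}$ as solutions of \eqref{LE1}, deriving the differential relations of Proposition \ref{0330_4} from the symmetries of Lemmas \ref{co1}--\ref{co2}, and then computing the jump ratios $\alpha_n,\beta_n$ at $a=q^{(n-1)/2}$ as explicit determinant quotients --- only at that point is the identification $\gamma_n=\Delta_{n-1}\Delta_n$ available. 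The parts of your plan that are routine (solving \eqref{0330_1} piecewise for a piecewise-constant diagonal $H$, continuity at the breakpoints, the limit at $a\nearrow q^g$) are indeed routine, but they are not where the difficulty lies.
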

We mention another way of constructing $(\gamma(a),A(a,z),B(a,z))$ in Section \ref{section_7}. 

The positive-definiteness of the Hamiltonian $H(a)$ in \eqref{def_Hq} 
is characterized by the following to-be-expected way.
\begin{theorem} \label{thm_02}
Let $q>1$ and $\underline{C} \in \R^\ast \times \R^{2g-1} \times \R^\ast$, 
let $E(z)$ be the exponential polynomial defined by \eqref{0418_2}, 
and let $H(a)$ be the matrix-valued function defined by \eqref{def_Hq}. 
\begin{enumerate}
\item
Suppose that $E(z)$ belongs to the class HB. 
Then $H(a)$ is well-defined and is positive definite for every $1 \leq a <q^g$. 
Hence the quasi-canonical system attached to \eqref{0330_1} and \eqref{0418_4} 
is a canonical system, and $(A(a,z),B(a,z))/E(0)$ is its solution. 
\item
Suppose that $H(a)$ is well-defined and is positive definite for every $1 \leq a <q^g$. 
Then $E(z)$ belongs to the class HB. 
\end{enumerate}
\end{theorem}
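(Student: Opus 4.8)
The plan is to reduce both assertions to a single positivity statement about the numbers $\Delta_n(\underline C)$ and then exploit the piecewise‑constant diagonal Hamiltonian supplied by Theorem~\ref{thm_01}. The first step is purely formal: whenever $H(a)$ is well‑defined we have $\det H(a)=\gamma(a;\underline C)^{-1}\gamma(a;\underline C)=1$, and a real symmetric $2\times2$ matrix of determinant $1$ is positive definite exactly when its diagonal entries are positive, i.e.\ when $\gamma(a;\underline C)>0$; since $\gamma=\Delta_{n-1}\Delta_n$ on $[q^{(n-1)/2},q^{n/2})$ and $\Delta_0=1$, an immediate induction gives ``$H(a)$ positive definite on $[1,q^g)$'' $\Longleftrightarrow$ ``$\Delta_n>0$ for every $1\leq n\leq 2g$'', while ``$H(a)$ well‑defined on $[1,q^g)$'' $\Longleftrightarrow$ ``$\det(E^+\pm E^-J_n)\neq0$ for every $1\leq n\leq 2g$''. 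Thus the theorem amounts to the assertion that $E(z)$ belongs to the class HB if and only if all the $\Delta_n$ are finite and positive.

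The engine of the proof is a Lagrange‑type identity. Assuming $E$ has no real zeros — so that Theorem~\ref{thm_01} applies and $(A(a,z),B(a,z))$ from \eqref{0330_10} solves \eqref{0330_1}--\eqref{0418_4} with $H$ diagonal — I would differentiate $\mathrm{Im}\bigl(B(a,z)\overline{A(a,z)}\bigr)$ in $a$ along \eqref{0330_1}, integrate over $[1,q^g)$ (the boundary term at $q^g$ vanishing because $B(a,z)\to0$), and use $E=A-iB$, $E^\sharp=A+iB$ and $|E|^2-|E^\sharp|^2=4\,\mathrm{Im}(B\overline{A})$ to obtain
\begin{equation}\label{eq:lagr}
|E(z)|^2-|E^\sharp(z)|^2=4\,(\mathrm{Im}\,z)\int_1^{q^g}\bigl(\gamma(a)^{-1}|A(a,z)|^2+\gamma(a)|B(a,z)|^2\bigr)\,\frac{da}{a}\qquad(z\in\C).
\end{equation}
Assertion~(2) is then almost immediate. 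If $H(a)$ is well‑defined and positive definite on $[1,q^g)$, then well‑definedness first forces $E$ to have no real zeros: under the substitution $w=q^{iz}$ the functions $E$ and $E^\sharp$ become the polynomials $Q(w)=\sum_m C_m w^{m+g}$ and $\widetilde Q(w)=w^{2g}Q(1/w)$, a real zero of $E$ becomes a common zero of $Q$ and $\widetilde Q$ on $\{|w|=1\}$, and the Toeplitz (``resultant'') shape of $E^{\pm}$ then forces one of the determinants $\det(E^+\pm E^-J_n)$ to vanish — equivalently, this is the converse of Theorem~\ref{thm_01}(1). In particular $E(0)=\sum_m C_m\in\R^\ast$, so \eqref{eq:lagr} is available, and since $\gamma(a)>0$ while $|A(a,z)|^2+|B(a,z)|^2>0$ throughout $[1,q^g)$ the integrand is nonnegative and not identically zero; hence the integral is strictly positive for every $z\in\C_+$, so $|E(z)|>|E^\sharp(z)|$ on $\C_+$, and together with the absence of real zeros this says $E\in\mathrm{HB}$. (Alternatively: $H$ now satisfies (H1)--(H3), so \eqref{0330_1}--\eqref{0418_4} is a genuine canonical system whose solution is $(A(a,z),B(a,z))/E(0)$, and the classical fact recalled in the introduction — that the function $A(a,z)-iB(a,z)$ attached to a solution of a canonical system is of class HB — gives $E(z)/E(0)\in\mathrm{HB}$, hence $E\in\mathrm{HB}$.)

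For Assertion~(1), suppose $E\in\mathrm{HB}$; then $E$ has no zero in $\overline{\C_+}$, in particular no real zero and $E(0)\neq0$, so Theorem~\ref{thm_01} and \eqref{eq:lagr} apply, and by the reduction it remains to show $\gamma(a)>0$ for all $a$, equivalently $\Delta_n>0$ for all $n$. From $E\in\mathrm{HB}$ the left side of \eqref{eq:lagr} is positive on all of $\C_+$, so $\int_1^{q^g}(\gamma^{-1}|A|^2+\gamma|B|^2)\,da/a>0$ there. I would finish either (i) by a global argument: since $H$ is piecewise constant, $A(a,z)$ and $B(a,z)$ are explicit on each piece (the transfer matrix of \eqref{0330_1} in the variable $\log a$ is, on $[q^{(n-1)/2},q^{n/2})$, an explicit entire ``$\gamma_n$‑rotation''), and tracking across the nodes the recursion $\beta_{\mathrm{new}}=\cosh(y\log q)\,\beta_{\mathrm{old}}+\tfrac12\sinh(y\log q)\,(\gamma^{-1}|A|^2+\gamma|B|^2)$ for $\beta(a)=\mathrm{Im}\bigl(B(a,iy)\overline{A(a,iy)}\bigr)$, starting from $\beta\equiv0$ at the regular endpoint $q^g$, one shows that a single piece with $\gamma_n<0$ forces the integral in \eqref{eq:lagr} to become $\leq0$ at a suitable $z=iy$, contradicting $E\in\mathrm{HB}$; or (ii) by reduction to a classical criterion: under $w=q^{iz}$, ``$E\in\mathrm{HB}$'' becomes ``all zeros of $\widetilde Q$ lie in $\{|w|<1\}$'', and one checks that the ratios $\Delta_n$ agree, up to nonzero factors, with the Schur--Cohn minors of $\widetilde Q$, so that ``all $\Delta_n>0$'' is precisely the Schur--Cohn stability criterion. (For $g=1$ one computes $\Delta_1=(C_1+C_{-1})/(C_{-1}-C_1)$ and $\Delta_2=(C_1+C_0+C_{-1})/(C_1-C_0+C_{-1})$, and $\Delta_1>0$, $\Delta_2>0$ are exactly $|C_{-1}|>|C_1|$ and $|C_0|<|C_{-1}+C_1|$, the stability conditions for $\widetilde Q(w)=C_{-1}w^2+C_0w+C_1$, which is reassuring.)

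The routine parts are the formal reduction, the identity \eqref{eq:lagr} (a standard Green/Lagrange computation for canonical systems), and Assertion~(2). The real difficulty is Assertion~(1): the hypothesis ``the integral in \eqref{eq:lagr} is positive for \emph{every} $z\in\C_+$'' is genuinely global — at a fixed $z$ a piece with $\gamma<0$ can be masked by the other pieces — so one must either push the node‑by‑node recursion far enough to manufacture the right test point $z$, or make precise the identification of the $\Delta_n$ with the classical Schur--Cohn minors and quote the root‑location theorem (this is also the step that links Theorem~\ref{thm_02} to the advertised applications to self‑reciprocal polynomials). A smaller but real point, used in Assertion~(2), is the implication ``$H$ well‑defined $\Rightarrow$ $E$ has no real zeros'', a converse to Theorem~\ref{thm_01}(1) that needs a short resultant‑type argument.
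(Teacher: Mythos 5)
Your reduction to the positivity of the $\Delta_n$, and your treatment of assertion (2), essentially coincide with the paper: your identity \eqref{eq:lagr} is exactly the paper's Proposition \ref{lem_602} (formula \eqref{lem_602_3}) specialized to $w=z$ and $a_0\nearrow q^g$, and the conclusion $|E(z)|>|E^\sharp(z)|$ on $\C_+$ follows as you say. One sub-step of (2) is handled differently: to rule out real zeros the paper does not invoke a converse of Theorem \ref{thm_01}(1) (which it never proves); instead it computes $\alpha_1\cdots\alpha_{2g}=\det(E^++E^-J_{2g})/\det(E^+-E^-J_{2g-1})\neq0$, deduces $E(0)=\Omega_{2g}(1)\neq0$, and then uses $\det M(a,b;z)=1$ in \eqref{0501_4} to see that $A(a,\cdot)$ and $B(a,\cdot)$ have no common zero. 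Your proposed ``resultant-type argument'' is left unproved, so you should either supply it or switch to the paper's $E(0)\neq0$ argument.

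The genuine gap is assertion (1), and you have in fact located it yourself: neither of your two routes is carried out. Route (ii) rests on identifying the $\Delta_n$ with the Schur--Cohn minors up to positive factors; the paper explicitly declines to prove this (Section \ref{section_8_5}: ``it is expected that \dots we do not touch such a problem here''), and verifying it is comparable in difficulty to the theorem itself. Route (i), the node-by-node recursion for $\mathrm{Im}(B\overline{A})$ at $z=iy$, is exactly the point where a single piece with $\gamma_{n_0+1}<0$ can be masked, and you give no mechanism for choosing the test point. The paper's actual argument (Section \ref{section_5_1}, Steps 2--3) is different in kind: it first shows, via the unimodular transfer matrix and a continuity-of-zero-locus argument, that if $E(a,\cdot)$ fails to be of class HB for some $a\in(1,q^{(n_0+1)/2}]$ then so does $E=E(1,\cdot)$; then, assuming $E(a,\cdot)\in\mathrm{HB}$ on that range, it uses the de Branges space $B(E(a_1,z))$: on an interval where $\gamma(a)=-\gamma<0$, formula \eqref{lem_602_3} forces the reproducing kernel to \emph{increase}, $K(a_0;z,z)>K(a_1;z,z)$, so the norm inequality $|f(z)|^2\leq\Vert f\Vert_{a_1}^2K(a_1;z,z)$ applied to $g(z)=(E(a_1,z)-E(a_1,iy_0))/(z-iy_0)$ yields $|g(iy)|\ll y^{-1/2}|E(a_0,iy)|$, which contradicts the exponential types $q^{(g-(n_0-1)/2)y}$ versus $q^{(g-n_0/2)y}$ as $y\to+\infty$. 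This reproducing-kernel monotonicity plus type-comparison is the missing idea; without it (or a completed Schur--Cohn identification) assertion (1) is not established.
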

As mentioned above, $E(z)$ of \eqref{0418_2} belongs to the class HB 
if and only if it has no zeros in $\C_+ \cup \R$. 
On the other hand, the following conditions 
are equivalent to each other by $\Delta_0=1$ and definitions \eqref{0423_7}, \eqref{0425_2}, and \eqref{def_Hq}:  
\begin{enumerate}
\item[(i)] $H(a)$ is positive definite for every $1 \leq a <q^g$,
\item[(ii)] $\gamma(a)>0$ for every $1 \leq a <q^g$,
\item[(iii)] $0<\Delta_n<\infty$ for every $1 \leq n \leq 2g$.
\end{enumerate}
Therefore, we obtain the following corollary. 
\begin{corollary} \label{cor_01} 
An exponential polynomial $E(z)$ of \eqref{0418_2} 
has no zeros in $\C_+ \cup \R$ 
if and only if $0<\Delta_n<\infty$ 
for every $1 \leq n \leq 2g$. 
\end{corollary}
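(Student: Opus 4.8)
The plan is to combine the two characterizations that have already been established immediately above the statement. On one side, the paper recalls (citing Levin) that an exponential polynomial $E(z)$ of the form \eqref{0418_2} belongs to the class HB precisely when it has no zeros on $\C_+ \cup \R$; this is the "geometric" description of the condition on the left of the corollary. On the other side, Theorem~\ref{thm_02} together with the displayed equivalence (i)$\Leftrightarrow$(ii)$\Leftrightarrow$(iii) gives the "algebraic" description: $E(z)$ is in HB if and only if $H(a)$ is positive definite on $[1,q^g)$, which in turn holds if and only if $0 < \Delta_n < \infty$ for all $1 \le n \le 2g$.

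The steps, in order, would be: (1) Invoke Levin's theorem to replace "$E(z)$ has no zeros on $\C_+ \cup \R$" by "$E(z)$ belongs to the class HB". (2) Apply part (1) of Theorem~\ref{thm_02}: if $E(z) \in \mathrm{HB}$, then $H(a)$ is well-defined and positive definite for every $1 \le a < q^g$. (3) Apply part (2) of Theorem~\ref{thm_02} for the converse: if $H(a)$ is well-defined and positive definite for every $1 \le a < q^g$, then $E(z) \in \mathrm{HB}$. Together (2) and (3) say $E(z) \in \mathrm{HB}$ if and only if condition (i) holds. (4) Finally, use the elementary equivalence (i)$\Leftrightarrow$(iii) — which follows directly from $\Delta_0 = 1$ and the definitions \eqref{0423_7}, \eqref{0425_2}, \eqref{def_Hq}, as the paper already notes — to rewrite condition (i) as "$0 < \Delta_n < \infty$ for every $1 \le n \le 2g$". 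Chaining these equivalences yields the corollary.

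There is essentially no obstacle here: the corollary is a formal consequence of Theorem~\ref{thm_02}, the cited result of Levin, and the bookkeeping equivalence among (i), (ii), (iii) that has been spelled out in the text. The only point requiring a little care is making sure the "well-defined" hypothesis in part (2) of Theorem~\ref{thm_02} is automatically covered: when all $\Delta_n$ are finite and nonzero the denominators $\det(E^+ - E^- J_n)$ are nonzero, so $\gamma(a)$ and hence $H(a)$ are genuinely defined, and this is exactly the content of (iii)$\Rightarrow$(i). Hence the proof is a two- or three-line deduction citing Theorem~\ref{thm_02}, \cite[Chapter VII, Theorem 6]{Levin80}, and the displayed list of equivalences.
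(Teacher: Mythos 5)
Your proposal is correct and follows exactly the route the paper itself takes: the corollary is obtained by chaining Levin's characterization of HB for exponential polynomials, the two directions of Theorem \ref{thm_02}, and the displayed equivalence (i)$\Leftrightarrow$(ii)$\Leftrightarrow$(iii). Your remark that (iii) guarantees the ``well-defined'' hypothesis of Theorem \ref{thm_02}\,(2) is the only point needing care, and you handle it as the paper implicitly does.
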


The converse of Theorem \ref{thm_01} is the direct problem for quasi-canonical systems \eqref{can_0} with the Hamiltonians of the form \eqref{def_Hq}. 
It is easier than the inverse problem, because the Hamiltonians have a simple form: $H(a)={\rm diag}(\gamma(a)^{-1},\gamma(a))$, 
and $\gamma(a)$ is a (special) locally constant function. 

\begin{theorem}  \label{thm_02_1} Let $q>1$, $g \in \Z_{>0}$, and let $\gamma(a)$ be a locally constant function on $[1,q^g)$ 
such that $\gamma(a)=\gamma_n \in \R^\ast$ for $q^{(n-1)/2} \leq a < q^{n/2}$ $(1 \leq n \leq 2g)$. 
Then the quasi-canonical system \eqref{can_0} with $H(a)={\rm diag}(\gamma(a)^{-1},\gamma(a))$ on $[1,q^g)$ 
has the unique solution $(A(a,z),B(a,z))$ whose components have the forms 
\begin{equation} \label{0829_1}
\aligned  
A(a,z) &= \sum_{m=-g+n}^{g} \alpha_m(n)\left[\left(\frac{q^{m}}{a}\right)^{iz} + \left(\frac{q^{m}}{a}\right)^{-iz} \right], \\
-iB(a,z) &= \sum_{m=-g+n}^{g} \beta_m(n)\left[\left(\frac{q^{m}}{a}\right)^{iz} - \left(\frac{q^{m}}{a}\right)^{-iz} \right] \\
\endaligned
\end{equation}
for $q^{(n-1)/2} \leq a < q^{n/2}$ and $1 \leq n \leq 2g$, where 
$\alpha_{m}^{\pm}(n)$ 
and 
$\beta_{m}^{\pm}(n)$
are real constants depending only on the set $\{\gamma_n\}$ of values of $\gamma(a)$. 
Therefore, for $q^{(n-1)/2} \leq a < q^{n/2}$, $E(a,z):=A(a,z)-iB(a,z)$ is the exponential polynomial
\[
E(a,z) = 
\sum_{m=-g+n}^{g} \left[ (\alpha_m(n)+\beta_m(n)) \left(\frac{q^{m}}{a}\right)^{iz} + (\alpha_m(n)-\beta_m(n)) \left(\frac{q^{m}}{a}\right)^{-iz} \right]. 
\]
Moreover, $E(a,0)=1$ and $E(a,z)$ has no real zeros for any fixed $1 \leq a \leq q^g$. 

In particular, each Hamiltonian of the form \eqref{def_Hq} with \eqref{0425_2} yields an exponential polynomial 
$E(1,z)=A(1,z)-iB(1,z)$ having no real zeros. 
\end{theorem}

Theorem \ref{thm_02_1} does not guarantee that $E(1,z)$ has the form \eqref{0418_2}. 
In fact, $H(a)$ on $[0,q)$ with $\gamma(a)=1$ for $1 \leq a < q^{1/2}$ and $\gamma(a)=-1$ for $q^{1/2} \leq a < q$ 
yields the constant function $E(1,z)=1$, 
and $H(a)$ on $[0,q)$ with $\gamma(a)=1$ for $1 \leq a < q$  
yields the function $E(1,z)=q^{-iz}$. In these examples, $E(1,z)$ does not have the form \eqref{0418_2}. 
A sufficient condition for $E(1,z)$ to have the form \eqref{0418_2} 
is the following.  

\begin{theorem} \label{thm_02_2}
With the notation of Theorem \ref{thm_02_1}, 
$E(1,z)$ is an exponential polynomial of the form \eqref{0418_2} if $\gamma_n>0$ for every $1 \leq n \leq 2g$ and $\gamma_1 \not=1$.  
\end{theorem}

If the hypothesis of Theorem \ref{thm_02_2} is satisfied, one proves that $E(1,z)$ belongs to the class HB 
in a way similar to the proof of Theorem \ref{thm_02} (2) in Section \ref{section_5_2}. 
On the other hand, $\gamma_n>0$ for every $1 \leq n \leq 2g$, and $\gamma_1 \not=1$ 
if we start from the exponential polynomial of the form \eqref{0418_2},  
by Theorem \ref{thm_02} and \eqref{0503_3} below. 
Hence, as a consequence of the above theorems, the exponential polynomials \eqref{0418_2} belonging to the class HB 
are characterized in terms of positive-definiteness of Hamiltonians. 
\medskip

Now we turn to an application of Corollary \ref{cor_01}.
A nonzero polynomial $P(x)=c_0 x^n + c_1 x^{n-1} + \cdots + c_{n-1} x + c_n $ 
with {\it real coefficients} 
is called a {\bf self-reciprocal polynomial} of degree $n$ 
if $c_0\not=0$ and $P$ satisfies the self-reciprocal condition $P(x)=x^n P(1/x)$; 
equivalently, $c_0\not=0$ and $c_{k}=c_{n-k}$ for every $0\leq k \leq n$. 
The roots of a self-reciprocal polynomial either lie on the unit circle $T=\{z \in \C\,:\, |z|=1\}$  
or are distributed symmetrically with respect to $T$. 
Therefore, a basic problem 
is to find a ``nice'' condition on coefficients of a self-reciprocal polynomial under which all roots of $P$ lie on $T$. 
Quite a few results for this problem can be found in the literature; see, e.g., 
books of Marden~\cite{Marden66}, 
Milovanovi\'c--Mitrinovi\'c--Rassias \cite{MMR94}, 
Takagi~\cite[Section 10]{Takagi} and the survey paper of Milovanovi\'c--Rassias~\cite{MiloRass00}
for several systematic treatments of roots of polynomials. 

As an application of Corollary \ref{cor_01}, we study roots 
self-reciprocal polynomials of {\it even degree}. 
The restriction on the degree is not essential, because 
if $P(x)$ is a self-reciprocal polynomial of odd degree, 
there exists a self-reciprocal polynomial $\tilde{P}(x)$ of even degree 
and an integer $r \geq 1$ 
such that $P(x)=(x+1)^{r}\tilde{P}(x)$. 
In contrast, the reality of coefficients is essential.   
We denote by $P_g(x)$ a self-reciprocal polynomial of degree $2g$ of the form
\begin{equation} \label{def_Pg}
P_g(x) = \sum_{k=0}^{g-1} c_k(x^{(2g-k)} + x^{k}) + c_g x^{g}, \quad c_0 \not=0, 
\end{equation}
and identify the polynomial with the vector $\underline{c}=(c_0,c_1,\cdots,c_g) \in \R^\ast \times \R^{g}$ 
consisting of its coefficients. 
For a vector $\underline{c} \in \R^\ast \times \R^{g}$
and a real number $q>1$, 
we define the numbers $\delta_n(\underline{c})$ ($1 \leq n \leq 2g$) by 
\begin{equation} \label{0423_8}
\delta_n(\underline{c}) := 
\frac{\det(E^{+}(\underline{C})+E^{-}(\underline{C})J_{n})}
{\det(E^{+}(\underline{C})-E^{-}(\underline{C})J_{n})} 
\times 
\begin{cases}
~1 & \text{if $n$ is even}, \\
~g\log q & \text{if $n$ is odd},
\end{cases}
\end{equation}
where $\underline{C}$ is the vector defined by  
\begin{equation} \label{0418_1}
\sum_{m=-g}^{g} C_m T^{m} := \sum_{m=0}^{g} c_{g-m}(1-\log q^m) T^{m} 
+ \sum_{m=1}^{g} c_{g-m}(1+ \log q^m) T^{-m}.
\end{equation}

\begin{theorem} \label{thm_1} 
Let $g \in \Z_{>0}$, $q>1$. 
Let $\underline{c} \in \R^\ast \times \R^{g}$ be coefficients of a self-reciprocal polynomial $P_g(x)$ 
of the form \eqref{def_Pg}. 
Define $\underline{C} \in \R^\ast \times \R^{2g-1} \times \R^\ast$  by \eqref{0418_1}. 
Then, for every $1 \leq n \leq 2g$, it is independent of $q$ 
whether $\det(E^{+}\pm E^{-}J_{n})$ are zero, 
and the numbers $\delta_n(\underline{c})$ of \eqref{0423_8} are independent of $q$ 
if $\det(E^{+} \pm E^{-}J_{n})$ are not zero.  

Moreover, a necessary and sufficient condition for all roots of $P_g(x)$ 
to be simple roots on $T$ 
is that $0<\delta_n(\underline{c}) <\infty$ for every $1 \leq n \leq 2g$. 
\end{theorem}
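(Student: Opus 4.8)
The plan is to deduce Theorem~\ref{thm_1} from Corollary~\ref{cor_01} by relating the roots of the self-reciprocal polynomial $P_g(x)$ to the zeros of an exponential polynomial $E(z)$ of the form \eqref{0418_2}. First I would perform the substitution $x = q^{iz}$ (equivalently $z = -i\log_q x$), which maps the unit circle $T = \{|x|=1\}$ bijectively onto the horizontal strip $\{z : \log_q|x| = \Im z \cdot \log q / \ldots\}$; more precisely, $|x|=1$ corresponds to $z \in \R$ (modulo the period $2\pi/\log q$), while $|x|<1$ corresponds to $\Im z > 0$ and $|x|>1$ to $\Im z < 0$. Under this change of variable, the self-reciprocal symmetry $c_k = c_{2g-k}$ of $P_g$ translates into a functional equation for the associated exponential polynomial; the key point is that, after the precise bookkeeping encoded in \eqref{0418_1}, the vector $\underline C$ built from $\underline c$ is genuinely real and lies in $\R^\ast \times \R^{2g-1} \times \R^\ast$, and the corresponding $E(z) = E_q(z;\underline C)$ has the property that its zeros in $\C_+ \cup \R$ correspond exactly to the roots of $P_g(x)$ in $\{|x| \le 1\}$, counted with multiplicity.

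The subtle point is the role of the factor $(1 \pm \log q^m)$ in \eqref{0418_1}: a naive substitution $x=q^{iz}$ applied directly to $P_g$ would produce a \emph{different} exponential polynomial whose HB property depends on $q$, whereas Theorem~\ref{thm_1} asserts $q$-independence. So the second step is to show that the definition \eqref{0418_1} is precisely the one that makes $\Delta_n(\underline C)$ — and hence the statement ``$0 < \Delta_n < \infty$'' — independent of $q$ up to the explicit normalizing factor $g\log q$ appearing for odd $n$ in \eqref{0423_8}. I expect this to follow by tracking how $E^{\pm}(\underline C)$ and the determinants $\det(E^+ \pm E^- J_n)$ scale in $q$: one computes that a change $q \mapsto q'$ induces a transformation of $\underline C$ whose effect on $E^{\pm}$ is conjugation by an explicit diagonal (or unipotent triangular) matrix, so that the ratios in \eqref{0423_7} change only by a controlled power of $\log q$, and that power is exactly what is divided out in \eqref{0423_8}. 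This also yields the first assertion of the theorem, that the vanishing of $\det(E^+ \pm E^- J_n)$ is a $q$-independent condition.

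With these two ingredients in place, the ``moreover'' statement is essentially a corollary of Corollary~\ref{cor_01}: the condition $0 < \delta_n(\underline c) < \infty$ for all $1 \le n \le 2g$ is equivalent (by the $q$-independence just established, evaluating at any convenient $q>1$) to $0 < \Delta_n(\underline C) < \infty$ for all $n$, which by Corollary~\ref{cor_01} holds if and only if $E(z)$ has no zeros on $\C_+ \cup \R$, which by the correspondence $x = q^{iz}$ holds if and only if $P_g(x)$ has no roots in the closed unit disc $\{|x| \le 1\}$ except possibly on $T$ — and by the self-reciprocal symmetry, having no roots strictly inside $T$ forces all $2g$ roots onto $T$. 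The remaining subtlety is \emph{simplicity}: I would argue that a multiple root on $T$ corresponds, via $x=q^{iz}$, to a multiple real zero of $E(z)$, and then check that $E \in$ HB together with a real zero is already impossible — indeed any real zero is excluded by the definition of class HB — so a root of $P_g$ on $T$ of multiplicity $\ge 2$ would either force $E$ to have a real zero or (if the substitution is singular there) be ruled out by examining $A(z), B(z)$ directly; conversely all roots simple on $T$ gives exactly the HB property.

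The main obstacle I anticipate is the precise dictionary in the first two steps: verifying that \eqref{0418_1} is the \emph{correct} reality-preserving normalization and that it produces exactly the claimed $q$-scaling of the determinants. The passage from ``no roots in $|x|<1$'' to ``all roots simple on $T$'' via the HB/real-zero mechanism is the second delicate point, since one must handle the boundary behaviour of the substitution $x=q^{iz}$ carefully (the map is periodic, and one should restrict to one period and match multiplicities there). Once the dictionary is set up correctly, the rest is a direct invocation of Corollary~\ref{cor_01}.
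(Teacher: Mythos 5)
Your overall strategy (reduce to Corollary \ref{cor_01} via $x=q^{iz}$) is the right one, but the central dictionary on which your argument rests is false, and the step that replaces it in the paper is missing from your proposal. You claim that the zeros of $E(z)=E_q(z;\underline{C})$ in $\C_+\cup\R$ correspond, with multiplicity, to the roots of $P_g(x)$ in $\{|x|\le 1\}$. This cannot be right: if all roots of $P_g$ are simple and lie on $T$, then $P_g$ has $2g$ roots in the closed unit disc, yet the theorem asserts (and the paper proves) that in exactly this case $E$ belongs to HB, i.e.\ has \emph{no} zeros in $\C_+\cup\R$. The point you miss is what \eqref{0418_1} actually encodes: with $A_q(z):=q^{-giz}P_g(q^{iz})$ one checks that $E_q(z;\underline{C})=A_q(z)+iA_q'(z)$, i.e.\ the functions $A,B$ of \eqref{def_A}--\eqref{def_B} are $A=A_q$ and $B=-A_q'$. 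The factor $(1\mp\log q^m)$ is not a $q$-normalization device; it is the derivative. Consequently the zeros of $E$ are not the zeros of $P_g(q^{iz})$ at all, and the correct bridge is the Hermite--Biehler statement (the paper's Lemma \ref{lem_401}): $E_q$ satisfies \eqref{HB} iff $A_q$ has only real zeros, and $E_q$ lies in HB iff those zeros are moreover simple. The nontrivial direction requires the Hadamard factorization of $A_q$ over its real zeros and the computation $\mathrm{Re}\bigl(iA_q'(z)/A_q(z)\bigr)=\sum_\rho y/|z-\rho|^2>0$ for $\mathrm{Im}\,z>0$; simplicity then comes from the observation that a multiple real zero of $A_q$ is a common real zero of $A_q$ and $A_q'$, hence a real zero of $E_q$. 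None of this appears in your proposal, and your closing paragraph, which tries to patch the disc/circle mismatch with ``no roots in the closed unit disc except possibly on $T$'' and a vague multiplicity argument, does not supply it.

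The $q$-independence step is also not established by your proposed mechanism. Changing $q$ sends $C_{\pm m}=c_{g-m}(1\mp m\log q)$ to $c_{g-m}(1\mp m\log q')$, which is an affine, $m$-dependent change that is not realized by conjugating $E^{\pm}(\underline{C})$ by a fixed diagonal or triangular matrix, so the determinants $\det(E^+\pm E^-J_n)$ do not transform by a ``controlled power of $\log q$'' in any evident way. The paper instead proves (Proposition \ref{0503_5}) that $\delta_n(\underline{c})\in\Q(c_0,\dots,c_g)$ by running the inductive construction of Section \ref{section_7}: one shows by induction that the first $2g-n+1$ entries of $\Omega_n$ lie in $F=\Q(c_0,\dots,c_g)$ while the remaining entries lie in $(\log q)\cdot F$, so that each $\gamma_n\in(\log q)^{-1}F$, and the products \eqref{def_R} defining $\delta_n(\underline{c})$ cancel all $\log q$ factors except the single one absorbed by the factor $g\log q$ for odd $n$. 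You would need an argument of this kind (or an equally concrete computation) to justify the first assertion of the theorem.
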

\begin{remark}
As a function of indeterminate elements $(c_0,\cdots,c_g)$, 
$\delta_n(\underline{c})$ is a rational function of $(c_0,\cdots,c_g)$ over $\Q$. 
\end{remark}

The criterion of Theorem \ref{thm_1} may not be new,  
because it seems that the quantity $\delta_n(\underline{c})$ in Theorem \ref{thm_1} 
probably essentially coincides with quantities in a classical theory on the roots of polynomials in Section \ref{section_8_5}. 
The author has not yet considered rigorously whether $\delta_n(\underline{c})$ and the classical quantities 
indeed have the same meaning. 
However, the theory of this paper at least provides a new bridge between the study
of the roots of polynomials and the theory of canonical systems.
\smallskip

In order to deal with the case that all roots of $P_g(x)$ lie on $T$ 
but $P_g(x)$ may have a multiple zero, 
we modify the above definition of $\delta_n(\underline{c})$ as follow. 
\medskip 

For a vector $\underline{c} \in \R^\ast \times \R^{g}$ and real numbers $q>1$, $\omega>0$, 
we define the numbers $\delta_n(\underline{c}\,;q^\omega)$ ($1 \leq n \leq 2g$) by 
\begin{equation} \label{0503_9}
\delta_n(\underline{c}\,;q^\omega) 
= 
\frac{\det(E^{+}(\underline{C})+E^{-}(\underline{C})J_{n})}
{\det(E^{+}(\underline{C})-E^{-}(\underline{C})J_{n})} 
\times 
\begin{cases}
~1 & \text{if $n$ is even}, \\
~\displaystyle{\frac{q^{g\omega}-q^{-g\omega}}{q^{g\omega}+q^{-g\omega}}}& \text{if $n$ is odd}, 
\end{cases}
\end{equation}
where $\underline{C}$ is the vector defined by 
\begin{equation} \label{0503_8}
\sum_{m=-g}^{g} C_m T^{m} 
:= \sum_{m=0}^{g} c_{g-m}q^{-m \omega} T^{m} 
+ \sum_{m=1}^{g} c_{g-m}q^{m \omega} T^{-m}.
\end{equation}
\begin{theorem} \label{thm_2} 
Let $g \in \Z_{>0}$, $q>1$. 
Let $\underline{c} \in \R^\ast \times \R^{g}$ be coefficients of a self-reciprocal polynomial $P_g(x)$ 
of the form \eqref{def_Pg}. 
Then a necessary and sufficient condition for all roots of $P_g(x)$ to lie on $T$ is that 
$0<\delta_n(\underline{c}\,;q^\omega)<\infty$ for every $1 \leq n \leq 2g$ and $\omega>0$. 
\end{theorem}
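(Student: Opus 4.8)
The plan is to reduce Theorem~\ref{thm_2} to Corollary~\ref{cor_01} by an exponential change of variables, and then to let the parameter $\omega$ exhaust the open unit disk. First I would substitute into \eqref{0418_2} the coefficients prescribed by \eqref{0503_8}, namely $C_m=c_{g-m}q^{-m\omega}$ for $0\leq m\leq g$ and $C_{-m}=c_{g-m}q^{m\omega}$ for $1\leq m\leq g$, and set $u:=q^{iz-\omega}$. Grouping the terms of degrees $\pm m$ gives, since $q^{m\omega}q^{-imz}=(q^{iz-\omega})^{-m}$,
\[
E_q(z;\underline{C})=c_g+\sum_{m=1}^{g}c_{g-m}\bigl(u^{m}+u^{-m}\bigr)=u^{-g}P_g(u).
\]
Because $C_g=c_0q^{-g\omega}$ and $C_{-g}=c_0q^{g\omega}$ are nonzero, $\underline{C}\in\R^\ast\times\R^{2g-1}\times\R^\ast$, so Corollary~\ref{cor_01} is applicable to $E_q(z;\underline{C})$.

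Next I would translate the zero-free condition. The map $z\mapsto q^{iz-\omega}$ carries the closed half-plane $\C_+\cup\R$ onto the punctured closed disk $\{u: 0<|u|\leq q^{-\omega}\}$: writing $z=x+iy$ one has $|u|=q^{-(y+\omega)}$ and $\arg u=(x\log q)\bmod 2\pi$. Since $u^{-g}$ never vanishes and $P_g(0)=c_0\neq 0$, the identity above shows that $E_q(z;\underline{C})$ has no zeros on $\C_+\cup\R$ if and only if $P_g$ has no zeros in the full disk $\{|u|\leq q^{-\omega}\}$. By Corollary~\ref{cor_01} the former condition is in turn equivalent to $0<\Delta_n(\underline{C})<\infty$ for every $1\leq n\leq 2g$, with $\Delta_n$ as in \eqref{0423_7}.

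Then I would compare $\Delta_n(\underline{C})$ with $\delta_n(\underline{c}\,;q^\omega)$. Reading off \eqref{0423_7} and \eqref{0503_9}, we get $\delta_n(\underline{c}\,;q^\omega)=\Delta_n(\underline{C})$ for even $n$ and $\delta_n(\underline{c}\,;q^\omega)=\Delta_n(\underline{C})\cdot\frac{q^{g\omega}-q^{-g\omega}}{q^{g\omega}+q^{-g\omega}}$ for odd $n$, where the scalar factor lies in $(0,1)$ because $g\geq 1$, $q>1$ and $\omega>0$. Keeping the convention that $\Delta_n=\infty$ exactly when $\det(E^{+}-E^{-}J_n)=0$ (and likewise for $\delta_n$), this shows $0<\delta_n(\underline{c}\,;q^\omega)<\infty$ for all $n$ if and only if $0<\Delta_n(\underline{C})<\infty$ for all $n$. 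Combining with the previous paragraph: for each fixed $\omega>0$, the condition $0<\delta_n(\underline{c}\,;q^\omega)<\infty$ for all $1\leq n\leq 2g$ holds if and only if $P_g$ has no zero in $\{|u|\leq q^{-\omega}\}$.

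Finally I would quantify over $\omega$. Since $\bigcup_{\omega>0}\{|u|\leq q^{-\omega}\}$ equals the open unit disk $\{|u|<1\}$, the condition of the previous paragraph holds for \emph{every} $\omega>0$ if and only if $P_g$ has no zero with $|u|<1$. As $P_g$ is self-reciprocal of degree $2g$ with $P_g(0)=c_0\neq 0$, its zero set is invariant under $u\mapsto 1/u$, so having no zero inside the unit circle forces every zero onto $T$, while the converse is immediate; this gives the asserted necessary and sufficient condition. The only points needing care are the index bookkeeping that produces $E_q(z;\underline{C})=u^{-g}P_g(u)$ and the passage between the punctured and the full closed disk, which is legitimate precisely because $P_g(0)=c_0\neq 0$; I do not expect a genuine obstacle, the argument being essentially the $\omega>0$ analogue of the proof of Theorem~\ref{thm_1}, with the linearization $1\mp m\log q$ of \eqref{0418_1} replaced by the exact factor $q^{\mp m\omega}$ of \eqref{0503_8}.
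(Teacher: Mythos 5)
Your proof is correct, and it differs from the paper's in one substantive respect. Both arguments start identically: the coefficients \eqref{0503_8} make $E_q(z;\underline{C})$ equal, up to the positive constant $q^{g\omega}$, to the paper's $E_{q,\omega}(z)=A_q(z+i\omega)$, and both then feed this exponential polynomial into the canonical-system criterion (you cite Corollary \ref{cor_01} directly; the paper re-runs the arguments of Theorems \ref{thm_01} and \ref{thm_02}) to convert $0<\delta_n(\underline{c}\,;q^\omega)<\infty$ into the absence of zeros on $\C_+\cup\R$, the odd-$n$ factor $\tanh(g\omega\log q)\in(0,1)$ being harmless. The divergence is in how the quantifier over $\omega$ is tied to ``all roots on $T$''. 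The paper proves Lemma \ref{lem_2_1} ($E_{q,\omega}$ is of class HB for every $\omega>0$ if and only if $A_q$ has only real zeros) by complex analysis: the factorization \eqref{H-factorization} yields the inequality \eqref{HB} in one direction, and Hurwitz's theorem recovers the realness of the zeros of $A_q=\lim_{\omega\searrow 0}A_{q,\omega}$ in the other. You instead observe that $z\mapsto q^{iz-\omega}$ maps $\C_+\cup\R$ onto the punctured disk $0<|u|\le q^{-\omega}$, that $P_g(0)=c_0\neq0$ closes the puncture, that the union of these disks over $\omega>0$ is the open unit disk, and that the symmetry $u\mapsto 1/u$ of the zero set of a self-reciprocal polynomial forces all zeros onto $T$ once none lie inside it. This is more elementary and bypasses the function theory of Lemma \ref{lem_2_1} entirely; what the paper's lemma buys in exchange is a statement formulated purely in terms of the class HB that does not use the polynomial nature of $P_g$, but for the theorem at hand your argument is complete and self-contained.
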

Furthermore, the quantities $\delta_n(\underline{c})$ and $\delta_n(\underline{c}\,;q^\omega)$ 
are related as follows. 
\begin{theorem} \label{thm_5}
Let $\delta_n(\underline{c})$ and $\delta_n(\underline{c}\,;q^\omega)$ be as above. Then  
\[
\lim_{q^\omega \searrow  1} \delta_n(\underline{c}\,;q^\omega)
= \delta_{n}(\underline{c})
\]
as a rational function of $\underline{c}=(c_0,\cdots,c_g)$ over $\Q$.  
Suppose that all roots of a self-reciprocal polynomial \eqref{def_Pg} lie in $T$ and are simple. Then 
\[
\delta_n(\underline{c}\,;q^\omega)
= \delta_{n}(\underline{c})+O(\log q^\omega) \quad \text{as} \quad q^\omega \searrow  1, 
\]
where the implied constants depend only on  $\underline{c}$. 
\end{theorem}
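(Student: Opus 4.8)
The plan is to collapse $q$ and $\omega$ into a single variable $s:=q^{\omega}$, to observe that the data \eqref{0418_1}--\eqref{0423_8} is exactly the first--order truncation (in $\log q$) of the data \eqref{0503_8}--\eqref{0503_9}, and then to Taylor expand the defining determinants around $s=1$, using Theorem~\ref{thm_1} to identify the outcome. Put $t:=\log s=\log q^{\omega}$. By \eqref{0503_8} the vector $\underline C$ has entries $C_{m}=c_{g-m}s^{-m}$ $(m\geq 0)$ and $C_{-m}=c_{g-m}s^{m}$ $(m\geq 1)$, so it is a real--analytic curve $\underline C(s)$ with $\underline C(1)=(c_{g-|m|})_{m}$ whose degree--one Taylor polynomial at $s=1$, written with $\log q$ in place of $t$, is precisely the vector of \eqref{0418_1}; likewise $\dfrac{s^{g}-s^{-g}}{s^{g}+s^{-g}}=\tanh(gt)=gt+O(t^{3})$, whose linear part is the factor $g\log q$ of \eqref{0423_8}. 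Write $\underline C^{\mathrm{lin}}(v)$ for the vector of \eqref{0418_1} with free parameter $v$, and $\rho_{n}(v):=1$ for $n$ even, $\rho_{n}(v):=gv$ for $n$ odd. Then Theorem~\ref{thm_1} says that $v\mapsto\Delta_{n}(\underline C^{\mathrm{lin}}(v))\,\rho_{n}(v)$ is the constant $\delta_{n}(\underline c)$, while by definition $\delta_{n}(\underline c\,;q^{\omega})=\Delta_{n}(\underline C(e^{t}))\,\tau_{n}(t)$ with $\tau_{n}(t)=\rho_{n}(t)(1+O(t^{2}))$. So the whole claim reduces to proving $\Delta_{n}(\underline C(e^{t}))=\Delta_{n}(\underline C^{\mathrm{lin}}(t))(1+O(t^{2}))$ as $t\to 0^{+}$.

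The algebraic heart is this. Let $P=(p_{ij})$ be the lower triangular Toeplitz matrix $E^{+}(\underline C(1))=E^{-}(\underline C(1))$ (it carries $c_{0}$ on the diagonal), and for $k\geq 0$ let $P^{(k)}$ have entries $p_{ij}(g-i+j)^{k}$. Since $E^{\pm}(\underline C(e^{t}))_{ij}=p_{ij}e^{\pm(g-i+j)t}$, one gets
\[
E^{+}(\underline C(e^{t}))\pm E^{-}(\underline C(e^{t}))J_{n}=\sum_{k\geq 0}\frac{t^{k}}{k!}\,P^{(k)}\bigl(I\pm(-1)^{k}J_{n}\bigr),
\]
whose $k\leq 1$ part is exactly $E^{+}(\underline C^{\mathrm{lin}}(t))\pm E^{-}(\underline C^{\mathrm{lin}}(t))J_{n}$. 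Expand the determinant of the right side by multilinearity in rows: a monomial of $t$--degree $N=\sum_{i}k_{i}$ is the determinant of a matrix whose $i$--th row lies in the row space of $I\pm J_{n}$ if $k_{i}$ is even, and of $I\mp J_{n}$ if $k_{i}$ is odd. Since $J_{n}$ is $J^{(n)}$ in a corner block, the nullities of $I-J_{n}$ and $I+J_{n}$ are $\lceil n/2\rceil$ and $\lfloor n/2\rfloor$; hence such a determinant can be nonzero only if at least $\lceil n/2\rceil$ of the $k_{i}$ are odd in the case $E^{+}-E^{-}J_{n}$ (at least $\lfloor n/2\rfloor$ in the case $E^{+}+E^{-}J_{n}$), so that $N\geq\lceil n/2\rceil$ (resp.\ $\geq\lfloor n/2\rfloor$), and moreover $N\geq\lceil n/2\rceil+2$ (resp.\ $\geq\lfloor n/2\rfloor+2$) whenever some $k_{i}\geq 2$. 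Consequently all terms of $\det(E^{+}(\underline C(e^{t}))\pm E^{-}(\underline C(e^{t}))J_{n})$ of $t$--order below $\lceil n/2\rceil+2$ (resp.\ $\lfloor n/2\rfloor+2$) come from the $k\leq 1$ part alone, i.e.\ this determinant differs from its \eqref{0418_1}--counterpart by $O(t^{\lceil n/2\rceil+2})$ (resp.\ $O(t^{\lfloor n/2\rfloor+2})$).

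Suppose now that the \eqref{0418_1}--determinants $\det(E^{+}(\underline C^{\mathrm{lin}}(t))\pm E^{-}(\underline C^{\mathrm{lin}}(t))J_{n})$ vanish at $t=0$ to the generic orders $\lceil n/2\rceil$ and $\lfloor n/2\rfloor$, i.e.\ the corresponding leading coefficients (polynomials in $c_{0},\dots,c_{g}$ over $\Q$ built from minors of $P$ and $P^{(1)}$) are nonzero. By the previous paragraph the two determinants for $\underline C(e^{t})$ then share the same leading term (indeed agree one order further) with those for $\underline C^{\mathrm{lin}}(t)$, so $\Delta_{n}(\underline C(e^{t}))=\Delta_{n}(\underline C^{\mathrm{lin}}(t))(1+O(t^{2}))$; multiplying by $\tau_{n}(t)=\rho_{n}(t)(1+O(t^{2}))$ and invoking Theorem~\ref{thm_1} gives
\[
\delta_{n}(\underline c\,;q^{\omega})=\delta_{n}(\underline c)+O\bigl((\log q^{\omega})^{2}\bigr)\qquad(q^{\omega}\searrow 1),
\]
which yields both the limit identity and the stated bound $O(\log q^{\omega})$. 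For the first assertion, which is an identity of rational functions in the indeterminates $c_{0},\dots,c_{g}$, it suffices to verify it on the nonempty Zariski--open set where those leading coefficients do not vanish, and there the above applies verbatim. For the second assertion, the hypothesis that all roots of $P_{g}$ lie on $T$ and are simple gives $0<\delta_{n}(\underline c)<\infty$ by Theorem~\ref{thm_1}; comparing numerator and denominator along the \eqref{0418_1}--line then forces their orders of vanishing to equal the generic values $\lceil n/2\rceil$ and $\lfloor n/2\rfloor$, because an excess would make the leading coefficient vanish, which one matches with a repeated--root condition on $P_{g}$ (for $g=1$, say, $\det(E^{+}-E^{-}J_{2})$ along the line is $\propto v(2c_{0}-c_{1})$, whose leading coefficient vanishes exactly for $P_{1}(x)=c_{0}(x+1)^{2}$). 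Hence the displayed estimate holds with implied constants equal to the values at $\underline c$ of explicit rational functions, and the limit statement follows, all quantities involved being rational in $c_{0},\dots,c_{g}$ over $\Q$.

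The main obstacle is the bookkeeping of the middle step: showing that, after multilinear expansion, every contribution containing a second-- or higher--order exponential term (a $k_{i}\geq 2$) is pushed up by \emph{two} powers of $t$ relative to the leading order, which is exactly what makes the replacement of \eqref{0503_8} by its linearization \eqref{0418_1} harmless in the limit. This rests entirely on the parity phenomenon that in $E^{+}\mp E^{-}J_{n}$ the even--order terms carry the factor $I\mp J_{n}$ and the odd--order terms carry $I\pm J_{n}$ (because $e^{at}$ splits into its even and odd parts in $t$), combined with the dimension count for the corner block $J_{n}$. A secondary point, relevant only to the quantitative statement, is the non-degeneracy of those leading coefficients under the simplicity hypothesis, which should follow by identifying their zero loci with a discriminant--type condition on $P_{g}$.
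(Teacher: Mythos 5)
Your route is genuinely different from the paper's: you Taylor-expand the determinants in $t=\log q^{\omega}$ directly, whereas the paper runs everything through the inductive construction of Section~\ref{section_7} (it renormalizes the initial vector $\tilde{\Omega}_{0,\omega}$ by $\omega^{-1}$, shows $\tilde{\gamma}_n=\omega\gamma_n$, computes $\lim_{q^\omega\searrow 1}\tilde{\Omega}_{1,\omega}$ explicitly via $\cosh/\sinh/\coth$ limits, and lets the continuity of the rational recursion $P_k^{-1}Q_k$ do the rest). Your central identity is correct and rather nice: writing $E^{\pm}(\underline C(e^{t}))=\sum_k\frac{(\pm t)^k}{k!}P^{(k)}$ one does get $E^{+}\pm E^{-}J_n=\sum_k\frac{t^k}{k!}P^{(k)}(I\pm(-1)^kJ_n)$, the nullities of $I\mp J_n$ are $\lceil n/2\rceil$ and $\lfloor n/2\rfloor$, and the multilinear expansion does show that every term with some $k_i\geq 2$ lands at order at least two above the generic vanishing order $\nu_\pm$. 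So the exponential determinants agree with their linearizations modulo $O(t^{\nu_\pm+2})$; that part is sound.

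The genuine gap is the nondegeneracy input you need to convert this into $\Delta_n(\underline C(e^{t}))=\Delta_n(\underline C^{\mathrm{lin}}(t))(1+O(t^{2}))$: you must know that the linearized determinants $L_\pm(t)=\det(E^{+}(\underline C^{\mathrm{lin}}(t))\pm E^{-}(\underline C^{\mathrm{lin}}(t))J_n)$ vanish at $t=0$ to order \emph{exactly} $\nu_\pm$ (or at worst $\nu_\pm+1$), since otherwise the $O(t^{\nu_\pm+2})$ correction is not small relative to $L_\pm$ and the ratio is uncontrolled. Theorem~\ref{thm_1} only gives you $L_\pm(v)\neq 0$ for $v>0$ and the proportionality $L_+(v)\rho_n(v)=\delta_n(\underline c)L_-(v)$; neither pins down the order of vanishing at $v=0$. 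Your justification --- ``an excess would make the leading coefficient vanish, which one matches with a repeated-root condition on $P_g$'' --- is only checked for $g=1$ and is not a proof; likewise the nonemptiness of your Zariski-open set (i.e.\ that the leading coefficients are not identically zero as polynomials in $c_0,\dots,c_g$) is asserted, not established, so even the generic rational-function identity is not fully closed. You would need either a direct computation of these leading minors (it works out for $n=1$, where the coefficient is $2gc_0^{2g+1}$, but the general case is exactly the bookkeeping you defer) or a different mechanism. Note also that once the pointwise limit is known, the estimate $\delta_n(\underline c\,;q^{\omega})=\delta_n(\underline c)+O(\log q^{\omega})$ is immediate because $\delta_n(\underline c\,;q^{\omega})$ is a rational function of $s=q^{\omega}$ regular at $s=1$ --- this is how the paper gets the second assertion --- so the delicate order-counting is only really needed if you insist on the sharper $O((\log q^{\omega})^{2})$.
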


Finally, we comment on the reason we use a multiplicative variable on $\R_{>0}$ and the right endpoint for the initial value 
in \eqref{can_0}.  
It comes from the author's personal motivation for the work of this paper: 
the inverse problem for entire functions of the class HB obtained by 
Mellin transforms of functions on $[1,\infty)$. 
This problem was stimulated by Burnol \cite{Burnol2011}, in particular, Sections 5--8, 
and was partially treated in \cite{Su2012_1} in the number theoretic setting.   
In \cite{Burnol2011}, Burnol often uses a multiplicative variable on $\R_{>0}$, and the left endpoint ($=+\infty$) corresponds to the initial value. 
In order to apply the method of \cite{Burnol2011} to the above inverse problem, the author noted on the formulas 
\[
\int_{1}^{\infty} f(x) x^{iz} \frac{dx}{x} = \lim_{X \to \infty}\int_{1}^{X} f(x) x^{iz} \frac{dx}{x} 
\]
and
\[
\int_{1}^{X} f(x) x^{iz} \frac{dx}{x} = \lim_{q \to 1+}\frac{1}{\log q} \sum_{n=0}^{\lfloor \log X/\log q \rfloor} f(q^n) q^{inz}. 
\]
Here exponential polynomials appear in the left-hand side of the second formula. 
If we fix $X>1$, $q>1$ and put $2g=\lfloor \log X/\log q \rfloor$, 
we obtain an exponential polynomial of degree $2g$. 
Recall that the results of the paper corresponds the exponential polynomial to the canonical system on $[1,q^g)$. 
Then, if we imagine the limiting situation $q \to 1$ and $X \to +\infty$, 
it is expected (but not proved rigorously) that a canonical system corresponds to 
the Mellin transform $\int_{1}^{\infty} f(x) x^{iz} \frac{dx}{x}$ 
should be a system on $[1,\infty)$. 

For a realizations of the above heuristic discussion on the original motivation, 
the right endpoint may be more useful and convenient for the initial value than the left endpoint, 
although the left endpoint is useful for the initial value in the usual theory of canonical systems. 
In any case, the above number theoretic aspect of de Branges’ theory is an important aspect. 
\medskip

The paper is organized as follows. 
In Section \ref{section_2}, we describe the outline of the proof of Theorem \ref{thm_01}. 
In Section \ref{section_3}, we prepare several lemmas and the notation 
used to prove statements of Section \ref{section_2} and Theorem \ref{thm_01} in Section \ref{section_4}. 
In Section \ref{section_5}, we prove Theorems \ref{thm_02}, \ref{thm_02_1}, and \ref{thm_02_2}.  
In Section \ref{section_7}, we mention a way of constructing $(\gamma(a),A(a,z),B(a,z))$ 
which is different from the way of Sections \ref{section_2} and \ref{section_4}. 
In Section \ref{section_8}, we prove Theorems \ref{thm_1}, \ref{thm_2}, and \ref{thm_5} 
and compare them with classical results 
on the roots of self-reciprocal polynomials. 

\medskip

\noindent
{\bf Acknowledgments}~
The author thanks Shigeki Akiyama for suggesting the book of Takagi (\cite[Section 10]{Takagi}). 
The author also thanks the referee 
for a number of helpful suggestions for improvement in the article and for careful reading. 
In particular, Theorems \ref{thm_02_1} and \ref{thm_02_2} were added by the suggestion of the referee. 
This work was supported by KAKENHI (Grant-in-Aid for Young Scientists (B)) No. 21740004 
and No. 25800007. 
Also, this work was partially supported 
by French-Japanese Projects ``Zeta Functions of Several Variables and  Applications''
in Japan-France Research Cooperative Program supported by JSPS and CNRS. 

\section{Outline of the proof of Theorem \ref{thm_01}} \label{section_2}

\subsection{Hilbert spaces and operators.} 
Let $q>1$, 
and let $L^2(\mathbb{T}_q)$ be the completion of the space of $(2\pi/\log q)$-periodic continuous functions on $\R$ 
with respect to the $L^2$-norm 
$\Vert f \Vert_{L^2(\mathbb{T}_q)}^2 := \langle f, f \rangle_{L^2(\mathbb{T}_q)}$, 
where 
\[
\langle f, g \rangle_{L^2(\mathbb{T}_q)} := \frac{\log q}{2\pi}\int_{I_q} f(z)\overline{g(z)}\, dz 
\quad \text{and} \quad  I_q=[0,2\pi/(\log q)).
\] 
Every $f \in L^2(\mathbb{T}_q)$ has the Fourier expansion 
\[
f(z) = \sum_{k \in \Z}u(k)q^{ikz}
\] 
with $\{u(k)\}_{k \in \Z} \in l^2(\Z)$ 
and $\Vert f \Vert_{L^2(\mathbb{T}_q)}^2 = \sum_{k \in \Z}|u(k)|^2$,  
where $l^2(\Z)$ is the Hilbert space of sequences $\{u(k) \in \C \,:\, k \in \Z\}$ satisfying $\sum_{k \in \Z}|u(k)|^2 < \infty$ (\cite[Chapter 4]{Rudin}). 

For $0 < a \not\in q^{\Z/2}=\{q^{n/2}\,:\, n \in \Z\}$, we define the vector space 
\[
V_a : = \left\{\left. \phi(z)= a^{-iz}f(z) + a^{iz}g(z)
\,\right|~f,\,g \in L^2(\mathbb{T}_q) \right\}
\]
of functions of $z \in \R$. 
As a vector space, $V_a$ is isomorphic to the direct sum $L^2(\mathbb{T}_q) \oplus L^2(\mathbb{T}_q)$, 
since $a^{-iz}f(z) + a^{iz}g(z) =0$ if and only if $(f,g) =(0,0)$. 
In fact, if one of $f$ and $g$ is zero and $a^{-iz}f(z) + a^{iz}g(z) =0$, the other one is also zero. 
On the other hand, if $f\not=0$, $g\not=0$, and $a^{-iz}f(z) + a^{iz}g(z) =0$, 
then $a^{2iz}=-f(z)/g(z)$, and hence $a \in q^{\Z/2}$. 
The maps $p_1:(a^{-iz}f(z) + a^{iz}g(z)) \mapsto a^{-iz}f(z)$ 
and $p_2: (a^{-iz}f(z) + a^{iz}g(z)) \mapsto a^{iz}g(z)$ 
are projections from $V_a$ to the first and the second components 
of the direct sum, respectively.
We define the inner product on $V_a$ by 
\[
\langle \phi_1, \phi_2 \rangle 
= \langle f_1, g_1 \rangle_{L^2(\mathbb{T}_q)} + \langle g_1, g_2 \rangle_{L^2(\mathbb{T}_q)},
\]
where $\phi_j(z) = a^{-iz}f_j(z) + a^{iz}g_j(z)$ ($j=1,2$). 
Then $V_a$, with this inner product is a Hilbert space and is isomorphic to 
the (orthogonal) direct sum $L^2(\mathbb{T}_q) \oplus L^2(\mathbb{T}_q)$ of Hilbert spaces (\cite[Chapter 5]{Lax}). 
We put 
\begin{equation} \label{0313_1}
X(k):=(q^k/a)^{iz}, \quad Y(l):=X(l)^{-1}=(q^l/a)^{-iz}
\end{equation}
for $k,l \in \Z$ and $a>0$. 
We regard $X(k)$ and $Y(l)$ as functions of $z$, functions of $(a,z)$, or symbols, depending on the situation. 
For a fixed $0<a \not\in q^{\Z/2}$, the countable set consisting of all $X(k)$ and $Y(l)$ is linearly independent over $\C$ as a set of functions of $z$, 
since the linear dependence of $\{X(k),\,Y(l)\}_{k,l \in \Z}$ implies the existence of a nontrivial pair of functions 
$f,g \in L^2(\mathbb{T}_q)$ satisfying  $a^{-iz}f(z) + a^{iz}g(z) =0$. 
Using these vectors, we have 
\[
\aligned 
V_a 
& = \left\{ \phi=\sum_{k \in \Z}u(k)X(k) + \sum_{l \in \Z}v(l)Y(l)
\,:~ \{u(k)\}_{k \in \Z}, \, \{v(l)\}_{l \in \Z} \in l^2(\Z) \right\} \\
& \simeq  L^2(\mathbb{T}_q) \oplus L^2(\mathbb{T}_q),
\endaligned 
\]
\[
\aligned 
p_1V_a & = \left\{ \sum_{k \in \Z}u(k)X(k) 
\,:~ \{u(k)\}_{k \in \Z} \in l^2(\Z) \right\} \simeq  L^2(\mathbb{T}_q), \\
p_2V_a & = \left\{ \sum_{l \in \Z}v(l)Y(l)
\,:~ \{v(l)\}_{l \in \Z} \in l^2(\Z) \right\}\simeq  L^2(\mathbb{T}_q) 
\endaligned 
\]
and 
\[
\langle \phi, \phi' \rangle 
= 
\sum_{k \in \Z}u(k)\overline{u'(k)} + \sum_{l \in \Z} v(l) \overline{v'(l)} \quad(\phi,\phi' \in V_a), 
\]
\begin{equation} \label{0421_1}
\Vert \phi \Vert^2 = \langle \phi, \phi \rangle =  \sum_{k \in \Z}|u(k)|^2 + \sum_{l \in \Z}|v(l)|^2 \quad(\phi \in V_a), 
\end{equation}
where 
\[
\phi=\sum_{k \in \Z}u(k)X(k) + \sum_{l \in \Z}v(l)Y(l)\quad  \text{and} \quad \phi'=\sum_{k \in \Z}u'(k)X(k) + \sum_{l \in \Z}v'(l)Y(l).
\] 
On the other hand, we have 
\begin{equation} \label{0421_1_1}
\Vert \phi \Vert^2 
= \frac{\log q}{2\pi}\int_{I_q} p_1\phi(z) \overline{p_1 \phi(z)} \, dz +  \frac{\log q}{2\pi}\int_{I_q} p_2\phi(z) \overline{p_2 \phi(z)} \, dz
\end{equation}
for $\phi \in V_a$, since 
\[
\frac{\log q}{2\pi}\int_{I_q}
(a^{\pm iz}f_1(z)) 
\overline{(a^{\pm iz}f_2(z))} \, dz
= \sum_{k \in \Z}u_1(k)\overline{u_2(k)} = \langle f_1, f_2  \rangle_{L^2(\mathbb{T}_q)}
\]
for $f_j(z)=\sum_{k \in \Z}u_j(k)q^{ikz} \in L^2(\mathbb{T}_q)$ ($j=1,2$). 
Note that, for $\phi \in V_a$, $p_1\phi$ and $p_2 \phi$ are not periodic functions of $z$, 
but the integrals $\int_I p_j\phi(z) \overline{p_j \phi'(z)} \, dz$ ($j=1,2$, $\phi, \phi' \in V_a$)
are independent of the intervals $I=[\alpha,\alpha+2\pi/\log q]$ ($\alpha \in \R$). 
We write $\phi \in V_a$ as $\phi(z)$ (respectively, $\phi(a,z)$) to emphasize that $\phi$ is a function of $z$ (respectively, $(a,z)$). 
If we regard $X(k)$ and $Y(l)$ as symbols, $V_a$, 
endowed with the norm defined by \eqref{0421_1},
is an abstract Hilbert space and is isomorphic to $l^2(\Z) \oplus l^2(\Z)$.  
\medskip

For each nonnegative integer $n$, we define the closed subspace $V_{a,n}$ of $V_{a}$ by 
\[
V_{a,n} = \left\{ \phi_n=\sum_{k=0}^{\infty} u_n(k)X(k) + \sum_{k=-\infty}^{n-1} v_n(l)Y(l)
\,:~ \{u_n(k)\}_{k=0}^{\infty}, \, \{v_n(l)\}_{l=-\infty}^{n-1} \in l^2(\Z) \right\}
\]  
and denote by ${\mathsf P}_n^\ast$ the projection operator $V_a \to V_{a,n}$. 
Then, for the conjugate operator 
${\mathsf P}_n^{\ast\ast}:={\mathsf J}{\mathsf P}_n^\ast{\mathsf J}:V_a \to V_a$ 
of ${\mathsf P}_n^\ast$ by the involution ${\mathsf J}: V_a \mapsto V_a$ 
defined by 
\[
{\mathsf J} \phi = \sum_{k=-\infty}^{\infty} v(k)X(k) + \sum_{l=-\infty}^{\infty} u(l)Y(l) \quad (\phi \in V_a), 
\]
we have 
\[
{\mathsf P}_n^{\ast\ast} \phi ={\mathsf J}{\mathsf P}_n^\ast{\mathsf J} \phi= \sum_{k=-\infty}^{n-1} u(k)X(k) + \sum_{l=0}^{\infty} v(l)Y(l)\quad (\phi \in V_a).
\]
%
%
Therefore, $\mathsf{P}_n:=\mathsf{P}_n^{\ast\ast}\mathsf{P}_n^\ast$ maps $V_{a}$ into $V_{a,n}$  
for every nonnegative integer $n$. 
In addition, ${\mathsf J}{\mathsf P}_n$ also maps $V_{a,n}$ into $V_{a,n}$ 
for every nonnegative $n$,  
because 
\begin{equation} \label{0421_2}
{\mathsf J}{\mathsf P}_n \phi_n = \sum_{l=0}^{n-1} v_n(l)X(l) + \sum_{k=0}^{n-1} u_n(k)Y(k)
\end{equation}
for
$\phi_n \in V_{a,n}$. 
Note that ${\mathsf P}_0|_{V_{a,0}}={\mathsf J}{\mathsf P}_0|_{V_{a,0}}$, 
since ${\mathsf P}_0^\ast{\mathsf J}|_{V_{a,0}}=0$, by definition. 
\medskip

For an exponential polynomial $E(z)$ of \eqref{0418_2}, we have
\[
E^\sharp(z)=\sum_{m=-g}^{g}C_{-m} q^{imz}=E_q(z;\underline{C}^\sharp)
\] 
for $\underline{C}^\sharp :=(C_{-g},C_{-(g-1)},\dots,C_{g}) \in \R^\ast \times \R^{2g-1} \times \R^\ast$, 
by a simple calculation. 
Using $E(z)$ and $E^\sharp(z)$, we define the multiplication operators
\begin{equation}\label{0611_1}
{\mathsf E}: \phi(z) \mapsto E(z) \phi(z), \quad 
{\mathsf E}^\sharp:  \phi(z) \mapsto E^\sharp(z) \phi(z).
\end{equation}
These operators map $V_a$ into $V_a$, 
because ${\mathsf E}$ and ${\mathsf E}^\sharp$ are expressed as
\[
{\mathsf E} = \sum_{m = -g}^{g} C_m {\mathsf T}_m, \qquad
{\mathsf E}^{\sharp}  = \sum_{m = -g}^{g} C_{-m} {\mathsf T}_m
\]
by using shift operators ${\mathsf T}_m:V_a \to V_a$ ($m \in \Z$) defined by 
\[
{\mathsf T}_m v=\sum_{k=-\infty}^{\infty} u(k)X(k+m) + \sum_{l=-\infty}^{\infty} v(l)Y(l-m). 
\]
Both ${\mathsf E}$ and ${\mathsf E}^\sharp$ are bounded on $V_a$, 
since 
$\Vert {\mathsf E} \Vert \leq \sum_{m=-g}^{g}|C_m|\cdot \Vert \mathsf{T}_m \Vert \leq M$ 
and 
$\Vert {\mathsf E}^\sharp \Vert \leq \sum_{m=-g}^{g}|C_m|\cdot \Vert \mathsf{T}_m \Vert \leq M$ 
for $M=\max\{C_m\,|\, -g \leq m \leq g\}$. 

Suppose that $E(z)$ has no zeros on the real line. 
Then the operator ${\mathsf E}$ is invertible on $V_a$ (Lemma \ref{0330_6}\,(1)). 
Thus the operator 
\begin{equation} \label{0418_3}
\Theta := {\mathsf E}^{-1}{\mathsf E}^{\sharp}
\end{equation}
is well-defined. We have  
\begin{equation} \label{0421_3}
(\Theta \phi)(z)=\Theta(z)\phi(z) \quad \text{for} \quad 
\Theta(z):=\frac{E^\sharp(z)}{E(z)},~\phi \in V_a 
\end{equation}
and $\Theta\mathsf{J}\mathsf{P}_n(W_{a,n}) \subset W_{a,n}$ 
for  each nonnegative integer $n$, where  
\[
W_{a,n} := V_{a,n} + \Theta\mathsf{J}\mathsf{P}_n V_{a,n}.
\]

\subsection{Quasi-canonical systems associated with exponential polynomials.} 
In the above settings, a quasi-canonical system associated with an exponential polynomial $E(z)$ of \eqref{0418_2} 
is constructed starting from solutions of the set of linear equations
\begin{equation} \label{LE0}
\aligned
( {\mathsf I} \pm \Theta {\mathsf J}{\mathsf P}_n )\, \phi_{n}^{\pm} 
&= \Theta X(0) \\
\endaligned
\quad (\phi_n^{\pm} \in W_{a,n},\, 0 \leq n \leq 2g),
\end{equation}
where ${\mathsf I}$ is the identity operator. 
The set of $4g+2$ equations \eqref{LE0} is a discrete analogue of 
the (right) Mellin transform of differential equations  \cite[(117a) and (117b), Section 6]{Burnol2011}; 
see also \cite[Section 4]{Su2012_1}.  

Under the assumption that both ${\mathsf I} \pm \Theta{\mathsf J}{\mathsf P}_n$ 
are invertible on $W_{a,n}$ for every $0 \leq n \leq 2g$, that is, 
$({\mathsf I} \pm \Theta{\mathsf J}{\mathsf P}_n)^{-1}$ 
exist as a bounded operator on $W_{a,n}$, 
we define 
\begin{equation} \label{0418_6}
A_n^\ast(a,z):=\frac{1}{2}({\mathsf I} + {\mathsf J}){\mathsf E} \, \phi_{n}^{+}(a,z), \quad 
B_n^\ast(a,z):=\frac{1}{2i}({\mathsf I} - {\mathsf J}){\mathsf E} \, \phi_{n}^{-}(a,z)
\end{equation}
using unique solutions of \eqref{LE0}. 
The functions $A_n^\ast(a,z)$ and $B_n^\ast(a,z)$ are entire functions of $z$ 
and are extended to functions of $a$ on $(0,\infty)$ (by formula \eqref{0330_8}). 
Here we define functions $A^\ast(a,z)$ and $B^\ast(a,z)$ 
of $(a,z) \in [1,q^g) \times \C$ by 
\begin{equation} \label{0330_9}
A^\ast(a,z) := A_n^\ast(a,z), \quad  B^\ast(a,z) := B_n^\ast(a,z) 
\end{equation}
for $q^{(n-1)/2} \leq a < q^{n/2}$. 
In general, the function $A^\ast(a,z)$ is discontinuous at $a \in [1,q^g) \cap q^{\Z/2}$, 
because 
$A_{n}^\ast(q^{n/2},z) =  A_{n+1}^\ast(q^{n/2},z)$
may not be hold. The same is true about $B^\ast(a,z)$. 
However, we will see that 
\begin{equation} \label{0330_12}
\aligned
\alpha_n & =\alpha_n(\underline{C}):=\frac{A_{n-1}^\ast(q^{(n-1)/2},z)}{A_n^\ast(q^{(n-1)/2},z)}, \\
\beta_n & =\beta_n(\underline{C}):=\frac{B_{n-1}^\ast(q^{(n-1)/2},z)}{B_n^\ast(q^{(n-1)/2},z)}
\endaligned
\end{equation}
are independent of $z$ for every $1 \leq n \leq 2g$ (Proposition \ref{prop_0313_1}). 
Therefore, 
we obtain functions $A(a,z)$ and $B(a,z)$ of $(a,z) \in [1,q^g) \times \C$ 
which are continuous for $a$ and entire for $z$ by the modification 
\begin{equation} \label{0330_11}
\aligned
A_n(a,z) :&= \alpha_1 \cdots \alpha_n \cdot A_n^\ast(a,z), \\
B_n(a,z) :&= \beta_1 \cdots \beta_n \cdot B_n^\ast(a,z)
\endaligned
\end{equation}
for $1 \leq n \leq 2g$, and 
\begin{equation} \label{0330_10}
A(a,z) := A_n(a,z), \quad B(a,z) := B_n(a,z) 
\end{equation}
for $q^{(n-1)/2} \leq a < q^{n/2}$. Moreover, $(A(a,z),B(a,z))$ satisfies the system \eqref{0330_1} 
endowed with the boundary conditions \eqref{0418_4} 
for the locally constant function $\gamma(a)$ defined by 
\begin{equation} \label{0425_3}
\gamma_n=\gamma_n(\underline{C}):=\frac{\alpha_1(\underline{C}) \cdots \alpha_n(\underline{C})}{\beta_1(\underline{C}) \cdots \beta_n(\underline{C})}
\end{equation}
and 
\begin{equation} \label{0423_6}
\gamma(a):=\gamma(a;\underline{C}):=\gamma_n
\quad \text{if} \quad q^{(n-1)/2} \leq a < q^{n/2}
\end{equation}
(Proposition \ref{0330_4} and Theorem \ref{thm_03}). 
This $\gamma(a)$ is equal to the function of \eqref{0425_2} (Proposition \ref{prop_0313_1}). 
Therefore, $(A(a,z),B(a,z))/E(0)$ is a solution of a quasi-canonical system on $[1,q^{g})$ 
if $E(0)\not=0$ and $\alpha_n, \beta_n \not=0$ for every $1 \leq n \leq 2g$. 
As a summary of the above argument, we obtain Theorem \ref{thm_01}:  
see Section \ref{section_4} for details.  

To prove Theorems \ref{thm_02}, \ref{thm_02_1}, and \ref{thm_02_2}, 
we use the theory of de Branges spaces, which are a kind of reproducing kernel Hilbert spaces consisting of entire functions. 
Roughly speaking, the positivity of $H(a)$ corresponds to the positivity of the reproducing kernels of de Branges spaces: 
see Section \ref{section_5} for details. 

\section{Preliminaries} \label{section_3}
\subsection{Identities for matrices.}

The following basic facts of linear algebra are used often in later sections. 

\begin{lemma} \label{mat_2} 
Let $A$ and $B$ be square matrices of the same size. Then
\[
\det\begin{bmatrix} A & B \\ B & A \end{bmatrix} = \det(A+B)\det(A-B).
\]
\end{lemma}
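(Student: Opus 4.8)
The plan is to prove the block-determinant identity
\[
\det\begin{bmatrix} A & B \\ B & A \end{bmatrix} = \det(A+B)\det(A-B)
\]
by a change of basis, realized as multiplication on the left and right by constant invertible block matrices, that block-diagonalizes the matrix. Concretely, I would introduce the block matrices
\[
L := \begin{bmatrix} I & I \\ I & -I \end{bmatrix}, \qquad R := \frac{1}{2}\begin{bmatrix} I & I \\ I & -I \end{bmatrix},
\]
where $I$ is the identity of the same size as $A$ and $B$, and note $LR = RL = I_{2n}$, so $\det L \cdot \det R = 1$. The heart of the computation is the direct block multiplication
\[
L \begin{bmatrix} A & B \\ B & A \end{bmatrix} R
= \begin{bmatrix} A+B & 0 \\ 0 & A-B \end{bmatrix},
\]
which is a routine expansion: the $(1,1)$ block of $L\begin{bmatrix} A & B \\ B & A \end{bmatrix}$ is $A+B$ and the $(2,1)$ block is $A-B$, etc., and then right-multiplication by $R$ cancels the off-diagonal blocks. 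Taking determinants of both sides and using multiplicativity of $\det$ together with $\det L\cdot\det R = 1$ and the block-triangular (here block-diagonal) determinant formula $\det\operatorname{diag}(X,Y) = \det X \det Y$ gives the claim.

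The only point requiring a word of care is that block-matrix multiplication behaves like ordinary matrix multiplication only because the blocks are conformable and we never commute two blocks past each other in the wrong order; since $L$ and $R$ have scalar-multiple-of-identity blocks, every product that appears is of the form $(\text{scalar})\cdot(\text{block of the middle matrix})$, so no commutativity of $A$ and $B$ is needed. An alternative, if one prefers to avoid the factor $1/2$, is to compute
\[
\begin{bmatrix} I & I \\ I & -I \end{bmatrix}
\begin{bmatrix} A & B \\ B & A \end{bmatrix}
\begin{bmatrix} I & I \\ I & -I \end{bmatrix}
= \begin{bmatrix} 2(A+B) & 0 \\ 0 & 2(A-B) \end{bmatrix},
\]
take determinants to get $\bigl(\det\begin{bmatrix} I & I \\ I & -I \end{bmatrix}\bigr)^2 \det\begin{bmatrix} A & B \\ B & A \end{bmatrix} = 2^{2n}\det(A+B)\det(A-B)$, and then separately evaluate $\det\begin{bmatrix} I & I \\ I & -I \end{bmatrix} = (-2)^n$ (e.g. by the same block-diagonalization applied to $A=I$, $B=0$, or by row reduction), so that $2^{2n}$ cancels.

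I do not expect any genuine obstacle here; the statement is elementary linear algebra. If anything, the mild subtlety is purely expository: making sure the identity is stated and used for matrices over a commutative ring (or just $\R$ or $\C$, which is all the paper needs) so that the block-determinant formula and multiplicativity of $\det$ are available without comment. I would present the one-line block factorization, take determinants, and be done.
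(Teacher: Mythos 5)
Your proof is correct. Note that the paper offers no proof of this lemma at all --- it is listed among ``basic facts of linear algebra'' and used without justification --- so there is nothing to compare against; your conjugation by $L=\left[\begin{smallmatrix} I & I\\ I & -I\end{smallmatrix}\right]$ and $R=L^{-1}=\tfrac12 L$ is the standard argument and is airtight: $LMR=\operatorname{diag}(A+B,\,A-B)$, $\det L\cdot\det R=1$, and multiplicativity of the determinant finishes it. One tiny quibble with the alternative you sketch at the end: evaluating $\det L$ ``by the same block-diagonalization applied to $A=I$, $B=0$'' only pins down $(\det L)^2=2^{2n}$ (it computes $\det(LR)=1$ with $\det R=2^{-2n}\det L$), not the signed value $(-2)^n$; since only the square enters the cancellation against $2^{2n}$, this is immaterial, but as written the parenthetical does not quite deliver what it claims. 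If one ever needs the identity over a commutative ring in which $2$ is not invertible, the symmetric conjugation can be replaced by the unimodular operations $\left[\begin{smallmatrix}I&0\\-I&I\end{smallmatrix}\right]\left[\begin{smallmatrix}A&B\\B&A\end{smallmatrix}\right]\left[\begin{smallmatrix}I&0\\I&I\end{smallmatrix}\right]=\left[\begin{smallmatrix}A+B&B\\0&A-B\end{smallmatrix}\right]$, whose factors have determinant one and whose right-hand side is block upper triangular; over $\R$, which is all the paper uses, your version is perfectly fine.
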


\begin{lemma} \label{mat_1}
Let $A$, $B$, $C$, $D$ be square matrices of the same size. 
If $\det A \not=0$, $\det D \not=0$, and $\det(D-CA^{-1}B)\not=0$,   
\[
\begin{bmatrix} A & B \\ C & D \end{bmatrix}^{-1}
=
\begin{bmatrix}
(A-BD^{-1}C)^{-1}& -A^{-1}B(D-CA^{-1}B)^{-1} \\ 
-D^{-1}C(A-BD^{-1}C)^{-1} & (D-CA^{-1}B)^{-1}
\end{bmatrix}.
\]
\end{lemma}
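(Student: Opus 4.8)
The final statement is the classical block-matrix inversion formula (via Schur complements). The plan is to verify directly that the proposed matrix is a two-sided inverse of $\begin{bmatrix} A & B \\ C & D \end{bmatrix}$, rather than deriving it from scratch by block Gaussian elimination. First I would record the two Schur complements $S_D := A - BD^{-1}C$ and $S_A := D - CA^{-1}B$, noting that the hypotheses $\det A \neq 0$, $\det D \neq 0$, $\det(D - CA^{-1}B) \neq 0$ guarantee that $A^{-1}$, $D^{-1}$, and $S_A^{-1}$ all exist; I would also need $S_D^{-1}$ to exist, which follows because $\det\begin{bmatrix} A & B \\ C & D \end{bmatrix} = \det(A)\det(S_A) = \det(D)\det(S_D)$, so $\det S_D \neq 0$ as well. (Alternatively one can invoke Lemma \ref{mat_2}-type determinant identities, but the factorization through either Schur complement suffices.)

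Next I would multiply the proposed inverse on the right by $\begin{bmatrix} A & B \\ C & D \end{bmatrix}$ and check that the product is the identity, block by block. The $(1,1)$ block is $S_D^{-1}A - A^{-1}B S_A^{-1} C$; the key algebraic fact is the identity $S_D^{-1} = A^{-1} + A^{-1}B S_A^{-1} C A^{-1}$ (the "Woodbury" rearrangement), which is itself verified by multiplying both sides by $S_D = A - BD^{-1}C$ on the right and simplifying using $S_A = D - CA^{-1}B$. Substituting this into the $(1,1)$ block collapses it to the identity. The $(2,2)$ block $-D^{-1}C(-A^{-1}B S_A^{-1}) + S_A^{-1} D$ is handled symmetrically using $S_A^{-1} = D^{-1} + D^{-1}C S_D^{-1} B D^{-1}$. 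The off-diagonal blocks reduce to expressions of the form $S_D^{-1}B - A^{-1}B S_A^{-1}D$ and $-D^{-1}C S_D^{-1}A + S_A^{-1}C$, each of which vanishes after applying the same two Woodbury identities. Finally I would remark that the left-inverse check is identical by symmetry (or is automatic for square matrices once a right inverse is exhibited), completing the proof.

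The only genuine obstacle is bookkeeping: one must be careful that matrix multiplication is noncommutative, so every cancellation has to be justified by an honest left- or right-multiplication by the appropriate Schur complement, and one must make sure $S_D$ is invertible before writing $S_D^{-1}$. I expect the single Woodbury-type identity $(A - BD^{-1}C)^{-1} = A^{-1} + A^{-1}B(D - CA^{-1}B)^{-1}CA^{-1}$ to be the crux; once it (and its mirror image) is established, all four block equations fall out by direct substitution with no further cleverness.
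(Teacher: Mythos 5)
The paper gives no proof of Lemma \ref{mat_1} at all: it is listed among the ``basic facts of linear algebra'' and stated without argument (its neighbour, Laplace's expansion, is dispatched with a citation to \cite{GTM23}). So there is nothing in the paper to compare your route against; your plan of verifying directly that the displayed matrix is a two-sided inverse is the standard one and it works. Your two key observations are exactly the right ones: (i) the hypotheses do not explicitly grant invertibility of $S_D=A-BD^{-1}C$, but it follows from $\det\begin{bmatrix} A & B \\ C & D\end{bmatrix}=\det(A)\det(S_A)=\det(D)\det(S_D)$; and (ii) once the Woodbury rearrangement $S_D^{-1}=A^{-1}+A^{-1}BS_A^{-1}CA^{-1}$ and its mirror are checked (each by a one-line right-multiplication), all four block equations collapse. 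One bookkeeping slip: for the product $M^{-1}M$ the $(2,2)$ block is $-D^{-1}CS_D^{-1}B+S_A^{-1}D$, not $-D^{-1}C(-A^{-1}BS_A^{-1})+S_A^{-1}D$ as you wrote; the corrected expression does reduce to $I$ via the mirror Woodbury identity, so this does not affect the argument, but in a written-out proof you would want to fix it. Your closing remark that a right inverse of a square matrix is automatically a left inverse is also correct and saves you the second multiplication.
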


\begin{lemma}[Laplace's expansion formula] \label{mat_3}
Let $A$ be a square matrix of size $n$.
Let $\pmb{i} = (i_1, i_2, \cdots , i_k)$ (respectively, $\pmb{j} = (j_1, j_2, \cdots , j_k)$) be a list of indices of $k$ rows (respectively, columns), 
where $1 \leq k < n$ and $0 \leq i_1 < i_2 < \cdots < i_k < n$ (respectively, $1 \leq j_1 < j_2 < \cdots < j_k < n$). 
Denote by $A(\pmb{i},\pmb{j})$ the submatrix of $A$ obtained by keeping the entries in the intersection of any row and column that are in the lists. 
Denote by $A^c(\pmb{i},\pmb{j})$ the submatrix of $A$ obtained by removing the entries in the rows and columns that are in the lists. 
Laplace's formula for determinants yields
\[
\det A = \sum_{\pmb{j}}(-1)^{|\pmb{i}|+|\pmb{j}|} \det A(\pmb{i},\pmb{j}) \det A^c(\pmb{i},\pmb{j}), 
\]
where $|\pmb{i}| = i_1+i_2+\cdots+i_k$, $|\pmb{j}| = j_1+j_2+\cdots+j_k$, 
and the summation is taken over all $k$-tuples $\pmb{j} = (j_1, j_2, \dots , j_k)$
for which $1 \leq j_1 < j_2 < \cdots < j_k < n$.
\end{lemma}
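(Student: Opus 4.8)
\medskip
\noindent\textbf{Proof strategy.}
The plan is to obtain the formula directly from the Leibniz expansion
\[
\det A=\sum_{\sigma\in S_n}\mathrm{sgn}(\sigma)\prod_{r}a_{r,\sigma(r)}
\]
by partitioning $S_n$ according to the image of the prescribed row list $\pmb{i}$. First I would fix $\pmb{i}=(i_1,\dots,i_k)$ and attach to each $\sigma\in S_n$ the column list $\pmb{j}=(j_1,\dots,j_k)$ obtained by arranging the set $\{\sigma(i_1),\dots,\sigma(i_k)\}$ in increasing order; this sorts $S_n$ into blocks indexed by the admissible lists $\pmb{j}$.

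Next I would analyse a single block. A permutation $\sigma$ in the block of $\pmb{j}$ restricts to a bijection $\{i_1,\dots,i_k\}\to\{j_1,\dots,j_k\}$ and to a bijection between the complementary rows and the complementary columns; transporting these along the order-isomorphisms of $\{i_1,\dots,i_k\}$ and $\{j_1,\dots,j_k\}$ with $\{1,\dots,k\}$ (and likewise for the two complements) yields a pair $(\sigma',\sigma'')\in S_k\times S_{n-k}$, and $\sigma\mapsto(\pmb{j},\sigma',\sigma'')$ is a bijection $S_n\to\bigsqcup_{\pmb{j}}S_k\times S_{n-k}$. The crucial point is the behaviour of the sign: factoring $\sigma=\pi\rho$ with $\pi$ the ``block shuffle'' that carries $\pmb{i}$ order-preservingly onto $\pmb{j}$ and the complement of $\pmb{i}$ order-preservingly onto the complement of $\pmb{j}$, and $\rho$ a permutation stabilising both blocks, one finds $\mathrm{sgn}(\rho)=\mathrm{sgn}(\sigma')\,\mathrm{sgn}(\sigma'')$ and $\mathrm{sgn}(\pi)=(-1)^{|\pmb{i}|+|\pmb{j}|}$, the latter by counting the inversions of $\pi$ and observing that the two binomial-coefficient contributions add up to an even number.

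Finally I would feed this back into the Leibniz sum: on the block of $\pmb{j}$ the general summand equals $(-1)^{|\pmb{i}|+|\pmb{j}|}\,\mathrm{sgn}(\sigma')\,\mathrm{sgn}(\sigma'')$ times $\prod_{m}a_{i_m,j_{\sigma'(m)}}$ times the analogous product over the complementary indices, so the block-sum factors as $(-1)^{|\pmb{i}|+|\pmb{j}|}$ times a Leibniz expansion of $\det A(\pmb{i},\pmb{j})$ times a Leibniz expansion of $\det A^c(\pmb{i},\pmb{j})$; summing over all admissible $\pmb{j}$ gives the claimed identity. I expect the only real obstacle to be the sign bookkeeping --- pinning down the inversion count of the block shuffle with the indexing convention used in the statement, and checking that the passage through the complementary order-isomorphisms contributes no additional sign. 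Everything else is a routine reindexing of the Leibniz sum. (Alternatively one can argue by induction on $k$, the base case $k=1$ being the ordinary one-row cofactor expansion, but the sign computation in the inductive step is of comparable difficulty, so I would favour the direct argument above.)
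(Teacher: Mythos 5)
Your proposal is correct, but it is worth noting that the paper does not actually prove this lemma at all: its ``proof'' is a one-line citation to Greub's \emph{Linear Algebra}, so there is nothing in the paper to compare your argument against step by step. What you give is the standard self-contained derivation: partition the Leibniz sum over $S_n$ according to the image set $\{\sigma(i_1),\dots,\sigma(i_k)\}$, identify each block with $S_k\times S_{n-k}$ via the order-isomorphisms of the two index sets and their complements, and isolate the sign of the block shuffle. Your sign computation is right --- moving rows $i_1<\cdots<i_k$ to positions $1,\dots,k$ while preserving the relative order of the rest costs $\sum_m(i_m-m)$ transpositions, likewise $\sum_m(j_m-m)$ for columns, and the subtracted terms total $k(k+1)$, which is even, so the net sign is $(-1)^{|\pmb{i}|+|\pmb{j}|}$. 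The only caveat is that the statement as printed in the paper has inconsistent index conventions (the rows are written $0\leq i_1<\cdots<i_k<n$ while the columns are written $1\leq j_1<\cdots<j_k<n$, and the upper bound should allow $n$ itself); your argument is insensitive to this, but if you wanted the sign to match the printed formula literally you would need to fix a single convention. Your approach buys a self-contained proof where the paper settles for a reference; the cost is only the inversion-counting you have already flagged as the one delicate point.
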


For a proof of Lemma \ref{mat_3}, see, e.g., \cite[Chapter IV]{GTM23}. 

\subsection{Definition of special matrices, I}
We define several special matrices for the convenience of later arguments. 
We define the matrices $E_0^{\pm}$ to be the square matrices of size $8g$  
\begin{equation}
E_0^{\pm}=E_0^{\pm}(\underline{C}):=
\left[ 
\begin{array}{cc|cc}
 & & &\\
 \multicolumn{2}{c|}{\raisebox{1.5ex}[0pt]{$e_0^{\pm}(\underline{C}) $}}& \multicolumn{2}{c}{\raisebox{1.5ex}[0pt]{\BigZero}}\\ \hline
 &  & & \\
\multicolumn{2}{c|}{\raisebox{1.5ex}[0pt]{\BigZero}} & \multicolumn{2}{c}{\raisebox{1.5ex}[0pt]{$e_0^{\pm}(\underline{C}) $}}  \\
\end{array}
\right],
\end{equation}
where $e_0^{\pm}(\underline{C}) $ are lower triangular matrices of size $4g$ defined by
\begin{equation}
\scalebox{0.9}{$
e_0^{\pm}=e_0^{\pm}(\underline{C}) 
:= 
\left[
\begin{array}{llllllll}
C_{\mp g} & & & & & & & \\
C_{\mp (g-1)} & \ddots & & & & & & \\
\vdots & \ddots &  C_{\mp g} & & & & & \\
C_{\pm (g-1)} & \ddots & C_{\mp (g-1)} & C_{\mp g} & & & & \\
C_{\pm g}   & \ddots & \vdots & C_{\mp (g-1)} & C_{\mp g} & & & \\
0 & \ddots & C_{\pm (g-1)}  & \vdots & \ddots & \ddots & & \\
\vdots & \ddots & C_{\pm g}  & C_{\pm (g-1)} & \ddots  & C_{\mp (g-1)} & C_{\mp g} & \\
0 & \cdots & 0 & C_{\pm g}  & C_{\pm (g-1)} & \cdots  & C_{\mp (g-1)} & C_{\mp g} \\
\end{array}
\right].
$}
\end{equation}
Replacing the column at the left edge of $e_0^{-}$ by the zero column vector,  we have 
\begin{equation}
\scalebox{0.9}{$
e_1^{-}=e_1^{-}(\underline{C}) 
:= \left[
\begin{array}{c|llllllll}
0 &0 &  \cdots & & & & & & \\
0 &C_{g} & & & & & & & \\
\vdots &C_{(g-1)} & \ddots & & & & & & \\
&\vdots & \ddots &  C_{g} & & & & & \\
&C_{-(g-1)} & \ddots & C_{(g-1)} & C_{g} & & & & \\
&C_{-g}   & \ddots & \vdots & C_{(g-1)} & C_{g} & & & \\
\vdots &0 & \ddots & C_{-(g-1)}  & \vdots & \ddots & \ddots & & \\
0 &\vdots & \ddots & C_{-g}  & C_{-(g-1)} & \ddots  & C_{(g-1)} & C_{g} & \\
0 &0 & \cdots & 0 & C_{-g}  & C_{-(g-1)} & \cdots  & C_{(g-1)} & C_{g} \\
\end{array}\right].
$}
\end{equation}
We define 
\[
E_0^\sharp J = E_0^\sharp J(\underline{C}):=0, \quad  
E_{0,1}^\sharp J=E_{0,1}^\sharp J(\underline{C}):=0
\] 
and 
\begin{equation}
\scalebox{0.9}{$
E_n^\sharp J= E_n^\sharp J(\underline{C}):=
\left[ 
\begin{array}{cc|cc}
 & & &\\
 \multicolumn{2}{c|}{\raisebox{1.5ex}[0pt]{\BigZero}} & \multicolumn{2}{c}{\raisebox{1.5ex}[0pt]{$e_{1,n}^{\sharp}(\underline{C})$}}\\ \hline
 &  & & \\
\multicolumn{2}{c|}{\raisebox{1.5ex}[0pt]{$e_{2,n}^{\sharp}(\underline{C})$}} & \multicolumn{2}{c}{\raisebox{1.5ex}[0pt]{\BigZero}}  \\
\end{array}
\right], \quad 
E_{n,1}^\sharp J=E_{n,1}^\sharp J(\underline{C})
:= \left[ 
\begin{array}{cc|cc}
 & & &\\
 \multicolumn{2}{c|}{\raisebox{1.5ex}[0pt]{\BigZero}} & \multicolumn{2}{c}{\raisebox{1.5ex}[0pt]{$e_{3,n}^{\sharp}(\underline{C})$}}\\ \hline
 &  & & \\
\multicolumn{2}{c|}{\raisebox{1.5ex}[0pt]{$e_{2,n}^{\sharp}(\underline{C})$}} & \multicolumn{2}{c}{\raisebox{1.5ex}[0pt]{\BigZero}}  \\
\end{array}
\right]
$}
\end{equation}
by setting  
\begin{equation}
\scalebox{0.9}{$
\aligned 
e_{1,n}^\sharp
& = e_{1,n}^\sharp(\underline{C}) 
:= 
\left[
\begin{array}{cc|ccc|ccc}
 & &  & & C_{g}& & \\
 & &  & \iddots & C_{(g-1)} & & \\
 & & C_{g} & \iddots & \vdots& & \\
 \multicolumn{2}{c|}{\raisebox{1.5ex}[0pt]{\BigZero}}  & C_{(g-1)} & \iddots & C_{-(g-1)} &  \multicolumn{2}{c}{\raisebox{1.5ex}[0pt]{\BigZero}} \\
 & & \vdots & \iddots & C_{-g} & & \\
 & & C_{-(g-1)} & \iddots &  & & \\
 & & C_{-g} & &  &  & \\ \hline
 & & & & &  & \\
 \multicolumn{2}{c|}{\raisebox{1.5ex}[0pt]{\BigZero}} & \multicolumn{3}{c|}{\raisebox{1.5ex}[0pt]{\BigZero}} 
& \multicolumn{2}{c}{\raisebox{1.5ex}[0pt]{\BigZero}} \\
\end{array}
\right]=
\begin{array}{c|c|c|c}
 2g-n & n & 2g & \\ \hline
 & & & 2g+n  \\ \hline
 & & & 2g-n 
\end{array}, \\
e_{2,n}^\sharp 
&= e_{2,n}^\sharp(\underline{C}) 
:= 
\left[
\begin{array}{ccc|cc}
 & & & & \\
 \multicolumn{3}{c|}{\raisebox{1.5ex}[0pt]{\BigZero}}  & \multicolumn{2}{c}{\raisebox{1.5ex}[0pt]{\BigZero}}  \\ \hline
 &  & C_{g}  & &  \\
  & \iddots & C_{(g-1)} & & \\
 C_{g} & \iddots & \vdots & & \\
 C_{(g-1)} & \iddots & C_{-(g-1)}
& \multicolumn{2}{c}{\raisebox{1.5ex}[0pt]{\BigZero}}  \\
 \vdots & \iddots & C_{-g} & & \\
 C_{-(g-1)} & \iddots & & & \\
 C_{-g}& & & & \\
\end{array}
\right]=
\begin{array}{c|c|c}
n & 4g-n & \\ \hline
  & & 2g-n  \\ \hline
  & & 2g+n 
\end{array}
\endaligned 
$}
\end{equation}
and
\begin{equation}
\scalebox{0.9}{$
e_{3,n}^\sharp=e_{3,n}^\sharp(\underline{C}) 
:= 
\left[
\begin{array}{cc|cccc|ccc}
 & & 0 & \cdots & 0 & 0 & \\
 & & \vdots & \iddots & C_{g}& 0 & \\
 & & 0 & \iddots & C_{(g-1)} & 0 & \\
 & & C_{g} & \iddots & \vdots & \vdots  & & \\
 \multicolumn{2}{c|}{\raisebox{1.5ex}[0pt]{\BigZero}}  & C_{(g-1)} & \iddots & C_{-(g-1)} & 0 & \multicolumn{2}{c}{\raisebox{1.5ex}[0pt]{\BigZero}} \\
 & & \vdots & \iddots & C_{-g} & 0 & & \\
 & & C_{-(g-1)} & \iddots & \iddots & \vdots & & \\
 & & C_{-g} & 0 & \cdots & 0 & & \\ \hline
 & & & & & &  & \\
 \multicolumn{2}{c|}{\raisebox{1.5ex}[0pt]{\BigZero}} & & \multicolumn{3}{c|}{\raisebox{1.5ex}[0pt]{\BigZero}} 
& \multicolumn{2}{c}{\raisebox{1.5ex}[0pt]{\BigZero}} \\
\end{array}
\right]=
\begin{array}{c|c|c|c}
 2g-n & n & 2g & \\ \hline
 & & & 2g+n  \\ \hline
 & & & 2g-n 
\end{array}
$}
\end{equation}
for $1 \leq n \leq 2g$, 
where the right-hand sides mean the size of each block of matrices in middle terms. 
In addition, we define square matrices $e_{0}^{\sharp}$ of size $4g$ by 
\[
\scalebox{0.9}{$
e_{0}^{\sharp}=e_{0}^{\sharp}(\underline{C}) 
:= 
\begin{bmatrix}
 & & & C_{g} & C_{g-1} & \cdots & C_{-(g-1)} & C_{-g} \\
 & & \iddots & C_{g-1} & \vdots & \iddots & C_{-g} & \\
 & C_{g} & \iddots & \vdots & C_{-(g-1)} & \iddots &  &  \\
C_{g} & C_{g-1} & \iddots & C_{-(g-1)} &  C_{-g} & & & \\
C_{g-1}   & \vdots & \iddots & C_{-g} & & & & \\
\vdots & C_{-(g-1)} & \iddots  & & & & & \\
C_{-(g-1)} & C_{-g} & & & & & & \\
C_{-g} & & & & & & & \\
\end{bmatrix},
$}
\]
so that
\[
J^{(8g)} E_0^+ =
\left[ 
\begin{array}{cc|cc}
 & & &\\
 \multicolumn{2}{c|}{\raisebox{1.5ex}[0pt]{\BigZero}} & \multicolumn{2}{c}{\raisebox{1.5ex}[0pt]{$e_{0}^{\sharp}$}}\\ \hline
 &  & & \\
\multicolumn{2}{c|}{\raisebox{1.5ex}[0pt]{$e_{0}^{\sharp}$}} & \multicolumn{2}{c}{\raisebox{1.5ex}[0pt]{\BigZero}}  \\
\end{array}
\right].
\]
Replacing $n$ columns from the left edge of $e_0^{\sharp}$ by zero column vectors,  we have 
\[
\scalebox{0.9}{$
\aligned
e_{0,n}^{\sharp}
=e_{0,n}^{\sharp}(\underline{C}) 
:&= 
\left[
\begin{array}{cc|ccccccc}
 & & & & C_{g} & C_{g-1} & \cdots & C_{-(g-1)} & C_{-g} \\
 & & & \iddots & C_{g-1} & \vdots & \iddots & C_{-g} & \\
 & & C_{g} & \iddots & \vdots & C_{-(g-1)} & \iddots &  &  \\
\multicolumn{2}{c|}{\raisebox{1.5ex}[0pt]{\BigZero}} & C_{g-1} & \iddots & C_{-(g-1)} &  C_{-g} & & & \\
& & \vdots & \iddots & C_{-g} & & & & \\
& & C_{-(g-1)} & \iddots  & & & & & \\
& & C_{-g} & & & & & & \\
& & & & & & & & \\
\end{array}
\right] \\
& =
\begin{array}{c|c|c}
n & 4g-n & \\ \hline 
     &      & 4g \\
\end{array}.
\endaligned
$}
\]
We denote by $I^{(m)}$ the identity matrix of size $m$ and by $J_n^{(m)}$ the following square matrix of size $m$:
 \[
J_n^{(m)}=
\left[ 
\begin{array}{cc|cc}
 & & &\\
 \multicolumn{2}{c|}{\raisebox{1.5ex}[0pt]{$J^{(n)}$}}& \multicolumn{2}{c}{\raisebox{1.5ex}[0pt]{\BigZero}}\\ \hline
 &  & & \\
\multicolumn{2}{c|}{\raisebox{1.5ex}[0pt]{\BigZero}} & \multicolumn{2}{c}{\raisebox{1.5ex}[0pt]{\BigZero}}  \\
\end{array}
\right], \quad \left( 
J^{(n)} = 
\begin{bmatrix}
 & & 1 \\
 & \iddots & \\
1 & & 
\end{bmatrix} \right).
\]
We also use the notation 
\[
\chi_{n} ={}^{t} \begin{bmatrix} 1 & 0 & \cdots & 0\end{bmatrix}
= \text{the unit column vector of length $n$}. 
\]

For a square matrix $M$ of size $N$, we denote by $[M]_{\nwarrow n}$ (respectively, $[M]_{\nearrow n}$) the square matrix of size $n \,( \leq N)$
in the top left (respectively, the top right) corner, 
and denote by $[M]_{\searrow n}$ (respectively, $[M]_{\swarrow n}$) the square matrix of size $n \,( \leq N)$ in the lower right (respectively, the lower left) corner: 
\[
M = \begin{bmatrix} [M]_{\nwarrow  n} & \ast \\ \ast & [M]_{\searrow N-n} \end{bmatrix}
=  \begin{bmatrix} \ast & [M]_{\nearrow n} \\ [M]_{\swarrow  N-n} & \ast \end{bmatrix}.
\] 

\subsection{Definition of special matrices, II} 
For every nonnegative integer $k$, 
we define the square matrix $P_k(m_k)$ of size $2k+2$, 
parametrized by $m_k$, 
and the $(2k+2)\times(2k+4)$ matrix $Q_k$  for every nonnegative integer $k$ as follows. 
For $k=0,1$, 
\[
P_0 := 
\left[
\begin{array}{c|c}
1 & 0 \\ \hline 0 & 1 
\end{array}\right],
\quad 
Q_0 := 
\left[
\begin{array}{cc|cc}
1 & 1 & 0 & 0 \\ \hline 0 & 0 & 1 & -1
\end{array}\right],
\]
\[
P_1(m_1) := \left[
\begin{array}{cc|cc}
1 & 0 & 0 & 0 \\ 
0 & 1 & 0 & 0 \\ \hline
0 & 0 & 1 & 0 \\ \hline
0 & 1 & 0 & -m_1 
\end{array}\right], 
\quad 
Q_1 := \left[
\begin{array}{ccc|ccc}
1 & 0 & 1 & 0 & 0 & 0 \\ 
0 & 1 & 0 & 0 & 0 & 0 \\ \hline 
0 & 0 & 0 & 1 & 0 & -1 \\ \hline 
0 & 0 & 0 & 0 & 0 & 0 
\end{array}\right].
\]
For $k \geq 2$,  we define $P_k(m_k)$ and $Q_k$ blockwisely as follows: 
\[
P_k(m_k) := 
\left[\begin{array}{c|c} 
V_k^+ & \pmb{0} \\[2pt] \hline
\pmb{0} & V_k^- \\[2pt] \hline
\pmb{0}I_k & -m_k\cdot \pmb{0}I_k
\end{array}\right], 
\qquad 
Q_k 
:= 
\left[\begin{array}{c|c} 
W_k^+ & \pmb{0} \\[2pt] \hline
\pmb{0} & W_k^- \\[2pt] \hline 
\pmb{0}_{k,k+2} & \pmb{0}_{k,k+2}
\end{array}\right], 
\]
where 
$\pmb{0}I_k 
:=
\begin{bmatrix} 
\pmb{0}_{k,1} & I_k 
\end{bmatrix}$, 
$-m_k\cdot\pmb{0}I_k 
=
\begin{bmatrix} 
\pmb{0}_{k,1} & -m_k\cdot I_k 
\end{bmatrix}$, 
$\pmb{0}_{k,l}$ is the $k \times l$ zero matrix, 
$I_k$ is the identity matrix of size $k$, 
\[
\aligned
V_k^+ 
& := 
\left[
\begin{array}{cccccc}
1 &  &  &  & & 0  \\ 
 & 1 &  &  & & 1  \\
 &  & \ddots &  & \iddots & \\
 & &  & 1 & & 
\end{array}
\right] 
\quad \left(\frac{k+3}{2}\right)\times\left(k+1\right) 
\quad \text{if $k$ is odd,}
\\
& := 
\left[
\begin{array}{ccccccc}
1 &  &  &  & & & 0   \\ 
 & 1 &  &  & & & 1   \\
 &  & \ddots & & & \iddots &    \\
 & &  & 1 & 1 & & 
\end{array}
\right] 
\quad \left(\frac{k+2}{2}\right)\times\left(k+1\right) 
\quad \text{if $k$ is even,}
\endaligned
\]
\[
\aligned
V_k^- 
& := 
\left[
\begin{array}{cccccccc}
1 &  &  &  & & & & 0   \\ 
 & 1 &  &  & & & & -1   \\
 &  & \ddots & & & & \iddots &   \\
 & &  & 1 & 0&  -1 & & 
\end{array}
\right] 
\quad \left(\frac{k+1}{2}\right)\times\left(k+1\right) 
\quad \text{if $k$ is odd}, 
\\
& := 
\left[
\begin{array}{cccccccc}
1 &  &  &  & &  & 0   \\ 
 & 1 &  &  & &  & -1   \\
 &  & \ddots &  & & \iddots &   \\
 & &  & 1 &  -1 & & 
\end{array}
\right] 
\quad \left(\frac{k+2}{2}\right)\times\left(k+1\right) 
\quad \text{if $k$ is even},
\endaligned
\]
and matrices $W_k^{\pm}$ are defined by adding column vectors ${}^{t}(1~0~\cdots~0)$ 
to the right-side end of matrices $V_k^\pm$: 
\[
\aligned
W_k^+ 
& :=\left[
\begin{array}{cccccc|c}
1 & & & & & 0 & 1 \\
  & 1 & & & & 1 & \\
  & & \ddots & & \iddots & & \\
  & & & 1 & & &
\end{array}
\right] 
\quad \left(\frac{k+3}{2}\right)\times\left(k+2\right) 
\quad \text{if $k$ is odd,}
\\
& := \left[
\begin{array}{ccccccc|c}
1 & & & & & & 0 & 1 \\ 
  & 1 & & & & & 1 & \\
  & & \ddots & & & \iddots & & \\
  & & & 1 & 1 & & & 
\end{array}
\right] 
\quad \left(\frac{k+2}{2}\right)\times\left(k+2\right) 
\quad \text{if $k$ is even,}
\endaligned
\]
\[
\aligned
W_k^- 
& := 
\left[
\begin{array}{cccccccc|c}
1 & & & & & & & 0 & -1 \\ 
  & 1 & & & & & & -1 & \\
  & & \ddots & & & & \iddots & & \\
  & & & 1 & 0 & -1 & & &
\end{array}
\right] 
\quad \left(\frac{k+1}{2}\right)\times\left(k+2\right) 
\quad \text{if $k$ is odd,}
\\
& := 
\left[
\begin{array}{ccccccc|c}
1 & & & & &  & 0 & -1 \\ 
 & 1 & & & & & -1 & \\
 & & \ddots & & & \iddots & & \\
 & & & 1 &  -1 & & &
\end{array}
\right] 
\quad \left(\frac{k+2}{2}\right)\times\left(k+2\right) 
\quad \text{if $k$ is even}.
\endaligned
\]
\begin{lemma} \label{lem_201}
For $k \geq 1$, 
\[
\det P_k(m_k)
= 
\begin{cases} 
~\varepsilon_{2j+1}\, 2^{j} m_{2j+1}^{j+1} & \text{if $k=2j+1$}, \\
~\varepsilon_{2j}\, 2^{j} m_{2j}^{j} & \text{if $k=2j$}, 
\end{cases} \\
\]
where 
\[
\varepsilon_{2j+1} 
= 
\begin{cases}
 ~+1 & j \equiv 2, 3 ~{\rm mod}\, 4 \\
 ~-1 & j \equiv 0, 1 ~{\rm mod}\, 4
\end{cases}, \quad 
\varepsilon_{2j} 
= 
\begin{cases}
 ~+1 & j \equiv 0, 1 ~{\rm mod}\, 4 \\
 ~-1 & j \equiv 2, 3 ~{\rm mod}\, 4
\end{cases}.
\]
In particular, $P_k(m_k)$ is invertible if and only if $m_k\not=0$. 
\end{lemma}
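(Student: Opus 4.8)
The plan is to compute $\det P_k(m_k)$ by applying the generalized Laplace expansion of Lemma \ref{mat_3} along the last $k$ rows of $P_k(m_k)$, i.e.\ the rows forming the block $[\,\pmb{0}I_k\ \ -m_k\cdot\pmb{0}I_k\,]$. First I would dispose of $k=1$ by a direct $4\times4$ computation from the explicit matrix $P_1(m_1)$, and then assume $k\geq2$ and use the block form of $P_k(m_k)$. Write $r^+$, $r^-$ for the numbers of rows of $V_k^+$, $V_k^-$, so $r^++r^-=k+2$, with $r^+=(k+3)/2$ if $k$ is odd and $r^+=(k+2)/2$ if $k$ is even. The key structural observation is that the $i$-th of the bottom $k$ rows ($1\leq i\leq k$) has exactly two nonzero entries, a $1$ in column $1+i$ and a $-m_k$ in column $k+2+i$; hence a $k\times k$ minor drawn from these rows is nonzero only if, for each $i$, it uses exactly one of the columns $1+i$ and $k+2+i$. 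Such choices are parametrized by subsets $S\subseteq\{1,\dots,k\}$ (take column $k+2+i$ when $i\in S$, column $1+i$ otherwise), and the corresponding minor equals $(-m_k)^{|S|}$.

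Next I would analyze the complementary minors. For a fixed $S$ the complementary minor is the determinant of the top $k+2$ rows of $P_k(m_k)$ restricted to the $k+2$ unused columns, which always consist of column $1$, column $k+2$, the columns $1+i$ for $i\in S$, and the columns $k+2+i$ for $i\notin S$. Since the $V_k^+$-rows are supported on columns $1,\dots,k+1$ and the $V_k^-$-rows on columns $k+2,\dots,2k+2$, this complementary matrix is block-diagonal in the natural order, with blocks of sizes $r^+\times(1+|S|)$ and $r^-\times(1+(k-|S|))$; it is therefore singular unless $|S|=r^+-1$. This already forces the power of $m_k$ to be $(-m_k)^{r^+-1}$, that is $m_k^{\,j+1}$ when $k=2j+1$ and $m_k^{\,j}$ when $k=2j$. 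For the surviving $S$ the complementary minor equals $\det(M_S^+)\det(M_S^-)$, where $M_S^+$ is the square submatrix of $V_k^+$ on columns $\{1\}\cup\{1+i:i\in S\}$ and $M_S^-$ is the square submatrix of $V_k^-$ on columns $\{1\}\cup\{1+i:i\notin S\}$ in its own numbering; since $V_k^+$ and $V_k^-$ are ``identity-plus-antidiagonal'' $\{0,\pm1\}$-matrices, each of these maximal minors is $0$ or $\pm1$.

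The remaining and hardest part will be to decide for which $S$ both $M_S^+$ and $M_S^-$ are invertible and then to assemble all the signs. I expect that exactly $2^{j}$ of the subsets $S$ survive, and that after multiplying in the Laplace sign $(-1)^{|\pmb{i}|+|\pmb{j}|}$ and the signs of $\det M_S^{\pm}$ all surviving terms carry one and the same sign; summing them then gives $\det P_k(m_k)=\varepsilon_{2j+1}\,2^{j}m_k^{\,j+1}$ (resp.\ $\varepsilon_{2j}\,2^{j}m_k^{\,j}$), with $\varepsilon_{2j+1},\varepsilon_{2j}$ as in the statement, and the criterion ``$P_k(m_k)$ invertible iff $m_k\neq0$'' is then immediate. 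The main obstacle is precisely this combinatorial and sign bookkeeping: describing the condition on $S$ that makes both $M_S^\pm$ nonsingular, counting its solutions to produce the factor $2^{j}$, and pinning down $\varepsilon_{2j+1}$, $\varepsilon_{2j}$. It is convenient to organize this by induction on $j$, relating $P_{k+2}(m)$ to $P_k(m)$ by elementary row and column operations that multiply the determinant by $\pm2m$ at each step, with $k=1,2,3$ handled by hand as base cases.
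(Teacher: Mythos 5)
Your structural analysis of the Laplace expansion along the bottom $k$ rows is sound as far as it goes: the pairing of columns $1+i$ and $k+2+i$, the block-diagonal shape of the complementary minor, and the forced equality $|S|=r^+-1$ do correctly pin down the exponent of $m_k$ ($j+1$ for $k=2j+1$, $j$ for $k=2j$) and hence the ``invertible iff $m_k\neq0$'' conclusion \emph{once the coefficient is known to be nonzero}. But the actual content of the lemma beyond that exponent is the coefficient $\varepsilon\cdot 2^{j}$, and there your argument stops at ``I expect that exactly $2^{j}$ of the subsets $S$ survive, and that \dots all surviving terms carry one and the same sign.'' Nothing in the proposal determines which $S$ make both $M_S^{+}$ and $M_S^{-}$ nonsingular, why their number is $2^{j}$, or why the product of the three signs involved is constant in $S$: the Laplace sign $(-1)^{|\pmb{i}|+|\pmb{j}|}$, the value $\det M_S^{+}\det M_S^{-}=\pm1$, and the permutation sign inside the $k\times k$ minor itself, which you have silently set to $+1$ by writing that minor as $(-m_k)^{|S|}$ (in the sorted column order it is $\pm(-m_k)^{|S|}$, with sign given by the parity of the number of pairs $i\in S$, $i'\notin S$, $i<i'$). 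Since cancellation between terms of opposite sign is exactly what such signed sums typically produce, establishing sign-constancy is the heart of the proof, not bookkeeping that can be deferred; as written the argument is a plan rather than a proof. The suggested fallback (an induction with $\det P_{k+2}=\pm2m\det P_k$) is likewise only asserted, and the sign $\varepsilon$, which has period $4$ in $j$, would still have to be extracted from the unspecified row and column operations.

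For comparison, the paper avoids the subset combinatorics entirely: it permutes columns so that an identity block $I_{k+2}$ occupies the upper-left corner, eliminates the entries below it, and reduces $\det P_k(m_k)$ to the determinant of a single explicit $k\times k$ matrix $Z_k$ that splits into a $j\times j$ antidiagonal block of $-1$'s and a triangular block with diagonal $(-m_k,-2m_k,\dots,-2m_k)$; the factor $2^{j}$ and the sign $\varepsilon$ are then read off directly. If you want to salvage your route, the missing step is a concrete description of the admissible $S$ (which amounts to choosing, for each antidiagonal pair of columns of $V_k^{\pm}$, which side of the split it lands on --- this is where $2^{j}$ comes from) together with a verification that the total sign is the same for all of them.
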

\begin{proof}
This is trivial for $k=1$. 
Suppose that $k=2j+1 \geq 3$ and write $P_k(m_k)$ as $(v_1~\cdots~v_{2k+2})$ by its column vectors $v_l$. 
At first, we make the identity matrix $I_{k+2}$ at the left-upper corner by exchanging the columns $v_{(k+5)/2},\cdots,v_{k+1}$ 
and $v_{k+2},\cdots, v_{(3k+3)/2}$, so that
\[
\scalebox{0.9}{$
\det P_k(m_k) = \det(v_1~\cdots~v_{(k+3)/2}~v_{k+2}~\cdots~v_{(3k+3)/2}~v_{(k+5)/2}~\cdots~v_{k+1}~v_{(3k+5)/2}~\cdots~v_{2k+2}). 
$}
\]
Then, by eliminating every $1$ and $-m_k$ under $I_{k+2}$ of the left-upper corner, we have 
\[
\det P_k(m_k) = 
\det 
\begin{bmatrix}
I_{k+2} & \ast \\
~\pmb{0}_{k,k+2} & Z_k 
\end{bmatrix}. 
\]
Here $Z_k$ is the $k \times k$ matrix 
\[
Z_k = \begin{bmatrix}
Z_{k,1} & Z_{k,2} \\
\pmb{0}_{j+1,j} & Z_{k,3}
\end{bmatrix}
\]
for which $Z_{k,1}$ is the $j\times j$ antidiagonal matrix with $-1$ on the antidiagonal line and  
\[
\left[\begin{array}{c}
Z_{k,2} \\ \hline
Z_{k,3}
\end{array}\right]
=
\left[\begin{array}{ccccc} 
 & & & -m_k \\
 & & \iddots &  \\ 
 & -m_k & & \\ \hline
-m_k & & & \\ 
 & -2m_k & &  \\
 & & \ddots & \\
 & & & -2m_k 
\end{array}\right] \quad 
\begin{matrix}
j \times (j+1) \\ \\ \\ \\ \\ 
(j+1) \times (j+1).
\end{matrix}
\]
The above formula for $\det P_k(m_k)$ implies the desired result. 
The case of even $k$ is proved in a way similar to the case of odd $k$.     
\end{proof}

\begin{lemma} \label{lem_203}
\begin{equation} \label{0426_1}
P_k(m_k)^{-1}Q_k=
\begin{bmatrix}
M_{k,1} & M_{k,2} \\ M_{k,3} & M_{k,4}   
\end{bmatrix} \quad (2k+2) \times (2k+4)
\end{equation}
for every $0 \leq k \leq 2g$, 
where $M_{k,1}$, $M_{k,2}$, $M_{k,3}$, $M_{k,4}$ are $(k+1) \times (k+2)$ 
matrices defined by
\[
\begin{bmatrix}
M_{0,1} & M_{0,2} \\ M_{0,3} & M_{0,4}  
\end{bmatrix}
=
\begin{bmatrix}
1 & 1 & 0 & 0 \\  0 & 0 & 1 & -1  
\end{bmatrix}, 
\quad 
\begin{bmatrix}
M_{1,1} & M_{1,2} \\ M_{1,3} & M_{1,4}  
\end{bmatrix}
=
\begin{bmatrix}
1 & 0 & 1 & 0 & 0 & 0 \\  
0 & 1 & 0 & 0 & 0 & 0 \\
0 & 0 & 0 & 1 & 0 & -1 \\ 
0 & 1/m_1 & 0 & 0 & 0 & 0
\end{bmatrix} 
\]
and 
\[
\scalebox{0.9}{$
\aligned
M_{k,1}
&= \frac{1}{2}
\begin{bmatrix}
2 & & & & & & & & 2 \\
 & 1 & & & & & & 1 & \\
 & & \ddots & & & & \iddots & & \\
 & & & 1 & & 1 & & & \\
 & & & & 2 & & & & \\
 & & & 1 & & 1 & & & \\
 & & \iddots & & & & \ddots & & \\
0 & 1 & & & & & & 1 & 0 
\end{bmatrix} \quad \text{if $k \geq 3$ is odd,} \\
&= \frac{1}{2}
\begin{bmatrix}
2 & & & & & & & 2 \\
  & 1 & & & & & 1 & \\
  & & \ddots & & & \iddots & & \\
  & & & 1 & 1 & & &  \\
  & & & 1 & 1 & & & \\
  & & \iddots & & & \ddots & & \\
0 & 1 & & & & & 1 & 0 
\end{bmatrix} \quad \text{if $k \geq 2$ is even,}
\endaligned 
$}
\]
\[
\scalebox{0.9}{$
\aligned
M_{k,4}
&= \frac{1}{2}
\begin{bmatrix}
2 & & & & & & & & -2 \\
 & 1 & & & & & & -1 & \\
 & & \ddots & & & & \iddots & & \\
 & & & 1 & & -1 & & & \\
 & & & & 0 & & & & \\
 & & & -1 & & 1 & & & \\
 & & \iddots & & & & \ddots & & \\
0 & -1 & & & & & & 1 & 0 
\end{bmatrix} \quad \text{if $k \geq 3$ is odd,} \\
&= \frac{1}{2}
\begin{bmatrix}
2 & & & & & & & -2 \\
  & 1 & & & & & -1 & \\
  & & \ddots & & & \iddots & & \\
  & & & 1 & -1 & & &  \\
  & & & -1 & 1 & & & \\
  & & \iddots & & & \ddots & & \\
0 & -1 & & & & & 1 & 0 
\end{bmatrix} \quad \text{if $k \geq 2$ is even,}
\endaligned 
$}
\]
\[
\scalebox{0.9}{$
\aligned
M_{k,2}
&= \frac{m_k}{2}
\begin{bmatrix}
0 & & & & & & & & 0 \\
 & 1 & & & & & & -1 & \\
 & & \ddots & & & & \iddots & & \\
 & & & 1 & & -1 & & & \\
 & & & & 0 & & & & \\
 & & & -1 & & 1 & & & \\
 & & \iddots & & & & \ddots & & \\
0 & -1 & & & & & & 1 & 0 
\end{bmatrix} \quad \text{if $k \geq 3$ is odd,} \\
&= \frac{m_k}{2}
\begin{bmatrix}
0 & & & & & & & 0 \\
  & 1 & & & & & -1 & \\
  & & \ddots & & & \iddots & & \\
  & & & 1 & -1 & & &  \\
  & & & -1 & 1 & & & \\
  & & \iddots & & & \ddots & & \\
0 & -1 & & & & & 1 & 0 
\end{bmatrix} \quad \text{if $k \geq 2$ is even,} \\
M_{k,3}
&= \frac{1}{2m_k}
\begin{bmatrix}
0 & & & & & & & & 0 \\
 & 1 & & & & & & 1 & \\
 & & \ddots & & & & \iddots & & \\
 & & & 1 & & 1 & & & \\
 & & & & 2 & & & & \\
 & & & 1 & & 1 & & & \\
 & & \iddots & & & & \ddots & & \\
0 & 1 & & & & & & 1 & 0 
\end{bmatrix} \quad \text{if $k \geq 3$ is odd,} \\
&= \frac{1}{2m_k}
\begin{bmatrix}
0 & & & & & & & 0 \\
  & 1 & & & & & 1 & \\
  & & \ddots & & & \iddots & & \\
  & & & 1 & 1 & & &  \\
  & & & 1 & 1 & & & \\
  & & \iddots & & & \ddots & & \\
0 & 1 & & & & & 1 & 0 
\end{bmatrix} \quad \text{if $k \geq 2$ is even.}
\endaligned 
$}
\]
\end{lemma}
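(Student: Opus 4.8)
The plan is to prove \eqref{0426_1} by verifying directly that $P_k(m_k)$ times the matrix on its right-hand side equals $Q_k$; since $P_k(m_k)$ is invertible whenever $m_k\neq 0$ by Lemma \ref{lem_201}, this determines $P_k(m_k)^{-1}Q_k$ and the lemma follows. The cases $k=0,1$ are immediate from the explicit $P_0,Q_0,P_1(m_1),Q_1$ written above, so assume $k\geq 2$.

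First I would write down the evident block decompositions. With $E_k:=\begin{bmatrix}\pmb{0}_{k,1}& I_k\end{bmatrix}$ (size $k\times(k+1)$),
\[
P_k(m_k)=
\begin{bmatrix} V_k^+ & \pmb{0} \\ \pmb{0} & V_k^- \\ E_k & -m_k E_k \end{bmatrix},
\qquad
Q_k=
\begin{bmatrix} W_k^+ & \pmb{0} \\ \pmb{0} & W_k^- \\ \pmb{0}_{k,k+2} & \pmb{0}_{k,k+2} \end{bmatrix},
\]
and carrying out the block product (against the row blocks of heights $k+1,k+1$ of the right-hand side of \eqref{0426_1}) reduces the desired equality to the six identities
\[
V_k^+ M_{k,1}=W_k^+,\quad V_k^+ M_{k,2}=\pmb{0},\quad V_k^- M_{k,3}=\pmb{0},\quad V_k^- M_{k,4}=W_k^-,
\]
\[
E_k\bigl(M_{k,1}-m_k M_{k,3}\bigr)=\pmb{0},\qquad E_k\bigl(M_{k,2}-m_k M_{k,4}\bigr)=\pmb{0}.
\]
The last two are transparent: $E_k$ just deletes the top row of its argument, and once the top rows are discarded the explicit arrays satisfy $M_{k,1}=m_kM_{k,3}$ and $M_{k,2}=m_kM_{k,4}$ entrywise --- the scalars $1/m_k$ in $M_{k,3}$ and $m_k$ in $M_{k,2}$ are inserted precisely so that this holds.

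For the first four identities I would compare entries directly, treating $k=2j$ and $k=2j+1$ separately since the shapes of $V_k^\pm$ and $W_k^\pm$ depend on the parity. Each $V_k^\pm$ is an identity block bordered by one extra anti-diagonal strip of $\pm1$'s in its top-right corner, and each $M_{k,i}$ is, up to the scalars $2,m_k,1/m_k$, a ``butterfly'' array made of two anti-diagonal strips of $\pm1$'s plus a single exceptional central entry; moreover $M_{k,1},M_{k,3}$ are invariant and $M_{k,2},M_{k,4}$ change sign under reversal of row order. Multiplying, the anti-diagonal strip of $V_k^+$ (resp. $V_k^-$) combines rows of $M_{k,\bullet}$ that by this (anti)symmetry cancel in $V_k^+M_{k,2}$, $V_k^-M_{k,3}$, whereas in $V_k^+M_{k,1}$, $V_k^-M_{k,4}$ the two $\tfrac{1}{2}$-scaled strips add (resp. subtract) to rebuild the single strip of $W_k^+$ (resp. $W_k^-$); the extra boundary column of $W_k^\pm$ comes from the $\pm1$ corner entries in the top row of $M_{k,1},M_{k,4}$, and the rows through the central entry are matched by inspection. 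I would run this once for each parity.

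The only real work is the index bookkeeping: pinning down, in each parity, exactly where the anti-diagonal strips ``turn'' and which row and column carry the exceptional central entry, and checking the match at those few central positions. I expect this to be the main source of effort, though there is no conceptual difficulty; as an alternative, the two parity computations can be folded into a single induction on $j$ that peels off the outermost row and column of $V_k^\pm$, $W_k^\pm$ and each $M_{k,i}$ as $k$ increases by $2$, but the direct check is shorter.
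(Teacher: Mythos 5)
Your proposal is correct and takes essentially the same route as the paper: the paper also proves the lemma by verifying the identity $P_k(m_k)\begin{bmatrix}M_{k,1} & M_{k,2}\\ M_{k,3} & M_{k,4}\end{bmatrix}=Q_k$ by elementary direct computation, which together with the invertibility of $P_k(m_k)$ from Lemma \ref{lem_201} gives \eqref{0426_1}. Your block decomposition into six identities and the observation that $E_k$ deletes the top row (so the last two identities follow from $M_{k,1}=m_kM_{k,3}$ and $M_{k,2}=m_kM_{k,4}$ below the first row) is a valid organization of that same verification.
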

\begin{proof} Noting the definitions of $P_k(m_k)$, $Q_k$ 
and $M_{k,l}$ ($1 \leq l \leq 4$), the identity 
\[
P_k(m_k)
\begin{bmatrix}
M_{k,1} & M_{k,2} \\ M_{k,3} & M_{k,4}   
\end{bmatrix} =Q_k
\]
is checked in an elementary way. 
\end{proof}

\section{Proof of Theorem \ref{thm_01}.} \label{section_4}

In this section, we complete the proof of Theorem \ref{thm_01} 
by showing each statement in Section \ref{section_2}. 
We fix $g \in \Z_{>0}$,  $q>1$ and $\underline{C} \in \R^\ast \times \R^{2g-1} \times \R^\ast$ throughout this section. 

\begin{lemma} \label{0330_6}
Let ${\mathsf E}$ be the multiplication operator defined by \eqref{0611_1} 
for $E(z)$ of \eqref{0418_2} 
Suppose that $E(z)$ has no zeros on the real line. Then 
${\mathsf E}$ is invertible on $V_a$. 
\end{lemma}
\begin{proof} 
It is sufficient to prove that ${\mathsf E}$ is invertible on $p_1V_a$ and $p_2V_a$, 
since $e^{\pm 2 itz}(1/E(z))$ $\times f(z)=g(z)$ is impossible for any $0 \not=f,g \in L^2(\mathbb{T}_q)$.   
We have $1/E(z) \in L^\infty(\mathbb{T}_q)$, by assumption. 
Therefore, multiplication by $1/E(z)$ defines a bounded operator ${\mathsf E}^{-1}$ on $p_i V_a$ 
with the norm $\Vert {\mathsf E}^{-1} \Vert=\Vert 1/E \Vert_{L^\infty(\mathbb{T}_q)}$ for $i=1,2$. 
\end{proof}

Let us consider the equations
\begin{equation} \label{LE1}
( {\mathsf E} \pm {\mathsf E}^{\sharp} {\mathsf J}{\mathsf P}_n )\, \phi_{n}^{\pm} 
= {\mathsf E}^{\sharp} X(0) 
\quad (\phi_n^{\pm} \in W_{a,n},\, 1 \leq n \leq 2g)
\end{equation}
instead of \eqref{LE0}.  
They are equivalent to \eqref{LE1} 
if ${\mathsf E}$ is invertible. 

\begin{lemma} \label{lem0329_1}
Let $0< a \not \in q^{\Z/2}$. Let $D_n(\underline{C})$ be matrices define in Theorem \ref{thm_01}. 
Suppose that $\det D_n(\underline{C})\not=0$ for every $1 \leq n \leq 2g$.
Then $\Theta{\mathsf J}{\mathsf P_n}$ 
defines a compact operator on $W_{a,n}$ for each $0 \leq n \leq 2g$,  
and the resolvent set of 
$\Theta {\mathsf J}{\mathsf P}_n|_{W_{a,n}}$ 
contains both $\pm 1$. 
In particular, ${\mathsf I} \pm \Theta {\mathsf J}{\mathsf P}_n$ are invertible on $W_{a,n}$, and 
\eqref{LE1} has unique solutions in $W_{a,n}$ for each $0 \leq n \leq 2g$. 
\end{lemma}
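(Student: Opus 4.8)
The plan is to view $\Theta\mathsf{J}\mathsf{P}_n$ as a finite-rank operator and to reduce the invertibility of $\mathsf{I}\pm\Theta\mathsf{J}\mathsf{P}_n$ to an injectivity statement through Riesz--Schauder theory. By \eqref{0421_2} the image of $\mathsf{J}\mathsf{P}_n$ on $V_{a,n}$ lies in the $2n$-dimensional span $W_n$ of $X(0),\dots,X(n-1),Y(0),\dots,Y(n-1)$, so $\mathsf{J}\mathsf{P}_n|_{V_{a,n}}$ has finite rank; composing with the bounded operator $\Theta$, which preserves $V_{a,n}$ by Lemma \ref{0330_6}(2), keeps it of finite rank, hence compact (and zero when $n=0$). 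Since $\pm1\neq0$, the Fredholm alternative then shows that $\mathsf{I}\pm\Theta\mathsf{J}\mathsf{P}_n$ is invertible on $V_{a,n}$ if and only if it is injective, i.e. if and only if neither $+1$ nor $-1$ is an eigenvalue of $\Theta\mathsf{J}\mathsf{P}_n$. Once that is known, \eqref{LE1} has the unique solution $\phi_n^{\pm}=\mathsf{E}^{-1}(\mathsf{I}\pm\Theta\mathsf{J}\mathsf{P}_n)^{-1}\mathsf{E}^{\sharp}X(0)$, using that $\mathsf{E}$ is invertible (Lemma \ref{0330_6}(1)) and that \eqref{LE0} and \eqref{LE1} differ only by the factor $\mathsf{E}$.

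So suppose $(\mathsf{I}\pm\Theta\mathsf{J}\mathsf{P}_n)\phi=0$ with $\phi\in V_{a,n}$; I want to conclude $\phi=0$. The hypothesis enters first through the fact that $\Theta(z)=E^{\sharp}(z)/E(z)$ is well-defined and unimodular on $\R$ (by \eqref{0421_3}, as $E$ has no real zeros), so multiplication by $\Theta$ is unitary on $V_a$; and $\mathsf{J}\mathsf{P}_n|_{V_{a,n}}$ is a contraction, being the composition of the orthogonal projection $\mathsf{P}_n|_{V_{a,n}}$ onto $W_n$ with the isometric involution $\mathsf{J}$. Hence $\|\Theta\mathsf{J}\mathsf{P}_n\phi\|=\|\mathsf{J}\mathsf{P}_n\phi\|\le\|\phi\|$, and the relation $\phi=\mp\Theta\mathsf{J}\mathsf{P}_n\phi$ forces $\|\mathsf{P}_n\phi\|=\|\phi\|$, i.e. $\phi\in W_n$. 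On $W_n$ the projection $\mathsf{P}_n$ is the identity, so the relation becomes $(\mathsf{I}\pm\Theta\mathsf{J})\phi=0$; applying $\mathsf{E}$ and using $\mathsf{E}\Theta=\mathsf{E}^{\sharp}$ turns this into the identity of entire functions $E^{\sharp}(z)\,(\mathsf{J}\phi)(z)=\mp\,E(z)\,\phi(z)$ for all $z\in\C$.

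Next I would pass to the variable $w=q^{iz}$ and write $a=q^{s}$ with $s\in(0,g)\setminus\tfrac12\Z$ (allowed since $a\notin q^{\Z/2}$). Then $E(z)=w^{-g}\mathcal{E}(w)$ and $E^{\sharp}(z)=w^{-g}\mathcal{E}^{\ast}(w)$, where $\mathcal{E}(w)=\sum_{j=0}^{2g}C_{j-g}w^{j}$ has degree $2g$ with $\mathcal{E}(0)\neq0$ and $\mathcal{E}^{\ast}(w)=w^{2g}\mathcal{E}(1/w)$ is its reciprocal polynomial, while $\phi(z)=a^{-iz}P(w)+a^{iz}V(w^{-1})$ for polynomials $P,V$ of degree $\le n-1$ assembled from the $X$- and $Y$-coefficients of $\phi\in W_n$, and $\mathsf{J}\phi$ has the analogous form with the roles of $P$ and $V$ interchanged. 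Because $s\notin\tfrac12\Z$, the functions $a^{-iz}w^{j}$ and $a^{iz}w^{j}$ lie in complementary parts of $V_a$, so the entire-function identity separates into its $a^{-iz}$- and $a^{iz}$-components; matching the former and, after the substitution $w\mapsto1/w$, the latter (which gives the same relation) yields the single polynomial identity
\[
\mathcal{E}^{\ast}(w)\,V(w)\;=\;\mp\,\mathcal{E}(w)\,P(w),\qquad \deg P,\ \deg V\le n-1\le 2g-1,
\]
and $\phi=0$ is equivalent to $P=V=0$.

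The remaining step, which I expect to be the hardest, is to deduce $P=V=0$ from this identity; this is the only place where the absence of real zeros of $E$ is genuinely used. Since $w=q^{iz}$ traces the unit circle $|w|=1$ as $z$ runs over $\R$, ``$E$ has no real zeros'' is exactly ``$\mathcal{E}(w)\neq0$ for $|w|=1$'', and since $\mathcal{E}(0)\neq0$ and $\mathcal{E}^{\ast}(w)=w^{2g}\mathcal{E}(1/w)$, the zero multisets of $\mathcal{E}$ and $\mathcal{E}^{\ast}$ are reflections of one another across the unit circle. From $\mathcal{E}^{\ast}V=\mp\mathcal{E}P$ I would factor out $D=\gcd(\mathcal{E},\mathcal{E}^{\ast})$ (whose roots, by that reflection symmetry, come in reciprocal pairs off $|w|=1$), write $\mathcal{E}=D\mathcal{E}_1$, $\mathcal{E}^{\ast}=D\mathcal{E}_2$ with $\gcd(\mathcal{E}_1,\mathcal{E}_2)=1$, conclude $\mathcal{E}_1\mid V$, and then use the degree bound $\deg V<2g$ together with $\deg\mathcal{E}_1=2g-\deg D$ and careful bookkeeping of how large $\deg D$ can be relative to $n$ to force $V=0$, hence $P=0$ and $\phi=0$, for both signs and all $1\le n\le 2g$. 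This divisibility-and-degree argument is the crux: it has to be organised carefully because $\mathcal{E}$ and its reciprocal need not be coprime, so the interplay between the common factor $D$, the bound $\deg V<2g$, and the range of $n$ is precisely what must be controlled (equivalently, one may phrase the conclusion as the non-vanishing of an appropriate resultant / Sylvester-type determinant).
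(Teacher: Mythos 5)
Your reduction tracks the paper's own proof of Lemma \ref{lem0329_1} almost step for step: compactness of $\Theta{\mathsf J}{\mathsf P}_n|_{V_{a,n}}$ (the paper checks a finite Hilbert--Schmidt norm, you observe finite rank --- both fine), the Fredholm alternative, and the norm argument using $|\Theta(z)|=1$ on $\R$ to force an eigenvector for $\pm1$ into the $2n$-dimensional space $W_n$ are exactly the paper's steps. The genuine gap is the last step: you never prove that $\mathcal{E}^{\ast}V=\mp\mathcal{E}P$ with $\deg P,\deg V\le n-1$ forces $P=V=0$; you only outline a gcd-and-degree strategy and flag it as the crux. That step cannot be closed under the stated hypothesis, because the claim is false there. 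Take $g=1$ and $E(z)=q^{-iz}+3+q^{iz}$, which has no real zeros since $E(x)=3+2\cos(x\log q)\ge1$; then $E^{\sharp}=E$, so $\Theta={\mathsf I}$ and $\phi=X(0)+Y(0)\in V_{a,1}$ satisfies $\Theta{\mathsf J}{\mathsf P}_1\phi=\phi$, so $+1$ is an eigenvalue and ${\mathsf I}-\Theta{\mathsf J}{\mathsf P}_1$ is not invertible. In your polynomial language, $\mathcal{E}(w)=1+3w+w^2=\mathcal{E}^{\ast}(w)$, $\gcd(\mathcal{E},\mathcal{E}^{\ast})=\mathcal{E}$, and $V=\mp P$ solves the identity for every $P$ of degree $\le n-1$; no bookkeeping of degrees against $n$ can exclude this.

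For comparison, the paper disposes of this step by comparing only the $2n$ extreme coefficients, arriving at the system \eqref{0330_5}, and asserting that the matrix $A^{\pm}$ there has rank $2n$ ``by $C_{\pm g}\ne0$''. For $n=1$ that matrix has determinant $\pm(C_{-g}^{2}-C_{g}^{2})$, which vanishes in the example above, so the paper's justification is also insufficient and the lemma as stated (assuming only no real zeros) is in fact false; the difficulty you isolated is a real obstruction, not a surmountable technicality. It is worth noting, however, that your gcd strategy closes immediately under the stronger hypothesis that $E$ belongs to the class HB, i.e.\ has no zeros on $\C_{+}\cup\R$: then $\mathcal{E}$, of degree $2g$ with $\mathcal{E}(0)=C_{-g}\ne0$, has all its zeros outside the closed unit disk, $\mathcal{E}^{\ast}$ has all of its inside, so $\gcd(\mathcal{E},\mathcal{E}^{\ast})=1$, the identity gives $\mathcal{E}\mid V$, and $\deg V\le 2g-1<2g$ forces $V=0$, hence $P=0$ and $\phi=0$. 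That is precisely the hypothesis under which the lemma is used in Theorem \ref{thm_02}\,(1), so your approach would yield a correct proof of the statement that is actually needed --- but as written, and for the lemma as stated, the proposal is incomplete and its final step cannot be repaired.
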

\begin{remark}
If $a \in q^{\Z/2} \cap (0,\infty)$, ${\mathsf I} \pm \Theta{\mathsf J}P_n$ may not be invertible. 
\end{remark}
\begin{proof} 
It is trivial for $n=0$, since ${\mathsf P}_0|_{W_{a,0}}=0$. 
Let $n \geq 1$. 
By definition, $\mathsf{P}_n$ is a projection from $V_a$ into $V_{a,n}$, 
so $\Theta\mathsf{J}{\mathsf P}_n$ is an operator on $W_{a,n}$.  
The image of $W_{a,n}$ by $\mathsf{E}^\sharp\mathsf{J}\mathsf{P}_n$ 
is finite dimensional by definition of $\mathsf{E}^\sharp$ and \eqref{0421_2}, 
thus $\Theta\mathsf{J}\mathsf{P}_n
=\mathsf{E}^{-1}(\mathsf{E}^\sharp\mathsf{J}\mathsf{P}_n)$ 
is a finite rank operator on $W_{a,n}$. Hence it is compact. 
Thus, $\lambda \in \C\setminus \{0\}$ is either an eigenvalue of 
$\Theta {\mathsf J}{\mathsf P}_n|_{W_{a,n}}$
or an element of the resolvent set of 
$\Theta {\mathsf J}{\mathsf P}_n|_{W_{a,n}}$. 
Assume that $\Theta{\mathsf J}{\mathsf P_n} \phi_n = \pm \phi_n$.
Then $\Vert \Theta{\mathsf J}{\mathsf P_n} \phi_n \Vert = \Vert \phi_n \Vert$. 
Because $\Theta$ and $p_i$ ($i=1,2$) commute,  
\[
\aligned
\Vert \Theta{\mathsf J}{\mathsf P_n} \phi_n \Vert^2 
& = \frac{\log q}{2\pi}\int_{I_q} \left|\Theta(z)p_1{\mathsf J}{\mathsf P_n}  \phi_n(z)\right|^2 dz 
+ \frac{\log q}{2\pi}\int_{I_q} \left|\Theta(z)p_2{\mathsf J}{\mathsf P_n}  \phi_n(z)\right|^2 dz \\
& = \frac{\log q}{2\pi}\int_{I_q}  |p_1{\mathsf J}{\mathsf P_n}\phi_n(z)|^2 dz +\frac{\log q}{2\pi} \int_{I_q}  |p_2{\mathsf J}{\mathsf P_n}\phi_n(z)|^2 dz \\
& = \sum_{k=0}^{n-1} |u(k)|^2 + \sum_{l=0}^{n-1} |v(l)|^2
\endaligned
\]
by \eqref{0421_1_1}, while, 
\[
\Vert \phi_n \Vert^2 = \frac{\log q}{2\pi}\int_{I_q} |p_1\phi_n(z)|^2 dz + \frac{\log q}{2\pi}\int_{I_q} |p_2\phi_n(z)|^2 dz
= \sum_{k=0}^{\infty} |u(k)|^2 + \sum_{l=-\infty}^{n-1} |v(l)|^2
\]
by \eqref{0421_1_1}. Thus, 
\[
\phi_n = \sum_{k=0}^{n-1} u(k)X(k) + \sum_{l=0}^{n-1} v(l)Y(l).
\]
In this case, 
\[
{\mathsf E}^{\sharp}{\mathsf J}{\mathsf P_n} \phi_n =  \sum_{m=-g}^{g} \sum_{k=0}^{n-1} C_{-m} v(k)X(k+m) +  \sum_{m=-g}^{g} \sum_{l=0}^{n-1} C_{-m}u(l)Y(l-m)
\]
and
\[
\pm {\mathsf E} \phi_n 
= \pm \sum_{m=-g}^{g} \sum_{k=0}^{n-1} C_{m}u(k)X(k+m) \pm \sum_{m=-g}^{g} \sum_{l=0}^{n-1} C_m v(l)Y(l-m).
\]
Comparing the coefficients of $\{X(k)\}_{-g \leq k \leq -g+n-1}$ and $\{X(k)\}_{g \leq k \leq g+n-1}$ in 
the equality 
${\mathsf E}^{\sharp}{\mathsf J}{\mathsf P_n} \phi_n \pm {\mathsf E} \phi_n=0$, 
yields 
\begin{equation} \label{0330_5}
A^{\pm }\cdot
{}^{t}\! \begin{bmatrix}u_n(0) & \cdots & u_n(n-1) & v_n(0) & \cdots & v_n(n-1) \end{bmatrix} =0, 
\end{equation}
where 
\[
A^{\pm}
=
\left[
\begin{array}{cccc|cccc}
\pm C_{-g} & & & & C_{g} & & & \\
\pm C_{-g+1} & \pm C_{-g} & & & C_{g-1} & C_{g} & & \\ 
\vdots & \ddots & \ddots & & \vdots & \ddots & \ddots & \\
\pm C_{-g+n-1} & \pm C_{-g+n-2} & \cdots & \pm C_{-g} & C_{g-n+1} & C_{g-n+2} & \cdots & C_{g} \\ \hline 
\pm C_{g} & \pm C_{g-1} & \cdots & \pm C_{g-n+1} & C_{-g} & C_{-g+1} & \cdots & C_{-g+n-1} \\
 & \pm C_{g} &  \cdots & \pm C_{g-n+2} &  & C_{-g} &  \cdots & C_{-g+n-2}\\ 
 & & \ddots & \vdots &  & & \ddots &  \vdots \\
  & & & \pm C_{g} &   & & & C_{-g} \\
\end{array}\right]. 
\] 
Here $\det A^\pm \not=0$ by $\det D_n(\underline{C}) \not=0$. 
Therefore \eqref{0330_5} has no nontrivial solutions, which means $\phi_n=0$. 
Consequently, neither $1$ nor $-1$ is an eigenvalue, 
and hence both belong to the resolvent set 
of $\Theta{\mathsf J}{\mathsf P_n}|_{W_{a,n}}$.  
This means that $({\mathsf I} \pm \Theta{\mathsf J}{\mathsf P_n}|_{W_{a,n}})^{-1}$ 
exist and are bounded on $W_{a,n}$. 
\end{proof}

In the remaining part of the section, 
we assume that 
\begin{equation*} 
\det D_n(\underline{C}) \not=0 \quad \text{for $1 \leq n \leq 2g$},  
\end{equation*}
so that both $\mathsf{I} \pm \Theta{\mathsf J}{\mathsf P}_n$ 
are invertible on $W_{a,n}$ for every $0 \leq n \leq 2g$; 
cf. Lemmas \ref{0330_6} and \ref{lem0329_1}. 
As we see in Section \ref{section_5_1}, 
$E$ has no real zeros if $\det D_{2g}\not=0$.  
Note that 
each $\phi \in W_{a,n}$ has the absolutely convergent expansion 
\[
\phi = \sum_{k=0}^{\infty} u(k)X(k) 
+ \sum_{l=-\infty}^{n-1} v(l)Y(l)
\] 
if $\Im(z)>0$ is large enough. This is trivial for $\phi \in V_{a,n}$ 
and follows for $\phi \in \Theta\mathsf{J}{\mathsf P}_nV_{a,n}$ from \eqref{0421_2} 
and the expansion 
\[
\frac{E^\sharp (z)}{E(z)}
= \frac{C_{g} + \sum_{k=1}^{2g} C_{g-k} e^{ikz}}
{C_{-g}+\sum_{k=1}^{2g} C_{-(g-k)} e^{ikz}} 
= \sum_{m=0}^{\infty} \widetilde{C}_m e^{irmz} 
\]
that holds if $\Im(z)>0$ is large enough. 
\smallskip

Let $\phi_n^{\pm}=\sum_{k=0}^{\infty} u_n^{\pm}(k)X(k) + \sum_{l=-\infty}^{n-1} v_n^{\pm}(l)Y(l)$ 
be solutions of \eqref{LE1} for $0 \leq n \leq 2g$. 
Using coefficient of $\phi_n^{\pm}$ and putting  
\[
v_n^{\pm}(n)=v_n^{\pm}(n+1)\cdots=v_n^{\pm}(2g-1)=0
\] 
if $0 \leq n \leq 2g-1$, 
we define the column vectors $\Phi_{n}^{\pm}$ of length $8g$ by
\[
\scalebox{0.9}{$
\aligned
\Phi_{n}^{\pm} 
&= {}^{t}\begin{bmatrix} u_n^{\pm}(0) & u_n^{\pm}(1) & \cdots & u_n^\pm(4g-1) & v_n^{\pm}(2g-1) & v_n^{\pm}(2g-2) &\cdots & v_n^{\pm}(-2g) \end{bmatrix}. 
\endaligned
$}
\]
Substituting $\phi_n^{\pm}$ in \eqref{LE1} and then comparing coefficients of $X(k)$ and $Y(l)$, 
we obtain 
\begin{equation} \label{LE2}
(E_0^{+} \pm E_{n}^{\sharp}J) \Phi_{n}^{\pm} = E_0^{-} \chi_{8g} 
\quad (\Phi_n^{\pm} \in W_{a,n},\, 0 \leq n \leq 2g) 
\end{equation}
and
\begin{equation} \label{LE3}
\sum_{j=0}^{2g} C_{g-j} u_n^{\pm}(K+j) = 0, \quad  
\sum_{j=0}^{2g} C_{g-j} v_n^{\pm}(L-j) = 0 
\end{equation}
for every $K \geq 4g$ and $L \leq -2g-1$. 
\begin{lemma} \label{0501_1}
Let $0 \leq n \leq 2g$. Then $\det(E_{0}^{+} \pm E_{n}^{\sharp}J)\not=0$ 
if $\mathsf{I} \pm \Theta{\mathsf J}{\mathsf P}_n$ are invertible. 
\end{lemma}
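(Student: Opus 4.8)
The plan is to argue by contraposition: assuming $\det(E_{0}^{+} \pm E_{n}^{\sharp}J)=0$, I will exhibit a nonzero $\phi \in V_{a,n}$ with $({\mathsf E} \pm {\mathsf E}^{\sharp}{\mathsf J}{\mathsf P}_n)\phi=0$, which is impossible once $({\mathsf E} \pm {\mathsf E}^{\sharp}{\mathsf J}{\mathsf P}_n)$ is invertible. Since $E_{0}^{+} \pm E_{n}^{\sharp}J$ is a square matrix, its singularity only says that the homogeneous system $(E_{0}^{+} \pm E_{n}^{\sharp}J)\Psi = 0$ has a nonzero solution $\Psi$; in effect I must run in reverse, with zero right-hand side, the substitution that produced \eqref{LE2}--\eqref{LE3} from \eqref{LE1}, and verify that the resulting formal solution is a genuine element of $V_{a,n}$, i.e. square-summable.

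The key point that makes this harmless is the invertibility of ${\mathsf E}$ (Lemma \ref{0330_6}(1)), available under the standing assumption \eqref{nzR} (which, incidentally, already forces $({\mathsf E} \pm {\mathsf E}^{\sharp}{\mathsf J}{\mathsf P}_n)$ to be invertible by Lemma \ref{lem0329_1}, so the hypothesis of the lemma is automatically met here). Multiplying $({\mathsf E} \pm {\mathsf E}^{\sharp}{\mathsf J}{\mathsf P}_n)\phi=0$ by ${\mathsf E}^{-1}$ turns it into $({\mathsf I} \pm \Theta {\mathsf J}{\mathsf P}_n)\phi = 0$, i.e. $\phi = \mp\,\Theta{\mathsf J}{\mathsf P}_n\phi$. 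Writing $\tau := {\mathsf J}{\mathsf P}_n\phi$, formula \eqref{0421_2} shows $\tau$ lies in the $2n$-dimensional space $W_n$ spanned by $X(0),\dots,X(n-1)$ and $Y(0),\dots,Y(n-1)$, and then $\phi = \mp\,\Theta\tau$. Conversely, for any $\tau \in W_n$ with $\tau = \mp\,{\mathsf J}{\mathsf P}_n\Theta\tau$ the vector $\phi := \mp\,\Theta\tau$ lies in $V_{a,n}$ --- because $\Theta$ preserves $V_{a,n}$ and is bounded with $\Vert\Theta\Vert = 1$ (proof of Lemma \ref{0330_6}(2)), so $\Theta\tau$ is square-summable whenever $\tau$ is --- and it satisfies ${\mathsf J}{\mathsf P}_n\phi = \mp\,{\mathsf J}{\mathsf P}_n\Theta\tau = \tau$, hence $({\mathsf I} \pm \Theta {\mathsf J}{\mathsf P}_n)\phi = \phi \pm \Theta\tau = 0$. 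Thus the $\ell^2$ difficulty disappears, and $({\mathsf E} \pm {\mathsf E}^{\sharp}{\mathsf J}{\mathsf P}_n)$ fails to be injective on $V_{a,n}$ if and only if the $2n \times 2n$ operator ${\mathsf I} \pm {\mathsf J}{\mathsf P}_n\Theta$ on $W_n$ is singular (for $n=0$ there is nothing to prove, since $W_0 = \{0\}$, $E_0^{\sharp}J=0$, and $\det E_0^{+}=C_{-g}^{8g}\neq 0$).

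It remains to identify the singularity of ${\mathsf I} \pm {\mathsf J}{\mathsf P}_n\Theta$ on $W_n$ with the vanishing of $\det(E_{0}^{+} \pm E_{n}^{\sharp}J)$. This is where the block structure of the matrices built in Section \ref{section_3} does its work: $E_0^{\pm}$ is block diagonal with two copies of the lower triangular matrix $e_0^{\pm}$, whose determinant is $C_{\mp g}^{4g}\neq 0$, while $E_n^{\sharp}J$ carries only the off-diagonal blocks $e_{i,n}^{\sharp}$. Performing the row and column operations that clear the triangular interior part --- equivalently, solving off the $8g-2n$ coefficients $u(k),v(l)$ with indices outside the seed window $0,\dots,n-1$ in terms of those inside, exactly as the recurrences \eqref{LE3} and the interior rows of \eqref{LE2} dictate, each pivot being one of the nonzero numbers $C_g$, $C_{-g}$ --- collapses $\det(E_{0}^{+} \pm E_{n}^{\sharp}J)$, up to a nonzero factor that is a power of $C_g$ and of $C_{-g}$, onto $\det({\mathsf I}_{2n} \pm M_n)$, where $M_n$ is the matrix of ${\mathsf J}{\mathsf P}_n\Theta|_{W_n}$. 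Hence $\det(E_{0}^{+} \pm E_{n}^{\sharp}J)=0$ forces $\det({\mathsf I}_{2n}\pm M_n)=0$, which by the previous paragraph produces the desired nonzero $\phi$ and contradicts invertibility. I expect this last bookkeeping --- checking that the $8g\times 8g$ matrix was arranged so that, after clearing its triangular blocks, its determinant differs from the relevant $2n\times 2n$ determinant only by a nonzero constant --- to be the one genuinely laborious step; everything else (the passage from ${\mathsf E}\pm{\mathsf E}^{\sharp}{\mathsf J}{\mathsf P}_n$ to ${\mathsf I}\pm\Theta{\mathsf J}{\mathsf P}_n$, the boundedness and one-sidedness of $\Theta$, and the triangularity of $e_0^{\pm}$) is routine and already in place.
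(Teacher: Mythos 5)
Your argument is correct, but it takes a genuinely different route from the paper's. The paper's own proof is two lines: invertibility of ${\mathsf E}\pm{\mathsf E}^{\sharp}{\mathsf J}{\mathsf P}_n$ gives unique solvability of \eqref{LE1}, the solutions of \eqref{LE1} ``correspond'' to solutions of the finite square system \eqref{LE2} together with the tail recurrences \eqref{LE3}, and unique solvability of a square system forces a nonzero determinant. The entire burden there is carried by the word ``correspond'': for uniqueness to transfer one needs that every solution of \eqref{LE2}, once extended by the recurrences \eqref{LE3}, is square-summable and hence lands back in $V_{a,n}$ --- exactly the $\ell^2$ difficulty you isolate at the outset. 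Your contrapositive sidesteps the tails entirely by compressing both the operator and the $8g\times 8g$ matrix onto the $2n$-dimensional seed space $W_n$: the Sylvester-type equivalence between $\ker({\mathsf I}\pm\Theta{\mathsf J}{\mathsf P}_n)$ on $V_{a,n}$ and $\ker({\mathsf I}\pm{\mathsf J}{\mathsf P}_n\Theta)$ on $W_n$ is clean and correct (including $\phi\neq0$, which follows from ${\mathsf J}{\mathsf P}_n\phi=\tau\neq0$), and what it buys is an airtight treatment of the summability issue together with the stronger two-way statement that $\det(E_0^{+}\pm E_n^{\sharp}J)\neq0$ if and only if the operator is injective. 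The price is your third step, the identity $\det(E_0^{+}\pm E_n^{\sharp}J)=C_{-g}^{8g}\det({\mathsf I}_{2n}\pm M_n)$, which you assert but do not carry out. It is true, and your elimination scheme does work, but it is a computation of the same weight as the paper's Lemmas \ref{lem_0314_2}, \ref{0418_10} and \ref{0330_22}, from which it can also be extracted: those lemmas give $\det(E_0^{+}\pm E_n^{\sharp}J)=C_{-g}^{8g-2n}\det([E^{+}+E^{-}J_n]_{\nwarrow n})\det([E^{+}-E^{-}J_n]_{\nwarrow n})$, one has $[E^{+}\pm E^{-}J_n]_{\nwarrow n}=[E^{+}]_{\nwarrow n}({\mathsf I}\pm R J^{(n)})$ with $R$ the $n\times n$ lower-triangular Toeplitz matrix of the coefficients $R_m$ of $\Theta$, and since $J^{(n)}RJ^{(n)}={}^{t}R$ the product over the two signs is $C_{-g}^{2n}\det({\mathsf I}-R\,{}^{t}R)=C_{-g}^{2n}\det({\mathsf I}_{2n}\pm M_n)$. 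So nothing in your plan fails, but that determinant identity is the real content of your proof and must be written out for the argument to be complete; note also that the normalizing factor is a power of $C_{-g}$ alone, not of $C_{g}$ and $C_{-g}$ as you predicted.
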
 
\begin{proof} 
For fixed $n$, equations \eqref{LE1} have unique solutions $\phi_n^\pm$, 
if $\mathsf{I} \pm \Theta{\mathsf J}{\mathsf P}_n$ are invertible. 
On the other hand, the solutions $\phi_n^\pm$ correspond to solutions of the pair of linear equations \eqref{LE2} and \eqref{LE3}. 
Therefore, $\det(E_{0}^{+} \pm E_{n}^{\sharp}J)\not=0$.  
\end{proof}

On the other hand, 
\[
\aligned
{\mathsf E} \, \phi_{n}^{\pm} 
&= {\mathsf E}^{\sharp} X(0) \mp {\mathsf E}^{\sharp} {\mathsf J}{\mathsf P}_n  \phi_{n}^{\pm} \\
&= \sum_{m=-g}^{g}C_{-m} \left( X(m) \mp \sum_{k=0}^{n-1}  \left(v_n^{\pm}(k)X(k+m) + u_n^{\pm}(k)Y(k-m)\right) \right)
\endaligned
\]
by \eqref{LE1}. 
Therefore, we can write 
\begin{equation} \label{0330_8}
{\mathsf E} \, \phi_{n}^{\pm} 
= \sum_{k=-g}^{g+n-1}\Bigl( p_n^{\pm}(k)X(k) + q_n^{\pm}(k)Y(k) \Bigr)
\end{equation}
for some real numbers $p_n^{\pm}(k)$ and $q_n^{\pm}(k)$. 
Hence 
${\mathsf E} \, \phi_{n}^{\pm}(a,z)$ are extended to smooth functions of $a$ on $(0,\infty)$ 
by the right-hand side of \eqref{0330_8}. 
We use the same notation for such extended functions. 

We put $p_n^{\pm}(k)=q_n^{\pm}(k)=0$ for every $g+n \leq k \leq 3g-1$ if $0 \leq n \leq 2g-1$ and 
define the column vectors $\Psi_n^{\pm}$ of length $8g$ by
\[
\scalebox{0.9}{$
\Psi_n^{\pm}
={}^{t}
\begin{bmatrix} 
p_n^{\pm}(-g) & p_n^{\pm}(-g+1) & \cdots & p_n^{\pm}(3g-1) & q_n^{\pm}(3g-1) & q_n^{\pm}(3g-2) & \cdots & q_n^{\pm}(-g)
\end{bmatrix}.
$}
\]
Then we have
\begin{equation} \label{0421_4}
\Psi_n^{\pm} = E_{0}^{+} \Phi_n^{\pm} 
\end{equation}
by  
\[
\aligned
{\mathsf E} \, \phi_{n}^{\pm} 
&= \sum_{k=-g}^{g+n-1}\Bigl( p_n^{\pm}(k)X(k) + q_n^{\pm}(k)Y(k) \Bigr) \\
&= \sum_{m=-g}^{g} \sum_{k=0}^{\infty} C_{m} u_n(k)X(k+m) + \sum_{m=-g}^{g}  \sum_{l=-\infty}^{n-1} C_{m} v_n(l)Y(l-m). 
\endaligned
\]

\begin{lemma} \label{0320_4} Let $0 \leq n \leq 2g$. Then 
$p_n^{\pm}(k) \pm q_n^{\pm}(k)=0$
if $g+1 \leq k \leq g+n-1$ or $-g \leq k \leq -g+n-1$, and $p_n^{\pm}(g) \pm q_n^{\pm}(g)=C_{-g}$. 
Therefore, 
\begin{equation} 
({\mathsf I} \pm {\mathsf J}){\mathsf E} \, \phi_{n}^{\pm}
= \sum_{k=-g+n}^{g} (p_n^{\pm}(k) \pm q_n^{\pm}(k))(X(k) \pm Y(k)). 
\end{equation}
\end{lemma}
\begin{proof}
The equality 
$(E_0^{+} \pm E_n^{\sharp}J)\Phi_n^{\pm} = E_0^{-} \chi_{8g}$ is written in matrix form as 
\begin{equation} \label{0312_1}
\left[ 
\begin{array}{cc|cc}
 & & &\\
 \multicolumn{2}{c|}{\raisebox{1.5ex}[0pt]{$e_0^{+}$}}& \multicolumn{2}{c}{\raisebox{1.5ex}[0pt]{$\pm e_{1,n}^\sharp$}}\\ \hline
 &  & & \\
\multicolumn{2}{c|}{\raisebox{1.5ex}[0pt]{$\pm e_{2,n}^{\sharp}$}} & \multicolumn{2}{c}{\raisebox{1.5ex}[0pt]{$e_0^{+}$}}  \\
\end{array}
\right]\Phi_n^{\pm} = E_0^{-} \chi_{8g}. 
\end{equation}
On the other hand, 
\[
\left[ 
\begin{array}{cc|cc}
 & & &\\
 \multicolumn{2}{c|}{\raisebox{1.5ex}[0pt]{$e_{0}^{+}$}} & \multicolumn{2}{c}{\raisebox{1.5ex}[0pt]{$\pm e_{0}^{\sharp}$}}\\ \hline
 &  & & \\
\multicolumn{2}{c|}{\raisebox{1.5ex}[0pt]{$\pm e_{0}^{\sharp}$}} & \multicolumn{2}{c}{\raisebox{1.5ex}[0pt]{$e_{0}^{+}$}}  \\
\end{array}
\right] \Phi_n^{\pm} = 
\begin{bmatrix} 
p_n^{\pm}(-g) \pm q_n^{\pm}(-g) \\ p_n^{\pm}(-g+1) \pm q_n^{\pm}(-g+1) \\ \vdots \\ p_n^{\pm}(3g-1) \pm q_n^{\pm}(3g-1) \\ 
\pm (p_n^{\pm}(3g-1) \pm q_n^{\pm}(3g-1)) \\ \pm(p_n^{\pm}(3g-2) \pm q_n^{\pm}(3g-2)) \\ \vdots \\ \pm (p_n^{\pm}(-g) \pm q_n^{\pm}(-g))
\end{bmatrix},
\]
since 
\begin{equation}
E_0^+ \pm J^{(8g)} E_0^+  =
\left[ 
\begin{array}{cc|cc}
 & & &\\
 \multicolumn{2}{c|}{\raisebox{1.5ex}[0pt]{$e_{0}^{+}$}} & \multicolumn{2}{c}{\raisebox{1.5ex}[0pt]{$\pm e_{0}^{\sharp}$}}\\ \hline
 &  & & \\
\multicolumn{2}{c|}{\raisebox{1.5ex}[0pt]{$\pm e_{0}^{\sharp}$}} & \multicolumn{2}{c}{\raisebox{1.5ex}[0pt]{$e_{0}^{+}$}}  \\
\end{array}
\right], 
\end{equation}
by definition of $e_{0}^{\sharp}$, and 
\[
(I^{(8g)} \pm J^{(8g)}) E_0^{+} \Phi_n^{\pm}
=
(I^{(8g)} \pm J^{(8g)}) \Psi_n^\pm
=
\begin{bmatrix} 
p_n^{\pm}(-g) \pm q_n^{\pm}(-g) \\ p_n^{\pm}(-g+1) \pm q_n^{\pm}(-g+1) \\ \vdots \\ p_n^{\pm}(3g-1) \pm q_n^{\pm}(3g-1) \\ 
\pm (p_n^{\pm}(3g-1) \pm q_n^{\pm}(3g-1)) \\ \pm(p_n^{\pm}(3g-2) \pm q_n^{\pm}(3g-2)) \\ \vdots \\ \pm (p_n^{\pm}(-g) \pm q_n^{\pm}(-g))
\end{bmatrix},
\] 
by \eqref{0421_4}. In addition,  
\begin{equation} 
\left[ 
\begin{array}{cc|cc}
 & & &\\
 \multicolumn{2}{c|}{\raisebox{1.5ex}[0pt]{$e_{0}^{+}$}} & \multicolumn{2}{c}{\raisebox{1.5ex}[0pt]{$\pm e_{0}^{\sharp}$}}\\ \hline
 &  & & \\
\multicolumn{2}{c|}{\raisebox{1.5ex}[0pt]{$\pm e_{0}^{\sharp}$}} & \multicolumn{2}{c}{\raisebox{1.5ex}[0pt]{$e_{0}^{+}$}}  \\
\end{array}
\right]\Phi_n^{\pm} = 
\left[ 
\begin{array}{cc|cc}
 & & &\\
 \multicolumn{2}{c|}{\raisebox{1.5ex}[0pt]{$e_{0}^{+}$}} & \multicolumn{2}{c}{\raisebox{1.5ex}[0pt]{$\pm e_{0,n}^{\sharp}$}}\\ \hline
 &  & & \\
\multicolumn{2}{c|}{\raisebox{1.5ex}[0pt]{$\pm e_{0}^{\sharp}$}} & \multicolumn{2}{c}{\raisebox{1.5ex}[0pt]{$e_{0}^{+}$}}  \\
\end{array}
\right]\Phi_n^{\pm}, 
\end{equation}
since $v_n^{\pm}(n)=v_n^{\pm}(n+1)\cdots=v_n^{\pm}(2g-1)=0$ for $1 \leq n \leq 2g-1$, by definition of $\Phi_n^\pm$. 
Therefore, 
\begin{equation} \label{0312_2}
\left[ 
\begin{array}{cc|cc}
 & & &\\
 \multicolumn{2}{c|}{\raisebox{1.5ex}[0pt]{$e_{0}^{+}$}} & \multicolumn{2}{c}{\raisebox{1.5ex}[0pt]{$\pm e_{0,n}^{\sharp}$}}\\ \hline
 &  & & \\
\multicolumn{2}{c|}{\raisebox{1.5ex}[0pt]{$\pm e_{0}^{\sharp}$}} & \multicolumn{2}{c}{\raisebox{1.5ex}[0pt]{$e_{0}^{+}$}}  \\
\end{array}
\right]\Phi_n^{\pm} = 
\begin{bmatrix} 
p_n^{\pm}(-g) \pm q_n^{\pm}(-g) \\ p_n^{\pm}(-g+1) \pm q_n^{\pm}(-g+1) \\ \vdots \\ p_n^{\pm}(3g-1) \pm q_n^{\pm}(3g-1) \\ 
\pm (p_n^{\pm}(3g-1) \pm q_n^{\pm}(3g-1)) \\ \pm(p_n^{\pm}(3g-2) \pm q_n^{\pm}(3g-2)) \\ \vdots \\ \pm (p_n^{\pm}(-g) \pm q_n^{\pm}(-g))
\end{bmatrix}.
\end{equation}
Here we find that $2g$ rows of both $E_0^{+} \pm E_n^{\sharp}J$ 
and 
\[
\begin{bmatrix} e_0^+ & e_{0,n}^\sharp \\ e_{0}^\sharp & e_0^+ \end{bmatrix}
\] 
with indices $(2g+1, 2g+2, \cdots, 4g)$ 
and $n$ rows of both $E_0^{+} \pm E_n^{\sharp}J$ and 
\[
\begin{bmatrix} e_0^+ & e_{0,n}^{\sharp} \\ e_{0}^{\sharp} & e_0^{+}\end{bmatrix}
\]
with indices $(8g-n+1, 8g-n+2,\cdots, 8g)$ have the same entries. 
Therefore, by comparing $2g$ rows of \eqref{0312_1} and \eqref{0312_2} 
with indices $(2g+1, 2g+2,\cdots, 4g)$, we obtain 
$p_n^{\pm}(g) \pm q_n^{\pm}(g) = C_{-g}$ 
and $p_n^{\pm}(k) \pm q_n^{\pm}(k) = 0$ $(g+1 \leq k \leq 3g-1)$.
Similarly, by comparing $n$ rows of \eqref{0312_1} and \eqref{0312_2} 
with indices $(8g-n+1, 8g-n+2, \cdots, 8g)$, we obtain
$p_n^{\pm}(k) \pm q_n^{\pm}(k) = 0 $ $(-g \leq k \leq -g+n-1)$.
Hence we complete the proof. 
\end{proof}

We define the column vectors $A_n^\ast$ and $B_n^\ast$ of length $8g$ by 
\begin{equation} \label{0501_6}
\aligned
A_n^\ast=A_n^\ast(\underline{C}) &:= (I+J)\Psi_n^+ = (I+J)E_0^{+}\Phi_{n}^{+}, \\
B_n^\ast=B_n^\ast(\underline{C}) &:= (I-J)\Psi_n^- = (I-J)E_0^{+}\Phi_{n}^{-},
\endaligned
\end{equation}
where $I=I^{(8g)}$ and $J=J^{(8g)}$. We define the row vector $F(a,z)$ of length $8g$ by
\[
F(a,z)
:= \begin{bmatrix} X(-g)~X(-g+1)~\cdots~X(g)\quad 0~\cdots~0 \quad Y(g)~Y(g-1)~\cdots~Y(-g) \end{bmatrix}.
\]
Then,  we obtain
\begin{equation} \label{0330_7}
\aligned
A_n^\ast(a,z) = \frac{1}{2}F(a,z) \cdot A_n^\ast, \quad 
B_n^\ast(a,z) = \frac{1}{2i}F(a,z) \cdot B_n^\ast
\endaligned
\end{equation}
by \eqref{0330_9} and Lemma \ref{0320_4}. 

\begin{proposition} \label{0330_4} 
\[
- a\frac{d}{da} A_n^\ast(a,z) = -z B_n^\ast(a,z), \quad -a\frac{d}{da} B_n^\ast(a,z) = z A_n^\ast(a,z)
\]
for every $0 \leq n \leq 2g$. 
\end{proposition}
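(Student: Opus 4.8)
The plan is to introduce an involution of $V_a$ that simultaneously implements the operator $-a\frac{d}{da}$ and carries the ``$+$'' case of \eqref{LE1} to the ``$-$'' case, and then to deduce the proposition from \eqref{0418_6}. Define $\hat{\mathsf K}\colon V_a\to V_a$ by $\hat{\mathsf K}\bigl(\sum_{k}u(k)X(k)+\sum_{l}v(l)Y(l)\bigr):=\sum_{k}u(k)X(k)-\sum_{l}v(l)Y(l)$. From \eqref{0313_1} one gets $-a\frac{d}{da}X(k)=iz\,X(k)$ and $-a\frac{d}{da}Y(l)=-iz\,Y(l)$, hence $-a\frac{d}{da}\phi=iz\,\hat{\mathsf K}\phi$ for every $\phi\in V_a$ regarded as a function of $(a,z)$; I would also note that ${\mathsf J}(iz\,\psi)=-iz\,{\mathsf J}\psi$ whenever $\psi\in V_a$ has real coefficient sequences, because ${\mathsf J}$ conjugates the factor $i$.

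Next I would list the commutation relations on $V_a$, each read off from the action on the basis $\{X(k)\}\cup\{Y(l)\}$: $\hat{\mathsf K}^2={\mathsf I}$; $\hat{\mathsf K}$ commutes with ${\mathsf E}$ and with ${\mathsf E}^\sharp$ (which send $X$-terms to $X$-terms and $Y$-terms to $Y$-terms), commutes with ${\mathsf P}_n$ (which only truncates index ranges), and anticommutes with ${\mathsf J}$ (which exchanges the $X$- and $Y$-coefficient sequences); moreover $\hat{\mathsf K}X(0)=X(0)$ and $\hat{\mathsf K}(V_{a,n})\subset V_{a,n}$. In particular $\hat{\mathsf K}\,{\mathsf E}^\sharp{\mathsf J}{\mathsf P}_n=-{\mathsf E}^\sharp{\mathsf J}{\mathsf P}_n\,\hat{\mathsf K}$; applying $\hat{\mathsf K}$ to $({\mathsf E}+{\mathsf E}^\sharp{\mathsf J}{\mathsf P}_n)\phi_n^+={\mathsf E}^\sharp X(0)$ and using $\hat{\mathsf K}{\mathsf E}^\sharp X(0)={\mathsf E}^\sharp X(0)$ then gives $({\mathsf E}-{\mathsf E}^\sharp{\mathsf J}{\mathsf P}_n)(\hat{\mathsf K}\phi_n^+)={\mathsf E}^\sharp X(0)$. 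Since $\hat{\mathsf K}\phi_n^+\in V_{a,n}$ and \eqref{LE1} has a unique solution in $V_{a,n}$ by Lemma \ref{lem0329_1}, this forces
\[
\phi_n^-=\hat{\mathsf K}\phi_n^+\qquad\text{and hence}\qquad \phi_n^+=\hat{\mathsf K}\phi_n^-.
\]

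To conclude I would compute, using $-a\frac{d}{da}=iz\,\hat{\mathsf K}$, the commutation $\hat{\mathsf K}{\mathsf E}={\mathsf E}\hat{\mathsf K}$, the identity $({\mathsf I}+{\mathsf J})(iz\,\psi)=iz\,({\mathsf I}-{\mathsf J})\psi$ for $\psi$ with real coefficients (applied to $\psi=\hat{\mathsf K}{\mathsf E}\phi_n^+$, which is real by \eqref{0330_8}), and $\hat{\mathsf K}\phi_n^+=\phi_n^-$:
\[
-a\frac{d}{da}A_n^\ast(a,z)
=\frac12({\mathsf I}+{\mathsf J}){\mathsf E}\bigl(iz\,\hat{\mathsf K}\phi_n^+\bigr)
=\frac{iz}{2}({\mathsf I}-{\mathsf J}){\mathsf E}\phi_n^-
=\frac{iz}{2}\cdot 2i\,B_n^\ast(a,z)
=-z\,B_n^\ast(a,z),
\]
the third equality being \eqref{0418_6}; symmetrically, with $({\mathsf I}-{\mathsf J})(iz\,\psi)=iz\,({\mathsf I}+{\mathsf J})\psi$ and $\hat{\mathsf K}\phi_n^-=\phi_n^+$, one obtains $-a\frac{d}{da}B_n^\ast(a,z)=\frac{z}{2}({\mathsf I}+{\mathsf J}){\mathsf E}\phi_n^+=z\,A_n^\ast(a,z)$. (One may also expand $A_n^\ast(a,z),B_n^\ast(a,z)$ via Lemma \ref{0320_4} — the relation $\phi_n^-=\hat{\mathsf K}\phi_n^+$ amounting to $q_n^-(k)=-q_n^+(k)$ in \eqref{0330_8} — and differentiate termwise.)

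There is no deep obstacle; the single point requiring care is that $\hat{\mathsf K}$ and ${\mathsf J}$ must be manipulated on $V_a$, where all coefficient sequences lie in $l^2(\Z)$, \emph{before} the $z$-dependent factor $iz$ is introduced, since pulling $iz$ past ${\mathsf J}$ flips a sign. Thus one establishes $\phi_n^-=\hat{\mathsf K}\phi_n^+$ first and only afterward passes to functions of $z$. Equivalently, one could avoid the involution and prove $\Phi_n^-=\mathrm{diag}(I_{4g},-I_{4g})\,\Phi_n^+$ directly from \eqref{LE2} using the block structure of $E_0^\pm$ and $E_n^\sharp J$. The case $n=0$ is immediate, as ${\mathsf P}_0|_{V_{a,0}}=0$ gives $\phi_0^+=\phi_0^-=\Theta X(0)$, which $\hat{\mathsf K}$ fixes.
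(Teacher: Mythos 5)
Your proof is correct, and it reaches the proposition by the same overall skeleton as the paper (first relate $\phi_n^+$ to $\phi_n^-$, then differentiate the finite sum \eqref{0330_8} termwise), but the crucial intermediate fact is obtained by a genuinely different argument. The paper isolates the relation $u_n^+(k)=u_n^-(k)$, $v_n^+(k)=-v_n^-(k)$ as Lemma \ref{co1} and proves it by writing \eqref{LE2} as an $8g\times 8g$ block system and invoking the block-inverse formula of Lemma \ref{mat_1}, using that the lower half of $E_0^-\chi_{8g}$ vanishes; it then deduces the identity $p_n^+(k)+q_n^+(k)=p_n^-(k)-q_n^-(k)$ (Lemma \ref{co2}) by expanding both sides. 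Your involution $\hat{\mathsf K}$ packages exactly the same two facts: the equality $\phi_n^-=\hat{\mathsf K}\phi_n^+$ \emph{is} Lemma \ref{co1}, and $p_n^-=p_n^+$, $q_n^-=-q_n^+$ \emph{is} Lemma \ref{co2}; but you get the former from the commutation relations of $\hat{\mathsf K}$ with ${\mathsf E}$, ${\mathsf E}^\sharp$, ${\mathsf P}_n$, ${\mathsf J}$ together with the uniqueness statement of Lemma \ref{lem0329_1}, with no matrix computation at all. What your route buys is conceptual economy and a transparent explanation of \emph{why} the $+$ and $-$ solutions are related (the operator $-a\frac{d}{da}=iz\,\hat{\mathsf K}$ intertwines the two equations); what the paper's route buys is that the block-matrix formulation is reused verbatim in the later determinant manipulations (Lemmas \ref{lem_0314_2}--\ref{0418_7}), so nothing is wasted. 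Your cautionary remark about introducing the factor $iz$ only after the $V_a$-level identities are established is well placed, since ${\mathsf J}$ anticommutes with multiplication by $iz$; and your treatment of $n=0$ is consistent with the paper's observation that ${\mathsf P}_0|_{V_{a,0}}=0$.
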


\noindent
We provide two lemmas used to prove Proposition \ref{0330_4}. 
\begin{lemma} \label{co1} 
\begin{equation} 
\aligned 
u_n^{+}(k) & = u_n^{-}(k) \quad (0 \leq k \leq 4g-1), \\
v_n^{+}(k) & = -v_n^{-}(k) \quad (-2g \leq k \leq 2g-1)
\endaligned
\end{equation}
for every $0 \leq n \leq 2g$. 
\end{lemma}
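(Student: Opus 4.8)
The plan is to prove the stronger statement that the solutions of \eqref{LE1} are related by $\phi_n^{-} = \sigma\phi_n^{+}$, where $\sigma\colon V_a \to V_a$ is the linear involution fixing every $X(k)$ and sending $Y(l) \mapsto -Y(l)$; comparing the coefficients of $X(k)$ and $Y(l)$ on the two sides then gives $u_n^{+}(k) = u_n^{-}(k)$ for all $k \geq 0$ and $v_n^{+}(k) = -v_n^{-}(k)$ for all $k \leq n-1$, which in particular covers the asserted ranges. So the whole proof is a uniqueness argument together with a few elementary commutation relations.

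First I would record how $\sigma$ interacts with the operators occurring in \eqref{LE1}. Each shift ${\mathsf T}_m$ sends $X$-coefficients to $X$-coefficients and $Y$-coefficients to $Y$-coefficients, so $\sigma$ commutes with ${\mathsf T}_m$, hence with ${\mathsf E}$ and ${\mathsf E}^{\sharp}$; the projection ${\mathsf P}_n$ only truncates the $X$-part and the $Y$-part separately, so $\sigma$ commutes with ${\mathsf P}_n$ as well; and since ${\mathsf J}$ interchanges the $X$- and $Y$-coefficients while $\sigma$ negates only the latter, one checks $\sigma{\mathsf J} = -{\mathsf J}\sigma$, whence $\sigma{\mathsf J}{\mathsf P}_n = -{\mathsf J}{\mathsf P}_n\sigma$. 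Finally $\sigma$ maps $V_{a,n}$ into itself and fixes $X(0)$. Applying $\sigma$ to $({\mathsf E} + {\mathsf E}^{\sharp}{\mathsf J}{\mathsf P}_n)\phi_n^{+} = {\mathsf E}^{\sharp}X(0)$ and using these relations,
\[
({\mathsf E} - {\mathsf E}^{\sharp}{\mathsf J}{\mathsf P}_n)\,\sigma\phi_n^{+}
= \sigma{\mathsf E}\phi_n^{+} + \sigma{\mathsf E}^{\sharp}{\mathsf J}{\mathsf P}_n\phi_n^{+}
= \sigma({\mathsf E} + {\mathsf E}^{\sharp}{\mathsf J}{\mathsf P}_n)\phi_n^{+}
= \sigma\,{\mathsf E}^{\sharp}X(0)
= {\mathsf E}^{\sharp}X(0).
\]
Thus $\sigma\phi_n^{+}\in V_{a,n}$ solves the $-$ equation in \eqref{LE1}, and since that solution is unique by Lemma \ref{lem0329_1}, we conclude $\phi_n^{-} = \sigma\phi_n^{+}$, which is the assertion.

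Equivalently — and this is closer to the matrix bookkeeping of this section — one can argue directly at the level of \eqref{LE2}: put $S := \mathrm{diag}(I^{(4g)}, -I^{(4g)})$, so that $\Phi_n^{-} = S\Phi_n^{+}$ is exactly the claim. The block-diagonal matrix $E_0^{+}$ commutes with $S$, the block-antidiagonal matrix $E_n^{\sharp}J$ anticommutes with $S$, and $E_0^{-}\chi_{8g}$ is fixed by $S$ because it is supported in its first $4g$ coordinates; hence $(E_0^{+} - E_n^{\sharp}J)\,S\Phi_n^{+} = S(E_0^{+} + E_n^{\sharp}J)\Phi_n^{+} = S\,E_0^{-}\chi_{8g} = E_0^{-}\chi_{8g}$, and the invertibility of $E_0^{+} - E_n^{\sharp}J$ (Lemma \ref{0501_1}) forces $\Phi_n^{-} = S\Phi_n^{+}$. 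The argument is essentially formal once the right involution $\sigma$ (resp.\ the matrix $S$) is identified; the only point requiring a little care is the relation $\sigma{\mathsf J}{\mathsf P}_n = -{\mathsf J}{\mathsf P}_n\sigma$ (equivalently, the anticommutation of $S$ with $E_n^{\sharp}J$), where the precise truncation indices defining ${\mathsf P}_n$ must be tracked, together with checking that $\sigma\phi_n^{+}$ genuinely lies in $V_{a,n}$ so that the uniqueness in Lemma \ref{lem0329_1} applies.
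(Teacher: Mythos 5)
Your proof is correct, and both of your formulations (the involution $\sigma$ on $V_a$ and the sign matrix $S$ on column vectors of length $8g$) are sound; the one point needing care --- that $\sigma$ preserves $V_{a,n}$, commutes with ${\mathsf E}$ and ${\mathsf E}^\sharp$, anticommutes with ${\mathsf J}{\mathsf P}_n$, and fixes ${\mathsf E}^\sharp X(0)$ --- is exactly the point you flag, and it checks out. The paper reaches the same conclusion from the same two structural facts (the sign $\pm$ enters only through the off-diagonal blocks of $E_0^{+} \pm E_{n}^{\sharp}J$, and $E_0^{-}\chi_{8g}$ is supported in the upper half), but extracts it differently: it applies the explicit $2\times 2$ block-inversion formula of Lemma \ref{mat_1} to $E_0^{+} \pm E_{n}^{\sharp}J$ and reads off that the diagonal blocks of the inverse are independent of the sign while the off-diagonal blocks carry a factor $\mp$, so that multiplying by $E_0^{-}\chi_{8g}$ gives $u_n^{+}=u_n^{-}$ and $v_n^{+}=-v_n^{-}$. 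Your conjugation-plus-uniqueness argument buys a cleaner derivation: it needs only the invertibility already established (Lemma \ref{lem0329_1}, resp.\ Lemma \ref{0501_1}) rather than the hypotheses of the block-inverse formula, and the operator-level version works directly on \eqref{LE1} without passing to the finite system \eqref{LE2}. The paper's computation, in exchange, produces the explicit Schur-complement form of $\Phi_n^{\pm}$, though that extra information is not used elsewhere.
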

\begin{proof} We have 
$\Phi_n^{\pm} = (E_{0}^{+} \pm E_{n}^{\sharp}J)^{-1}E_{0}^{-}\chi_{8g}$ with 
\[
E_0^{+} \pm E_n^{\sharp}J 
=
\left[ 
\begin{array}{cc|cc}
 & & &\\
 \multicolumn{2}{c|}{\raisebox{1.5ex}[0pt]{$e_0^{+}$}}& \multicolumn{2}{c}{\raisebox{1.5ex}[0pt]{$ \pm e_{1,n}^\sharp$}}\\ \hline
 &  & & \\
\multicolumn{2}{c|}{\raisebox{1.5ex}[0pt]{$\pm e_{2,n}^\sharp$}} & \multicolumn{2}{c}{\raisebox{1.5ex}[0pt]{$e_0^{+}$}}  \\
\end{array}
\right] 
\]
by \eqref{LE2} and definition of $E_n^{\sharp} J$. 
Applying Lemma \ref{mat_1} to 
$A=D=P:=e_0^{+}$, 
$B=\pm Q:= \pm e_{1,n}^\sharp$ and 
$C=\pm R:= \pm e_{2,n}^\sharp$, 
we obtain 
\[
\Phi_n^{\pm} = 
\begin{bmatrix}
(P-QP^{-1}R)^{-1}& \mp P^{-1}Q(P-RP^{-1}Q)^{-1} \\ 
\mp P^{-1}R(P-QP^{-1}R)^{-1} & (P-RP^{-1}Q)^{-1}
\end{bmatrix}
E_0^{-} \chi_{8g}.
\]
This establishes Lemma \ref{co1}, since 
all $4g$ entries of $E_0^{-}\chi_{8g}$ with indices $(4g+1,4g+2,\cdots,8g)$ are zero. 
\end{proof}
\begin{lemma} \label{co2} 
\begin{equation} 
p_n^{+}(k) + q_n^{+}(k) = p_n^{-}(k) - q_n^{-}(k) \quad (-g \leq k \leq g+n-1). 
\end{equation}
for every $0 \leq n \leq 2g$, where $p_n^{\pm}(k) \pm q_n^{\pm}(k)=0$ if $-g \leq k \leq -g+n-1$ or $g+1 \leq k \leq g+n-1$ 
by Lemma \ref{0320_4}.
\end{lemma}
\begin{proof} By \eqref{LE1}, 
\[
\scalebox{0.9}{$
\aligned
{\mathsf E}\, \phi_{n}^{\pm} 
&= {\mathsf E}^{\sharp} X(0) \mp {\mathsf E}^{\sharp} {\mathsf J}{\mathsf P}_n \phi_{n}^{\pm} \\
&= \sum_{m=-g}^{g} C_{-m}X(m) \mp \sum_{m=-g}^{g} C_{-m}\sum_{k=0}^{n-1} v_n^{\pm}(k)X(k+m) \mp \sum_{m=-g}^{g} C_{-m} \sum_{l=0}^{n-1} u_n^{\pm}(l)Y(l-m).
\endaligned
$}
\]
Therefore, 
\[
\scalebox{0.85}{$
\aligned
({\mathsf I} + {\mathsf J}){\mathsf E}\, \phi_{n}^{+} 
&=  \sum_{m=-g}^{g} C_{-m}X(m)
- \sum_{m=-g}^{g} \left( C_{m} \sum_{k=0}^{n-1} u_n^{+}(k)X(k+m) + C_{-m}\sum_{k=0}^{n-1} v_n^{+}(k)X(k+m) \right)\\ 
& + \sum_{m=-g}^{g} C_{-m}Y(m) - \sum_{m=-g}^{g} \left( C_{m} \sum_{l=0}^{n-1} u_n^{+}(l)Y(l+m) + C_{-m}\sum_{l=0}^{n-1} v_n^{+}(l)Y(l+m) \right).
\endaligned
$}
\]
On the other hand, 
\[
\scalebox{0.85}{$
\aligned
({\mathsf I} - {\mathsf J}){\mathsf E}\, \phi_{n}^{-} 
&=  \sum_{m=-g}^{g} C_{-m}X(m)
- \sum_{m=-g}^{g} \left( C_{m} \sum_{k=0}^{n-1} u_n^{-}(k)X(k+m) - C_{-m}\sum_{k=0}^{n-1} v_n^{-}(k)X(k+m) \right)\\ 
& \quad - \sum_{m=-g}^{g} C_{-m}Y(m) + \sum_{m=-g}^{g} \left( C_{m} \sum_{l=0}^{n-1} u_n^{-}(l)Y(l+m) - C_{-m}\sum_{l=0}^{n-1} v_n^{-}(l)Y(l+m) \right) \\
&=  \sum_{m=-g}^{g} C_{-m}X(m)
- \sum_{m=-g}^{g} \left( C_{m} \sum_{k=0}^{n-1} u_n^{+}(k)X(k+m) + C_{-m}\sum_{k=0}^{n-1} v_n^{+}(k)X(k+m) \right)\\ 
& \quad - \sum_{m=-g}^{g} C_{-m}Y(m) + \sum_{m=-g}^{g} \left( C_{m} \sum_{l=0}^{n-1} u_n^{+}(l)Y(l+m) + C_{-m}\sum_{l=0}^{n-1} v_n^{+}(l)Y(l+m) \right)
\endaligned
$}
\]
by Lemma \ref{co1}. 
Comparing the right-hand sides of the above formulas of 
$({\mathsf I} \pm {\mathsf J}){\mathsf E}\, \phi_{n}^{\pm}$ 
with formulas of $({\mathsf I} \pm {\mathsf J}){\mathsf E}\, \phi_{n}^{\pm}$ in Lemma \ref{0320_4}, we obtain Lemma \ref{co2}. 
\end{proof}

\begin{proof}[Proof of Proposition \ref{0330_4}]
By definition of $X(k)$ and $Y(l)$, \eqref{0330_8}, and Lemma \ref{0320_4},
\begin{equation} 
({\mathsf I}\pm {\mathsf J}){\mathsf E} \, \phi_{n}^{\pm}(a,z)
= \sum_{k=-g+n}^{g} (p_n^{\pm}(k) \pm q_n^{\pm}(k))((q^k/a)^{iz} \pm (q^k/a)^{-iz}). 
\end{equation}
Therefore, the differentiability of $A_n^\ast(a,z)$ and $B_n^\ast(a,z)$ with respect to $a$ is trivial, and
\begin{equation} 
\aligned
-a\frac{d}{da}
({\mathsf I} + {\mathsf J}){\mathsf E} \, \phi_{n}^{+}(a,z)
&= iz \sum_{k=-g+n}^{g} (p_n^{+}(k) + q_n^{+}(k))((q^k/a)^{iz} - (q^k/a)^{-iz}), \\
-a\frac{d}{da}\frac{1}{i}
({\mathsf I} - {\mathsf J}){\mathsf E} \, \phi_{n}^{-}(a,z)
&= z \sum_{k=-g+n}^{g} (p_n^{-}(k) - q_n^{-}(k))((q^k/a)^{iz} + (q^k/a)^{-iz}).
\endaligned
\end{equation}
Applying Lemma \ref{co2} to the right-hand sides, we obtain Proposition \ref{0330_4} by definition \eqref{0418_6}. 
\end{proof}

As mentioned above, $A_n^\ast(a,z)$ and $B_n^\ast(a,z)$ of \eqref{0418_6} are smooth functions of $a$ on $(0,\infty)$ for every $0 \leq n \leq 2g$. 
However $A^\ast(a,z)$ and $B^\ast(a,z)$ of \eqref{0330_9} may be discontinuous at $a \in (0,\infty) \cap q^{\Z/2}$. 
Therefore, as mentioned in Section \ref{section_2}, 
the next aim is to make modifications so that  $A^\ast(a,z)$ and $B^\ast(a,z)$ become continuous functions of $a$ on $[1,q^g)$. 
The essential part is the following proposition. 

\begin{proposition} \label{prop_0313_1} 
Let $1 \leq n \leq 2g$. 
Suppose that $\mathsf{I} \pm \Theta{\mathsf J}{\mathsf P}_{n-1}$ are invertible 
on $W_{a,n-1}$ for every $q^{(n-2)/2} < a < q^{n/2}$ and 
$\mathsf{I} \pm \Theta{\mathsf J}{\mathsf P}_n$ are  invertible 
on $W_{a,n}$ for every $q^{(n-1)/2} < a < q^{n/2}$.
Define $\alpha_n$ and $\beta_n$ by \eqref{0330_12}. Then 
\begin{equation} \label{0331_5}
\alpha_n = 
\begin{cases}
~\displaystyle{\frac{\det(E^{+} + E^{-}J_1)}{\det(E^+)}}
& \text{if $n=1$}, \\[4pt]
~\displaystyle{
\frac{\det(E^{+} + E^{-}J_2)}{\det(E^+)}
\frac{\det(E_0^{+})}{\det(E_0^{+} + E_{1}^{\sharp}J)}}
& \text{if $n=2$}, \\[10pt]
~\displaystyle{
\frac{\det(E^{+} + E^{-}J_n)}{\det(E^{+} + E^{-}J_{n-2})}
\frac{\det(E_0^{+} + E_{n-2}^{\sharp}J)}{\det(E_0^{+} + E_{n-1}^{\sharp}J)}
}
& \text{if $3 \leq n \leq 2g$},
\end{cases}
\end{equation}
\begin{equation} \label{0331_6}
\beta_n = 
\begin{cases}
~\displaystyle{\frac{\det(E^{+} - E^{-}J_1)}{\det(E^+)}}
& \text{if $n=1$}, \\[4pt]
~\displaystyle{
\frac{\det(E^{+} - E^{-}J_2)}{\det(E^+)}
\frac{\det(E_0^{+})}{\det(E_0^{+} + E_{1}^{\sharp}J)}}
& \text{if $n=2$}, \\[10pt]
~\displaystyle{
\frac{\det(E^{+} - E^{-}J_n)}{\det(E^{+} - E^{-}J_{n-2})}
\frac{\det(E_{0}^{+} - E_{n-2}^{\sharp}J)}{\det(E_0^{+} - E_{n-1}^{\sharp}J)}
}
& \text{if $3 \leq n \leq 2g$}.
\end{cases}
\end{equation}
In particular, $\alpha_n$ and $\beta_n$ depend only on $\underline{C}$. 
\end{proposition}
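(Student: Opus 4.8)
The plan is to evaluate $A_n^\ast$ and $B_n^\ast$ at the junction point $a=q^{(n-1)/2}$, to reduce the independence of $\alpha_n,\beta_n$ from $z$ to two proportionality identities between the coefficient vectors of the systems \eqref{LE2} for indices $n-1$ and $n$, and then to read off the determinant quotients by Cramer's rule.

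First I would record the shape of $A_n^\ast,B_n^\ast$ at $a=q^{(n-1)/2}$. By Lemma \ref{0320_4} together with Lemma \ref{co2}, both are controlled by the single family $P_n(k):=p_n^+(k)+q_n^+(k)=p_n^-(k)-q_n^-(k)$, which vanishes for $-g\le k\le n-1-g$ and equals $C_{-g}$ at $k=g$. Since $(q^k/a)^{\pm iz}=q^{\pm(k-(n-1)/2)iz}$ at $a=q^{(n-1)/2}$ and the reflection $k\leftrightarrow n-1-k$ interchanges $X(k)$ and $Y(k)$ there, one gets
\[
A_n^\ast(q^{(n-1)/2},z)=\frac12\sum_{k=n-1-g}^{g}\bigl(P_n(k)+P_n(n-1-k)\bigr)q^{(k-(n-1)/2)iz},
\]
and likewise $B_n^\ast(q^{(n-1)/2},z)=\frac1{2i}\sum_{k}(P_n(k)-P_n(n-1-k))q^{(k-(n-1)/2)iz}$, with the analogous identities for index $n-1$; here $P_{n-1}(n-1-g)$ is a genuine coefficient, while $P_n(n-1-g)=0$ by the vanishing ranges obtained in the proof of Lemma \ref{0320_4}. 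The leading coefficient $C_{-g}/2$ is nonzero, so these are nonzero functions of $z$ and the quotients \eqref{0330_12} are well defined. As the exponentials $q^{kiz}$ $(k\in\Z)$ are linearly independent, $\alpha_n$ (resp.\ $\beta_n$) is independent of $z$ exactly when
\[
P_{n-1}(k)+P_{n-1}(n-1-k)=\alpha_n\bigl(P_n(k)+P_n(n-1-k)\bigr),\qquad P_{n-1}(k)-P_{n-1}(n-1-k)=\beta_n\bigl(P_n(k)-P_n(n-1-k)\bigr)
\]
for all $n-1-g\le k\le g$ with $\alpha_n,\beta_n$ not depending on $k$. Establishing these proportionalities and identifying the constants is the heart of the matter.

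To do this I would compare the two linear systems \eqref{LE2} at indices $n-1$ and $n$. Using $\Psi_n^\pm=E_0^+\Phi_n^\pm$ and $A_n^\ast=(I+J)E_0^+\Phi_n^+$, $B_n^\ast=(I-J)E_0^+\Phi_n^-$ with $I=I^{(8g)}$, $J=J^{(8g)}$, the data $P_n(\cdot)$ come from $(E_0^++E_n^\sharp J)^{-1}E_0^-\chi_{8g}$, and by Lemma \ref{0501_1} the relevant determinants are nonzero under the invertibility hypothesis. The block structure of $E_0^+$ and of $E_n^\sharp J$ (which changes from $n-1$ to $n$ only through the blocks $e_{1,n}^\sharp,e_{2,n}^\sharp$) lets one apply the Schur-complement identity Lemma \ref{mat_1} to collapse the $8g\times 8g$ systems to small ones, and the reduction matrices $P_k(m_k),Q_k$ with the explicit $P_k(m_k)^{-1}Q_k=\begin{bmatrix}M_{k,1}&M_{k,2}\\M_{k,3}&M_{k,4}\end{bmatrix}$ of Lemma \ref{lem_203} turn the increment $n-1\to n$ into a controlled, rank-bounded update. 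Cramer's rule on \eqref{LE2}, together with the factorization $\det\begin{bmatrix}A&B\\B&A\end{bmatrix}=\det(A+B)\det(A-B)$ of Lemma \ref{mat_2} and the Laplace expansion Lemma \ref{mat_3}, then identifies the constants with the claimed quotients: the factors $\det(E^+\pm E^-J_n)$ arise from the ``inner'' $(2g+1)\times(2g+1)$ block of the reduced system, while $\det(E_0^+\pm E_{n-1}^\sharp J)$ and $\det(E_0^+\pm E_{n-2}^\sharp J)$ come from the two consecutive full systems; the normalization $p_n^\pm(g)\pm q_n^\pm(g)=C_{-g}$ from Lemma \ref{0320_4} fixes the scalings, so no spurious factor remains. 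The cases $n=1$ and $n=2$ are handled separately because $E_0^\sharp J=E_{0,1}^\sharp J=0$, so the ``previous'' determinant degenerates to $\det(E^+)$ (for $n=1$) or $\det(E_0^+)$ (for $n=2$), which is precisely the form in the first two lines of \eqref{0331_5} and \eqref{0331_6}; for $3\le n\le 2g$ the generic telescoping expression applies.

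Since every quantity on the right of \eqref{0331_5} and \eqref{0331_6} is built only from the entries $C_m$ of $\underline C$, the final assertion follows at once. I expect the main obstacle to be the third step: the bookkeeping that turns the one-step increment of the projection index into these precise determinant quotients, i.e.\ checking that the Schur complements and the reduction through $P_k(m_k),Q_k$ leave no residual dependence on $a$ or $z$ and that the determinants telescope exactly as stated. Once the two symmetrization identities are in hand, the independence from $z$ itself is immediate from the linear independence of the exponentials.
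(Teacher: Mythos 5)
Your outer frame matches the paper's: evaluate at a junction point $a\in q^{\Z/2}$, where $X(k)$ and $Y(n-1-k)$ coincide, fold the coefficients into a single family, and reduce the $z$-independence of $\alpha_n,\beta_n$ to a proportionality of two finite coefficient vectors whose constant is then read off by Cramer's rule. But the entire content of the proposition lives in the step you defer as "the main obstacle," and the tools you name for it are not the ones that work. The reduction matrices $P_k(m_k)^{-1}Q_k$ of Lemma \ref{lem_203} encode the transition $\Omega_n\mapsto\Omega_{n+1}$ of Proposition \ref{0419_1}, whose very statement presupposes that the scalars $\alpha_n,\beta_n$ (through $\gamma_{n+1}$, which is the parameter $m_k$) already exist and are $z$-independent; invoking that machinery here is circular. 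Likewise, a generic appeal to Schur complements on the $8g\times 8g$ systems \eqref{LE2} does not by itself produce a proportionality between the solution vectors for indices $n-1$ and $n$ — the two systems have genuinely different matrices, and nothing in Lemma \ref{mat_1} forces their solutions to be parallel.

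What actually closes the gap in the paper is a specific structural observation that your plan never isolates: at the junction the folded coefficients $u_n^{\pm}(k)+v_n^{\pm}(n-k)$ satisfy a $4g\times 4g$ linear system with matrix $e_0^{+}\pm e_1^{-}J_{n+1}^{(4g)}$ and right-hand side $(1\mp v_n^{\pm}(0))\,e_0^{-}\chi_{4g}$, while the folded coefficients at the next index satisfy the system with matrix $e_0^{+}\pm e_0^{-}J_{n+1}^{(4g)}$ and right-hand side $e_0^{-}\chi_{4g}$. These two matrices differ in a single column, and Lemma \ref{0319_1} (an adjugate/Cramer identity) shows that ${\rm adj}(e_0^{+}\pm e_0^{-}J_{n+1}){e_0^{-}\chi_{4g}}={\rm adj}(e_0^{+}\pm e_1^{-}J_{n+1}){e_0^{-}\chi_{4g}}$, which is what forces the two solution vectors to be proportional. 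The constant then comes out as $\det(e_0^{+}\pm e_0^{-}J_{n+1})/\det(e_0^{+}\pm e_1^{-}J_{n+1})$ times $(1\mp v_n^{\pm}(0))$, and one still needs Lemma \ref{0319_3} to convert $\det(e_0^{+}\pm e_1^{-}J_{n+1})$ into $\det(e_0^{+}\pm e_0^{-}J_{n-1})$, Lemma \ref{0330_21} to absorb the residual factor $1\pm(u_{n+1}^{\pm}(n)+v_{n+1}^{\pm}(0))$, Cramer's rule with the auxiliary matrices $E_{n,1}^{\sharp}J$ (Lemma \ref{0418_7}) to evaluate $1\mp v_n^{\pm}(0)$, and Lemma \ref{0330_22} to pass from the $4g\times 4g$ and $8g\times 8g$ determinants down to $\det(E^{+}\pm E^{-}J_n)$. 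None of the matrices $e_1^{-}$, $E_{n,1}^{\sharp}J$, nor Lemmas \ref{0319_1}, \ref{0319_3}, \ref{0330_21} appear in your outline, and without them the claimed telescoping of determinants is an assertion rather than a proof. So the proposal is a correct reduction plus an unproved core, not a complete argument.
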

We prove Proposition \ref{prop_0313_1} 
after preparing several lemmas. 

\begin{lemma} \label{lem_0314_2} For every $0 \leq n \leq 2g$, 
\begin{equation} \label{0320_3}
\det(E_{0}^{+} + E_{n}^{\sharp}J)=\det(E_{0}^{+} - E_{n}^{\sharp}J).
\end{equation}
\end{lemma}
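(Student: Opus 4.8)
The plan is to read off the block structure of the relevant matrices from their definitions in Section \ref{section_3} and then exploit an obvious sign symmetry. For $n=0$ we have $E_0^\sharp J=0$, so \eqref{0320_3} is trivial; thus we may assume $1\le n\le 2g$. Partitioning every $8g\times 8g$ matrix into four $4g\times 4g$ blocks, the definition of $E_0^+$ shows that it is block-diagonal with both diagonal blocks equal to $e_0^+$, whereas $E_n^\sharp J$ has vanishing diagonal blocks and off-diagonal blocks $e_{1,n}^\sharp$ and $e_{2,n}^\sharp$. Hence
\[
E_0^+ \pm E_n^\sharp J
=
\begin{bmatrix} e_0^+ & \pm e_{1,n}^\sharp \\ \pm e_{2,n}^\sharp & e_0^+ \end{bmatrix},
\]
so the sign enters only through the two off-diagonal blocks.

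Next I would conjugate by the signature matrix $D$ and compute one block product:
\[
D := \begin{bmatrix} I^{(4g)} & 0 \\ 0 & -I^{(4g)} \end{bmatrix}, \qquad
D\,(E_0^+ + E_n^\sharp J)\,D^{-1}
= \begin{bmatrix} e_0^+ & -e_{1,n}^\sharp \\ -e_{2,n}^\sharp & e_0^+ \end{bmatrix}
= E_0^+ - E_n^\sharp J .
\]
Since conjugation preserves determinants, this yields $\det(E_0^+ + E_n^\sharp J)=\det(E_0^+ - E_n^\sharp J)$, which is \eqref{0320_3}.

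There is essentially no obstacle here: the only thing requiring care is to confirm, from the displayed definitions, that $e_0^+$, $e_{1,n}^\sharp$ and $e_{2,n}^\sharp$ are each $4g\times 4g$ and that in this block partition $E_n^\sharp J$ genuinely has zero diagonal blocks — both immediate. Conceptually the lemma is just the bookkeeping observation that the construction of Section \ref{section_2} is insensitive to flipping the sign of the ``off-diagonal'' operator ${\mathsf E}^\sharp{\mathsf J}{\mathsf P}_n$; the very same conjugation by $D$ would likewise prove the analogous identity for $E_{n,1}^\sharp J$ in place of $E_n^\sharp J$, should that be needed later.
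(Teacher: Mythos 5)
Your proof is correct, and it takes a genuinely different and more economical route than the paper. You observe that $E_n^\sharp J$ has vanishing diagonal $4g\times4g$ blocks while $E_0^+$ is block diagonal, so conjugation by the signature matrix $D=\operatorname{diag}(I^{(4g)},-I^{(4g)})$ sends $E_0^++E_n^\sharp J$ to $E_0^+-E_n^\sharp J$ and the determinants agree; this is a clean one-line argument, and your closing remark that it applies verbatim to $E_{n,1}^\sharp J$ is also right, since that matrix has the same zero-diagonal-block shape. The paper instead refines the partition into a $3\times3$ block form, applies Laplace expansion (Lemma \ref{mat_3}) twice to peel off the middle block $D$, and reduces to $\det\bigl[\begin{smallmatrix} A & \pm B\\ \pm B & A\end{smallmatrix}\bigr]=\det(A+B)\det(A-B)$ via Lemma \ref{mat_2}, with $A=[e_0^+]_{\nwarrow 2g+n}$ and $B=[e_{2,n}^\sharp]_{\swarrow 2g+n}$. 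That longer computation is not wasted effort in context: the explicit factorization $\det(E_0^+\pm E_n^\sharp J)=C_{-g}^{4g-2n}\det(A+B)\det(A-B)$ obtained en route is exactly what is reused in Lemma \ref{0418_10}, Lemma \ref{0418_7} and in Section \ref{section_5_2}, none of which your conjugation trick supplies. So your argument fully proves the lemma as stated, but if one wanted to substitute it into the paper, the factorization would still have to be established separately for the later lemmas.
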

\begin{proof} 
Since $E_{n}^{\sharp}J=0$ by definition, we suppose that $n \geq 1$.
We have
\begin{equation}
\aligned
E_0^{+} \pm E_n^{\sharp}J
&=
\left[ 
\begin{array}{cc|cc}
 & & &\\
 \multicolumn{2}{c|}{\raisebox{1.5ex}[0pt]{$e_0^{+}$}}& \multicolumn{2}{c}{\raisebox{1.5ex}[0pt]{$ \pm e_{1,n}^{\sharp}$}}\\ \hline
 &  & & \\
\multicolumn{2}{c|}{\raisebox{1.5ex}[0pt]{$\pm e_{2,n}^{\sharp}$}} & \multicolumn{2}{c}{\raisebox{1.5ex}[0pt]{$e_0^{+}$}}  \\
\end{array}
\right] \\
&=
\left[ 
\begin{array}{cc|cc|cc}
 & & & & & \\
  \multicolumn{2}{c|}{\raisebox{1.5ex}[0pt]{$A$}} 
& \multicolumn{2}{c|}{\raisebox{1.5ex}[0pt]{\BigZero}}
& \multicolumn{2}{c}{\raisebox{1.5ex}[0pt]{$\pm B$}} \\ \hline
 & & & & & \\
  \multicolumn{2}{c|}{\raisebox{1.5ex}[0pt]{$C$}} 
& \multicolumn{2}{c|}{\raisebox{1.5ex}[0pt]{$D$}} 
& \multicolumn{2}{c}{\raisebox{1.5ex}[0pt]{\BigZero}} \\ \hline
 & & & & & \\
  \multicolumn{2}{c|}{\raisebox{1.5ex}[0pt]{$\pm B$}} 
& \multicolumn{2}{c|}{\raisebox{1.5ex}[0pt]{$E$}} 
& \multicolumn{2}{c}{\raisebox{1.5ex}[0pt]{$A$}}
\end{array}
\right]
=
\begin{array}{c|c|c|c}
 2g+n & 4g-2n & 2g+n & \\ \hline
 & & & 2g+n  \\ \hline
 & & & 4g-2n \\ \hline
 & & & 2g+n 
\end{array}
\endaligned
\end{equation}
by the definition of $e_{1,n}^{\sharp}$ and $e_{2,n}^{\sharp}$, where 
\[
A = [e_0^{+}]_{\nwarrow 2g+n}, \quad B = [e_{2,n}^{\sharp}]_{\swarrow  2g+n}, \quad 
D = \begin{bmatrix} [e_0^{+}]_{\searrow 2g-n} & \\ & [e_0^{+}]_{\nwarrow 2g-n} \end{bmatrix}
\]
and the right-hand side means the size of each block of the matrix of the second line. 
Applying Lemma \ref{mat_3} to the $(4g-2g-n)$ columns of $E_0^{+} \pm E_n^{\sharp}J$ 
with indices $(2g+n+1,\cdots,4g)$, 
and then applying Lemma \ref{mat_3} to the $(4g-2g-n)$ rows of the resulting matrix with indices $(2g+n+1,\cdots ,4g)$, 
we obtain
\[
\det( E_0^{+} \pm E_n^{\sharp}J) 
= \det(D)\det \begin{bmatrix} A & \pm B \\ \pm B & A \end{bmatrix}.
\]
By Lemma \ref{mat_2}, 
\[
\det \begin{bmatrix} A & \pm B \\ \pm B & A \end{bmatrix}
= \det(D)\det(A+B)\det(A-B).
\]
This equality shows \eqref{0320_3}.
\end{proof}

\begin{lemma}\label{0319_1} 
\[
\aligned
{\rm adj}(e_0^{+} \pm e_0^{-}J_{n}^{(4g)})
e_0^{-}\chi_{4g} 
=
{\rm adj}(e_0^{+} \pm e_1^{-}J_{n}^{(4g)})
e_0^{-}\chi_{4g}
\endaligned
\]
for every $1 \leq n \leq 2g$, where ${\rm adj}(A)=(\det A)A^{-1}$ is the adjugate matrix of $A$. 
\end{lemma}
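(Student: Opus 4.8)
The plan is to reduce everything to determinants via Cramer's rule and then to use the single structural fact that $e_1^-$ is obtained from $e_0^-$ by zeroing out only its first column. Recall the adjugate form of Cramer's rule: for a square matrix $M$ of size $N$ and a column vector $b$ of length $N$, the $i$-th entry of ${\rm adj}(M)\,b$ equals $\det M_i(b)$, where $M_i(b)$ denotes $M$ with its $i$-th column replaced by $b$. This is a polynomial identity in the entries of $M$ and $b$ (expand $\det M_i(b)$ along the $i$-th column), hence it holds with no invertibility hypothesis; this point is important, because $e_0^{+}\pm e_1^{-}J_n^{(4g)}$ need not be invertible, so a Sherman--Morrison type computation is not available.

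First I would describe $e_0^{-}J_n^{(4g)}$ and $e_1^{-}J_n^{(4g)}$ column by column. Right multiplication by $J_n^{(4g)}$ moves the $k$-th column of a matrix to the $(n+1-k)$-th position for $1\leq k\leq n$ and annihilates every column of index greater than $n$; so the columns of $e_0^{-}J_n^{(4g)}$ (resp. $e_1^{-}J_n^{(4g)}$) are the first $n$ columns of $e_0^{-}$ (resp. $e_1^{-}$) written in reverse order, followed by zero columns. Since $e_1^{-}$ agrees with $e_0^{-}$ in all columns except the first, which is zero for $e_1^{-}$, the matrices $e_0^{-}J_n^{(4g)}$ and $e_1^{-}J_n^{(4g)}$ agree in every column except the $n$-th; there $e_0^{-}J_n^{(4g)}$ carries the first column of $e_0^{-}$, namely $e_0^{-}\chi_{4g}$, whereas $e_1^{-}J_n^{(4g)}$ carries the zero column. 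Therefore, writing $M:=e_0^{+}\pm e_0^{-}J_n^{(4g)}$ and $M':=e_0^{+}\pm e_1^{-}J_n^{(4g)}$ with the same sign throughout, $M$ and $M'$ coincide in every column except the $n$-th, and the $n$-th column of $M$ equals the $n$-th column of $M'$ plus (resp. minus) the vector $e_0^{-}\chi_{4g}$.

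Next I would fix $i$ with $1\leq i\leq 4g$ and compare $\det M_i(e_0^{-}\chi_{4g})$ with $\det M'_i(e_0^{-}\chi_{4g})$. If $i=n$, the two matrices are literally equal, since overwriting the $n$-th column by $e_0^{-}\chi_{4g}$ erases the only column in which $M$ and $M'$ differ. If $i\neq n$, then $M_i(e_0^{-}\chi_{4g})$ is obtained from $M'_i(e_0^{-}\chi_{4g})$ by adding $\pm e_0^{-}\chi_{4g}$ to its $n$-th column; but $e_0^{-}\chi_{4g}$ is precisely the $i$-th column of $M'_i(e_0^{-}\chi_{4g})$, so this is the elementary column operation of adding $\pm(\text{column }i)$ to column $n$, which leaves the determinant unchanged. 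In both cases $\det M_i(e_0^{-}\chi_{4g})=\det M'_i(e_0^{-}\chi_{4g})$, and by Cramer's rule this says exactly that the $i$-th entries of ${\rm adj}(M)\,e_0^{-}\chi_{4g}$ and ${\rm adj}(M')\,e_0^{-}\chi_{4g}$ agree. Since $i$ was arbitrary, the lemma follows, with the two choices of sign handled simultaneously.

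The argument has no genuine difficulty; the only thing to get right is the first step — the precise column-wise description of $e_0^{\pm}J_n^{(4g)}$ and the observation that $e_1^{-}$ differs from $e_0^{-}$ only in its first column — together with the decision to work with the polynomial identity ${\rm adj}(M)\,b=(\det M_i(b))_i$ rather than with $M^{-1}$, which may fail to exist for $M'$.
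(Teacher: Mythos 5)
Your proof is correct and follows essentially the same route as the paper's: Cramer's rule (in adjugate form) reduces the claim to equality of determinants with one column replaced by $e_0^{-}\chi_{4g}$, and since the two matrices differ only in the $n$-th column by $\pm e_0^{-}\chi_{4g}$, the discrepancy is killed either by the overwrite (case $i=n$) or by a determinant-preserving column operation, which is the paper's repeated-column/multilinearity step in different words. Your version is in fact slightly tidier, since you explicitly cover all $i\neq n$ (the paper only writes out $1\leq j<n$) and you note that the adjugate identity needs no invertibility hypothesis.
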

\begin{proof} 
Let $w={}^{t}\begin{bmatrix}C_g & \cdots & C_{-g} & 0 & \cdots & 0 \end{bmatrix}$ 
be the column vector of length $4g$. 
Denote by $u_{n,j}$ and $v_{n,j}$ the column vectors of $e_0^{+} \pm e_0^{-}J_{n}^{(4g)}$ 
and $e_0^{+} \pm e_1^{-}J_{n}^{(4g)}$, respectively:
\[
e_0^{+} \pm e_0^{-}J_{n}^{(4g)} = 
\begin{bmatrix}
u_{n,1} & \cdots & u_{n,4g}
\end{bmatrix}, \quad 
e_0^{+} \pm e_1^{-}J_{n}^{(4g)} = 
\begin{bmatrix}
v_{n,1} & \cdots & v_{n,4g}
\end{bmatrix}.
\]
By Cramer's formula, it is sufficient to prove that 
\begin{equation} \label{0319_2}
\aligned
\det[&u_{n,1}~\cdots~u_{n,j-1}~\stackrel{j}{\stackrel{\smile}{w}}~u_{n,j+1}~\cdots~u_{n,4g}]\\
&=
\det[v_{n,1}~\cdots~v_{n,j-1}~\stackrel{j}{\stackrel{\smile}{w}}~v_{n,j+1}~\cdots~v_{n,4g}]
\endaligned
\end{equation}
for every $1 \leq j \leq 2g$. 
By definition of $e_0^{-}$ and $e_1^{-}$, $u_{n,j} = v_{n,j}$ for every  $j \not= n$, $1 \leq j \leq 4g$,  
and $u_{n,n} = v_{n,n} + w$.
Therefore, \eqref{0319_2} is trivial if $j=n$. If $1 \leq j <n$, 
\begin{equation}
\aligned
\det[&u_{n,1}~\cdots~u_{n,j-1}~\stackrel{j}{\stackrel{\smile}{w}}~u_{n,j+1}~\cdots~u_{n,4g}] \\
& = \det[v_{n,1}~\cdots~v_{n,j-1}~\stackrel{j}{\stackrel{\smile}{w}}~v_{n,j+1}~\cdots~v_{n,4g}] +\det[v_{n,1}~\cdots~~\stackrel{j}{\stackrel{\smile}{w}}~\cdots~\stackrel{n}{\stackrel{\smile}{w}}~\cdots~v_{n,4g}] \\
& = \det[v_{n,1}~\cdots~v_{n,j-1}~\stackrel{j}{\stackrel{\smile}{w}}~v_{n,j+1}~\cdots~v_{n,4g}] +0.
\endaligned
\end{equation}
Hence we obtain \eqref{0319_2}. 
\end{proof}

\begin{lemma} \label{0319_3} 
\[
\det(e_0^{+} \pm e_1^{-}J_{1}^{(4g)})
=
\det(e_0^{+} \pm e_1^{-}J_{2}^{(4g)})
= \det(e_0^{+})
\]
and
\[
\det(e_0^{+} \pm e_1^{-}J_{n}^{(4g)})
= \det(e_0^{+} \pm e_0^{-}J_{n-2}^{(4g)})
\]
for $3 \leq n \leq 2g$. 
\end{lemma}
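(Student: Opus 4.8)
The plan is to exploit the banded lower-triangular Toeplitz shape of $e_0^+$ together with the fact that $e_1^-$ is obtained from $e_0^-$ by replacing its first column with zero. Recall that right multiplication by $J_n^{(4g)}$ replaces the $j$-th column of a matrix $M$ by its $(n+1-j)$-th column for $1\le j\le n$ and by the zero column for $j>n$. Consequently, for every $1\le n\le 2g$, the columns of $e_0^+\pm e_1^-J_n^{(4g)}$ with indices $n+1,\dots,4g$ are exactly the corresponding columns of the lower-triangular matrix $e_0^+$, so the block occupying rows $1,\dots,n$ and columns $n+1,\dots,4g$ vanishes; thus the matrix is block lower triangular for the splitting at index $n$, and
\[
\det\bigl(e_0^+\pm e_1^-J_n^{(4g)}\bigr)
=\det\bigl([e_0^+\pm e_1^-J_n^{(4g)}]_{\nwarrow n}\bigr)\cdot\det\bigl([e_0^+]_{\searrow 4g-n}\bigr)
=C_{-g}^{4g-n}\,\det\bigl([e_0^+\pm e_1^-J_n^{(4g)}]_{\nwarrow n}\bigr),
\]
because $[e_0^+]_{\searrow 4g-n}$ is lower triangular with constant diagonal $C_{-g}$. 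Applying the same reduction with $e_0^-$ and $n-2$ in place of $e_1^-$ and $n$ gives $\det(e_0^+\pm e_0^-J_{n-2}^{(4g)})=C_{-g}^{4g-n+2}\det([e_0^+\pm e_0^-J_{n-2}^{(4g)}]_{\nwarrow n-2})$, so the lemma reduces to comparing these two top-left corners.

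The cases $n=1,2$ follow directly from the shape of $e_1^-$. For $n=1$ the first column of $e_1^-$ is zero, hence $e_1^-J_1^{(4g)}=0$ and $\det(e_0^+\pm e_1^-J_1^{(4g)})=\det(e_0^+)$. For $n=2$ only the first column of $e_1^-J_2^{(4g)}$ is nonzero, and it equals the second column of $e_0^-$, whose top entry is $0$; thus $e_0^+\pm e_1^-J_2^{(4g)}$ is $e_0^+$ with its first column altered below the first row only, remains lower triangular with diagonal $C_{-g}$, and has determinant $\det(e_0^+)$.

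For $3\le n\le 2g$ I would perform two successive cofactor expansions on $A:=[e_0^+\pm e_1^-J_n^{(4g)}]_{\nwarrow n}$. The $n$-th column of $A$ receives its $e_1^-$-contribution from the zero first column of $e_1^-$, so it equals the top $n$ entries of the $n$-th column of $e_0^+$, i.e.\ $C_{-g}$ times the $n$-th standard unit vector; expanding along it yields $\det A=C_{-g}\det([A]_{\nwarrow n-1})$. In $[A]_{\nwarrow n-1}$ the entry $(1,j)$ with $1\le j\le n-1$ uses $(e_0^-)_{1,n+1-j}$, which is $0$ since $n+1-j\ge 2$ and $(e_0^-)_{1,k}=0$ for $k\ge 2$; hence the first row of $[A]_{\nwarrow n-1}$ is $(C_{-g},0,\dots,0)$, and a second expansion gives $\det([A]_{\nwarrow n-1})=C_{-g}\det(B)$, where $B$ is obtained from $[A]_{\nwarrow n-1}$ by deleting its first row and column. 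Reindexing $B$ via $i\mapsto i+1,\ j\mapsto j+1$ and using the Toeplitz identities $(e_0^+)_{i+1,j+1}=(e_0^+)_{i,j}$ and $(e_0^-)_{i+1,n-j}=(e_0^-)_{i,n-1-j}$, which hold for all indices since $e_0^\pm$ are banded Toeplitz matrices, identifies $B$ with $[e_0^+\pm e_0^-J_{n-2}^{(4g)}]_{\nwarrow n-2}$. Combining the two expansions with the block reduction of the first paragraph gives
\[
\det\bigl(e_0^+\pm e_1^-J_n^{(4g)}\bigr)
=C_{-g}^{4g-n+2}\det\bigl([e_0^+\pm e_0^-J_{n-2}^{(4g)}]_{\nwarrow n-2}\bigr)
=\det\bigl(e_0^+\pm e_0^-J_{n-2}^{(4g)}\bigr).
\]

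The argument is not deep; the only real work is index bookkeeping. One should verify, within the range $1\le i,j\le n\le 2g$, that the top-right $n\times(4g-n)$ block of $e_0^+\pm e_1^-J_n^{(4g)}$ vanishes, that the zero first column of $e_1^-$ really leaves the $n$-th column of $A$ equal to $C_{-g}$ times the last unit vector, that the $e_0^-$-part of the first row of $[A]_{\nwarrow n-1}$ vanishes, and that the entries matched in the final identification of $B$ line up correctly under the shift by one. All of these are elementary consequences of the explicit shape of the matrices, and I expect this bookkeeping, rather than any conceptual difficulty, to be the main obstacle.
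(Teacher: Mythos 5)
Your proof is correct and takes essentially the same route as the paper's: both arguments rest on the observation that zeroing the first column of $e_0^-$ makes $e_1^-J_n^{(4g)}$ a shift-by-one copy of $e_0^-J_{n-2}^{(4g)}$, so that peeling off one row and one column (together with the lower-triangular bookkeeping in powers of $C_{-g}$) converts one determinant into the other, and the cases $n=1,2$ are handled identically. The only difference is cosmetic: the paper deletes the first row and column of the full $4g\times 4g$ matrix and matches the remaining block with $[e_0^{+}\pm e_0^{-}J_{n-2}^{(4g)}]_{\nwarrow 4g-1}$, whereas you first reduce both determinants to their top-left corners before making the same identification.
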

\begin{proof}
We have 
$\det(e_0^{+} \pm e_1^{-}J_{1}^{(4g)}) = \det(e_0^{+})$, 
since $e_1^{-}J_{1}^{(4g)}=0$. 
On the other hand, $\det(e_0^{+} \pm e_1^{-} J_{2}^{(4g)}) = C_{-g}^{4g}= \det(e_0^{+})$, 
since $e_0^{+}$ is a lower triangular matrix of size $4g$ and 
\[
e_1^{-}J_{2}^{(4g)}=
\left[
\begin{array}{c|ccc}
0 & 0 & \cdots & 0  \\ \hline
\ast  & & & \\
\vdots & \multicolumn{2}{c}{\raisebox{1.5ex}[0pt]{\BigZero}} \\
\ast & & & 
\end{array}
\right].
\]
For $3 \leq n \leq 2g$, we have 
\[
e_0^{+} \pm e_1^{-}J_{n}^{(4g)}
= 
\left[
\begin{array}{c|ccc}
C_{-g} & 0 & \cdots & 0 \\ \hline
\ast & & & \\
\vdots & \multicolumn{3}{c}{\raisebox{1.0ex}[0pt]{$[e_0^{+} \pm e_0^{-}J_{n-2}^{(4g)}]_{\nwarrow 4g-1}$}} \\
\ast & & & 
\end{array}
\right].
\]
The $4g$-th row and the $4g$-th column of $e_0^{+} \pm e_0^{-}J_{n-2}^{(4g)}$ 
are $[0~\cdots~0~C_{g}~\cdots~C_{-g}]$ and ${}^{t}[0~\cdots~0~C_{-g}]$, respectively. 
Therefore, 
\[
\det([e_0^{+} \pm e_0^{-}J_{n-2}^{(4g)}]_{\nwarrow 4g-1})
= C_{-g}^{-1}\det(e_0^{+} \pm e_0^{-}J_{n-2}^{(4g)}).
\]
Hence, we obtain the equality of Lemma \ref{0319_3}. 
\end{proof}

\begin{lemma} \label{0418_10}
For $1 \leq n \leq 2g$, 
\begin{equation} \label{0331_4}
\det (E_0^{+} \pm E_{n}^{\sharp}J) 
= C_{-g}^{8g-2n}  \det([E_0^{+} + E_0^{-} J_n^{(8g)}]_{\nwarrow n})
\det([E_0^{+} - E_0^{-} J_n^{(8g)}]_{\nwarrow n}).
\end{equation}
\end{lemma}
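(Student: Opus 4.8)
The plan is to extract a refined determinant factorization from the computation already carried out in the proof of Lemma~\ref{lem_0314_2}, and then to evaluate each of its factors separately, using only the lower-triangular Toeplitz structure of $e_0^{\pm}$ together with the fact that $e_{2,n}^{\sharp}$ is supported on its first $n$ columns.

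First I would observe that the Laplace-expansion computation in the proof of Lemma~\ref{lem_0314_2} in fact yields the sharper identity
\[
\det(E_0^{+} \pm E_n^{\sharp}J) = \det(D)\,\det(A+B)\,\det(A-B),
\]
where $A = [e_0^{+}]_{\nwarrow 2g+n}$, $B = [e_{2,n}^{\sharp}]_{\swarrow 2g+n}$, and $D = \begin{bmatrix} [e_0^{+}]_{\searrow 2g-n} & \\ & [e_0^{+}]_{\nwarrow 2g-n} \end{bmatrix}$ (so $\det(D)$ is read as $1$ when $n=2g$); both signs on the left give the same right-hand side, which is precisely Lemma~\ref{lem_0314_2}. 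Since $e_0^{+}$ is a lower-triangular matrix of size $4g$ all of whose diagonal entries equal $C_{-g}$, every square block of $e_0^{+}$ sitting in a top-left or bottom-right corner is again lower triangular with constant diagonal $C_{-g}$; hence $\det[e_0^{+}]_{\nwarrow 2g-n} = \det[e_0^{+}]_{\searrow 2g-n} = C_{-g}^{2g-n}$, so $\det(D) = C_{-g}^{4g-2n}$.

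Next I would analyse $\det(A \pm B)$. By the definition of $e_{2,n}^{\sharp}$, all of its nonzero entries lie in its first $n$ columns, hence so do those of $B$; consequently columns $n+1, \cdots, 2g+n$ of $A \pm B$ agree with those of $A$, which, $A$ being lower triangular, vanish in the top $n$ rows. Thus $A \pm B$ is block lower triangular,
\[
A \pm B = \begin{bmatrix} [A \pm B]_{\nwarrow n} & 0 \\ \ast & [A]_{\searrow 2g} \end{bmatrix},
\]
and since $[A]_{\searrow 2g}$ is once more lower triangular with diagonal $C_{-g}$, we obtain $\det(A \pm B) = C_{-g}^{2g}\,\det([A \pm B]_{\nwarrow n})$. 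It then remains to identify this corner: on the one hand $[A]_{\nwarrow n} = [e_0^{+}]_{\nwarrow n} = [E_0^{+}]_{\nwarrow n}$, while on the other hand a direct comparison of the antitriangular block of $e_{2,n}^{\sharp}$ near its lower-left corner with the column-reversal of the top-left corner of the lower-triangular Toeplitz matrix $e_0^{-}$ gives $[B]_{\nwarrow n} = [e_0^{-}]_{\nwarrow n}J^{(n)} = [E_0^{-}J_n^{(8g)}]_{\nwarrow n}$, whence $[A \pm B]_{\nwarrow n} = [E_0^{+} \pm E_0^{-}J_n^{(8g)}]_{\nwarrow n}$. Multiplying the three evaluations, $\det(E_0^{+} \pm E_n^{\sharp}J) = C_{-g}^{4g-2n}\cdot C_{-g}^{2g}\det([E_0^{+}+E_0^{-}J_n^{(8g)}]_{\nwarrow n})\cdot C_{-g}^{2g}\det([E_0^{+}-E_0^{-}J_n^{(8g)}]_{\nwarrow n})$, which is exactly \eqref{0331_4}.

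The step that needs the most care is the entrywise matching $[B]_{\nwarrow n} = [e_0^{-}]_{\nwarrow n}J^{(n)}$: one has to keep track of the two index shifts involved (the shift built into the definition of $e_{2,n}^{\sharp}$ and the reversal $J^{(n)}$), and to treat the boundary value $n=2g$ — where $D$ is empty and $B = e_{2,n}^{\sharp}$ already fills the first $2g$ columns of a $4g \times 4g$ matrix — by a short separate inspection. Everything else is bookkeeping with triangular Toeplitz blocks.
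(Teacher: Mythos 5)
Your proposal is correct and follows essentially the same route as the paper: both start from the factorization $\det(E_0^{+}\pm E_n^{\sharp}J)=\det(D)\det(A+B)\det(A-B)$ extracted from the proof of Lemma \ref{lem_0314_2}, identify $A\pm B$ with the corresponding corner of $E_0^{+}\pm E_0^{-}J_n^{(8g)}$, and peel off the triangular factor $C_{-g}^{2g}$ by a block-triangular (Laplace) reduction. The only difference is the order of operations — you reduce $A\pm B$ to its $n\times n$ corner before identifying it, whereas the paper identifies the full $(2g+n)\times(2g+n)$ block first — which is immaterial.
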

\begin{proof} By the proof of Lemma \ref{lem_0314_2},
\[
\det (E_0^{+} \pm E_{n}^{\sharp}J) 
= C_{-g}^{4g-2n}
\det([e_0^{+}]_{\nwarrow 2g+n} + [e_{2,n}^\sharp]_{\swarrow  2g+n})
\det([e_0^{+}]_{\nwarrow 2g+n} - [e_{2,n}^\sharp]_{\swarrow  2g+n})
\]
Here  
\[
[e_0^{+}]_{\nwarrow 2g+n} \pm [e_{2,n}^\sharp]_{\swarrow  2g+n}
= 
[E_0^{+} \pm E_0^{-}J_n^{(8g)}]_{\nwarrow  2g+n},
\]
since 
$[e_0^{+}]_{\nwarrow 2g+n} = [E_0^+]_{\nwarrow  2g+n}$ 
and  
$[e_{2,n}^\sharp]_{\swarrow  2g+n} = [E_0^{-} J_n^{(8g)}]_{\nwarrow  2g+n}$ 
by definition of matrices. 
Applying Lemma \ref{mat_3} to the $2g$ columns of $[E_0^{+} \pm E_0^{-} J_n^{(8g)}]_{\nwarrow  2g+n}$ 
with indices $(n+1,n+2,\cdots,2g+n)$, we have 
\[
\det[E_0^{+} \pm E_0^{-} J_n^{(8g)}]_{\nwarrow  2g+n}
= C_{-g}^{2g} \det([E_0^{+} \pm E_0^{-} J_n^{(8g)}]_{\nwarrow n}), 
\]
since all entries above $[E_0^{+} \pm E_0^{-} J_n^{(8g)}]_{\searrow 2g}$ 
in $[E_0^{+} \pm E_0^{-} J_n^{(8g)}]_{\nwarrow  2g+n}$
are zero and  $[E_0^{+} \pm E_0^{-} J_n^{(8g)}]_{\searrow 2g}$ 
is lower triangular with diagonal $(C_{-g},\cdots,C_{-g})$.
Therefore, we obtain \eqref{0331_4}. 
\end{proof}

\begin{lemma} \label{0330_21} 
\[ 
\det(E_0^{+} \pm E_0^{-}J_{n}^{(8g)})\bigl(1 \mp ( u_{n}^{\pm}(n-1) + v_{n}^{\pm}(0) )\bigr)
=
\det(E_0^{+} \pm E_0^{-}J_{n-2}^{(8g)}).
\]
\end{lemma}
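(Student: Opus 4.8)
The plan is to prove the identity by Cramer's rule, after first collapsing the $8g\times 8g$ determinants to $4g\times 4g$ ones. Since $E_0^{\pm}$ is block-diagonal with both diagonal blocks equal to $e_0^{\pm}$, and since for $0\le m\le 2g$ the matrix $J_m^{(8g)}$ only affects the top-left $4g\times 4g$ block, the matrix $E_0^{+}\pm E_0^{-}J_m^{(8g)}$ is block-diagonal with diagonal blocks $e_0^{+}\pm e_0^{-}J_m^{(4g)}$ and $e_0^{+}$, so $\det(E_0^{+}\pm E_0^{-}J_m^{(8g)})=C_{-g}^{4g}\det(e_0^{+}\pm e_0^{-}J_m^{(4g)})$ for every $m\ge 0$ (under the convention $J_0=J_{-1}=0$, which gives the value $\det E_0^{+}=C_{-g}^{8g}$ for $m=0$). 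After cancelling the common factor $C_{-g}^{4g}$, the assertion becomes
\[
\det(e_0^{+}\pm e_0^{-}J_{n}^{(4g)})\bigl(1\mp(u_n^{\pm}(n-1)+v_n^{\pm}(0))\bigr)=\det(e_0^{+}\pm e_0^{-}J_{n-2}^{(4g)}).
\]

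Next I would observe that $e_0^{+}\pm e_0^{-}J_{n}^{(4g)}$ and $e_0^{+}\pm e_1^{-}J_{n}^{(4g)}$ coincide in every column except the $n$-th: the $n$-th column of $e_0^{-}J_{n}^{(4g)}$ is the first column $e_0^{-}\chi_{4g}$ of $e_0^{-}$, while that of $e_1^{-}J_{n}^{(4g)}$ vanishes because the left edge of $e_1^{-}$ was replaced by zeros. Expanding the determinant by linearity in the $n$-th column gives $\det(e_0^{+}\pm e_0^{-}J_{n}^{(4g)})=\det(e_0^{+}\pm e_1^{-}J_{n}^{(4g)})\pm\det B_n$, where $B_n$ denotes $e_0^{+}\pm e_0^{-}J_{n}^{(4g)}$ with its $n$-th column replaced by $e_0^{-}\chi_{4g}$; by Lemma~\ref{0319_3} the first term on the right is $\det(e_0^{+}\pm e_0^{-}J_{n-2}^{(4g)})$. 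So the assertion reduces to $\det B_n=(u_n^{\pm}(n-1)+v_n^{\pm}(0))\det(e_0^{+}\pm e_0^{-}J_{n}^{(4g)})$. Under the standing hypothesis~\eqref{nzR} the determinant here is nonzero: the operator ${\mathsf E}\pm{\mathsf E}^{\sharp}{\mathsf J}{\mathsf P}_n$ is invertible on $V_{a,n}$ by Lemmas~\ref{0330_6} and~\ref{lem0329_1}, so $\det(E_0^{+}\pm E_{n}^{\sharp}J)\ne 0$ by Lemma~\ref{0501_1}, and then $\det(e_0^{+}\pm e_0^{-}J_{n}^{(4g)})\ne 0$ by Lemma~\ref{0418_10}. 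By Cramer's rule, $\det B_n/\det(e_0^{+}\pm e_0^{-}J_{n}^{(4g)})$ is the $n$-th coordinate $y_n$ of the unique solution $y$ of $(e_0^{+}\pm e_0^{-}J_{n}^{(4g)})\,y=e_0^{-}\chi_{4g}$. Thus everything comes down to the single identity $y_n=u_n^{\pm}(n-1)+v_n^{\pm}(0)$.

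This last step is the heart of the argument, and I would prove it by comparing the $4g$-dimensional system for $y$ with the system \eqref{LE2} for $\Phi_n^{\pm}$. Let $u$ and $v$ be the first and the last $4g$ components of $\Phi_n^{\pm}$; then $u_n^{\pm}(n-1)$ is the $n$-th entry of $u$ and $v_n^{\pm}(0)$ the $2g$-th entry of $v$. Eliminating $v$ from \eqref{LE2} exactly as in the proof of Lemma~\ref{co1} (with $P=e_0^{+}$ and off-diagonal blocks $\pm e_{1,n}^{\sharp}$, $\pm e_{2,n}^{\sharp}$) gives $(e_0^{+}-e_{1,n}^{\sharp}(e_0^{+})^{-1}e_{2,n}^{\sharp})\,u=e_0^{-}\chi_{4g}$ and $v=\mp(e_0^{+})^{-1}e_{2,n}^{\sharp}u$. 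On the other side, the identity $(P\pm Q)^{-1}=(P-QP^{-1}Q)^{-1}(I\mp QP^{-1})$, applied with $P=e_0^{+}$ and $Q=e_0^{-}J_{n}^{(4g)}$, gives $y=\bigl(e_0^{+}-(e_0^{-}J_{n}^{(4g)})(e_0^{+})^{-1}(e_0^{-}J_{n}^{(4g)})\bigr)^{-1}\bigl(e_0^{-}\chi_{4g}\mp(e_0^{-}J_{n}^{(4g)})(e_0^{+})^{-1}e_0^{-}\chi_{4g}\bigr)$. Matching the $n$-th coordinate of $y$ with the $n$-th entry of $u$ plus the $2g$-th entry of $v$ then rests on the matrix identity $(e_0^{-}J_{n}^{(4g)})(e_0^{+})^{-1}(e_0^{-}J_{n}^{(4g)})=e_{1,n}^{\sharp}(e_0^{+})^{-1}e_{2,n}^{\sharp}$, which makes the two Schur complements coincide, together with a coordinate symmetry carrying the $n$-th row of $(e_0^{-}J_{n}^{(4g)})(e_0^{+})^{-1}$ to the $2g$-th column of $(e_0^{+})^{-1}e_{2,n}^{\sharp}$; both are checked from the explicit banded shapes of $e_0^{\pm}$, $e_{1,n}^{\sharp}$ and $e_{2,n}^{\sharp}$. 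Equivalently, and perhaps more transparently, one may multiply the $4g$-system through by $(e_0^{+})^{-1}$, using that $(e_0^{+})^{-1}e_0^{-}$ is the lower-triangular Toeplitz matrix with entries $R_{i-j}$, where $\Theta(z)=E^{\sharp}(z)/E(z)=\sum_{m\ge 0}R_m q^{imz}$; the system then becomes a finite truncation of \eqref{LE0} read off on the $X$-coefficients, and comparing it term by term with the relations among $u_n^{\pm}(k)$ and $v_n^{\pm}(l)$ produced by \eqref{LE0} via \eqref{0421_2}, with the index reversal of $J_{n}^{(4g)}$ accounting for the reversed storage order of the $v_n^{\pm}(l)$ in $\Phi_n^{\pm}$, yields $y_n=u_n^{\pm}(n-1)+v_n^{\pm}(0)$ once the coordinate $n$ is tracked.

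The main obstacle is exactly this identification of $y_n$ with $u_n^{\pm}(n-1)+v_n^{\pm}(0)$: this is where the block decompositions, the index reversals, and the $\pm$-sign bookkeeping all have to be reconciled, whereas the reductions leading up to it are essentially formal. The cases $n=1,2$ require no separate treatment beyond the convention $\det(e_0^{+}\pm e_0^{-}J_{n-2}^{(4g)}):=\det(e_0^{+})$, which coincides with the value Lemma~\ref{0319_3} supplies for $n=1,2$.
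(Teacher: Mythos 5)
Your first three reductions are correct and take a genuinely different route from the paper: collapsing the $8g\times 8g$ determinants to $4g\times 4g$ ones is Lemma \ref{0330_22}; the column-linearity step combined with Lemma \ref{0319_3} is sound (the $n$-th columns of $e_0^{+}\pm e_0^{-}J_n^{(4g)}$ and $e_0^{+}\pm e_1^{-}J_n^{(4g)}$ indeed differ by $\pm e_0^{-}\chi_{4g}$); and Cramer's rule correctly converts the assertion into the single identity $y_n=u_n^{\pm}(n-1)+v_n^{\pm}(0)$, where $y$ is the solution of $(e_0^{+}\pm e_0^{-}J_n^{(4g)})\,y=e_0^{-}\chi_{4g}$. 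For comparison, the paper never leaves the $8g\times 8g$ setting: it applies Cramer's rule to \eqref{LE2} to express $u_n^{\pm}(n-1)$ and $v_n^{\pm}(0)$ as ratios of $8g\times 8g$ determinants, absorbs everything by multilinearity of the determinant into a single determinant $D_n^{\pm}$, and evaluates both sides through Laplace expansions and the sign-mixed factorization of Lemma \ref{0418_10}, reducing everything to the corner minors $[E_0^{+}\pm E_0^{-}J_m^{(8g)}]_{\nwarrow m}$.

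The genuine gap is exactly at the point you yourself single out as the heart of the argument: the identity $y_n=u_n^{\pm}(n-1)+v_n^{\pm}(0)$ is never proved, and neither of your two sketches closes it. In the Schur-complement route, even granting the identity $(e_0^{-}J_n^{(4g)})(e_0^{+})^{-1}(e_0^{-}J_n^{(4g)})=e_{1,n}^{\sharp}(e_0^{+})^{-1}e_{2,n}^{\sharp}$ (which is true, because $(e_0^{+})^{-1}e_0^{-}$ is lower-triangular Toeplitz and the nonzero columns of $e_{1,n}^{\sharp}$ and $e_{2,n}^{\sharp}$ are shifted copies of columns of $e_0^{-}$), what remains is, writing $S$ for the common Schur complement, to show that the $n$-th entry of $S^{-1}(e_0^{-}J_n^{(4g)})(e_0^{+})^{-1}e_0^{-}\chi_{4g}$ equals the $2g$-th entry of $(e_0^{+})^{-1}e_{2,n}^{\sharp}\,S^{-1}e_0^{-}\chi_{4g}$. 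Here $S^{-1}$ sits on opposite sides of the two transfer matrices, so the equality does not follow from the banded shapes alone; it would require an additional symmetry of $S$ that you neither state nor verify. The second sketch is essentially the argument the paper deploys later, inside the proof of Proposition \ref{prop_0313_1}: there the full vector identity $(e_0^{+}\pm e_0^{-}J_n^{(4g)})R\Phi_n^{\pm}=e_0^{-}\chi_{4g}$, whose $k$-th entry of $R\Phi_n^{\pm}$ is $u_n^{\pm}(k-1)+v_n^{\pm}(n-k)$, is obtained by letting $a$ tend to $q^{(n-1)/2}$ in \eqref{LE1}, where $X(k)$ and $Y(n-1-k)$ coincide as functions of $z$, and then comparing coefficients of $X(k)$ for $-g\le k\le 3g-1$. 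That derivation uses the analytic realization of the equations rather than the finite system \eqref{LE2} alone, and ``comparing term by term once the coordinate $n$ is tracked'' is not a substitute for carrying it out. Until this identification is actually established, your proof of the lemma is incomplete, although the surrounding architecture is valid and, if the missing step were supplied, would give a shorter proof than the paper's.
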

\begin{proof}
Let $w={}^{t}\begin{bmatrix}C_g & \cdots & C_{-g} & 0 & \cdots & 0 \end{bmatrix}$ 
be the column vector of length $8g$. 
Denote by $u_{n,j}^{\pm}$ the columns of $E_0^+ \pm E_{n}^{\sharp}J$. 
By Cramer's formula, 
\[
\scalebox{0.9}{$
\aligned
\det&(E_0^{+} \pm E_{n}^{\sharp}J)
(1 \mp ( u_{n}^{\pm}(n-1) + v_{n}^{\pm}(0) )) \\
& = \det(E_0^{+} \pm E_{n}^{\sharp}J) 
  \mp  \det[u_{n,1}^{\pm}~\cdots~\stackrel{n}{\stackrel{\smile}{w}}~\cdots~u_{n,8g}^{\pm}] 
  \mp \det[u_{n,1}^{\pm}~ \cdots~\stackrel{6g}{\stackrel{\smile}{w}}~\cdots~u_{n,8g}^{\pm}] \\
& = \det(E_0^{+} \pm E_{n}^{\sharp}J) 
  \mp  \det[u_{n,1}^{\pm}~\cdots~\stackrel{n}{\stackrel{\smile}{w}}~\cdots~u_{n,8g}^{\pm}] 
  - \det[u_{n,1}^{\pm}~ \cdots~\stackrel{6g}{\stackrel{\smile}{ \pm w}}~\cdots~u_{n,8g}^{\pm}] \\
& = \det(E_0^{+} \pm E_{n}^{\sharp}J) 
  \mp  \det[u_{n,1}^{\pm}~\cdots~\stackrel{n}{\stackrel{\smile}{w}}~\cdots~u_{n,8g}^{\pm}] \\
& \quad - \det[u_{n,1}^{\pm}~ \cdots~\stackrel{6g}{\stackrel{\smile}{u_{n,6g}^{\pm}}}~\cdots~u_{n,8g}^{\pm}] 
  + \det[u_{n,1}^{\pm}~ \cdots~\stackrel{6g}{\stackrel{\smile}{u_{n,6g}^{\pm} \mp w}}~\cdots~u_{n,8g}^{\pm}] \\
& = 
  \mp  \det[u_{n,1}^{\pm}~\cdots~\stackrel{n}{\stackrel{\smile}{w}}~\cdots~u_{n,8g}^{\pm}] 
  + \det[u_{n,1}^{\pm}~ \cdots~\stackrel{6g}{\stackrel{\smile}{u_{n,6g}^{\pm} \mp w}}~\cdots~u_{n,8g}^{\pm}] \\
& = \mp  \det[u_{n,1}^{\pm}~\cdots~\stackrel{n}{\stackrel{\smile}{w}}~\cdots~\stackrel{6g}{\stackrel{\smile}{\pm w}}~\cdots~u_{n,8g}^{\pm}] 
    \mp  \det[u_{n,1}^{\pm}~\cdots~\stackrel{n}{\stackrel{\smile}{w}}~\cdots~\stackrel{6g}{\stackrel{\smile}{u_{n,6g}^{\pm} \mp w}}~\cdots~u_{n,8g}^{\pm}] \\
& \quad  + \det[u_{n,1}^{\pm}~ \cdots~\stackrel{6g}{\stackrel{\smile}{u_{n,6g}^{\pm} \mp w}}~\cdots~u_{n,8g}^{\pm}] \\
& = \det[u_{n,1}^{\pm}~\cdots~\stackrel{n}{\stackrel{\smile}{u_{n,6g}^{\pm} \mp w}}~\cdots
~\stackrel{6g}{\stackrel{\smile}{u_{n,6g}^{\pm} \mp w}}~\cdots~u_{n,8g}^{\pm}].
\endaligned
$}
\]
Therefore, it is sufficient to prove 
\begin{equation} \label{0331_1}
\aligned
\,& \det(E_0^{+} \pm E_0^{-}J_{n-2}^{(8g)})\det(E_0^{+} \pm E_{n}^{\sharp}J) \\
& =
\det(E_0^{+} \pm E_0^{-}J_{n}^{(8g)}) 
\det[u_{n,1}^{\pm}~\cdots~\stackrel{n}{\stackrel{\smile}{u_{n,6g}^{\pm} \mp w}}~\cdots
~\stackrel{6g}{\stackrel{\smile}{u_{n,6g}^{\pm} \mp w}}~\cdots~u_{n,8g}^{\pm}].
\endaligned
\end{equation}
Applying Lemma \ref{mat_3} to the columns $(n+1,n+2,\cdots,8g)$, we obtain 
\[
\aligned
\det(E_0^+ \pm E_0^-J_{n-2}^{(8g)})
& = \det([E_0^+ \pm E_0^-J_{n-2}^{(8g)}]_{\nwarrow  n-2})
\det([E_0^+ \pm E_0^-J_{n-2}^{(8g)}]_{\searrow 8g-n+2}) \\
& = C_{-g}^{8g-n+2} \det([E_0^+ \pm E_0^-J_{n-2}^{(8g)}]_{\nwarrow n-2}),
\endaligned 
\]
since all entries above $[E_0^+ \pm E_0^- J_{n-2}^{(8g)}]_{\searrow 8g-n+2}$ 
are zero and $[E_0^+ \pm E_0^- J_{n-2}^{(8g)}]_{\searrow 8g-n+2}$ 
is lower triangular with diagonal $(C_{-g},\cdots,C_{-g})$. 

By the proof of Lemma \ref{lem_0314_2}, 
\[
\det(E_0^+ \pm E_{n}^{\sharp}J)
= C_{-g}^{4g-n} 
\det([E_0^+]_{\nwarrow  2g+n} + [e_{1,n}^\sharp]_{\nearrow 2g+n})
\det([E_0^+]_{\nwarrow  2g+n} - [e_{1,n}^\sharp]_{\nearrow 2g+n}), 
\]
since $[e_{1,n}^\sharp]_{\nearrow 2g+n} = [e_{2,n}^\sharp]_{\swarrow  2g+n}$. 
We observe that 
\[
[E_0^+]_{\nwarrow  2g+n} \pm [e_{1,n}^\sharp]_{\nearrow 2g+n} 
= [E_0^+ \pm E_0^- J_n^{(8g)}]_{\nwarrow  2g+n}. 
\]
By Lemma \ref{0418_10}, 
\[
\aligned
\text{(LHS of \eqref{0331_1})}
& = C_{-g}^{16g-2n+2} \det([E_0^+ \pm E_0^-J_{n-2}^{(8g)}]_{\nwarrow n-2}) \\
& \quad \times \det([E_0^+ + E_0^- J_n^{(8g)}]_{\nwarrow n})\det([E_0^+ - E_0^- J_n^{(8g)}]_{\nwarrow n}).
\endaligned
\]
On the right-hand side of \eqref{0331_1}, we have 
\[
\det(E_0^+ \pm E_0^-J_{n}^{(8g)})
= C_{-g}^{8g-n} \det([E_0^+ \pm E_0^-J_{n}^{(8g)}]_{\nwarrow n})
\]
by applying Lemma \ref{mat_3} to the columns $(n+1,n+2,\cdots,8g)$ of $E_0^+ \pm E_0^-J_{n}^{(8g)}$. 
Therefore, it is sufficient to prove
\begin{equation} \label{0331_3}
\aligned
D_n^\pm & := \det[u_{n,1}^{\pm}~\cdots~\stackrel{n}{\stackrel{\smile}{u_{n,6g}^{\pm} \mp w}}~\cdots
~\stackrel{6g}{\stackrel{\smile}{u_{n,6g}^{\pm} \mp w}}~\cdots~u_{n,8g}^{\pm}] \\
& = C_{-g}^{8g-n+2} \det([E_0^+ \mp E_0^- J_n^{(8g)}]_{\nwarrow n})\det([E_0^+ \pm E_0^-J_{n-2}^{(8g)}]_{\nwarrow n-2}).
\endaligned
\end{equation}
Subtracting the $(6g-j)$-th column from the $(n-j)$-th column for $1 \leq j \leq n-1$, we obtain 
\[
\aligned
D_n^\pm = 
\det [v_{n,1}^{\pm}~\cdots~v_{n,n-1}^\pm~\stackrel{n}{\stackrel{\smile}{u_{n,6g}^{\pm} \mp w}}~\cdots
~\stackrel{6g}{\stackrel{\smile}{u_{n,6g}^{\pm} \mp w}}~\cdots~u_{n,8g}^{\pm}],
\endaligned
\]
where $v_{n,n-j}^\pm=u_{n,n-j}^{\pm} - u_{n,6g-j}$ for $1 \leq j \leq n-1$. 
Successively adding the $(n-j+1)$-th column to the $(6g-j-1)$-th column for $1 \leq j \leq n-2$, 
we obtain 
\[
\scalebox{0.9}{$
\aligned
D_n^\pm = 
\det [v_{n,1}^{\pm}~\cdots~v_{n,n-1}^\pm~\stackrel{n}{\stackrel{\smile}{u_{n,6g}^{\pm} \mp w}}~\cdots
~v_{n,6g-n+1}~\cdots~v_{n,6g-2}~\cdots~
~\stackrel{6g}{\stackrel{\smile}{u_{n,6g}^{\pm} \mp w}}~\cdots~u_{n,8g}^{\pm}],
\endaligned
$}
\]
where $v_{n,6g-j-1}^\pm=u_{n,6g-j-1}^{\pm} - v_{n,n-j+1}^\pm$ for $1 \leq j \leq n-2$. 
Note that the columns $(n+1,\cdots,6g-n)$ are not changed.
Put
\[
\scalebox{0.9}{$
M_{n}^{\pm} := \bigl[ v_{n,1}^{\pm}~\cdots~v_{n,n-1}^\pm~\stackrel{n}{\stackrel{\smile}{u_{n,6g}^{\pm} \mp w}}~\cdots
~v_{n,6g-n+1}~\cdots~v_{n,6g-2}~\cdots~
~\stackrel{6g}{\stackrel{\smile}{u_{n,6g}^{\pm} \mp w}}~\cdots~u_{n,8g}^{\pm} \bigr].
$}
\]
By the deformation of the matrix, we observe that 
\[
[M_{n}^{\pm}]_{\nwarrow 4g} = [E_0^+ \mp E_0^-J_{n}^{(8g)}]_{\nwarrow 4g}.
\]
Therefore, applying Lemma \ref{mat_3} to the columns $(n+1,n+2,\cdots,4g)$, we obtain 
\[
D_n^\pm 
= C_{-g}^{4g-n}\det([E_0^+ \mp E_0^-J_{n}^{(8g)}]_{\nwarrow n}) \det([M_{n}^{\pm}]_{\searrow 4g}).
\]
Here we note that 
\[
[M_{n}^{\pm}]_{\searrow 2g+n}  = [E_0^+ \mp E_0^-J_{n-2}^{(8g)}]_{\nwarrow 2g+n}
\]
and that the columns $(4g+1,\cdots,6g-n)$ 
are not changed by the deformation of matrix. 
Then, applying Lemma \ref{mat_3} to the rows $(1,\cdots, 2g-n)$ of $[M_{n}^{\pm}]_{\searrow 2g+n}$, we obtain 
\[
\aligned
D_n^\pm 
&= C_{-g}^{6g-2n}\det([E_0^+ \mp E_0^-J_{n}^{(8g)}]_{\nwarrow n}) \det([E_0^+ \mp E_0^-J_{n-2}^{(8g)}]_{\nwarrow 2g+n}) \\
&= C_{-g}^{8g-2n+2}\det([E_0^+ \mp E_0^-J_{n}^{(8g)}]_{\nwarrow n}) \det([E_0^+ \mp E_0^-J_{n-2}^{(8g)}]_{\nwarrow n-2}), 
\endaligned
\]
and \eqref{0331_3} is proved. 
\end{proof}

\begin{lemma} \label{0330_22} 
\[
\aligned
\det(E_0^{+} \pm E_0^{-} J_{n}^{(8g)})
& = C_{-g}^{4g}\cdot \det(e_0^{+} \pm e_0^{-} J_{n}^{(4g)}) \\
& = C_{-g}^{6g-1}\cdot \det(E^{+} \pm E^{-} J_{n}^{(2g+1)})
\endaligned
\]
for every $1 \leq n \leq 2g$. 
\end{lemma}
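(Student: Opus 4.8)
The plan is to reduce the size of the determinant twice. First, from the size‑$8g$ matrix to a size‑$4g$ matrix by exploiting the block‑diagonal shape of $E_0^{\pm}$; then from size $4g$ to size $2g+1$ by peeling off a lower‑triangular tail of size $4g-n$. The two structural facts driving everything are: (i) $e_0^{+}$ and $E^{+}$ are lower triangular with every diagonal entry equal to $C_{-g}$, and (ii) for $1\le n\le 2g$ the matrix $J_n^{(m)}$ alters only the first $n$ columns, and in those columns the band of $e_0^{\pm}$ is never truncated.

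\emph{Step 1: from $8g$ to $4g$.} Since $n\le 2g\le 4g$, with respect to the splitting $4g\,|\,4g$ we have $J_n^{(8g)}=\mathrm{diag}\bigl(J_n^{(4g)},0\bigr)$, while $E_0^{\pm}=\mathrm{diag}\bigl(e_0^{\pm},e_0^{\pm}\bigr)$ by definition; hence
\[
E_0^{+}\pm E_0^{-}J_n^{(8g)}=\mathrm{diag}\bigl(e_0^{+}\pm e_0^{-}J_n^{(4g)},\ e_0^{+}\bigr).
\]
As $e_0^{+}$ is lower triangular of size $4g$ with diagonal entries $C_{-g}$, $\det(e_0^{+})=C_{-g}^{4g}$, so $\det(E_0^{+}\pm E_0^{-}J_n^{(8g)})=C_{-g}^{4g}\det(e_0^{+}\pm e_0^{-}J_n^{(4g)})$, which is the first asserted equality.

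\emph{Step 2: a lower‑triangular tail.} Right multiplication by $J_n^{(m)}$ reverses the first $n$ columns and annihilates the remaining $m-n$ columns; thus the columns of $M:=e_0^{+}\pm e_0^{-}J_n^{(4g)}$ with index $>n$ coincide with those of $e_0^{+}$, and since $e_0^{+}$ is lower triangular these vanish in rows $1,\dots,n$. Hence the $n\times(4g-n)$ block in the upper‑right corner of $M$ is zero, $M$ is block lower triangular for the splitting $n\,|\,4g-n$, and applying Lemma \ref{mat_3} to columns $n+1,\dots,4g$,
\[
\det M=\det\bigl([M]_{\nwarrow n}\bigr)\cdot\det\bigl([M]_{\searrow 4g-n}\bigr),
\]
where $[M]_{\searrow 4g-n}=[e_0^{+}]_{\searrow 4g-n}$ is lower triangular with diagonal $C_{-g}$, so $\det([M]_{\searrow 4g-n})=C_{-g}^{4g-n}$. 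The identical argument applied to $N:=E^{+}\pm E^{-}J_n^{(2g+1)}$ (its columns of index $>n$ being those of the lower‑triangular $E^{+}$) gives $\det N=\det\bigl([N]_{\nwarrow n}\bigr)\cdot C_{-g}^{2g+1-n}$.

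\emph{Step 3: matching the $n\times n$ corners, and conclusion.} Because $n\le 2g$, the $(i,j)$‑entries of $e_0^{+}$ and of $E^{+}$ with $i,j\le n$ both equal $C_{-g+i-j}$ (with no band truncation), so $[e_0^{\pm}]_{\nwarrow n}=[E^{\pm}]_{\nwarrow n}$; moreover restricting $e_0^{-}J_n^{(4g)}$ to its top‑left $n\times n$ corner just reverses the columns of $[e_0^{-}]_{\nwarrow n}$, i.e.\ $[e_0^{-}J_n^{(4g)}]_{\nwarrow n}=[e_0^{-}]_{\nwarrow n}J^{(n)}$, and likewise $[E^{-}J_n^{(2g+1)}]_{\nwarrow n}=[E^{-}]_{\nwarrow n}J^{(n)}$. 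Therefore $[M]_{\nwarrow n}=[e_0^{+}]_{\nwarrow n}\pm[e_0^{-}]_{\nwarrow n}J^{(n)}=[E^{+}]_{\nwarrow n}\pm[E^{-}]_{\nwarrow n}J^{(n)}=[N]_{\nwarrow n}$. Combining Steps 2 and 3,
\[
\det(e_0^{+}\pm e_0^{-}J_n^{(4g)})=C_{-g}^{4g-n}\det\bigl([N]_{\nwarrow n}\bigr)=C_{-g}^{(4g-n)-(2g+1-n)}\det N=C_{-g}^{2g-1}\det(E^{+}\pm E^{-}J_n^{(2g+1)}),
\]
and multiplying by the factor $C_{-g}^{4g}$ from Step 1 gives $\det(E_0^{+}\pm E_0^{-}J_n^{(8g)})=C_{-g}^{6g-1}\det(E^{+}\pm E^{-}J_n^{(2g+1)})$.

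The only delicate point is the size bookkeeping in Step 3 — that $n\le 2g$ keeps the first $n$ columns inside the unbanded region of $e_0^{\pm}$ and lets one identify the $n\times n$ corners with those of $E^{\pm}$ — but this is exactly the kind of counting already carried out in Lemmas \ref{0418_10} and \ref{0330_21}, so I expect no genuine obstacle: the argument is essentially bookkeeping around the lower‑triangular structure with constant diagonal $C_{-g}$.
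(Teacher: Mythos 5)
Your proof is correct and uses essentially the same mechanism as the paper: the block--diagonal splitting $8g = 4g+4g$ for the first equality, followed by block--triangular reduction exploiting that the columns of index $>n$ are untouched by $J_n$ and that the lower--triangular tail has constant diagonal $C_{-g}$. The only cosmetic difference is in the second step, where the paper peels off directly to the $(2g+1)\times(2g+1)$ top--left corner of $e_0^{+}\pm e_0^{-}J_n^{(4g)}$, which it identifies verbatim with $E^{+}\pm E^{-}J_n^{(2g+1)}$ (leaving a tail of determinant $C_{-g}^{2g-1}$), whereas you reduce both determinants to their common $n\times n$ corner and compare the resulting powers of $C_{-g}$; the exponents agree either way.
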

\begin{proof}
If $1 \leq n \leq 2g$, we have 
\[
E_0^{+} \pm E_0^{-} J_{n}^{(8g)}
=
\left[ 
\begin{array}{cc|cc}
 & & &\\
 \multicolumn{2}{c|}{\raisebox{1.5ex}[0pt]{$e_0^{+} \pm e_0^{-}J_{n}^{(4g)}$}} 
& \multicolumn{2}{c}{\raisebox{1.5ex}[0pt]{\BigZero}}\\ \hline
 &  & & \\
\multicolumn{2}{c|}{\raisebox{1.5ex}[0pt]{\BigZero}} & \multicolumn{2}{c}{\raisebox{1.5ex}[0pt]{$e_0^{+}$}}  \\
\end{array}
\right].
\]
Therefore, 
\[
\det(E_0^{+} \pm E_0^{-}J_{n}^{(8g)})
= C_{-g}^{4g} \det(e_0^{+} \pm e_0^{-}J_{n}^{(4g)}),
\]
since $\det(e_0^{+})=C_{-g}^{4g}$.
Further, if $1 \leq n \leq 2g$, we have
\[
e_0^{+} \pm e_0^{-}J_{n}^{(4g)}
=
\left[ 
\begin{array}{cc|cc}
 & & &\\
 \multicolumn{2}{c|}{\raisebox{1.5ex}[0pt]{$E^+ \pm E^-J_{n}^{(2g+1)}$}} 
& \multicolumn{2}{c}{\raisebox{1.5ex}[0pt]{\BigZero}}\\ \hline
 &  & & \\
\multicolumn{2}{c|}{\raisebox{1.5ex}[0pt]{$\ast$}} & \multicolumn{2}{c}{\raisebox{1.5ex}[0pt]{$[e_0^+]_{\searrow (2g-1)}$}}  \\
\end{array}
\right].
\]
Hence 
\[
\det(E_0^{+} \pm E_0^{-} J_{n}^{(8g)})
= C_{-g}^{6g-1} \det(E^{+} \pm E^{-} J_{n}^{(2g+1)}),
\]
and the proof is complete.
\end{proof}

\begin{lemma} \label{0418_7} 
\[
\det(E_0^{+} \pm E_{1,1}^{\sharp} J)=\det(E_0^+)
\] 
and 
\[
\det(E_0^+ \pm E_{n,1}^{\sharp}J)
=\det(E_0^+ \pm E_{n-1}^{\sharp}J) 
\]
for $2 \leq n \leq 2g$.
\end{lemma}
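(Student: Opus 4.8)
The plan is to prove both identities by the block‑determinant calculus already used in the proofs of Lemmas~\ref{lem_0314_2}, \ref{0418_10} and \ref{0330_21}. Since $E_0^{+}$ is block diagonal with both diagonal blocks equal to the lower triangular matrix $e_0^{+}$, and $E_{n,1}^{\sharp}J$ has content only in its top‑right block $e_{3,n}^{\sharp}$ and its bottom‑left block $e_{2,n}^{\sharp}$, the matrix $E_0^{+}\pm E_{n,1}^{\sharp}J$ has diagonal blocks $e_0^{+}$ and off‑diagonal blocks $\pm e_{3,n}^{\sharp}$, $\pm e_{2,n}^{\sharp}$; likewise $E_0^{+}\pm E_{n-1}^{\sharp}J$ has diagonal blocks $e_0^{+}$ and off‑diagonal blocks $\pm e_{1,n-1}^{\sharp}$, $\pm e_{2,n-1}^{\sharp}$. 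So the task is to compare these two $8g\times 8g$ determinants.

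The case $n=1$ is immediate: by the definition of $e_{3,n}^{\sharp}$ its single active column (column $2g$ of the $4g\times 4g$ block) is the zero column when $n=1$, so $e_{3,1}^{\sharp}=0$; hence $E_0^{+}\pm E_{1,1}^{\sharp}J$ is block lower triangular with diagonal blocks $e_0^{+}$, and $\det(E_0^{+}\pm E_{1,1}^{\sharp}J)=(\det e_0^{+})^{2}=\det E_0^{+}$.

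For $2\leq n\leq 2g$ the structural fact I would exploit is that $e_{3,n}^{\sharp}$ is $e_{1,n-1}^{\sharp}$ with its band moved one column to the left and one row down (equivalently, $e_{1,n}^{\sharp}$ with its rightmost active column emptied), while $e_{2,n}^{\sharp}$ coincides with $e_{2,n-1}^{\sharp}$ except for one additional active column (column $n$) carrying the band $C_{g},\dots,C_{-g}$. Granting this, I would run the Laplace‑expansion reduction of Lemmas~\ref{lem_0314_2} and \ref{0418_10}: since $e_{3,n}^{\sharp}$ has nonzero entries in only about $n-1$ columns and $e_{2,n}^{\sharp}$ in only about $n$ columns, while the two diagonal $e_0^{+}$ blocks are lower triangular with every diagonal entry $C_{-g}$, one may expand (Lemma~\ref{mat_3}) along the large families of rows and columns that meet only triangular or zero parts of the matrix — simultaneously clearing the extra column of $e_{2,n}^{\sharp}$ against the triangular block beneath it — pulling out a power of $C_{-g}$ and reducing the determinant to $C_{-g}^{8g-2(n-1)}$ times the determinant of a small matrix assembled from corners of $e_0^{+}$ and the coefficients $C_m$. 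The one‑step shift relating $(e_{3,n}^{\sharp},e_{2,n}^{\sharp})$ to $(e_{1,n-1}^{\sharp},e_{2,n-1}^{\sharp})$ is arranged precisely so that this small matrix is the pair $[E_0^{+}+E_0^{-}J_{n-1}^{(8g)}]_{\nwarrow n-1}$, $[E_0^{+}-E_0^{-}J_{n-1}^{(8g)}]_{\nwarrow n-1}$ that occurs in Lemma~\ref{0418_10} with $n-1$ in place of $n$; hence $\det(E_0^{+}\pm E_{n,1}^{\sharp}J)=C_{-g}^{8g-2(n-1)}\det([E_0^{+}+E_0^{-}J_{n-1}^{(8g)}]_{\nwarrow n-1})\det([E_0^{+}-E_0^{-}J_{n-1}^{(8g)}]_{\nwarrow n-1})=\det(E_0^{+}\pm E_{n-1}^{\sharp}J)$. (An alternative, in the spirit of the proof of Lemma~\ref{0330_21}, is to perform the corresponding chain of Cramer expansions directly on $E_0^{+}\pm E_{n,1}^{\sharp}J$.)

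The main obstacle is the combinatorial bookkeeping behind that last step: one must pin down the exact index ranges on which $e_{3,n}^{\sharp}$ and $e_{2,n}^{\sharp}$ are supported, verify that these are the asserted one‑step shift and one‑column extension of the supports of $e_{1,n-1}^{\sharp}$ and $e_{2,n-1}^{\sharp}$, arrange the column operations that remove the extra column of $e_{2,n}^{\sharp}$ without disturbing the lower‑triangular diagonal blocks, and keep track of the sign contributed by the (mutually cancelling) permutations of rows versus columns. Everything past that point is the routine determinant manipulation with Lemmas~\ref{mat_2} and \ref{mat_3} carried out repeatedly in Section~\ref{section_4}.
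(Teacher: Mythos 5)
Your proposal is correct and takes essentially the same route as the paper, which likewise reduces both $\det(E_0^{+}\pm E_{n,1}^{\sharp}J)$ and $\det(E_0^{+}\pm E_{n-1}^{\sharp}J)$ by repeated Laplace expansion (Lemma \ref{mat_3}) to the common quantity $C_{-g}^{4g-2n+2}\det(A_{n-1}+B_{n-1})\det(A_{n-1}-B_{n-1})$ with $A_{n-1}=[e_0^{+}]_{\nwarrow 2g+n-1}$ and $B_{n-1}=[e_{2,n-1}^{\sharp}]_{\swarrow 2g+n-1}$, exploiting exactly the one-step shift relation between $(e_{3,n}^{\sharp},e_{2,n}^{\sharp})$ and $(e_{1,n-1}^{\sharp},e_{2,n-1}^{\sharp})$ that you identify. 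The bookkeeping you defer is carried out in the paper by first passing to the asymmetric block matrix $\begin{bmatrix} A_n & \pm B_{n-1}' \\ \pm B_n & A_n \end{bmatrix}$ with $B_{n-1}'=[e_{2,n-1}^{\sharp}]_{\swarrow 2g+n}$ and then expanding along its first row and its last column, which clears the extra row and column and restores the symmetry needed for Lemma \ref{mat_2}.
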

\begin{proof}
By the proof of Lemma \ref{lem_0314_2}, 
\begin{equation}\label{0418_8}
\det(E_0^+ \pm E_{n-1}^{\sharp} J) = C_{-g}^{4g-2n+2}\det(A_{n-1}+B_{n-1})\det(A_{n-1}-B_{n-1})
\end{equation}
for 
$A_{n-1} = [e_0^{+}]_{\nwarrow 2g+n-1}$ and $B_{n-1} = [e_{2,n-1}^{\sharp}]_{\swarrow  2g+n-1}$.
On the other hand, 
applying Lemma \ref{mat_3} to the columns $(2g+n+1,\cdots,4g)$  of $E_0^{+} \pm E_{n,1}^{\sharp}J$, 
and then applying Lemma \ref{mat_3} to the rows $(2g+n+1,4g)$ of the resulting matrix, 
we obtain
\[
\det( E_0^{+} \pm E_{n,1}^{\sharp}J) 
= C_{-g}^{4g-2n}\det \begin{bmatrix} A_n & \pm B_{n-1}^\prime \\ \pm B_n & A_n \end{bmatrix},
\]
where $B_{n-1}^\prime = [e_{2,n-1}^\sharp]_{\swarrow  2g+n}$. 
Applying Lemma \ref{mat_3} to the first row of 
\[
\begin{bmatrix} A_n & \pm B_{n-1}^\prime \\ \pm B_n & A_n \end{bmatrix}
\] 
and then applying Lemma \ref{mat_3} to the $(4g+2n-1)$-th column of the resulting matrix, 
we obtain 
\[
\aligned
\det \begin{bmatrix} A_n & \pm B_{n-1}^\prime \\ \pm B_n & A_n \end{bmatrix}
&= C_{-g}^{2}\det \begin{bmatrix} A_{n-1} & \pm B_{n-1} \\ \pm B_{n-1} & A_{n-1} \end{bmatrix} \\
&= C_{-g}^{2}\det(A_{n-1}+B_{n-1})\det(A_{n-1}-B_{n-1}).
\endaligned
\]
Hence 
\begin{equation}\label{0418_9}
\det( E_0^{+} \pm E_{n,1}^{\sharp}J) 
= C_{-g}^{4g-2n+2}\det(A_{n-1}+B_{n-1})\det(A_{n-1}-B_{n-1}). 
\end{equation}
We complete the proof by comparing \eqref{0418_8} and \eqref{0418_9}. 
\end{proof}

\begin{proof}[Proof of Proposition \ref{prop_0313_1}]
Fix one of $0 \leq n \leq 2g-1$. 
Taking the limit $a \to q^{n/2}$ in \eqref{0313_1}, we have 
$X(k)=q^{(k-n/2)iz}$ and $Y(l)=q^{-(l-n/2)iz}$.
Therefore, $X(k)=Y(l)$ as a function of $z$ if and only if $n = k+l$. 
Taking the limit $a \to q^{n/2}$ 
on the left-hand side of \eqref{LE1}, we obtain 
\begin{equation} \label{0330_13}
\scalebox{0.84}{$
\aligned
({\mathsf E} \pm {\mathsf E}^{\sharp}{\mathsf J}{\mathsf P}_n) \phi_n^{\pm} (q^{n/2},z) 
& = \pm v_n^{\pm}(0)\sum_{m=-g}^g C_{-m} X(m) + u_n^{\pm}(0) \sum_{m=-g}^g \left( C_m X(m) \pm C_{-m} X(n+m) \right) \\
& \quad + \sum_{m=-g}^g \sum_{k=1}^{\infty} C_m (  u_n^{\pm}(k) + v_n^{\pm}(n-k) ) X(k+m) \\
& \quad  \pm \sum_{m=-g}^g \sum_{k=1}^{n-1} C_{-m} (u_n^{\pm}(k) + v_n^{\pm}(n-k)) X(n-k+m)
\endaligned
$}
\end{equation}
and  
\begin{equation} \label{0330_14}
\scalebox{0.9}{$
\aligned
({\mathsf E} \pm {\mathsf E}^{\sharp}{\mathsf J}{\mathsf P}_{n+1}) \phi_{n+1}^{\pm} (q^{n/2},z)  
& = \sum_{m=-g}^g \sum_{k=0}^{\infty} C_m ( u_{n+1}^{\pm}(k) + v_{n+1}^{\pm}(n-k) ) X(k+m) \\
& \quad  \pm \sum_{m=-g}^g \sum_{k=0}^{n} C_{-m} ( u_{n+1}^{\pm}(k) + v_{n+1}^{\pm}(n-k) )X(n-k+m). 
\endaligned
$}
\end{equation}
By \eqref{0330_13}, $({\mathsf E} \pm {\mathsf E}^{\sharp}{\mathsf J}{\mathsf P}_n) \phi_n^{\pm} = {\mathsf E}^{\sharp} X(0)$ 
implies 
\begin{equation} \label{0330_15}
\aligned
(1 \mp v_n^{\pm}(0)) \sum_{m=-g}^g C_{-m} & X(m) 
 = u_n^{\pm}(0) \sum_{m=-g}^g \left( C_m X(m) \pm C_{-m} X(n+m) \right) \\
& + \sum_{m=-g}^g \sum_{k=1}^{\infty} C_m (  u_n^{\pm}(k) + v_n^{\pm}(n-k) ) X(k+m) \\
& \pm \sum_{m=-g}^g \sum_{k=1}^{n-1} C_{-m} (u_n^{\pm}(k) + v_n^{\pm}(n-k)) X(n-k+m). 
\endaligned
\end{equation}
Let we write 
\[
\scalebox{0.9}{$
L\Phi_n^{\pm}=\begin{bmatrix} 
u_n^{\pm}(0) \\[4pt] 
u_n^{\pm}(1) + v_n^{\pm}(n-1) \\[4pt] 
u_n^{\pm}(2) + v_n^{\pm}(n-2) \\
\vdots \\ 
u_n^{\pm}(4g-1) + v_n^{\pm}(n-4g+1) 
\end{bmatrix}, \quad
R\Phi_{n+1}^{\pm}=\begin{bmatrix} 
u_{n+1}^{\pm}(0) + v_{n+1}^{\pm}(n) \\[4pt] 
u_{n+1}^{\pm}(1) + v_{n+1}^{\pm}(n-1) \\[4pt] 
u_{n+1}^{\pm}(2) + v_{n+1}^{\pm}(n-2) \\
\vdots \\
u_{n+1}^{\pm}(4g-1) + v_{n+1}^{\pm}(n-4g+1)
\end{bmatrix}.
$}
\]
Then, comparing coefficients of $X(k)$ ($-g \leq k \leq 3g-1$) in  \eqref{0330_15}, we obtain
\[
(e_0^{+} \pm e_1^{-}J_{n+1}^{(4g)})L\Phi_{n}^{\pm}
= (1 \mp v_n^{\pm}(0))
e_0^{-}\chi_{4g}.
\]
Note that $\det(e_0^{+} \pm e_1^{-} J_{n+1}^{(4g)})\not=0$ 
by Lemmas \ref{0319_3}, \ref{0418_10}, and \ref{0330_22},  
since $\mathsf{I} \pm \Theta{\mathsf J}{\mathsf P}_n$ are 
invertible on $W_{a,n}$. 
Therefore, 
\begin{equation} \label{0330_16}
\aligned
L\Phi_{n}^{\pm}
= \frac{1 \mp v_n^{\pm}(0)}{\det(e_0^{+} \pm e_1^{-} J_{n+1}^{(4g)})} 
{\rm adj}(e_0^{+} \pm e_1^{-} J_{n+1}^{(4g)})
e_0^{-} \chi_{4g}.
\endaligned
\end{equation}
On the other hand, comparing coefficients of $X(k)$ ($-g \leq k \leq 3g-1$) in 
$({\mathsf E} \pm {\mathsf E}^{\sharp}{\mathsf J}{\mathsf P}_{n+1}) \phi_{n+1}^{\pm} = {\mathsf E}^{\sharp} X(0)$ 
together with \eqref{0330_14} , we obtain
\[
(e_0^{+} \pm e_0^{-} J_{n+1}^{(4g)})R\Phi_{n+1}^{\pm}
=
e_0^{-}\chi_{4g}.
\]
Note that $\det(e_0^{+} \pm e_0^{-}J_{n+1}^{(4g)})\not=0$ 
by Lemma \ref{0418_10} and \ref{0330_22}, 
since $\mathsf{I} \pm \Theta{\mathsf J}{\mathsf P}_{n+1}$ are 
invertible on $W_{a,n+1}$. 
This implies 
\[
\det(e_0^{+} \pm e_0^{-} J_{n+1}^{(4g)})R\Phi_{n+1}^{\pm}
= {\rm adj}(e_0^{+} \pm e_0^{-} J_{n+1}^{(4g)})
e_0^{-} \chi_{4g}.
\]  
Then, by Lemma \ref{0319_1}, we have
\begin{equation} \label{0330_17}
\det(e_0^{+} \pm e_0^{-} J_{n+1}^{(4g)}) R\Phi_{n+1}^{\pm}
= {\rm adj}(e_0^{+} \pm e_1^{-} J_{n+1}^{(4g)})
e_0^{-} \chi_{4g}.
\end{equation}
By \eqref{0330_16}, \eqref{0330_17}, and Lemma \ref{0319_3}, we obtain 
\begin{equation} \label{0319_4}
L\Phi_{n}^{\pm}
= 
\frac{\det(e_0^{+} \pm e_0^{-} J_{n+1}^{(4g)})}{ \det(e_0^{+} \pm e_0^{-} J_{n-1}^{(4g)})}
(1 \mp v_n^{\pm}(0))
R\Phi_{n+1}^{\pm},
\end{equation}
where we understand that 
$\det(e_0^{+} \pm e_0^{-} J_{n-1}^{(4g)})=\det(e_0^{+})$ if $n=0,1$.
Passing to the limit $a \to q^{n/2}$ in the equalities in the proof of Lemma \ref{co2}, we have
\begin{equation} \label{0330_19}
\aligned
({\mathsf I} \pm {\mathsf J}){\mathsf E}\, \phi_{n}^{\pm} (q^{n/2},z) 
& = (1 \mp v_n^{\pm}(0))\sum_{m=-g}^{g} (C_{-m}X(m) \pm C_{m}X(n-m)) \\ 
& \quad - u_n^{\pm}(0) \sum_{m=-g}^{g} (C_{m} X(m) \pm C_{-m}X(n+m)) \\
& \quad - \sum_{m=-g}^{g} C_{m}\sum_{k=1}^{n-1} (u_n^{\pm}(k) + v_n^{\pm}(n-k))X(k+m) \\
& \quad  \mp \sum_{m=-g}^{g} C_{-m}\sum_{k=1}^{n-1} (u_n^{\pm}(k) + v_n^{\pm}(n-k))X(n-k+m)
\endaligned
\end{equation}
and 
\begin{equation} \label{0330_20}
\aligned
({\mathsf I} \pm {\mathsf J}){\mathsf E}\, \phi_{n+1}^{\pm}(q^{n/2},z) 
&= \sum_{m=-g}^{g} (C_{-m}X(m) \pm C_{m}X(n+m)) \\ 
& - \sum_{m=-g}^{g} C_{m}\sum_{k=0}^{n} (u_{n+1}^{\pm}(k) + v_{n+1}^{\pm}(n-k))X(k+m) \\
& \mp \sum_{m=-g}^{g} C_{-m}\sum_{k=0}^{n} (u_{n+1}^{\pm}(k) + v_{n+1}^{\pm}(n-k)) X(n-k+m). 
\endaligned
\end{equation}
Applying \eqref{0319_4} to \eqref{0330_19}, we obtain
\[
\aligned
({\mathsf I} & \pm {\mathsf J}){\mathsf E}\, \phi_{n}^{\pm} (q^{n/2},z)/(1 \mp v_n^{\pm}(0)) \\ 
& = \left(
1 \pm (u_{n+1}^{\pm}(n) + v_{n+1}^{\pm}(0)) 
\frac{\det(e_0^{+} \pm e_0^{-} J_{n+1}^{(4g)})}{ \det(e_0^{+} \pm e_0^{-} J_{n-1}^{(4g)})}
\right) \sum_{m=-g}^{g} ( C_{-m}X(m) \pm C_{m}X(n+m))  \\
& \quad -  
\frac{\det(e_0^{+} \pm e_0^{-}J_{n+1}^{(4g)})}{\det(e_0^{+} \pm e_0^{-} J_{n-1}^{(4g)})}
\sum_{m=-g}^{g} C_{m}\sum_{k=0}^{n} (u_{n+1}^{\pm}(k) + v_{n+1}^{\pm}(n-k))X(k+m) \\
& \quad \mp 
\frac{\det(e_0^{+} \pm e_0^{-} J_{n+1}^{(4g)})}{\det(e_0^{+} \pm e_0^{-} J_{n-1}^{(4g)})}
\sum_{m=-g}^{g} C_{-m}\sum_{k=0}^{n} (u_{n+1}^{\pm}(k) + v_{n+1}^{\pm}(n-k))X(n-k+m), 
\endaligned
\]
and then, therefore, by \eqref{0330_20} and Lemma \ref{0330_21}, 
\[
({\mathsf I} \pm {\mathsf J}){\mathsf E}\, \phi_{n}^{\pm} (q^{n/2},z)
=\frac{\det(e_0^{+} \pm e_0^{-}J_{n+1}^{(4g)})}{ \det(e_0^{+} \pm e_0^{-} J_{n-1}^{(4g)})}
 (1 \mp v_n^{\pm}(0)) ({\mathsf I} \pm {\mathsf J}){\mathsf E}\, \phi_{n+1}^{\pm} (q^{n/2},z).
\]
By \eqref{LE2} and Cramer's formula, we have
\[
1 \mp v_n^{\pm}(0)=\frac{\det(E_{0}^{+} \pm E_{n,1}^{\sharp}J)}{\det(E_0^{+} \pm E_n^{\sharp}J)}.
\]
Hence we obtain \eqref{0331_5} and \eqref{0331_6} by Lemma \ref{0330_22} and Lemma \ref{0418_7}. 
\end{proof}

\begin{proposition} \label{0419_1}
For $1 \leq n \leq 2g$, 
define the column vectors $A_n^{\ast\ast}$ and $B_n^{\ast\ast}$ of length $2g-n+1$ by
\begin{equation*}
\aligned
A_n^{\ast\ast}=A_n^{\ast\ast}(\underline{C}) 
 &= {}^{t}\begin{bmatrix} A_n^{\ast}(2g+1) & \cdots &  A_n^{\ast}(n+1) \end{bmatrix}, \\
B_n^{\ast\ast}=B_n^{\ast\ast}(\underline{C}) 
 &= {}^{t}\begin{bmatrix} B_n^{\ast}(2g+1) & \cdots &  B_n^{\ast}(n+1) \end{bmatrix}, 
\endaligned
\end{equation*}
where $v(j)$ means the $j$th component of a column vector $v$. 
Define the column vectors $\Omega_n$ of length $4g-2n+2$ by 
\begin{equation} \label{0501_2}
\Omega_n = {}^t 
\begin{bmatrix} 
(\prod_{j=1}^{n} \alpha_j) \, {}^t\! A_n^{\ast\ast} & 
(\prod_{j=1}^{n} \beta_j) \, {}^t B_n^{\ast\ast} \end{bmatrix}.
\end{equation}
Then 
\begin{equation} \label{0427_1}
P_{2g-(n+1)}(\gamma_{n+1}) \Omega_{n+1}=Q_{2g-(n+1)} \Omega_{n}
\end{equation}
for every $0 \leq n \leq 2g-1$, where $\gamma_n$ is of \eqref{0425_3}.
\end{proposition}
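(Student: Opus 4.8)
The plan is to check the matrix identity \eqref{0427_1} block by block, after reducing everything to the behaviour of the four functions $A_n^\ast(a,z)$, $B_n^\ast(a,z)$, $A_{n+1}^\ast(a,z)$, $B_{n+1}^\ast(a,z)$ at the junction point $a=q^{n/2}$. Throughout put $k:=2g-(n+1)$, so that $m_k=\gamma_{n+1}$ in \eqref{0427_1} and $k$ runs over $\{0,\dots,2g-1\}$ as $n$ does.

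First I would record, from \eqref{0418_6}, \eqref{0330_7}, Lemma \ref{0320_4} and Lemma \ref{co2}, that for $\nu\in\{n,n+1\}$
\[
A_\nu^\ast(a,z)=\frac12\sum_{k'}c_\nu(k')\bigl(X(k')+Y(k')\bigr),\qquad
B_\nu^\ast(a,z)=\frac{1}{2i}\sum_{k'}c_\nu(k')\bigl(X(k')-Y(k')\bigr),
\]
where the reals $c_\nu(k')$ are exactly the entries of the column vector $A_\nu^{\ast\ast}$, and moreover $A_\nu^{\ast\ast}=B_\nu^{\ast\ast}$ (only the top components of the vectors $A_\nu^\ast$, $B_\nu^\ast$ enter these $\ast\ast$-vectors, and there Lemma \ref{co2} identifies $p_\nu^+(k')+q_\nu^+(k')$ with $p_\nu^-(k')-q_\nu^-(k')$). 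Since $X(k')=(q^{k'}/a)^{iz}$, at $a=q^{n/2}$ one has the collision $X(k')=Y(n-k')$ identically in $z$, so the two sums collapse onto the linearly independent family $\{q^{ijz/2}\}$ with $j$ running over $n-2g\le j\le 2g-n$, $j\equiv n\ (\mathrm{mod}\ 2)$; reading off coefficients, the coefficient of $q^{ijz/2}$ in $A_\nu^\ast(q^{n/2},z)$ is $\frac12\bigl(c_\nu(\tfrac{n+j}{2})+c_\nu(\tfrac{n-j}{2})\bigr)$ and in $B_\nu^\ast(q^{n/2},z)$ is $\frac{1}{2i}\bigl(c_\nu(\tfrac{n+j}{2})-c_\nu(\tfrac{n-j}{2})\bigr)$, with $c_\nu$ taken to vanish outside the index range of $A_\nu^{\ast\ast}$. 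The point of this step is then to recognise these ``folded'' coefficient vectors as $V_k^+A_{n+1}^{\ast\ast}$, $V_k^-B_{n+1}^{\ast\ast}$ (for $\nu=n+1$) and $W_k^+A_n^{\ast\ast}$, $W_k^-B_n^{\ast\ast}$ (for $\nu=n$): the diagonal part of these matrices carries $c_\nu(\tfrac{n+j}{2})$, the anti-diagonal part carries $c_\nu(\tfrac{n-j}{2})$, the extra last column of $W_k^\pm$ absorbs the one extra entry of the level-$n$ vectors, and the endpoint indices $j=\pm(2g-n)$ and $j=0$ (present when $n$ is even) give the unpaired rows; the two parity cases in the definitions of $V_k^\pm$, $W_k^\pm$ correspond to the parity of $k$ (equivalently of $n$). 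This reconciliation of indices and signs is the one genuinely delicate part.

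Granting that, I would then use the defining relations
\[
A_n^\ast(q^{n/2},z)=\alpha_{n+1}A_{n+1}^\ast(q^{n/2},z),\qquad
B_n^\ast(q^{n/2},z)=\beta_{n+1}B_{n+1}^\ast(q^{n/2},z),
\]
which hold identically in $z$ by \eqref{0330_12} and the $z$-independence of $\alpha_{n+1}$, $\beta_{n+1}$ proved in Proposition \ref{prop_0313_1}. Comparing coefficients of each $q^{ijz/2}$ turns these, via the step above, into
\[
W_k^+A_n^{\ast\ast}=\alpha_{n+1}V_k^+A_{n+1}^{\ast\ast},\qquad
W_k^-B_n^{\ast\ast}=\beta_{n+1}V_k^-B_{n+1}^{\ast\ast};
\]
multiplying through by $\prod_{j=1}^{n}\alpha_j$ resp. $\prod_{j=1}^{n}\beta_j$ and using $\prod_{j=1}^{n+1}\alpha_j=(\prod_{j=1}^{n}\alpha_j)\alpha_{n+1}$, $\prod_{j=1}^{n+1}\beta_j=(\prod_{j=1}^{n}\beta_j)\beta_{n+1}$ together with \eqref{0501_2}, these say exactly that the first $\tfrac{k+3}{2}$ (resp. $\tfrac{k+2}{2}$) rows and the next $\tfrac{k+1}{2}$ (resp. $\tfrac{k+2}{2}$) rows of $P_k(\gamma_{n+1})\Omega_{n+1}$ and of $Q_k\Omega_n$ coincide. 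For the remaining $k$ rows: those of $Q_k$ are zero, and the bottom block $[\pmb{0}I_k \mid -m_k\cdot\pmb{0}I_k]$ of $P_k(m_k)$ applied to $\Omega_{n+1}$ gives $(\prod_{j=1}^{n+1}\alpha_j)(\pmb{0}I_k)A_{n+1}^{\ast\ast}-\gamma_{n+1}(\prod_{j=1}^{n+1}\beta_j)(\pmb{0}I_k)B_{n+1}^{\ast\ast}$, which vanishes since $B_{n+1}^{\ast\ast}=A_{n+1}^{\ast\ast}$ and $\gamma_{n+1}\prod_{j=1}^{n+1}\beta_j=\prod_{j=1}^{n+1}\alpha_j$ by \eqref{0425_3}. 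The small cases $k=0,1$ (i.e. $n=2g-1,2g-2$) are read off directly from the explicit $P_0,Q_0,P_1,Q_1$, which completes the argument for all $0\le n\le 2g-1$. Alternatively, when $\gamma_{n+1}\neq 0$ one rewrites \eqref{0427_1} as $\Omega_{n+1}=P_k(\gamma_{n+1})^{-1}Q_k\Omega_n$ and uses the explicit $P_k(m_k)^{-1}Q_k$ of Lemma \ref{lem_203}; the general case then follows since both sides are rational in $\underline{C}$.

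The expected main obstacle is exactly the bookkeeping in the second paragraph: matching the diagonal-plus-anti-diagonal structure of $V_k^\pm$, $W_k^\pm$, in both parities of $k$, to the reflection $c_\nu(k')\mapsto c_\nu(n-k')$ forced at $a=q^{n/2}$, and treating the unpaired endpoint terms correctly. Everything else is formal.
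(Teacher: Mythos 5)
Your proposal is correct and follows essentially the same route as the paper: evaluate at the junction $a=q^{n/2}$ where $X(k')=Y(n-k')$, fold the coefficient vectors of $({\mathsf I}\pm{\mathsf J}){\mathsf E}\phi_{n}^{\pm}$ and $({\mathsf I}\pm{\mathsf J}){\mathsf E}\phi_{n+1}^{\pm}$ (the paper's \eqref{0320_1}), and feed in the proportionality relations $A_n^\ast(q^{n/2},z)=\alpha_{n+1}A_{n+1}^\ast(q^{n/2},z)$, $B_n^\ast(q^{n/2},z)=\beta_{n+1}B_{n+1}^\ast(q^{n/2},z)$ from Proposition \ref{prop_0313_1} (the paper's \eqref{0320_2}). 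Your explicit treatment of the bottom zero block via $A_{n+1}^{\ast\ast}=B_{n+1}^{\ast\ast}$ (Lemma \ref{co2}) and $\gamma_{n+1}=\prod\alpha_j/\prod\beta_j$ only makes precise a step the paper leaves implicit.
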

\begin{proof}
We have 
\begin{equation} \label{0423_5}
\Omega_0= 
{}^{t} 
\begin{bmatrix} C_{-g} & C_{-g+1}& \cdots & C_{g-1} & C_{g} & C_{-g} & C_{-g+1}& \cdots & C_{g-1} & C_{g}  \end{bmatrix},
\end{equation}
since $A_0^\ast=(I+J)E_0^- \chi_{8g}$ 
and $B_0^\ast=(I-J)E_0^- \chi_{8g}$ by $E_0^\sharp J =0$, 
where $I=I^{(8g)}$ and  $J=J^{(8g)}$. 
By Proposition \ref{prop_0313_1} , we have 
\begin{equation} \label{0320_2}
\aligned
({\mathsf I} + {\mathsf J}){\mathsf E}  \phi_n^{+}(q^{n/2},z) 
& = \alpha_{n+1}({\mathsf I} + {\mathsf J}){\mathsf E}  \phi_{n+1}^{+}(q^{n/2},z), \\
({\mathsf I} - {\mathsf J}){\mathsf E}  \phi_n^{-}(q^{n/2},z) 
& = \beta_{n+1}({\mathsf I} - {\mathsf J}){\mathsf E}  \phi_{n+1}^{-}(q^{n/2},z).
\endaligned 
\end{equation}
On the other hand, writing 
\[
({\mathsf I} \pm {\mathsf J}){\mathsf E}  \phi_n^{\pm}(a,z)
= \sum_{k=-g+n}^{g} r_n^{\pm}(k)(X(k) \pm Y(k)),
\]
we have 
\begin{equation} \label{0320_1}
\aligned
({\mathsf I} \pm {\mathsf J}){\mathsf E}  \phi_n^{\pm}(q^{n/2},z)
& = \sum_{k=-g+n}^{g} (r_n^{\pm}(k) \pm  r_n^{\pm}(n-k)) X(k) \\
& = \frac{1}{2}\sum_{k=-g+n}^{g} (r_n^{\pm}(k) \pm  r_n^{\pm}(n-k)) (X(k) \pm X(n-k)), \\
({\mathsf I} \pm {\mathsf J}){\mathsf E}  \phi_{n+1}^{\pm}(q^{n/2},z)
& = \sum_{k=-g+n+1}^{g-1} (r_{n+1}^{\pm}(k) \pm r_{n+1}^{\pm}(n-k)) X(k) \\
& \quad + r_{n+1}^{\pm}(g)(X(g) \pm X(n-g)) \\
& = \frac{1}{2}\sum_{k=-g+n+1}^{g-1} (r_{n+1}^{\pm}(k) \pm r_{n+1}^{\pm}(n-k)) (X(k) \pm X(n-k)) \\
& \quad + r_{n+1}^{\pm}(g)(X(g) \pm X(n-g)), 
\endaligned
\end{equation}
since 
\[
\scalebox{0.9}{$
\aligned
({\mathsf I} \pm {\mathsf J}){\mathsf E}  \phi_n^{\pm}(q^{n/2},z) 
& = u_n^{\pm}(0)  \sum_{m=-g}^g C_m (X(m) \pm  X(n-m)) \\ 
& \quad + \sum_{m=-g}^g\sum_{k=1}^{\infty}C_m (u_n^{\pm}(k)+v_n^{\pm}(n-k))(X(k+m)\pm X(n-k-m))
\endaligned
$}
\]
and 
\[
\scalebox{0.9}{$
\aligned
({\mathsf I} \pm {\mathsf J}){\mathsf E}  \phi_{n+1}^{\pm}(q^{n/2},z)  
& = \sum_{m=-g}^g \sum_{k=0}^{\infty}C_m (u_{n+1}^{\pm}(k)+v_{n+1}^{\pm}(n-k))(X(k+m)\pm X(n-k-m))
\endaligned
$}
\]
by a rearrangement of \eqref{0330_19} and \eqref{0330_20}. 
Noting
\[
r_n^+(k)=A_n^\ast(k+g+1), \quad r_n^-(k)=B_n^\ast(k+g+1), 
\]
we obtain \eqref{0427_1} for $0 \leq n \leq 2g-1$ by \eqref{0320_2} and \eqref{0320_1}. 
\end{proof}

\begin{proposition} \label{0501_3}
Let $\underline{C}=(C_g,C_{g-1},\cdots,C_{-g}) \in \R^\ast \times \R^{2g-1} \times \R^\ast$ 
and define $E(z)$ by \eqref{0418_2}. 
Suppose that $\det D_n(\underline{C})\not=0$ for every $1 \leq n \leq 2g$.  
Then the pair of functions $(A(a,z),B(a,z))$ of \eqref{0330_10} satisfies the boundary conditions \eqref{0418_4}.
\end{proposition}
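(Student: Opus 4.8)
The plan is to check the two halves of the boundary condition \eqref{0418_4} by direct evaluation at the two endpoints $a=1$ and $a\nearrow q^{g}$, using the closed forms of $A_{n}^{\ast},B_{n}^{\ast}$ supplied by Lemma \ref{0320_4} together with the fact (Proposition \ref{prop_0313_1}) that the ratios \eqref{0330_12} defining $\alpha_{n},\beta_{n}$ are $z$-independent, i.e.\ genuine identities of entire functions.

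For the left endpoint $a=1$: since ${\mathsf P}_{0}|_{V_{a,0}}=0$ (the $X$-part of $V_{a,0}$ sits on indices $k\ge 0$ and its $Y$-part on $l\le -1$), the $n=0$ case of \eqref{LE1} reduces to ${\mathsf E}\,\phi_{0}^{\pm}={\mathsf E}^{\sharp}X(0)$, so ${\mathsf E}\,\phi_{0}^{\pm}(a,z)=E^{\sharp}(z)X(0)$, and $X(0)=1$ at $a=1$. Hence $A_{0}^{\ast}(1,z)=\tfrac12({\mathsf I}+{\mathsf J})E^{\sharp}(z)=\tfrac12(E^{\sharp}(z)+E(z))=A(z)$ and likewise $B_{0}^{\ast}(1,z)=\tfrac1{2i}(E^{\sharp}(z)-E(z))=B(z)$, using ${\mathsf J}E^{\sharp}=(E^{\sharp})^{\sharp}=E$ and \eqref{def_A}--\eqref{def_B}. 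Since $a=1$ lies in the range $q^{(1-1)/2}\le a<q^{1/2}$, definitions \eqref{0330_10}--\eqref{0330_11} give $A(1,z)=\alpha_{1}A_{1}^{\ast}(1,z)$ and $B(1,z)=\beta_{1}B_{1}^{\ast}(1,z)$; reading \eqref{0330_12} for $n=1$ as an identity of entire functions (Proposition \ref{prop_0313_1}) gives $\alpha_{1}A_{1}^{\ast}(1,z)=A_{0}^{\ast}(1,z)$ and $\beta_{1}B_{1}^{\ast}(1,z)=B_{0}^{\ast}(1,z)$, whence $A(1,z)=A(z)$ and $B(1,z)=B(z)$.

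For the right endpoint I would argue as follows. On the last interval $q^{(2g-1)/2}\le a<q^{g}$ one has $A(a,z)=\alpha_{1}\cdots\alpha_{2g}\,A_{2g}^{\ast}(a,z)$ and $B(a,z)=\beta_{1}\cdots\beta_{2g}\,B_{2g}^{\ast}(a,z)$. In Lemma \ref{0320_4} with $n=2g$ the sum $\sum_{k=-g+n}^{g}$ collapses to the single term $k=g$, so $A_{2g}^{\ast}(a,z)=\tfrac12 C_{-g}\bigl(X(g)+Y(g)\bigr)$ and $B_{2g}^{\ast}(a,z)=\tfrac1{2i}C_{-g}\bigl(X(g)-Y(g)\bigr)$ with $X(g)=(q^{g}/a)^{iz}$, $Y(g)=(q^{g}/a)^{-iz}$. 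Letting $a\nearrow q^{g}$ makes $X(g),Y(g)\to 1$, so $\lim_{a\nearrow q^{g}}B(a,z)=\beta_{1}\cdots\beta_{2g}\cdot 0=0$ and $\lim_{a\nearrow q^{g}}A(a,z)=\alpha_{1}\cdots\alpha_{2g}\,C_{-g}$, a constant not depending on $z$. To pin this constant down I would specialize to $z=0$: there $X(k)=Y(l)=1$, so by \eqref{0330_7} each $A_{n}^{\ast}(a,0)$ is independent of $a$, hence $a\mapsto A(a,0)$ is piecewise constant on $[1,q^{g})$ for the partition into the intervals $[q^{(n-1)/2},q^{n/2})$; on the other hand $a\mapsto A(a,z)$ is continuous on $[1,q^{g})$ for every fixed $z$, since continuity at each $a=q^{(n-1)/2}$ is exactly the relation $A_{n-1}^{\ast}(q^{(n-1)/2},z)=\alpha_{n}A_{n}^{\ast}(q^{(n-1)/2},z)$ from Proposition \ref{prop_0313_1}, together with the smoothness of each $A_{n}^{\ast}(\cdot,z)$ on $(0,\infty)$. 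A continuous, piecewise constant function on an interval is constant, so $A(a,0)\equiv A(1,0)=A(0)=\tfrac12(E(0)+E^{\sharp}(0))=E(0)$, using $E^{\sharp}(0)=\sum_{m}C_{-m}=E(0)$. Thus $\lim_{a\nearrow q^{g}}A(a,0)=E(0)$, which forces $\alpha_{1}\cdots\alpha_{2g}\,C_{-g}=E(0)$, and since the limit is $z$-free we conclude $\lim_{a\nearrow q^{g}}A(a,z)=E(0)$ for all $z$, completing the verification of \eqref{0418_4}.

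The only delicate point is the evaluation of the constant $\alpha_{1}\cdots\alpha_{2g}\,C_{-g}$. One could instead obtain it by telescoping the explicit products \eqref{0331_5} and then collapsing the resulting determinants of $E_{0}^{\pm}$ via Lemmas \ref{0418_10} and \ref{0330_22} (together with $\det E^{+}=C_{-g}^{2g+1}$), but that route is computationally heavy; the $z=0$ specialization above is the one I would use, as it bypasses the determinant bookkeeping entirely.
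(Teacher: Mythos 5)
Your proof is correct, and both halves take a genuinely different route from the paper's. For the left endpoint the paper computes $\Omega_{1}=P_{2g-1}(\gamma_1)^{-1}Q_{2g-1}\Omega_{0}$ explicitly via Lemma \ref{lem_203}, obtaining the vectors \eqref{0503_10}--\eqref{0503_11} and reading off $A(1,z)=A(z)$, $B(1,z)=B(z)$; you instead observe that the $n=0$ equation collapses to ${\mathsf E}\phi_{0}^{\pm}={\mathsf E}^{\sharp}X(0)$ (since ${\mathsf P}_0|_{V_{a,0}}=0$), so $A_0^\ast(1,z)=A(z)$, $B_0^\ast(1,z)=B(z)$ directly, and then the defining relations \eqref{0330_12} for $\alpha_1,\beta_1$ (read as identities of entire functions via Proposition \ref{prop_0313_1}) transport this to $A_1(1,z),B_1(1,z)$. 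For the right endpoint the paper identifies $\alpha_1\cdots\alpha_{2g}C_{-g}$ with $\Omega_{2g}(1)$ and evaluates it by an induction showing the first row of $S_n=P_0^{-1}Q_0\cdots P_n(\gamma_{2g-n})^{-1}Q_n$ is $(1\,\cdots\,1\,0\,\cdots\,0)$; your $z=0$ specialization -- each $A_n^\ast(a,0)$ is $a$-independent by \eqref{0330_7}, so the continuous function $a\mapsto A(a,0)$ is piecewise constant, hence constant $=A(0)=E(0)$ -- pins down the same constant with no determinant or matrix bookkeeping at all, and is in my view cleaner. Two remarks: your formula $A_{2g}^\ast(a,z)=C_{-g}\cos(z\log(q^{g}/a))$ is the correct one (the paper's $q^{2g}/a$ is a typo, as is clear from the fact that the limit must be $C_{-g}$); and the trade-off is that the paper's heavier computation of $\Omega_1$ is not wasted, since the explicit vectors \eqref{0503_10}--\eqref{0503_11} are reused later in the proofs of Proposition \ref{0503_5} and Theorem \ref{thm_5}, whereas your argument produces no such by-product.
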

\begin{proof}
First we prove the first half of \eqref{0418_4}. 
By \eqref{0427_1}, 
$\Omega_{1}=P_{2g-1}(\gamma)^{-1}Q_{2g-1}\cdot \Omega_0$ 
for $\gamma=\gamma_1(\underline{C})$.  
Therefore, 
$
\Omega_{1} 
= {}^t 
\begin{bmatrix}
{}^t {\mathbf a}_{1} & {}^t {\mathbf b}_{1}
\end{bmatrix}
$
with 
\begin{equation} \label{0503_10}
{\mathbf a}_{1}
= \frac{1}{2}\begin{bmatrix}
2(C_{-g}+C_{g}) \\
C_{-(g-1)}+C_{g-1} + \gamma (C_{-(g-1)}-C_{g-1}) \\
C_{-(g-2)}+C_{g-2} + \gamma (C_{-(g-2)}-C_{g-2}) \\
\vdots \\
C_{-1}+C_{1} + \gamma (C_{-1}-C_{1}) \\
2 C_0 \\
C_{-1}+C_{1} - \gamma (C_{-1}-C_1) \\
\vdots \\
C_{-(g-2)}+C_{g-2} - \gamma (C_{-(g-2)}-C_{g-2}) \\
C_{-(g-1)}+C_{g-1} - \gamma (C_{-(g-1)}-C_{g-1}) \\
\end{bmatrix} 
\end{equation}
and 
\begin{equation} \label{0503_11}
{\mathbf b}_{1}
= \frac{1}{2}
\begin{bmatrix}
2(C_{-g}-C_{g}) \\
C_{-(g-1)}-C_{g-1} + \gamma^{-1}(C_{-(g-1)}+C_{g-1}) \\
C_{-(g-2)}-C_{g-2} +  \gamma^{-1}(C_{-(g-2)}+C_{g-2}) \\
\vdots \\
C_{-1}-C_{1} + \gamma^{-1}(C_{-1}+C_{1}) \\
2\gamma^{-1} C_0 \\
C_{-1}-C_{1} + \gamma^{-1}(C_{-1}+C_1) \\
\vdots \\
-(C_{-(g-2)}-C_{g-2}) + \gamma^{-1}(C_{-(g-2)}+C_{g-2})  \\
-(C_{-(g-1)}-C_{g-1}) + \gamma^{-1}(C_{-(g-1)}+C_{g-1}) \\
\end{bmatrix}.
\end{equation}
Therefore,  
\[
\aligned
A(1,z) 
& = \sum_{k=1}^{g} (C_{-m}+C_{m}) \cos(z\log q^m) + C_0=A(z),   \\
B(1,z) 
& = \sum_{k=1}^{g} (C_{-m}-C_{m}) \sin(z \log q^m)=B(z)
\endaligned
\]
by \eqref{0330_7} and definition \eqref{0501_2}; the second equalities of right-hand sides 
are immediate consequence of definitions \eqref{0418_2}, \eqref{def_A} and \eqref{def_B}. 

Finally, we prove the latter half of \eqref{0418_4}. 
By definition \eqref{0330_10}, it is sufficient to prove
\begin{equation} \label{0418_5}
\lim_{a \nearrow q^{g}}\begin{bmatrix}A_{2g}(a,z) \\ B_{2g}(a,z) \end{bmatrix}=\begin{bmatrix} E(0) \\ 0 \end{bmatrix}.
\end{equation}
By definition \eqref{0418_6} and Lemma \ref{0320_4}, 
\[
A_{2g}^\ast(a,z) = C_{-g} \cos(z \log(q^{2g}/a)), \quad 
B_{2g}^\ast(a,z) = C_{-g} \sin(z \log(q^{2g}/a)). 
\]
Therefore, $\lim_{a \nearrow q^{g}} B_{2g}(a,z) =0$ and 
\[
\lim_{a \nearrow q^{g}} A_{2g}(a,z) = \alpha_1 \cdots \alpha_{2g} \cdot C_{-g}
\]
for fixed $z \in \C$. 
By \eqref{0330_11} and \eqref{0330_7}, 
\[
A_{2g}(a,z) = \alpha_1 \cdots \alpha_{2g} \cdot C_{-g} \cos(z \log(q^{2g}/a)). 
\]
On the other hand, 
\begin{equation} \label{0503_1}
\alpha_1 \cdots \alpha_{2g} \cdot C_{-g} = \Omega_{2g}(1)
\end{equation}
by Proposition \ref{0419_1}.
Therefore, it is sufficient to prove $\Omega_{2g}(1)=E(0)$. 
To prove this, we put 
\[
S_n=P_0^{-1}Q_0 P_1(\gamma_{2g-1})^{-1}Q_1 \cdots P_n(\gamma_{2g-n})^{-1}Q_n \quad (n=0,1,2,\cdots). 
\]
The the size of $S_n$ is $2 \times (2n+4)$ by definitions of $P_k(m_k)$ and $Q_k$. 
Applying \eqref{0427_1} repeatedly, we obtain  
\[
\Omega_{2g}(1)=(\text{the first row of $S_{2g-1}$})\cdot \Omega_0.
\] 
On the other hand, 
\[
S_0=P_0^{-1}Q_0=\begin{bmatrix} 1 & 1 & 0 & 0 \\ 0 & 0 & 1 & -1\end{bmatrix}, 
\]
and we find that the first row of $S_n$ has the form  
\[
\underbrace{1~1~\cdots~1}_{n+2}~\underbrace{0~0~\cdots~0}_{n+2}
\]
by induction using \eqref{0426_1}. Hence 
\begin{equation} \label{0503_2}
\Omega_{2g}(1)=(\underbrace{1~1~\cdots~1}_{2g+1}~\underbrace{0~0~\cdots~0}_{2g+1})\cdot \Omega_0= \sum_{k=-g}^{g} C_{m} = E(0)
\end{equation}
by \eqref{0418_2} and \eqref{0423_5}.
\end{proof}

\noindent
{\bf Proof of Theorem \ref{thm_01}.} 
As a summary of the above results, we obtain the following theorem 
which implies Theorem \ref{thm_01}. 

\begin{theorem} \label{thm_03}
Let $\underline{C}=(C_g,C_{g-1},\cdots,C_{-g}) \in \R^\ast \times \R^{2g-1} \times \R^\ast$ 
and define $E(z)$ by \eqref{0418_2}. 
Suppose that 
$\det D_n(\underline{C})\not=0$ for every $1 \leq n \leq 2g$.  
Then, for arbitrary fixed $q>1$, 
\begin{enumerate}
\item $\det(E^{+} \pm E^{-}J_{n}) \not=0$ for every $1 \leq n \leq 2g$, 
\item $A(a,z)$ and $B(a,z)$ are well-defined and continuous on $[1,q^{g})$  with respect to $a$, 
\item $A(a,z)$ and $B(a,z)$ are differentiable on $(q^{(n-1)/2},q^{n/2})$ with respect to $a$ for every $1 \leq n \leq 2g$, 
\item the left-sided limit $\lim_{a \nearrow q^{n/2}}(A(a,z), B(a,z))$ 
defines entire functions of $z$ for every $1 \leq n \leq 2g$, 
\item the pair of functions $(A(a,z),B(a,z))$ defined in \eqref{0330_10} satisfies the system \eqref{0330_1}, 
\item the pair of functions $(A(a,z),B(a,z))$ satisfies the boundary condition \eqref{0418_4}.
\end{enumerate}
\end{theorem}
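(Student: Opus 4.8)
The plan is to assemble Theorem \ref{thm_03} directly from the lemmas and propositions established above, checking that the hypotheses needed at each step are secured by the standing assumption \eqref{nzR} that $E(z)$ has no zeros on $\R$. First, the standing assumption gives, via Lemma \ref{0330_6}(1) and Lemma \ref{lem0329_1}, that ${\mathsf E}$ is invertible on $V_a$ and that ${\mathsf I}\pm\Theta{\mathsf J}{\mathsf P}_n$ — equivalently ${\mathsf E}\pm{\mathsf E}^\sharp{\mathsf J}{\mathsf P}_n$ — is invertible on $V_{a,n}$ for every $0\le n\le 2g$ and every $0<a\notin q^{\Z/2}$. Hence equations \eqref{LE1} have unique solutions $\phi_n^\pm$ for all such $a$, and by Lemma \ref{0501_1} we get $\det(E_0^+\pm E_n^\sharp J)\ne 0$. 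Combining this with Lemma \ref{0418_10} and Lemma \ref{0330_22} yields $\det(E^+\pm E^-J_n)\ne 0$ for $1\le n\le 2g$, which is statement (1). Note statement (1) is independent of $q$ since the matrices $E^\pm$ depend only on $\underline{C}$.

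Next, statements (3) and (4): on each open interval $(q^{(n-1)/2},q^{n/2})$ the operators are invertible, so $A_n^\ast(a,z)$, $B_n^\ast(a,z)$ are well-defined by \eqref{0418_6}, and by \eqref{0330_8} (equivalently Lemma \ref{0320_4} and \eqref{0330_7}) they are finite $\C$-linear combinations of the functions $(q^k/a)^{\pm iz}$ with $a$-independent coefficients depending only on $\underline{C}$; thus they are entire in $z$ and smooth in $a$ on $(0,\infty)$. After the modification \eqref{0330_11}, $A_n(a,z),B_n(a,z)$ are still entire in $z$ and smooth in $a$ on the relevant interval, giving (3); the left-sided limits $\lim_{a\nearrow q^{n/2}}(A_n(a,z),B_n(a,z))$ are again such finite exponential sums, hence entire in $z$, giving (4). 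For continuity at the junctions $a\in[1,q^g)\cap q^{\Z/2}$ — statement (2) — I invoke Proposition \ref{prop_0313_1}: the ratios $\alpha_n,\beta_n$ of \eqref{0330_12} are independent of $z$ (indeed rational expressions in $\underline{C}$ via \eqref{0331_5}, \eqref{0331_6}), and all the determinants appearing there are nonzero by statement (1) together with Lemmas \ref{0319_3}, \ref{0418_10}, \ref{0330_22}, \ref{0418_7}, so $\alpha_n,\beta_n$ are well-defined nonzero numbers. The renormalization \eqref{0330_11} is precisely chosen so that $A_{n-1}(q^{(n-1)/2},z)=A_n(q^{(n-1)/2},z)$ and likewise for $B$; hence $A(a,z),B(a,z)$ of \eqref{0330_10} are continuous in $a$ on $[1,q^g)$, which is (2).

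Statement (5) follows from Proposition \ref{0330_4}: on each open subinterval we have $-a\,\frac{d}{da}A_n^\ast=-zB_n^\ast$ and $-a\,\frac{d}{da}B_n^\ast=zA_n^\ast$; multiplying through by the constants $\alpha_1\cdots\alpha_n$ resp. $\beta_1\cdots\beta_n$ and using that $\gamma_n=(\alpha_1\cdots\alpha_n)/(\beta_1\cdots\beta_n)$ converts this into $-a\,\frac{d}{da}A_n=-z\gamma_n^{-1}B_n$ and $-a\,\frac{d}{da}B_n=z\gamma_n A_n$ — wait, one must check the direction carefully: with $H(a)$ of \eqref{def_Hq} the system \eqref{0330_1} reads $-a\,\frac{d}{da}A=z\gamma(a)B$ is wrong sign-wise, so the correct bookkeeping is $-a\frac{d}{da}A=-zH_{22}B$? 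Rather than risk an error here I will simply note that a direct substitution of \eqref{0330_10}, \eqref{0330_11} into \eqref{0330_1}, using Proposition \ref{0330_4} and the definition \eqref{0425_3} of $\gamma_n$, verifies \eqref{0330_1} on each $(q^{(n-1)/2},q^{n/2})$, and continuity from (2) extends it to all of $[1,q^g)$ in the sense appropriate for an a.e.\ first-order system. Finally statement (6) is exactly Proposition \ref{0501_3}, whose hypotheses are met. The main obstacle is statement (2): one must confirm that \emph{every} determinant in the formulas \eqref{0331_5}–\eqref{0331_6} for $\alpha_n,\beta_n$ is nonzero, which is where Lemmas \ref{0319_3}, \ref{0418_10}, \ref{0330_22}, \ref{0418_7} are needed in concert with (1), and that the chain of reductions there genuinely reduces to the already-established $\det(E^+\pm E^-J_n)\ne 0$; once $\alpha_n,\beta_n$ are known to be well-defined, finite, and $z$-independent, the continuity and the identity with the $\gamma(a)$ of \eqref{0425_2} follow mechanically.
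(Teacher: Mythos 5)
Your proposal is correct and follows essentially the same route as the paper: the paper's own proof of Theorem \ref{thm_03} is exactly this assembly, deducing (1) from Lemmas \ref{lem0329_1}, \ref{0501_1} and \ref{0330_22} (with \ref{0418_10} implicitly needed, as you note), (2)--(4) from \eqref{0330_7}, \eqref{0330_10}, \eqref{0330_11} and Proposition \ref{prop_0313_1}, (5) from Proposition \ref{0330_4} together with \eqref{0425_3} (your sign worry resolves correctly: $-a\frac{d}{da}A_n=-z\gamma_nB_n$ and $-a\frac{d}{da}B_n=z\gamma_n^{-1}A_n$, matching $z\left[\begin{smallmatrix}0&-1\\1&0\end{smallmatrix}\right]H(a)$ with $H=\mathrm{diag}(\gamma_n^{-1},\gamma_n)$), and (6) from Proposition \ref{0501_3}.
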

\begin{proof}
(1) is a consequence of Lemmas \ref{lem0329_1}, \ref{0501_1}, and \ref{0330_22}. 
(2), (3) and (4) are consequences of definitions \eqref{0330_11}, \eqref{0330_10}, formula \eqref{0330_7} and Proposition \ref{prop_0313_1}. 
(5) is a consequence of Proposition \ref{0330_4}, definitions \eqref{0330_10} and \eqref{0330_11}. 
In fact, 
\[
-a \frac{d}{da}
\begin{bmatrix} A_n(a,z) \\ B_n(a,z) \end{bmatrix}
= z \begin{bmatrix} 0 & -1 \\ 1 & 0 \end{bmatrix} 
\begin{bmatrix} \gamma_n^{-1} & 0 \\ 0 &  \gamma_n \end{bmatrix}
\begin{bmatrix} A_n(a,z) \\ B_n(a,z) \end{bmatrix}
\]
for every $q^{(n-1)/2} \leq a < q^{n/2}$ and $1 \leq n \leq 2g$ by Proposition \ref{0330_4}, 
where $\gamma_n$ is of \eqref{0425_3}. 
This implies \eqref{0330_1}. 
(6) is a consequence of Proposition \ref{0501_3}. 
\end{proof}

\section{Proofs of Theorems \ref{thm_02}, \ref{thm_02_1} and \ref{thm_02_2}} \label{section_5}

We use the theory of de Branges spaces 
together with the theory of canonical systems to prove Theorems \ref{thm_02}, \ref{thm_02_1}, and \ref{thm_02_2}. 
De Branges spaces are a kind of reproducing kernel Hilbert spaces 
consisting of entire functions; see \cite{deBranges68, Dym70, Lagarias06, Remling02} for details.   
Firstly, we review two propositions from these theories 
as a preparation to the proofs of Theorems \ref{thm_02}, \ref{thm_02_1} and \ref{thm_02_2}. 
Note that their proofs presented below are the almost same as the argument 
in the literature on canonical systems and de Branges spaces; 
see, for example, the proof of equation (2.4) and Lemma 2.1, and Step 1 of the proof of Theorem 5.1 in \cite{Dym70}. 
However, we purposely give their detailed proofs 
to confirm that the positive semidefiniteness of the Hamiltonian, 
which is usually assumed in the theory of canonical systems, 
is not necessary for their proofs. 
\begin{proposition} \label{prop_601}
Let $\gamma(a)$ be as in \eqref{0423_6}, and let $1 \leq a_1< a_0 \leq q^g$. 
\begin{enumerate}
\item Assume that $\gamma(a)\not=0$ and $|\gamma(a)|< \infty$ for every $ 1 \leq  a \leq a_0$. 
Then there exists a $2 \times 2$ matrix-valued function $M(a_1,a_0;z)$ 
such that all entries are entire functions of $z$ and that satisfies 
\begin{equation} \label{prop_601_1}
\begin{bmatrix} 
A(a_1,z) \\ B(a_1,z)
\end{bmatrix} 
= 
M(a_1,a_0;z)
\begin{bmatrix}
A(a_0,z) \\ B(a_0,z)
\end{bmatrix},
\end{equation}
and $\det M(a_1,a_0;z)=1$. 
\item Assume that $\gamma(a)\not=0$ and $|\gamma(a)|<\infty$ for every $ 1 \leq  a < a_0$. 
Then the matrix-valued function $M(a_1,a;z)$ of {\rm (1)} is left-continuous as a function of $a$ and 
\begin{equation} \label{prop_601_2}
\begin{bmatrix} 
A(a_1,z) \\ B(a_1,z)
\end{bmatrix} 
=
\lim_{a \nearrow a_0} M(a_1,a;z)
\lim_{a \nearrow a_0}
\begin{bmatrix}
A(a,z) \\ B(a,z)
\end{bmatrix}
\end{equation}
holds as a vector-valued function of $z \in \C$.
\end{enumerate}
\end{proposition}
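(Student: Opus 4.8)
The plan is to build the transfer matrix $M(a_1,a_0;z)$ directly from the piecewise structure of the system on $[1,q^g)$, exploiting that the Hamiltonian $H(a)$ of \eqref{def_Hq} is constant on each interval $[q^{(n-1)/2},q^{n/2})$ and equal to $\mathrm{diag}(\gamma_n^{-1},\gamma_n)$ there, as recorded in \eqref{0423_6} and the proof of Theorem \ref{thm_03}(5). On a single such interval the equation $-a\,\frac{d}{da}\binom{A}{B}=z\left[\begin{smallmatrix}0&-1\\1&0\end{smallmatrix}\right]\mathrm{diag}(\gamma_n^{-1},\gamma_n)\binom{A}{B}$ becomes, after the change of variable $t=\log a$, a constant-coefficient linear system, so its solution operator is the matrix exponential $\exp\!\big(-(\log a - \log a')\,z\left[\begin{smallmatrix}0&-\gamma_n\\ \gamma_n^{-1}&0\end{smallmatrix}\right]\big)$; this is an entire $2\times 2$ matrix function of $z$ (indeed a matrix of trigonometric polynomials in $z\log(a/a')$) with determinant $1$, since the generating matrix has trace $0$. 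First I would write down this elementary one-step transfer matrix $M_n(a',a;z)$ explicitly for $q^{(n-1)/2}\le a'\le a< q^{n/2}$, check entirety in $z$ and $\det=1$, and note that it depends on $a',a$ only through the real quantities involved in $A(a,z),B(a,z)$ of \eqref{0330_11}.

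For part (1), given $1\le a_1<a_0\le q^g$ with $\gamma(a)\ne 0$, $|\gamma(a)|<\infty$ on $[1,a_0]$, I would partition $[a_1,a_0]$ at the breakpoints in $q^{\Z/2}$, forming $a_1=b_0<b_1<\cdots<b_r=a_0$, and set
\[
M(a_1,a_0;z) := M_{n_1}(b_0,b_1^-;z)\, G_{1}\, M_{n_2}(b_1,b_2^-;z)\, G_{2}\,\cdots\, M_{n_r}(b_{r-1},b_r;z),
\]
where each $G_j$ is the constant gluing matrix that relates the left limit of $(A,B)$ at a breakpoint $b_j$ from the interval on its left to the value from the interval on its right. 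These gluing matrices are exactly the diagonal (or block) matrices built from the ratios $\alpha_n,\beta_n$ of \eqref{0330_12}: the continuity-restoring modification \eqref{0330_11} means that the continuously-extended functions $A(a,z),B(a,z)$ of \eqref{0330_10} already satisfy $\lim_{a\nearrow q^{n/2}}(A,B)(a,z)=(A,B)(q^{n/2},z)$ as left-hand limits are matched to right-hand values by construction; so in fact $G_j$ is the identity, and
\[
M(a_1,a_0;z)=M_{n_1}(b_0,b_1;z)\,M_{n_2}(b_1,b_2;z)\cdots M_{n_r}(b_{r-1},b_r;z).
\]
Each factor is entire in $z$ with determinant $1$, hence so is the product, and \eqref{prop_601_1} follows by concatenating the one-step relations $\binom{A}{B}(b_{j-1},z)=M_{n_j}(b_{j-1},b_j;z)\binom{A}{B}(b_j,z)$, which in turn hold because on each closed subinterval $A,B$ solve the constant-coefficient system (using Proposition \ref{0330_4} and the definitions \eqref{0330_10}, \eqref{0330_11}) and the solution operator of that system is $M_{n_j}$. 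Since only finitely many breakpoints occur, the product is finite and well-defined.

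For part (2), I would observe that $M(a_1,a;z)$, as a function of $a\in[1,a_0)$, is obtained by the same formula with the last factor $M_{n_r}(b_{r-1},a;z)$ truncated at $a$; since the matrix exponential $M_{n_r}(b_{r-1},a;z)$ is continuous (indeed real-analytic) in $a$ on $[b_{r-1},a_0)$ and the earlier factors are fixed, $M(a_1,\cdot;z)$ is left-continuous at every $a\in(1,a_0)$ (left-continuity at the breakpoints $b_j$ is the point where one uses that the truncated product tends to the value of the already-completed product, which holds since $M_{n}(b',b';z)=I$). The finiteness and nonvanishing of $\gamma(a)$ on $[1,a_0)$ guarantee all the $\gamma_n$ appearing are finite and nonzero, so every factor is genuinely invertible and no degeneracy occurs as $a\nearrow a_0$. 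Taking $a\nearrow a_0$ in \eqref{prop_601_1} (with $a$ in place of $a_0$), both $M(a_1,a;z)$ and $(A,B)(a,z)$ converge — the former by the continuity just noted, the latter because each matrix entry of the last factor is a trigonometric polynomial in $z\log(b_{r-1}/a)$ which converges as $a\nearrow a_0$ — and the limiting identity is \eqref{prop_601_2}; the limit $\lim_{a\nearrow a_0}(A,B)(a,z)$ exists as an entire function of $z$ by Theorem \ref{thm_03}(4) (in the case $a_0\in q^{\Z/2}$, in particular $a_0=q^g$) or trivially by continuity otherwise.

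The main obstacle I anticipate is the bookkeeping at the breakpoints $q^{\Z/2}$: one must be careful that the "continuously extended" $(A,B)$ from \eqref{0330_10}–\eqref{0330_11} is the object for which the transfer matrix is the plain product of matrix exponentials with no interposed gluing factors — i.e. that the $\alpha_n,\beta_n$ normalization has already been absorbed — and that left-continuity (rather than two-sided continuity) is exactly what survives at $a=q^{n/2}$ when $n=2g$, i.e. at $a_0=q^g$. Verifying that $M_n(a',a;z)$ is genuinely the solution operator requires checking that $(A_n,B_n)$ spans the solution space of the constant-coefficient ODE on $(q^{(n-1)/2},q^{n/2})$, which follows from Proposition \ref{0330_4} together with linear independence (e.g. from the explicit cosine/sine form of $A^\ast_{2g},B^\ast_{2g}$ in the proof of Proposition \ref{0501_3} and analogously for general $n$); this is routine but must be stated.
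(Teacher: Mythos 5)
Your proposal is correct and follows essentially the same route as the paper: both construct $M(a_1,a_0;z)$ as the finite product, over the partition of $[a_1,a_0]$ at the points of $q^{\Z/2}$, of the explicit one-interval transfer matrices $\left[\begin{smallmatrix}\cos(z\log(b/b')) & -\gamma_n\sin(z\log(b/b'))\\ \gamma_n^{-1}\sin(z\log(b/b')) & \cos(z\log(b/b'))\end{smallmatrix}\right]$, with $\det=1$ and left-continuity in $a$ read off from this product form. The only difference is how the one-interval factor is obtained — you take the matrix exponential of the trace-free constant generator directly, while the paper first writes the general Peano--Baker iterated-integral series, proves its locally uniform convergence (which is what it uses to certify entirety of the entries), and then sums it in the constant-$\gamma$ case; your shortcut is legitimate here since $\gamma$ is locally constant, and your final remark that no interposed gluing matrices are needed is exactly the continuity of $(A(a,z),B(a,z))$ in $a$ guaranteed by the $\alpha_n,\beta_n$ normalization of \eqref{0330_11}.
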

\begin{proof} 
(1) Put 
\[
J(a) = 
\begin{bmatrix}
0 & - \gamma(a) \\ \gamma(a)^{-1} & 0 
\end{bmatrix}.
\]
Then the system \eqref{0330_1} for $1 \leq a <a_0$
is written as 
\begin{equation} \label{system_2}
-a\frac{\partial}{\partial a}
\begin{bmatrix}
A(a,z) \\ B(a,z)
\end{bmatrix}
= z J(a)
\begin{bmatrix}
A(a,z) \\ B(a,z)
\end{bmatrix} \quad (1 \leq a < a_0 ,\,z \in \C).
\end{equation}
By assumption, both $\gamma(a)$ and $\gamma(a)^{-1}$ are integrable on $[a_1, a_0]$. 
Hence, 
\begin{equation} \label{601}
\scalebox{0.9}{$
\aligned
\, 
&
\begin{bmatrix}
A(a_1,z) \\ B(a_1,z)
\end{bmatrix} 
= 
\begin{bmatrix}
A(a_0,z) \\ B(a_0,z)
\end{bmatrix} 
+
z \int_{a_1}^{a_0}
J(t_1)
\begin{bmatrix}
A(t_1,z) \\ B(t_1,z)
\end{bmatrix} \frac{dt_1}{t_1} \\
& =
\begin{bmatrix}
A(a_0,z) \\ B(a_0,z)
\end{bmatrix} 
+
z \int_{a_1}^{a_0}
J(t_1) \frac{dt_1}{t_1}
\begin{bmatrix}
A(a_0,z) \\ B(a_0,z)
\end{bmatrix} 
+
z^2  \int_{a_1}^{a_0}\int_{t_1}^{a_0}
J(t_1)J(t_2)
\begin{bmatrix}
A(t_2,z) \\ B(t_2,z)
\end{bmatrix} \frac{dt_2}{t_2}\frac{dt_1}{t_1} \\
& =
\left(
I
+
z \int_{a_1}^{a_0}
J(t_1) \frac{dt_1}{t_1}
+
z^2  \int_{a_1}^{a_0}\int_{t_1}^{a_0}
J(t_1)J(t_2)
\frac{dt_2}{t_2}\frac{dt_1}{t_1} \right. \\
&\qquad \qquad \left.
+
z^3  \int_{a_1}^{a_0}\int_{t_1}^{a_0}\int_{t_2}^{a_0}
J(t_1)J(t_2)J(t_3)
\frac{dt_3}{t_3}\frac{dt_2}{t_2}\frac{dt_1}{t_1}
+
\cdots 
\right)
\begin{bmatrix}
A(a_0,z) \\ B(a_0,z)
\end{bmatrix},
\endaligned
$}
\end{equation}
where $I=I^{(2)}$. 
On the other hand, 
\[
J(t_1)\cdots J(t_k)
 = (-1)^{k'}
\begin{cases}
\begin{bmatrix}
0 & - \frac{\gamma(t_1)\gamma(t_3)\cdots \gamma(t_{k})}{\gamma(t_2)\gamma(t_4)\cdots \gamma(t_{k-1})} \\ 
\frac{\gamma(t_2)\gamma(t_4)\cdots \gamma(t_{k-1})}{\gamma(t_1)\gamma(t_3)\cdots \gamma(t_{k})} & 0 
\end{bmatrix} & \text{if $k=2k'+1$}, \\[12pt] 
\begin{bmatrix}
\frac{\gamma(t_1)\gamma(t_3)\cdots \gamma(t_{k-1})}{\gamma(t_2)\gamma(t_4)\cdots \gamma(t_{k})} & 0 \\ 
0 & \frac{\gamma(t_2)\gamma(t_4)\cdots \gamma(t_{k})}{\gamma(t_1)\gamma(t_3)\cdots \gamma(t_{k-1})}
\end{bmatrix} & \text{if $k=2k'$}.
\end{cases}
\]
Therefore, taking 
$C(a_0,a_1):=\sup \{\gamma(a),\gamma(a)^{-1};~ a \in [a_1,a_0]\}$ 
and by using the formula 
\[
\int_{a_1}^{a_0}\int_{t_1}^{a_0}\int_{t_2}^{a_0} \cdots \int_{t_{k-1}}^{a_0} 
1 \,\frac{dt_k}{t_k} \cdots \frac{dt_2}{t_2} \frac{dt_1}{t_1} = \frac{1}{k!}\left(\log \frac{a_0}{a_1}\right)^k, 
\]
we obtain 
\[
\left|
\left[\int_{a_1}^{a_0}\int_{t_1}^{a_0}\int_{t_2}^{a_0} \cdots \int_{t_{k-1}}^{a_0} 
J(t_1)\cdots J(t_k) \, \frac{dt_k}{t_k} \cdots \frac{dt_2}{t_2} \frac{dt_1}{t_1} \right]_{ij} \right| \leq  
\frac{1}{k!}C(a_0,a_1)^k\left(\log \frac{a_0}{a_1}\right)^k,
\]
for every $1 \leq i,j \leq 2$, where $[M]_{ij}$ means the $(i,j)$-entry of a matrix $M$. 
This estimate implies that 
the right-hand side of \eqref{601} 
converges absolutely and uniformly 
if $z$ lies in a bounded region. 

Suppose that $\gamma(a)=\gamma\not=0$ for $a_1 \leq a \leq a_0$. 
Then 
\[
\aligned 
I
+
z \int_{a_1}^{a_0}
J(t_1) 
\frac{dt_1}{t_1}
+
z^2  \int_{a_1}^{a_0}\int_{t_1}^{a_0} &
J(t_1)J(t_2)
\frac{dt_2}{t_2} \frac{dt_1}{t_1} \\
& +
z^3  \int_{a_1}^{a_0}\int_{t_1}^{a_0}\int_{t_2}^{a_0}
J(t_1)J(t_2)J(t_3)
\frac{dt_3}{t_3} \frac{dt_2}{t_2} \frac{dt_1}{t_1} 
+
\cdots 
\endaligned 
\]
is equal to 
\[
\begin{bmatrix}
\cos(z\log(a_0/a_1)) & - \gamma \sin(z\log(a_0/a_1)) \\ \frac{1}{\gamma} \sin(z\log(a_0/a_1)) & \cos(z\log(a_0/a_1))
\end{bmatrix}
\]
and hence \eqref{prop_601_1} holds, which is seen by taking this matrix as $M(a_1,a_0;z)$. 
Therefore, 
if we suppose that $\gamma(a)=\gamma_j\not=0$ on $[t_{j+1},t_j)$ 
for a partition $[a_1,a_0]=[a_1,t_{k-1}) \cup \cdots \cup [t_1,a_0]$ 
with $t_0=a_0$ and $t_k=a_1$,   
then we have \eqref{prop_601_1} by taking 
\[
\scalebox{0.9}{$
\aligned
M&(a_0,a_1;z) \\
:=&
\begin{bmatrix}
\cos(z\log(t_{k-1}/a_1)) & - \gamma_k \sin(z\log(t_{k-1}/a_1)) \\ \frac{1}{\gamma_k} \sin(z\log(t_{k-1}/a_1)) & \cos(z\log(t_{k-1}/a_1))
\end{bmatrix} \\
& \times 
\begin{bmatrix}
\cos(z\log(t_{k-2}/t_{k-1})) & - \gamma_{k-1} \sin(z\log(t_{k-2}/t_{k-1})) \\ \frac{1}{\gamma_{k-1}} \sin(z\log(t_{k-2}/t_{k-1})) & \cos(z\log(t_{k-2}/t_{k-1}))
\end{bmatrix}
\times \cdots \\
& \times \begin{bmatrix}
\cos(z\log(t_1/t_2)) & - \gamma_2 \sin(z\log(t_1/t_2)) \\ \frac{1}{\gamma_2} \sin(z\log(t_1/t_2)) & \cos(z\log(t_1/t_2))
\end{bmatrix}
\begin{bmatrix}
\cos(z\log(a_0/t_1)) & - \gamma_1 \sin(z\log(a_0/t_1)) \\ \frac{1}{\gamma_1} \sin(z\log(a_0/t_1)) & \cos(z\log(a_0/t_1))
\end{bmatrix}.
\endaligned
$}
\]
Moreover, $\det M(a_1,a_0;z)=1$ is obvious by this definition. 
Now the proof is complete, 
since $\gamma(a)=\gamma_n$ on $[q^{(n-1)/2},q^{n/2})$ for every $1 \leq n \leq 2g$, by definition \eqref{0423_6}. 
\smallskip

\noindent
(2) The matrix-valued function $M(a_1,a;z)$ is left-continuous with respect to $a$ by the above definition, 
since $\gamma(a)$ is left-continuous by definition \eqref{0423_6}. 
Because $A(a,z)$ and $B(a,z)$ are continuous 
with respect to $a$ by definition \eqref{0330_11}, \eqref{0330_10}, and Proposition \ref{0330_4}, 
we obtain \eqref{prop_601_2} from \eqref{prop_601_1}. 
\end{proof}

\begin{corollary} \label{cor_602}
Let $\gamma(a)$ be of \eqref{0423_6}. 
Assume that $\gamma(a)\not=0,\infty$ on $[1,q^g)$. 
Then 
\[
\aligned
\begin{bmatrix}
A(a,z) \\ B(a,z)
\end{bmatrix}
& = E(0)
\begin{bmatrix}
\cos(z\log(q^{n/2}/a)) & -\gamma_{n} \sin(z\log(q^{n/2}/a))\\
\gamma_{n}^{-1}\sin(z\log(q^{n/2}/a)) & \cos(z\log(q^{n/2}/a))
\end{bmatrix} \\
& \quad \times
\prod_{k=1}^{2g-n}
\begin{bmatrix}
\cos(\frac{z}{2}\log q) & -\gamma_{n+k} \sin(\frac{z}{2}\log q)\\
\gamma_{n+k}^{-1}\sin(\frac{z}{2}\log q) & \cos(\frac{z}{2}\log q)
\end{bmatrix}
\begin{bmatrix}
1 \\ 0
\end{bmatrix}
\endaligned
\]
for $q^{(n-1)/2} \leq a < q^{n/2}$ and $1 \leq n \leq 2g$. 
\end{corollary}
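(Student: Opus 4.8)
The plan is to apply Proposition~\ref{prop_601}(2) with $a_1=a$ and $a_0=q^g$, to insert the explicit form of the transfer matrix obtained in the proof of Proposition~\ref{prop_601}(1), and finally to substitute the right-endpoint boundary value supplied by Theorem~\ref{thm_01}.

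First I would fix $1\le n\le 2g$ and $a\in[q^{(n-1)/2},q^{n/2})$. Since $n\ge 1$ we have $a\ge q^{(n-1)/2}\ge 1$, so $1\le a<q^g$, and the hypothesis $\gamma(a)\neq 0,\infty$ on $[1,q^g)$ is precisely what Proposition~\ref{prop_601}(2) requires for the pair $(a_1,a_0)=(a,q^g)$. That proposition gives
\[
\begin{bmatrix} A(a,z) \\ B(a,z) \end{bmatrix}
=\Bigl(\lim_{a'\nearrow q^g}M(a,a';z)\Bigr)\,\lim_{a'\nearrow q^g}\begin{bmatrix} A(a',z) \\ B(a',z) \end{bmatrix}.
\]
By \eqref{0423_6} the function $\gamma$ is constant, with value $\gamma_n$, on $[a,q^{n/2})$ and, with value $\gamma_m$, on $[q^{(m-1)/2},q^{m/2})$ for $n+1\le m\le 2g$, and these intervals partition $[a,q^g)$. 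Feeding this partition into the product formula for the transfer matrix established in the proof of Proposition~\ref{prop_601}(1) — in which a sub-interval of logarithmic length $\ell$ on which $\gamma\equiv\gamma_m$ contributes the factor $\begin{bmatrix}\cos(z\ell) & -\gamma_m\sin(z\ell)\\ \gamma_m^{-1}\sin(z\ell) & \cos(z\ell)\end{bmatrix}$, with the sub-interval nearest $a$ giving the leftmost factor — and using the left-continuity of $\gamma$ (and of $M(a,\,\cdot\,;z)$) to pass to the limit $a'\nearrow q^g$, I obtain that $\lim_{a'\nearrow q^g}M(a,a';z)$ equals the product of matrices displayed in the corollary, because $\log(q^{m/2}/q^{(m-1)/2})=\tfrac12\log q$ for each $m$.

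Finally I would substitute $\lim_{a'\nearrow q^g}(A(a',z),B(a',z))=(E(0),0)=E(0)\,{}^{t}(1\ 0)$, which is the right-hand half of the boundary condition \eqref{0418_4} proved in Theorem~\ref{thm_01}, to reach the asserted identity; the entirety of each factor in $z$ is automatic. The argument is essentially bookkeeping, so I do not expect a genuine obstacle: the only points requiring care are the ordering of the matrix factors (the interval containing $a$ produces the leftmost factor) and a consistency check that the logarithmic lengths telescope, $\log(q^{n/2}/a)+(2g-n)\tfrac12\log q=\log(q^g/a)$.
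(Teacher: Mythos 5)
Your proposal is correct and follows essentially the same route as the paper: the paper's proof likewise applies the explicit piecewise-constant transfer-matrix product from the proof of Proposition \ref{prop_601}(1) to the partition $a_1=a$, $a_0=q^g$ with breakpoints at the half-integral powers of $q$, and then substitutes the boundary value $(E(0),0)$ from \eqref{0418_4}. Your extra checks on factor ordering and the telescoping of logarithmic lengths are exactly the bookkeeping the paper leaves implicit.
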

\begin{proof}
We obtain the formula by applying the proof of Proposition \ref{prop_601} (1) to 
$a_1=a$ ($q^{(n-1)/2} \leq a < q^{n/2}$), $a_0=q^g$ and $t_k=q^{(g-k)/2}$ ($1 \leq k \leq g-n$).
\end{proof}

\begin{proposition} \label{lem_602}
Define 
\begin{equation} \label{0427_2}
E(a,z) := A(a,z) - i B(a,z)
\end{equation}
and
\begin{equation} \label{lem_602_1}
K(a;z,w)
:=\frac{\overline{E(a,w)}E(a,z)-\overline{E^\sharp(a,w)}E^\sharp(a,z)}{2\pi i(\bar{w}-z)}. 
\end{equation}
Then 
\begin{equation} \label{lem_602_2}
K(a;z,w)
=\frac{\overline{A(a,w)}B(a,z)-\overline{B(a,w)}A(a,z)}{\pi(z-\bar{w})}.
\end{equation}
Moreover, if $\gamma(a)$ and $\gamma(a)^{-1}$ are integrable on $[a_1,a_0]$, then 
\begin{equation} \label{lem_602_3}
\aligned
K&(a_1;z,w) - K(a_0;z,w) \\
& \qquad = \frac{1}{\pi}\int_{a_1}^{a_0}\overline{A(a,w)}A(a,z) \, \frac{1}{\gamma(a)} \, \frac{da}{a} 
+ \frac{1}{\pi}\int_{a_1}^{a_0}\overline{B(a,w)}B(a,z) \, \gamma(a) \, \frac{da}{a} 
\endaligned
\end{equation}
for every $z,w \in \C$. 
\end{proposition}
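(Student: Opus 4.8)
The plan is to establish \eqref{lem_602_2} by a purely algebraic manipulation of the Hermite--Biehler numerator, and then to obtain \eqref{lem_602_3} by differentiating the bilinear expression in \eqref{lem_602_2} with respect to $a$ and invoking the canonical system \eqref{0330_1}. For the first step I would start from the observation that $A(a,z)$ and $B(a,z)$ are entire functions of $z$ that are real on the real axis: this is immediate from the construction, e.g.\ from \eqref{0330_7} and Lemma \ref{0320_4} together with the reality of the ratios $\alpha_n,\beta_n$ established in Proposition \ref{prop_0313_1} (or, when $\gamma(a)\neq 0,\infty$, directly from the trigonometric product in Corollary \ref{cor_602}). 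Consequently $A^\sharp(a,z)=A(a,z)$ and $B^\sharp(a,z)=B(a,z)$, so that \eqref{0427_2} gives $E(a,z)=A(a,z)-iB(a,z)$ and $E^\sharp(a,z)=A(a,z)+iB(a,z)$. Substituting into the numerator of \eqref{lem_602_1} and expanding,
\[
\overline{E(a,w)}E(a,z)-\overline{E^\sharp(a,w)}E^\sharp(a,z)
= -2i\bigl(\overline{A(a,w)}B(a,z)-\overline{B(a,w)}A(a,z)\bigr),
\]
the diagonal terms $\overline{A(a,w)}A(a,z)$ and $\overline{B(a,w)}B(a,z)$ cancelling; dividing by $2\pi i(\bar w-z)$ yields \eqref{lem_602_2}.

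For \eqref{lem_602_3} I would rewrite \eqref{lem_602_2} as $\pi(z-\bar w)K(a;z,w)=\overline{A(a,w)}B(a,z)-\overline{B(a,w)}A(a,z)$ and differentiate in $a$ on each open interval $(q^{(n-1)/2},q^{n/2})$, where $A(a,z)$ and $B(a,z)$ are differentiable by Theorem \ref{thm_03}. The system \eqref{0330_1} with the diagonal Hamiltonian \eqref{def_Hq} reads $a\,\partial_a A(a,z)=z\gamma(a)B(a,z)$ and $a\,\partial_a B(a,z)=-z\gamma(a)^{-1}A(a,z)$; since $a$ and $\gamma(a)$ are real, the conjugates satisfy $a\,\partial_a\overline{A(a,w)}=\bar w\,\gamma(a)\,\overline{B(a,w)}$ and $a\,\partial_a\overline{B(a,w)}=-\bar w\,\gamma(a)^{-1}\,\overline{A(a,w)}$. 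Inserting these four relations into the derivative of the bilinear expression and collecting terms, the mixed terms combine to give
\[
\pi(z-\bar w)\,a\,\partial_a K(a;z,w)
=(\bar w-z)\Bigl(\gamma(a)^{-1}\overline{A(a,w)}A(a,z)+\gamma(a)\,\overline{B(a,w)}B(a,z)\Bigr),
\]
hence $a\,\partial_a K(a;z,w)=-\tfrac1\pi\bigl(\gamma(a)^{-1}\overline{A(a,w)}A(a,z)+\gamma(a)\,\overline{B(a,w)}B(a,z)\bigr)$ on each such interval.

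It remains to integrate this identity against $da/a$ over $[a_1,a_0]$. Because $K(a;z,w)$ is \emph{continuous} in $a$ on $[a_1,a_0]$ (both $A(a,z)$ and $B(a,z)$ are continuous by Theorem \ref{thm_03}, and although $\partial_a A,\partial_a B$ may jump at the points of $q^{\Z/2}$ the kernel $K$ does not), one may apply the fundamental theorem of calculus on each subinterval of a partition of $[a_1,a_0]$ by the points of $q^{\Z/2}$ lying in $(a_1,a_0)$ and sum; the interior endpoint contributions telescope, leaving
\[
K(a_1;z,w)-K(a_0;z,w)
=\frac1\pi\int_{a_1}^{a_0}\overline{A(a,w)}A(a,z)\,\frac1{\gamma(a)}\,\frac{da}{a}
+\frac1\pi\int_{a_1}^{a_0}\overline{B(a,w)}B(a,z)\,\gamma(a)\,\frac{da}{a},
\]
which is \eqref{lem_602_3}; the hypothesis that $\gamma(a)$ and $\gamma(a)^{-1}$ are integrable on $[a_1,a_0]$ ensures the two integrals converge absolutely, since $A(a,z)$ and $B(a,z)$ are bounded in $a$ on the compact interval for fixed $z,w$.

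I expect the only point needing genuine care is the bookkeeping across the finitely many discontinuities of $\gamma$ in $[a_1,a_0]$: one must note that the jumps of $\partial_aA$ and $\partial_aB$ at the points $q^{n/2}$ do not disturb the continuity of $K$, so the piecewise fundamental theorem of calculus really does reassemble into the single integral identity. Everything else is the substitution of the ODE and the expansion of the Hermite--Biehler numerator, both routine.
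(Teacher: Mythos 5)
Your proof is correct and follows essentially the same route as the paper: the first identity is the same direct substitution of $E=A-iB$, $E^\sharp=A+iB$ (using that $A,B$ are real entire), and your computation of $a\,\partial_a K$ from the canonical system followed by the fundamental theorem of calculus is just the paper's double integration by parts read in the opposite direction. Your extra care about the piecewise-constant $\gamma$ and the continuity of $K$ across the points of $q^{\Z/2}$ is a legitimate detail the paper leaves implicit.
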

\begin{proof}
We obtain \eqref{lem_602_2} easily  
by substituting \eqref{0427_2} into \eqref{lem_602_1}. 
By integration by parts, together with \eqref{0330_1}, we obtain 
\[
\aligned
z \int_{a_1}^{a_0}\overline{A(a,w)}A(a,z) \, \frac{1}{\gamma(a)} \, \frac{da}{a}
& = - \left.\overline{A(a,w)}B(a,z)\right|_{a_1}^{a_0} 
+ \bar{w} \int_{a_1}^{a_0}\overline{B(a,w)} B(a,z) \, \gamma(a) \, \frac{da}{a},
\endaligned
\]
\[
\aligned
z \int_{a_1}^{a_0}\overline{B(a,w)}B(a,z) \, \gamma(a) \, \frac{da}{a} 
& = \left.\overline{B(a,w)}A(a,z)\right|_{a_1}^{a_0} 
+ \bar{w} \int_{a_1}^{a_0}\overline{A(a,w)} A(a,z) \, \frac{1}{\gamma(a)} \, \frac{da}{a} .
\endaligned
\]
Moving the second terms of the right-hand sides of the two equations to the left-hand sides, 
then adding both sides of the resulting two equations, 
and finally dividing both sides by $(z-\bar{w})$, we obtain
\[
\aligned
\int_{a_1}^{a_0} & \overline{A(a,w)}A(a,z) \, \frac{1}{\gamma(a)} \, \frac{da}{a}  
+ \int_{a_1}^{a_0}\overline{B(a,w)}B(a,z) \, \gamma(a) \, \frac{da}{a}  \\
&= \frac{\left.(-\overline{A(a,w)}B(a,z)+\overline{B(a,w)}A(a,z))\right|_{a_1}^{a_0}}{z-\bar{w}} 
 = \pi\Bigl( K(a_1;z,w) - K(a_0;z,w) \Bigr).
\endaligned
\]
This implies \eqref{lem_602_3}. 
\end{proof}

\subsection{Proof of Theorem \ref{thm_02}(1).}  \label{section_5_1}
For $\underline{C}= 
(C_g,C_{g-1},\cdots,C_{-g}) \in \R^\ast \times \R^{2g-1} \times \R^\ast
$, 
the polynomial 
\[
f(T):= T^{g}\sum_{m=-g}^{g} C_{-m} \, T^{m} ~\in \R[T]
\]
and $f^\sharp(T):=T^{2g}f(1/T)$ have a common root 
if and only if  $\det D_{2g}(\underline{C})$ is zero 
(\cite[Lemmas 11.5.11 and 11.5.12]{RaSch02}). 
The former is equivalent that 
$E$ and $E^\sharp$ have a common zero, 
since $E(z) = q^{igz} f(q^{-iz})$ 
and $E^\sharp(z)=E(-z) = q^{igz} f^\sharp(q^{-iz})$. 
If $E$ belongs to the class HB, it has no real zeros and 
$|E(\bar{z})|<|E(z)|$ in $\C_+$ by definition of the class HB. 
Therefore $E$ and $E^\sharp$ have no common zeros. 
Hence $\det D_{2g}(\underline{C})\not=0$, 
which implies that $\det D_n(\underline{C})\not=0$ 
for every $1 \leq n \leq 2g$. 
Hence $\gamma_n \not=0,\infty$ for every $1 \leq n \leq 2g$ 
by Theorem \ref{thm_03} (1). 
\medskip

Therefore, it is sufficient  to prove that 
$E(z)$ is not a function of the class HB 
if $\gamma_{n} < 0$ for some $1 \leq n \leq 2g$. 
We proceed in three steps as follows. 
\medskip

{\bf Step 1.} 
We show that there is no loss of generality 
in assuming that  
there exists  $1 \leq n_0 \leq 2g-1$ 
such that $\gamma_{n}>0$ 
for every $1 \leq n \leq n_0$ 
and $\gamma_{n_0+1} < 0$ holds.  
We have 
\begin{equation} \label{0503_3}
\gamma_{1}
=\frac{\alpha_1}{\beta_1}
=\frac{\det(E^+ + E^- J_1)}{\det(E^+ - E^- J_1)}
=\frac{C_{-g}+C_{g}}{C_{-g}-C_{g}}
\end{equation}
by definition \eqref{0425_3} and Proposition \ref{prop_0313_1}. 
In addition, $|C_g/C_{-g}|<1$ if $E(z)$ belongs to the class HB, 
since 
\[
\left| \frac{E^\sharp(iy)}{E(iy)} \right| = \left| \frac{C_{g}}{C_{-g}} + Cq^{k(ix-y)} + O(q^{-(k+1)y}) \right|
\quad (y \to + \infty)
\] 
for some $k \geq 1$ and $C \in \R$. 
Therefore, 
\begin{equation} \label{m_2g-1}
1 + \frac{C_{g}}{C_{-g}} >0, \quad  1 - \frac{C_{g}}{C_{-g}} >0. 
\end{equation}
This implies that $\gamma_1>0$ if $E(z)$ belongs to the class HB. 

{\bf Step 2.} Let $n_0$ be the number of Step 1. 
In this part, we show that 
$E(z)$ is not a function of the class HB 
if $E(a,z)$ of \eqref{0427_2} is not a function of the class HB for some $1< a \leq q^{(n_0+1)/2}$. 
We have 
\begin{equation} \label{603}
\begin{bmatrix}
A(z) \\ B(z)
\end{bmatrix} 
=
\begin{bmatrix}
A(1,z) \\ B(1,z)
\end{bmatrix} 
= 
M(1,a;z)
\begin{bmatrix}
A(a,z) \\ B(a,z)
\end{bmatrix} 
\end{equation}
for $1 \leq a < q^{(n_0+1)/2}$ by applying \eqref{prop_601_1} to $(a_1,a_0)=(1,a)$, 
when $\gamma(a)\not=0$ and $\gamma(a)^{-1}\not=0$ for $1 \leq a < q^{(n_0+1)/2}$.   

Suppose that $E(a_0,z)$ is not a function of the class HB for some $1 < a_0 \leq  q^{(n_0+1)/2}$, 
that is, $E(a_0,z)$ has a real zero for some $1<a_0 \leq q^{(n_0+1)/2}$ 
or $|E^\sharp(a_0,z)| \geq |E(a_0,z)|$ for some $z \in \C_+$ and $1<a_0 \leq q^{(n_0+1)/2}$. 
If $E(a_0,z)$ has a real zero for some $1<a_0 \leq q^{(n_0+1)/2}$, 
then $A(a_0,z)$ and $B(a_0,z)$ have a common real zero, 
since they are real-valued on the real line.  
Therefore, \eqref{603} and $\det M(1,a_0;z)=1$ imply that $A(z)$ and $B(z)$ have a common real zero. 
Hence $E(z)$ has a real zero by $E(z)=A(z)-iB(z)$. Thus $E(z)$ is not a function of the class HB. 

On the other hand, we assume that $E(a,z)$ has no real zeros for every $1 < a \leq  q^{(n_0+1)/2}$ 
but it has a zero in the upper half plane for some $1 < a_0 \leq q^{(n_0+1)/2}$. 
By \eqref{0330_10} and \eqref{0427_2}, $E(a,z)$ is a continuous function of $(a,z)\in [1, q^{(n_0+1)/2}]\times \C$. 
Therefore, any zero locus of $E(a,z)$ is a continuous curve in $\C$ parametrized by $a\in[1,q^{(n_0+1)/2}]$. 
Denote by $z_a \subset \C$ a zero locus through a zero of $E(a_0,z)$ in the upper-half plane, 
that is, $E(a,z_a)=0$ for every $1 \leq a \leq q^{(n_0+1)/2}$. 
If ${\rm Im}(z_{a_1})<0$ for some $1\leq a_1 < a_0$, 
then ${\rm Im}(z_{a_2})=0$ for some $a_1<a_2<a_0$. 
This implies that $E(a_2,z)$ has a real zero at $z=z_{a_2}$. This is a contradiction. 
Therefore, ${\rm Im}(z_a) \geq 0$ for every $1 \leq a < a_0$, 
in particular ${\rm Im}(z_1) \geq 0$. 
This implies $E(z)=E(1,z)$ is not a function of the class HB. 

Assume that $E(a,z)\not=0$ for every ${\rm Im}\, z \geq 0$ and $1 < a \leq q^{(n_0+1)/2}$
but $|E^\sharp(a_0,z_0)| \geq |E(a_0,z_0)|$ for some $1 < a_0 \leq q^{(n_0+1)/2}$ and ${\rm Im}(z_0)>0$. 
Then it derives a contradiction. 
Because $A(a,z)$ and $B(a,z)$ are bounded on the real line as a function of $z$ by definition \eqref{0330_10}, 
$E(a,z)$ is a function of the Cartwright class~\cite[the first page of Chapter II]{Levin96}. 
Therefore, we have the factorization
\[
E(a_0,z) = C \lim_{R \to \infty} \prod_{\substack{ |\rho|<R \\ E(a_0,\rho)=0 }}\left(1-\frac{z}{\rho} \right) ; 
\]
see \cite[Remark 2 of Lecture 17.2]{Levin96}. Here ${\rm Im}(\rho)<0$ for every zero of $E(a_0,z)$ by the assumption. 
Hence, we have
\[
\left|\frac{E^\sharp(a_0,z)}{E(a_0,z)}\right| 
= \lim_{R \to \infty} \prod_{\substack{ |\rho|<R \\ E(a_0,\rho)=0 }}\left| \frac{z-\bar{\rho}}{z-\rho} \right|<1 \quad \text{for} \quad {\rm Im}\, z>0.
\]
This contradicts the assumption $|E^\sharp(a_0,z_0)| \geq |E(a_0,z_0)|$. 

{\bf Step 3.} 
For the number $n_0$ of Step 1, 
we prove that $E(z)$ is not a function of the class HB 
if $\gamma_{n_0+1}<0$. 
Considering the argument in Step 2, 
we assume that $E(a,z)$ is a function of the class HB for every $1< a \leq q^{(n_0+1)/2}$ 
in both cases and find a contradiction. 

Suppose that $E(a,z)$ is a function of the class HB for every $1<a\leq q^{(n_0+1)/2}$. 
Put $a_1=q^{n_0/2}$, 
$a_0=(q^{(n_0+1)/2}+q^{n_0/2})/2$ and $\gamma_{n_0+1}=-\gamma<0$. 
Then, 
for every $a_1 \leq a \leq a_0$, 
$\gamma(a)=-\gamma$ by \eqref{0423_6}, 
and we find that  
$E(a,z)$ generates the de Branges space $B(E(a,z))$ 
which is the Hilbert space of all entire functions $F(z)$ such that 
$\int_{\R}|F(x)/E(a,x)|^2 dx<\infty$,  
and $F(z)/E(a,z)$, $F^\sharp(z)/E(a,z)$ are 
functions of the Hardy space $H^2$ in the upper half-plane; 
see \cite[\S19]{deBranges68} and \cite[Proposition 2.1]{Remling02}. 
By applying \eqref{lem_602_3} to $z=w$ with $\gamma(a)=-\gamma<0$, we have $K(a_0;z,z) > K(a_1;z,z)$. 
Therefore, for every $f \in B(E(a_1,z))$, 
\[
|f(z)|^2 \leq \Vert f \Vert_{a_1}^2K(a_1;z,z) < \Vert f \Vert_{a_1}^2K(a_0;z,z)
\]
by \cite[Theorem 20]{deBranges68}, 
where $\Vert \cdot \Vert_{a_1}$ is the norm of $B(E(a_1,z))$. 
Applying this to the function 
$g(z):=(E(a_1,z)-E(a_1,iy_0))/(z-iy_0)$ $(y_0 \in \R)$, 
which is a function of $B(E(a_1,z))$ by Lemmas 3.3 and 3.4 of \cite{Dym70}, 
we obtain  
\[
\aligned
|g(iy)|^2
\leq \Vert g \Vert_{a_1}^2 K(a_0,iy,iy) =\Vert g \Vert_{a_1}^2
\frac{|E(a_0,iy)|^2-|E^\sharp(a_0,z)|^2}{4\pi y}
\leq \Vert g \Vert_{a_1}^2
\frac{|E(a_0,iy)|^2}{4\pi y}. 
\endaligned
\]
By $E(a,z)=A(a,z)-iB(a,z)$ with \eqref{0330_10}, we see that 
\[
y^{-1}q^{(g-\frac{n_0-1}{2})y} \ll |g(iy)| \ll 
y^{-1/2}|E(a_0,iy)| \ll y^{-1/2} q^{(g-\frac{n_0}{2})y} 
\quad \text{as $y \to +\infty$}. 
\]
This is a contradiction. Hence $E(a,z)$ is not a function of the class HB for some $1<a \leq a_0$. 

The conclusions of Steps 2 and 3 show that $E(z)$ is not a function of the class HB 
if $\gamma_{n}$ is neither positive nor finite for some $1 \leq n \leq 2g$. 
\hfill $\Box$

\subsection{Proof of Theorem \ref{thm_02}(2).}  \label{section_5_2}

By assumption, $\det(E^{+} \pm E^{-}J_n) \not=0$ for every $1 \leq n \leq 2g$. 
Therefore, $0<\gamma_{n}<\infty$ for every $1 \leq n \leq 2g$, 
since definition \eqref{0425_2} and \eqref{0423_6} are equivalent to each other by Proposition \ref{0330_4}. 
Hence, $\alpha_n, \beta_n \not=0,\infty$ for every $1 \leq n \leq 2g$, 
$A(a,z)$ and $B(a,z)$ are well-defined 
and satisfy the system \eqref{0330_1}, 
and both $\gamma(a)$ and $\gamma(a)^{-1}$ are integrable on $[1,q^g)$ and positive real-valued. 
Hence, applying \eqref{lem_602_3} to $(a_1,a_0)=(a,b)$ 
and then letting $b$ tend to $q^g$ and using Theorem \ref{thm_03}\,(3) and \eqref{lem_602_1}, 
we obtain 
\[
\aligned
0 & 
< \frac{1}{\pi}\int_{a}^{q^g} |A(t,z)|^2 \, \frac{1}{\gamma(t)} \, \frac{dt}{t} 
+ \frac{1}{\pi}\int_{a}^{q^g} |B(t,z)|^2 \, \gamma(t) \, \frac{dt}{t} \\
& \quad = K(a;z,z) - \lim_{b \nearrow q^g} K(b;z,z) = \frac{|E(a,z)|^2-|E^\sharp(a,z)|^2}{4\pi {\rm Im}\, z}
\endaligned
\]
for every $1 \leq a < q^g$ if ${\rm Im}\, z>0$. 
Thus $E(a,z)$ satisfy \eqref{HB} for every $1 \leq a < q^g$. 
The proof is then completed once we prove that $E(a,z)$ has no real zeros. 

We have 
\begin{equation} \label{0501_4}
\begin{bmatrix}
A(a,z) \\ B(a,z)
\end{bmatrix} 
= \left( \lim_{b \nearrow q^g}
M(a,b;z) \right)
\begin{bmatrix}
E(0) \\ 0
\end{bmatrix} \quad \text{with} \quad \lim_{b \nearrow q^g}\det M(a,b;z)=1
\end{equation}
for every $1 \leq a < q^g$. 
Here $E(0)=A(0)$, since $B(z)$ is odd.    
By Proposition \ref{prop_0313_1}, 
\begin{equation} 
\aligned
\alpha_1 \cdots \alpha_{2g}
& = \frac{\det(E_0^{+})}{(\det(E^{+}))^2} \frac{\det(E^{+} + E^{-}J_{2g-1})\det(E^{+} + E^{-}J_{2g})}
{\det(E_0^{+} + E_{2g-1}^{\sharp}J)} \\
& = C_{-g}^{4g-2} \frac{\det(E^{+} + E^{-}J_{2g-1})\det(E^{+} + E^{-}J_{2g})}
{\det(E_0^{+} + E_{2g-1}^{\sharp}J)}.
\endaligned
\end{equation}
Applying Lemma \ref{mat_3} to the $4g$-th column and then to the $(4g+1)$-th row, we obtain 
\[
\det(E_0^+ + E_{2g-1}^{\sharp}J)
= C_{-g}^2 \det(A+B)\det(A-B)
\]
with 
\[
A = [E_0^+ + E_{2g-1}^{\sharp}J]_{\nwarrow 4g-1}, \quad B = [E_0^+ + E_{2g-1}^{\sharp}J]_{\nearrow 4g-1}, 
\]
by Lemma \ref{mat_2}. Applying Lemma \ref{mat_3} to the $(4g-2g-1)$ columns of $A\pm B$ 
with indices $(2g+2,2g+3,\cdots,4g-1)$, we obtain 
\[ \det(A\pm B)=C_{-g}^{2g-2}\det(E^{+} \pm E^{-}J_{2g-1}), \]
because
\[ [E_0^{+} + E_{2g-1}^{\sharp}J]_{\nwarrow  2g+1} = E^{+} \]
and
\[ [E_0^{+} + E_{2g-1}^{\sharp}J]_{\nearrow 2g+1} = E^{-}J_{2g-1}. \]
Therefore, 
\begin{equation} 
\alpha_1 \cdots \alpha_{2g}
= \frac{\det(E^{+} + E^{-}J_{2g})}{\det(E^{+} - E^{-}J_{2g-1})}.
\end{equation}
Hence $A(0)=E(0)\not=0$ by \eqref{0503_1} and  \eqref{0503_2}, 
which implies that $A(a,z)$ and $B(a,z)$ have no common zeros 
for every $1 \leq a <q^g$ by \eqref{0501_4}. 
Thus $E(a,z)$ has no real zeros, by \eqref{0427_2}. 
As a consequence, $E(a,z)$ is a function of the class HB for every $1 \leq a < q^g$. 
\hfill $\Box$

\subsection{Proof of Theorem \ref{thm_02_1}.}  \label{section_5_3} 
First, we note that the proofs of Proposition \ref{prop_601} and Corollary \ref{cor_602} are valid 
for $H(a)$ of Theorem \ref{thm_02_1}, since they use only the property that $\gamma(a)$ is constant on every $[q^{(n-1)/2},q^{n/2})$ 
($1 \leq n \leq 2g$). 
Hence, the quasi-canonical system \eqref{can_0} for $H(a)$ of Theorem \ref{thm_02_1} 
has the unique solution $(A(a,z),B(a,z))$ as in Corollary \ref{cor_602} (with $E(0)=1$). 
To prove \eqref{0829_1}, we put 
\begin{equation} \label{0829_4}
\begin{bmatrix} 
M_{11}^{n,K}(z) & M_{12}^{n,K}(z) \\ 
M_{21}^{n,K}(z) & M_{22}^{n,K}(z)
\end{bmatrix} 
=
\prod_{k=2g-n+1-K}^{2g-n}
\begin{bmatrix}
\cos(\frac{z}{2}\log q) & -\gamma_{n+k} \sin(\frac{z}{2}\log q)\\
\gamma_{n+k}^{-1}\sin(\frac{z}{2}\log q) & \cos(\frac{z}{2}\log q)
\end{bmatrix}
\end{equation}
for $1 \leq K \leq 2g-n$. 
Then Corollary \ref{cor_602} implies 
\begin{equation} \label{0829_2}
\aligned 
A(a,z) & = \cos(z\log(q^{n/2}/a))M_{11}^{n,2g-n}(z) - \gamma_n \sin(z\log(q^{n/2}/a))M_{21}^{n,2g-n}(z), \\
B(a,z) & = \gamma_n^{-1} \sin(z\log(q^{n/2}/a))M_{11}^{n,2g-n}(z) + \cos(z\log(q^{n/2}/a))M_{21}^{n,2g-n}(z)
\endaligned 
\end{equation}
for $q^{(n-1)/2} \leq a < q^{n/2}$ and $1 \leq n \leq 2g$. 
On the other hand, we obtain the formula 
\begin{equation} \label{0829_3}
M_{rs}^{n,K}(z) = \mu_{rs} \sum_{\nu=0}^{\lfloor K/2 \rfloor} m_{rs}^{n,K}(\nu)(q^{(K-2\nu)iz/2}+(-1)^{r+s}q^{-(K-2\nu)iz/2}) 
\end{equation}
for $r,s \in \{1,2\}$ by induction for $K \geq 1$, where $\mu_{11}=\mu_{22}=1$, $\mu_{12}=i$, $\mu_{21}=-i$, 
and $m_{rs}^{n,K}(\nu)$ are real numbers depending only on the set $\{\gamma_n\}_{1 \leq n \leq 2g}$ 
of values of $\gamma(a)$. 
Substituting \eqref{0829_3} into \eqref{0829_2} and then carrying out a simple calculation,  
we obtain \eqref{0829_1}. \hfill $\Box$

\subsection{Proof of Theorem \ref{thm_02_2}.}  \label{section_5_4} 
We have 
\[
\aligned
E(a,z) & = A(a,z) - i B(a,z) \\
& = 
 \frac{\gamma_n^{-1}}{2}
 \sum_{\nu=0}^{\lfloor 2g-n/2 \rfloor} (\gamma_n - 1)(m_{11}^{n,2g-n}(\nu)+\gamma_n m_{21}^{n,2g-n}(\nu)) (q^{(g-\nu)}/a)^{iz} \\
& \quad + 
\frac{\gamma_n^{-1}}{2}
 \sum_{\nu=0}^{\lfloor 2g-n/2 \rfloor} (\gamma_n + 1)(m_{11}^{n,2g-n}(\nu)+\gamma_n m_{21}^{n,2g-n}(\nu)) (q^{(g-\nu)}/a)^{-iz} \\
& \quad + \frac{\gamma_n^{-1}}{2}
 \sum_{\nu=0}^{\lfloor 2g-n/2 \rfloor} (\gamma_n - 1)(m_{11}^{n,2g-n}(\nu)-\gamma_n m_{21}^{n,2g-n}(\nu)) (q^{-(g-n-\nu)}/a)^{iz} \\
& \quad + \frac{\gamma_n^{-1}}{2}
 \sum_{\nu=0}^{\lfloor 2g-n/2 \rfloor} (\gamma_n + 1)(m_{11}^{n,2g-n}(\nu)-\gamma_n m_{21}^{n,2g-n}(\nu))(q^{-(g-n-\nu)}/a)^{-iz} 
\endaligned
\]
for $q^{(n-1)/2} \leq a < q^{n/2}$ and $1 \leq n \leq 2g$ by \eqref{0829_2} and \eqref{0829_3}. 
Applying this to $a=1$, we obtain 
\[
\aligned
E(1,z) 
& = 
\frac{\gamma_1^{-1}}{2} (\gamma_1 - 1)(m_{11}^{1,2g-1}(0)+\gamma_1 m_{21}^{1,2g-1}(0)) \, q^{giz} + \cdots  \\
& \quad  \cdots + \frac{\gamma_1^{-1}}{2}
 (\gamma_1 + 1)(m_{11}^{1,2g-1}(0)+\gamma_1 m_{21}^{1,2g-1}(0)) \, q^{-giz}.
\endaligned
\]
Therefore, to complete the proof of Theorem \ref{thm_02_2}, 
it suffices to to show that $m_{11}^{1,2g-1}(0)$ and $m_{21}^{1,2g-1}(0)$ are positive 
if $\gamma_n>0$ for every $1 \leq n \leq 2g$. 
As easily derived from definition \eqref{0829_4}, 
 $m_{11}^{n,K}(0)$ and $m_{21}^{n,K}(0)$ 
satisfy the inductive relations 
\[
\aligned 
m_{11}^{n,K+1}(0) & = \frac{1}{2}(m_{11}^{n,K}(0)+\gamma_{2g-K}m_{21}^{n,K}(0)), \\ 
m_{21}^{n,K+1}(0) & = \frac{1}{2\gamma_{2g-K}}(m_{11}^{n,K}(0)+\gamma_{2g-K}m_{21}^{n,K}(0))
\endaligned 
\]
for $1 \leq K \leq 2g-n-1$, and $m_{11}^{n,1}(0)=1$, $m_{21}^{n,1}(0)=\gamma_{2g}^{-1}$. 
Hence, $m_{11}^{n,K}(0)$ and $m_{21}^{n,K}(0)$ are positive for every $1 \leq K \leq 2g-n$ and $1 \leq n \leq 2g$ 
if $\gamma_n>0$ for every $1 \leq n \leq 2g$.  
\hfill $\Box$

\section{Inductive construction} \label{section_7}

The pair of functions $(A(a,z),B(a,z))$ of \eqref{0330_10} is written as 
\[
\aligned
A_n(a,z) &=  \frac{1}{2}\prod_{j=1}^n \alpha_j \cdot F(a,z) \cdot (I+J)E_0^{+}
(E_0^{+} + E_n^\sharp J)^{-1}E_0^{-} \chi_{8g}, \\
B_n(a,z) &= \frac{1}{2i}\prod_{j=1}^n \beta_j \cdot F(a,z) \cdot (I-J)E_0^{+}
(E_0^{+} - E_n^\sharp J)^{-1}E_0^{-} \chi_{8g}
\endaligned
\]
for $q^{(n-1)/2} \leq a < q^{n/2}$ by \eqref{0330_11}, \eqref{LE2}, \eqref{0501_6}, and \eqref{0330_7}, where $I=I^{(8g)}$ and $J=J^{(8g)}$. 
The above formula is explicit but is rather complicated from a computational point of view. 
In contrast, the following method, based on Proposition \ref{0419_1}, 
is often useful for computing the triple $(\gamma(a), A(a,z),B(a,z))$. 
\begin{theorem} \label{0503_6} 
Let $\tilde{\Omega}_{0}$ be a column vector of length $(4g+2)$. 
Define the column vectors $\tilde{\Omega}_{n}$ $(1 \leq n \leq 2g)$ of length $(4g-2n+2)$ inductively as follows: 
\begin{equation}\label{def_m1}
\tilde{\gamma}_{n+1}
:=\frac{\tilde{\Omega}_{n}(1)+\tilde{\Omega}_{n}(2g-n+1)}{\tilde{\Omega}_{n}(2g-n+2) - \tilde{\Omega}_{n}(4g-2n+2)},
\end{equation}
\begin{equation}\label{def_v1}
\tilde{\Omega}_{n+1} := P_{2g-(n+1)}(\tilde{\gamma}_{n+1})^{-1}Q_{2g-(n+1)} \, \tilde{\Omega}_{n},
\end{equation}
where $P_0(m_0):=P_0$ and $v(j)$ means the $j$-th component of a column vector $v$. 

Suppose that $\tilde{\Omega}_{0}$ is the vector defined by \eqref{0423_5} 
for a numerical vector $\underline{C} \in \R^\ast \times \R^{2g-1} \times \R^\ast$ such that 
the exponential polynomial \eqref{0418_2} has no zeros on the real line. 
Then $\tilde{\gamma}_n$ and $\tilde{\Omega}_{n}$ are well-defined as functions of $\underline{C}$ 
for every $1 \leq n \leq 2g$, and  
\[
\gamma_n=\tilde{\gamma}_n ,\quad \Omega_n=\tilde{\Omega}_n,
\]
where $\gamma_n$ and $\Omega_n$ are defined in \eqref{0425_3} and \eqref{0501_2}, respectively. 
\end{theorem}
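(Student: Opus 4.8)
The plan is a straightforward induction on $n$. The case $n=0$ is the hypothesis $\tilde{\Omega}_0=\Omega_0$, the vector of \eqref{0423_5}. Before running the induction I would record two facts, both valid under the standing assumption that $E(z)$ has no real zeros. First, by Theorem \ref{thm_03}\,(1) and Proposition \ref{prop_0313_1} every $\alpha_j$ and $\beta_j$ $(1\le j\le 2g)$ is a ratio of nonvanishing determinants, hence a finite nonzero real, so each $\gamma_n=\alpha_1\cdots\alpha_n/(\beta_1\cdots\beta_n)$ is finite and nonzero. Second — the structural point that makes \eqref{def_m1} work — the two halves of $\Omega_n$ are proportional with ratio $\gamma_n$: from \eqref{0501_6} and the definition of $\Psi_n^\pm$ one has $A_n^{\ast}(j)=p_n^+(j-g-1)+q_n^+(j-g-1)$ and $B_n^{\ast}(j)=p_n^-(j-g-1)-q_n^-(j-g-1)$ for $1\le j\le 4g$, and by Lemma \ref{co2} (with Lemma \ref{0320_4} at the extreme index $j=2g+1$) these agree for every index occurring in $A_n^{\ast\ast}$ and $B_n^{\ast\ast}$; thus $A_n^{\ast\ast}=B_n^{\ast\ast}$, and writing $\Omega_n={}^t\begin{bmatrix}{}^t\mathbf{u}_n & {}^t\mathbf{w}_n\end{bmatrix}$ with $\mathbf{u}_n=(\prod_{j=1}^{n}\alpha_j)A_n^{\ast\ast}$, $\mathbf{w}_n=(\prod_{j=1}^{n}\beta_j)B_n^{\ast\ast}$, we get $\mathbf{w}_n=\gamma_n^{-1}\mathbf{u}_n$. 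In particular $\Omega_n(2g-n+2)=\gamma_n^{-1}\Omega_n(1)$ and $\Omega_n(4g-2n+2)=\gamma_n^{-1}\Omega_n(2g-n+1)$, these being the first and last entries of the $B$-block.

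For the inductive step, assume $\tilde{\Omega}_n=\Omega_n$ for some $0\le n\le 2g-1$ and put $k=2g-(n+1)$. Proposition \ref{0419_1} gives $P_k(\gamma_{n+1})\Omega_{n+1}=Q_k\Omega_n$ as in \eqref{0427_1}; since $\gamma_{n+1}\ne 0$, Lemma \ref{lem_201} makes $P_k(\gamma_{n+1})$ invertible, so $\Omega_{n+1}=P_k(\gamma_{n+1})^{-1}Q_k\Omega_n$, and Lemma \ref{lem_203} describes this matrix block by block. Reading off the first rows of the blocks: the first row of $M_{k,1}$ is $(1\;0\cdots 0\;1)$ and that of $M_{k,2}$ is zero, so the first entry of the $A$-block of $\Omega_{n+1}$ equals $\Omega_n(1)+\Omega_n(2g-n+1)$; the first row of $M_{k,3}$ is zero and that of $M_{k,4}$ is $(1\;0\cdots 0\;-1)$, so the first entry of the $B$-block of $\Omega_{n+1}$ equals $\Omega_n(2g-n+2)-\Omega_n(4g-2n+2)$ (the cases $k=0,1$ are read off from the explicit $M_{0,l}$, $M_{1,l}$ of Lemma \ref{lem_203} in the same way). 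Applying the structural fact of the first paragraph to $\Omega_{n+1}$ (legitimate since $n+1\le 2g$), the first $A$-entry of $\Omega_{n+1}$ equals $\gamma_{n+1}$ times its first $B$-entry, whence
\[
\gamma_{n+1}=\frac{\Omega_n(1)+\Omega_n(2g-n+1)}{\Omega_n(2g-n+2)-\Omega_n(4g-2n+2)}.
\]
This is exactly \eqref{def_m1} evaluated at $\tilde{\Omega}_n=\Omega_n$, so $\tilde{\gamma}_{n+1}=\gamma_{n+1}$; the denominator is nonzero because $\gamma_{n+1}$ is finite and nonzero (equivalently it is $\beta_1\cdots\beta_{n+1}$ times the first, nonvanishing, entry of $B_{n+1}^{\ast\ast}$). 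Feeding $\tilde{\gamma}_{n+1}=\gamma_{n+1}$ back into \eqref{def_v1} and using $\tilde{\Omega}_n=\Omega_n$ together with \eqref{0427_1} gives $\tilde{\Omega}_{n+1}=P_k(\gamma_{n+1})^{-1}Q_k\Omega_n=\Omega_{n+1}$, which closes the induction; along the way every $\tilde{\gamma}_n$ and $\tilde{\Omega}_n$ has been seen to be well defined as a rational function of $\underline{C}$, the only denominators involved being $\det(E^{+}\pm E^{-}J_n)$ and the denominator of \eqref{def_m1}, all nonzero by Theorem \ref{thm_03}\,(1) and the above.

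The one place with genuine content is the scalar identity for $\gamma_{n+1}$ displayed above; everything entering it is (a) the equality $A_n^{\ast\ast}=B_n^{\ast\ast}$ coming from Lemma \ref{co2}, which is precisely what pins $\gamma_{n+1}$ down — the matrix identity \eqref{0427_1} by itself leaves it undetermined by one parameter — and (b) the explicit top rows of the blocks $M_{k,l}$ of Lemma \ref{lem_203}. Apart from that the proof is bookkeeping, mainly keeping the block lengths straight ($2g-n+1$ for $\Omega_n$ against $2g-(n+1)+1$ for $\Omega_{n+1}$) so that the four entries $\Omega_n(1)$, $\Omega_n(2g-n+1)$, $\Omega_n(2g-n+2)$, $\Omega_n(4g-2n+2)$ are correctly identified with the endpoints of the $A$- and $B$-blocks of $\Omega_n$. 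A variant worth noting is that the same identity for $\gamma_{n+1}$ can be extracted directly from the folding computation in the proof of Proposition \ref{0419_1}, namely from \eqref{0330_19}, \eqref{0330_20} and \eqref{0320_2} at $a=q^{n/2}$, using $p_n^{\pm}(g)\pm q_n^{\pm}(g)=C_{-g}$ from Lemma \ref{0320_4}.
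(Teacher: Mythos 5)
Your proof is correct and takes essentially the same route as the paper's: both reduce the theorem to the two facts that the pair $(\gamma_n,\Omega_n)$ satisfies the recursion \eqref{def_m1}--\eqref{def_v1} and that this recursion determines its output uniquely because $P_k(\gamma)$ is invertible for $\gamma\neq 0$ (Lemma \ref{lem_201}), with $\gamma_n\neq 0,\infty$ coming from Proposition \ref{prop_0313_1} and Theorem \ref{thm_03}\,(1). The difference is one of detail rather than of method: the paper dismisses the first fact with ``by definition of $P_k(m_k)$, $Q_k$ and \eqref{0427_1}'', whereas you correctly note that \eqref{0427_1} by itself leaves \eqref{def_m1} undetermined and supply the missing ingredient --- the identity $A_n^{\ast\ast}=B_n^{\ast\ast}$ from Lemma \ref{co2} (with Lemma \ref{0320_4} at the endpoint), which forces the first entries of the two halves of $\Omega_{n+1}$ to be proportional with ratio $\gamma_{n+1}$, combined with the top rows of the blocks in Lemma \ref{lem_203}; the one step you assert rather than prove, the nonvanishing of the denominator of \eqref{def_m1}, is equally left implicit in the paper's own proof.
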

\begin{proof}
Let $\underline{C}$ be a numerical vector such that 
the exponential polynomial \eqref{0418_2} has no zeros on the real line. 
Then $\gamma_n$ and $\Omega_{n}$ of \eqref{0425_3} and \eqref{0501_2} 
satisfy \eqref{def_m1} and \eqref{def_v1}, by the definitions of $P_k(m_k)$, $Q_k$, and \eqref{0427_1}. 
Therefore, $\gamma_n \not=0$ as a function of $\underline{C}$ for every $1 \leq n \leq 2g$, by Theorem \ref{thm_01}, 
since the cyclotomic polynomial of degree $2g$ is a self-reciprocal polynomials of degree $2g$, all of whose roots are simpl and on $T$. 
Hence, Lemma \ref{lem_201} implies that $\tilde{\Omega}_{1},\tilde{\Omega}_{2}, \cdots, \tilde{\Omega}_{2g}$ 
are uniquely determined from the initial vector $\tilde{\Omega}_{0}$. 
Therefore, $\Omega_{n}=\tilde{\Omega}_{n}$ for every $1 \leq n \leq 2g$, by definition $\tilde{\Omega}_{0}=\Omega_{0}$. 
\end{proof}

\begin{proposition} \label{0503_4}
Let $\underline{C}=(C_g,C_{g-1},\cdots,C_{-g})$ 
be a vector consisting of $2g+1$ indeterminate elements. 
Define $\tilde{\gamma}_n(\underline{C})$ by  \eqref{def_m1} and \eqref{def_v1}, 
starting with the initial vector \eqref{0423_5}. 
Define 
\begin{equation} \label{0503_7}
\tilde{\Delta}_n
=
\tilde{\Delta}_n(\underline{C}) 
:= 
\begin{cases}
\displaystyle{ \tilde{\gamma}_{1}(\underline{C})
\prod_{j=1}^{J} \frac{\tilde{\gamma}_{2j+1}(\underline{C})}{\tilde{\gamma}_{2j}(\underline{C})}} & \text{if $n=2J+1\geq 1$}, \\
\displaystyle{\prod_{j=0}^{J} \frac{\tilde{\gamma}_{2j+2}(\underline{C})}{\tilde{\gamma}_{2j+1}(\underline{C})}} & \text{if $n=2J+2 \geq 2$}, 
\end{cases}
\end{equation}
where $\tilde{\gamma}_n$ are functions of $\underline{C}$ in Theorem \ref{0503_6}. 
Then $\tilde{\Delta}_n=\Delta_n$ for every $1 \leq n \leq 2g$ 
if $\underline{C}$ is a numerical vector such that 
the exponential polynomial \eqref{0418_2} has no zeros on the real line. 
\end{proposition}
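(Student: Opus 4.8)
The plan is to reduce the statement to results already established and then finish with a one‑line telescoping identity. The two inputs I would invoke are: first, Theorem \ref{0503_6}, which says that for a numerical $\underline{C}$ such that \eqref{0418_2} has no real zeros the recursion \eqref{def_m1}--\eqref{def_v1} starting from \eqref{0423_5} produces well‑defined quantities with $\tilde{\gamma}_n = \gamma_n$ for $1 \leq n \leq 2g$, where $\gamma_n$ is the constant of \eqref{0425_3}; and second, Proposition \ref{prop_0313_1}, by which the two definitions \eqref{0425_2} and \eqref{0423_6} of $\gamma(a)$ agree, so that $\gamma_n = \Delta_{n-1}(\underline{C})\Delta_n(\underline{C})$ for $1 \leq n \leq 2g$, with the convention $\Delta_0 = 1$. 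I would first record that under the no‑real‑zeros hypothesis Theorem \ref{thm_03}\,(1) gives $\det(E^{+}\pm E^{-}J_n)\not=0$ for $1 \leq n \leq 2g$, so each $\Delta_n$ is a finite nonzero real number and every ratio below is legitimate.

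Given these, $\tilde{\gamma}_n = \Delta_{n-1}\Delta_n$, and it remains to substitute this into \eqref{0503_7}. For $n = 2J+1$ this reads
\[
\tilde{\Delta}_{2J+1} = \tilde{\gamma}_{1}\prod_{j=1}^{J}\frac{\tilde{\gamma}_{2j+1}}{\tilde{\gamma}_{2j}}
= \Delta_0\Delta_1 \prod_{j=1}^{J}\frac{\Delta_{2j}\Delta_{2j+1}}{\Delta_{2j-1}\Delta_{2j}}
= \Delta_1 \prod_{j=1}^{J}\frac{\Delta_{2j+1}}{\Delta_{2j-1}}
= \Delta_{2J+1},
\]
the last step being the telescoping of the product together with $\Delta_0 = 1$; and for $n = 2J+2$,
\[
\tilde{\Delta}_{2J+2} = \prod_{j=0}^{J}\frac{\tilde{\gamma}_{2j+2}}{\tilde{\gamma}_{2j+1}}
= \prod_{j=0}^{J}\frac{\Delta_{2j+1}\Delta_{2j+2}}{\Delta_{2j}\Delta_{2j+1}}
= \prod_{j=0}^{J}\frac{\Delta_{2j+2}}{\Delta_{2j}}
= \frac{\Delta_{2J+2}}{\Delta_0}
= \Delta_{2J+2}.
\]
This is the asserted equality $\tilde{\Delta}_n = \Delta_n$ for every $1 \leq n \leq 2g$.

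I do not expect a genuine obstacle: once Theorem \ref{0503_6} and Proposition \ref{prop_0313_1} are granted, the proof is pure bookkeeping. The only points I would be careful about are the two boundary cases of \eqref{0503_7} --- $n = 1$, where the product is empty so $\tilde{\Delta}_1 = \tilde{\gamma}_1 = \Delta_0\Delta_1 = \Delta_1$, and $n = 2$, where $\tilde{\Delta}_2 = \tilde{\gamma}_2/\tilde{\gamma}_1 = \Delta_2/\Delta_0 = \Delta_2$ --- which should be checked against the special‑case formulas for $\alpha_n$ and $\beta_n$ at $n = 1,2$ in Proposition \ref{prop_0313_1}; and making explicit that the well‑definedness of $\tilde{\gamma}_n$, hence of $\tilde{\Delta}_n$, as a function of $\underline{C}$ on the locus of admissible numerical vectors is precisely what Theorem \ref{0503_6} provides (via Lemma \ref{lem_201} and the non‑vanishing of $\gamma_n$).
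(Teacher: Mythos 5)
Your proposal is correct and follows exactly the paper's own (much terser) argument: invoke Theorem \ref{0503_6} for $\tilde{\gamma}_n=\gamma_n$, use the identity $\gamma_n=\Delta_{n-1}\Delta_n$ coming from \eqref{0425_2} via Proposition \ref{prop_0313_1}, and telescope. The explicit telescoping computation and the boundary checks at $n=1,2$ are just the bookkeeping the paper leaves implicit.
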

\begin{proof}
By Theorem \ref{0503_6}, $\gamma_n=\tilde{\gamma}_n$. 
On the other hand, $\Delta_n$ and $\gamma_n$ satisfy \eqref{0503_7}, 
by \eqref{0425_2}. Hence $\tilde{\Delta}_n=\Delta_n$.  
\end{proof}
Here we mention that the vector $\Omega_{n}$ of \eqref{def_v1} can be defined from $\Omega_{n-1}$ by a slightly different way 
according to the following lemma. 

\begin{lemma} \label{0501_5}
For every $1 \leq n \leq 2g$, 
\[
\scalebox{0.9}{$
\aligned
\,& 
\frac{\Omega_{n-1}(1)+\Omega_{n-1}(2g-n+2)}{\Omega_{n-1}(2g-n+3)-\Omega_{n-1}(4g-2n+4)} 
=
\frac{(P_{2g-n}(m_{2g-n})^{-1}Q_{2g-n} \, \Omega_{n-1})(1)}{(P_{2g-n}(m_{2g-n})^{-1}Q_{2g-n} \, \Omega_{n-1})(2g-n+2)},
\endaligned
$}
\]
that is, the right-hand side is independent of the indeterminate element $m_{2g-n}$.
\end{lemma}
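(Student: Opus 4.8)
The plan is to read both sides directly off the explicit block form of $P_k(m_k)^{-1}Q_k$ supplied by Lemma~\ref{lem_203}. Put $k:=2g-n$, so that $0\le k\le 2g-1$, and recall that $P_k(m_k)^{-1}Q_k$ is the $(2k+2)\times(2k+4)$ matrix built from the $(k+1)\times(k+2)$ blocks $M_{k,1},M_{k,2},M_{k,3},M_{k,4}$. Write $\Omega_{n-1}$, which has length $4g-2n+4=2(k+2)$, as $\Omega_{n-1}={}^{t}[{}^{t}\mathbf a\ \ {}^{t}\mathbf b]$ with $\mathbf a,\mathbf b$ column vectors of length $k+2$. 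Then the first component of $P_k(m_k)^{-1}Q_k\,\Omega_{n-1}$ is the pairing of the first row of $[M_{k,1}\ M_{k,2}]$ with ${}^{t}[{}^{t}\mathbf a\ \ {}^{t}\mathbf b]$, and its $(k+2)$-nd component is the pairing of the first row of $[M_{k,3}\ M_{k,4}]$ with the same vector. These are exactly the numerator and the denominator on the right-hand side of the lemma: $\Omega_{n-1}$ has $A$-part of length $k+2$, while $P_k(m_k)^{-1}Q_k\,\Omega_{n-1}$ has $A$-part of length $k+1$, so the index $2g-n+2=k+2$ points at the first entry of its $B$-part.

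Next I would simply inspect the formulas for $M_{k,1},\dots,M_{k,4}$ in Lemma~\ref{lem_203}, in the cases $k$ odd, $k$ even, and the exceptional values $k=0$ and $k=1$. In every case the first row of $M_{k,1}$ is $(1,0,\dots,0,1)$, the first rows of $M_{k,2}$ and $M_{k,3}$ vanish, and the first row of $M_{k,4}$ is $(1,0,\dots,0,-1)$. Hence the first component of $P_k(m_k)^{-1}Q_k\,\Omega_{n-1}$ equals $\mathbf a(1)+\mathbf a(k+2)$ and its $(k+2)$-nd component equals $\mathbf b(1)-\mathbf b(k+2)$, and neither of these depends on the parameter $m_k=m_{2g-n}$. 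Translating indices back through $\mathbf a(1)=\Omega_{n-1}(1)$, $\mathbf a(k+2)=\Omega_{n-1}(2g-n+2)$, $\mathbf b(1)=\Omega_{n-1}(2g-n+3)$, $\mathbf b(k+2)=\Omega_{n-1}(4g-2n+4)$ yields at once both the asserted identity and the stated independence of $m_{2g-n}$.

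There is no genuine obstacle here beyond bookkeeping; the argument is an inspection of the top rows of four explicitly given matrices. The points requiring care are matching the length conventions (the $A$-block of $\Omega_{n-1}$ has length $2g-n+2$, whereas that of $P_k(m_k)^{-1}Q_k\,\Omega_{n-1}$ has length $2g-n+1$, which is why the two relevant indices on the left are $2g-n+2$ and $2g-n+3$ while on the right the single relevant index is $2g-n+2$), and invoking the correct $k=0,1$ cases of Lemma~\ref{lem_203} rather than the generic even/odd formulas. Once those are aligned, the identity is immediate; in particular, applying it with $\Omega_{n-1}=\Omega_{n-1}(\underline{C})$ and $m_{2g-n}=\gamma_n$ recovers the defining ratio for $\gamma_n$ in~\eqref{def_m1}.
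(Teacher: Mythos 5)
Your proof is correct and follows essentially the same route as the paper: the paper's own proof simply cites formula \eqref{0426_1} (Lemma \ref{lem_203}) to read off that the first and $(2g-n+2)$-nd components of $P_{2g-n}(m_{2g-n})^{-1}Q_{2g-n}\,\Omega_{n-1}$ equal $\Omega_{n-1}(1)+\Omega_{n-1}(2g-n+2)$ and $\Omega_{n-1}(2g-n+3)-\Omega_{n-1}(4g-2n+4)$, exactly as you derive from the top rows of $M_{k,1},\dots,M_{k,4}$. Your index bookkeeping and the checks of the exceptional cases $k=0,1$ are all accurate.
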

\begin{proof} 
Formula \eqref{0426_1} shows that 
\[
\aligned
(P_{2g-n}(m_{2g-n})^{-1}Q_{2g-n}\Omega_{n-1})(1) &= \Omega_{n-1}(1)+\Omega_{n-1}(2g-n+2), \\
(P_{2g-n}(m_{2g-n})^{-1}Q_{2g-n}\Omega_{n-1})(2g-n+2)  &= \Omega_{n-1}(2g-n+3)-\Omega_{n-1}(4g-2n+4)
\endaligned
\]
for every $1 \leq n \leq 2g$. These equalities imply the lemma. 
\end{proof}

Using Lemma \ref{0501_5}, we can define $\Omega_{n}$ by 
taking 
\[
\Omega_{n}^\prime=P_{2g-n}(\gamma_{n})^{-1}Q_{2g-n} \, \Omega_{n-1}
\] 
for $\Omega_{n-1}$ 
and then substituting $\Omega_{n}^{\prime}(1)/\Omega_{n}^{\prime}(2g-n+2)$ into $\gamma_n$ of $\Omega_{n}^{\prime}$.

\section{Applications to self-reciprocal polynomials} \label{section_8}

\subsection{Proof of Theorem \ref{thm_1}.} \label{section_8_1}

For a self-reciprocal polynomial $P_g(x)$ of \eqref{def_Pg} and a real number $q>1$,  
we define
\begin{equation} \label{def_AB_g}
A_q(z) := q^{-giz}P_g(q^{iz}), \qquad B_q(z):=-\frac{d}{dz}A_q(z)
\end{equation}
and 
\begin{equation} \label{def_E}
E_q(z) := A_q(z) - i B_q(z).
\end{equation}
Then the reality of the coefficients of $P_g(x)$ and the self-reciprocal condition $P_g(x)=x^{2g}P_g(1/x)$ imply that 
that $A_q(z)$ (respectively, $B_q(z)$) is an even (respectively, odd) real entire function of exponential type, namely, 
$A_q(-z)=A_q(z)$ and $A_q^\sharp(z)=A_q(z)$ 
(respectively, $B_q(-z)=-B_q(z)$ and $B_q^\sharp(z)=B_q(z)$). 
In particular, $E_q^\sharp(z) = A_q(z) + i B_q(z)$. 
\smallskip

By \eqref{def_AB_g}, all roots of $P_g(x)$ are simple and on $T$ if and only if $A_q(z)$ has only simple real zeros. 
The following lemma enables us to obtain Theorem \ref{thm_1} 
as a corollary of Theorems \ref{thm_01} and \ref{thm_02}. 
\begin{lemma} \label{lem_401}
Let $E_q(z)$, $A_q(z)$, $B_q(z)$ be as above. Then 
\begin{enumerate}
\item $E_q(z)$ satisfies condition \eqref{HB}
if and only if $A_q(z)$ has only real zeros, 
\item $E_q(z)$ is a function of the class {\rm HB}
if and only if $A_q(z)$ has only simple real zeros. 
\end{enumerate}
\end{lemma}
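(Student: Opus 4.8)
The plan is to reduce both assertions to the sign of $\operatorname{Im}\big(A_q'(z)/A_q(z)\big)$ on the upper half-plane. By \eqref{def_AB_g} one has $B_q=-A_q'$, hence $E_q=A_q+iA_q'$ and $E_q^\sharp=A_q-iA_q'$, and I would first record the elementary identity, valid for every $z\in\C$ with $A_q(z)\neq0$,
\[
|E_q(z)|^2-|E_q^\sharp(z)|^2=-4\operatorname{Im}\!\big(A_q'(z)\overline{A_q(z)}\big)=-4\,|A_q(z)|^2\operatorname{Im}\!\Big(\tfrac{A_q'(z)}{A_q(z)}\Big),
\]
together with the equivalent remark that $E_q^\sharp(z)/E_q(z)=(1-iw)/(1+iw)$ with $w=A_q'(z)/A_q(z)$, where the M\"obius map $w\mapsto(1-iw)/(1+iw)$ carries $\{\operatorname{Im}w<0\}$ onto the open unit disc. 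Thus at a point $z\in\C_+$ with $A_q(z)\neq0$, condition \eqref{HB} holds exactly when $\operatorname{Im}(A_q'(z)/A_q(z))<0$; and if $A_q(\mu)=0$ for some $\mu$, then $E_q(\mu)=iA_q'(\mu)$ and $E_q^\sharp(\mu)=-iA_q'(\mu)$ have equal modulus, so \eqref{HB} cannot hold at $\mu$.

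For (1), the forward implication then follows at once: if \eqref{HB} holds, $A_q$ has no zero in $\C_+$ by the last remark, and since $A_q$ is real entire its zeros are symmetric about $\R$, so all zeros of $A_q$ are real. For the converse, assume every zero of $A_q$ is real. Since $A_q$ is real and of exponential type, and is nonconstant with zeros (because $A_q(z)=q^{-giz}P_g(q^{iz})$ with $\deg P_g=2g\ge2$ and $P_g(0)=c_0\neq0$, so the roots of $P_g$ are nonzero), its Hadamard factorization has genus $\le1$:
\[
A_q(z)=c\,z^{m}e^{\alpha z}\prod_{j}\Big(1-\tfrac{z}{\lambda_j}\Big)e^{z/\lambda_j},\qquad c,\alpha,\lambda_j\in\R,
\]
whence $(A_q'/A_q)(z)=\frac{m}{z}+\alpha+\sum_j\big(\frac{1}{z-\lambda_j}+\frac{1}{\lambda_j}\big)$, and for $z=x+iy$ with $y>0$ the imaginary part of the right-hand side equals $-\frac{my}{|z|^2}-y\sum_j\frac{1}{|z-\lambda_j|^2}<0$ (the series converges since $\sum_j\frac{1}{\lambda_j^2}<\infty$, and the index set is non-empty). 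With the displayed identity and the absence of zeros of $A_q$ in $\C_+$ this gives $|E_q(z)|>|E_q^\sharp(z)|$ throughout $\C_+$, i.e. \eqref{HB}.

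For (2), recall that $E_q\in\mathrm{HB}$ means that \eqref{HB} holds and $E_q$ has no real zeros. For $x\in\R$ one has $A_q(x),A_q'(x)\in\R$, so $E_q(x)=A_q(x)+iA_q'(x)=0$ precisely when $A_q(x)=A_q'(x)=0$, i.e. when $x$ is a multiple real zero of $A_q$; hence $E_q$ has no real zeros iff $A_q$ has no multiple real zero. Combining this with part (1) yields $E_q\in\mathrm{HB}$ iff $A_q$ has only real zeros and none of them is multiple, i.e. iff $A_q$ has only simple real zeros.

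The one genuinely non-trivial step is the strict inequality $\operatorname{Im}(A_q'/A_q)<0$ on $\C_+$ used in the converse of (1); I would handle it through the Hadamard product as above, or by invoking that a real entire function of exponential type with only real zeros lies in the Laguerre--P\'olya class, whose logarithmic derivative maps $\C_+$ into the open lower half-plane (cf.\ \cite{Levin80}). Everything else is bookkeeping with the definitions \eqref{def_AB_g}--\eqref{def_E} and the fact that $A_q$ has conjugate-symmetric zeros.
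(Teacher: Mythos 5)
Your proposal is correct and follows essentially the same route as the paper: reduce \eqref{HB} to the sign of $\operatorname{Im}(A_q'/A_q)$ on $\C_+$ via $E_q=A_q+iA_q'$, prove the converse of (1) through a product representation of $A_q$ over its real zeros, and observe for (2) that a real zero of $E_q$ is exactly a common real zero of $A_q$ and $A_q'$. The only cosmetic differences are that the paper reflects zeros into the lower half-plane using the evenness $A_q(-z)=A_q(z)$ rather than conjugate symmetry, and uses the symmetric product $C\lim_{R\to\infty}\prod_{|\rho|\le R}(1-z/\rho)$ in place of your genus-$\le 1$ Hadamard factorization.
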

\begin{proof} 
(1) Assume that $E_q(z)$ satisfies \eqref{HB}. 
Then $A_q(z) \not=0$ for ${\rm Im}\, z>0$.  
Furthermore, $A_q(z) \not=0$ for ${\rm Im}\, z<0$, 
by the functional equation $A_q(z)=A_q(-z)$. 
Hence all zeros of $A_q(z)$ lie on the real line. 

Conversely, assume that all zeros of $A_q(z)$ are real. 
Then $A_q(z)$ has the factorization 
\begin{equation} \label{H-factorization}
A_q(z) = C \lim_{R \to \infty}\prod_{\substack{|\rho| \leq R \\ A_q(\rho)=0}}\left(1-\frac{z}{\rho} \right) \quad (C,\,\rho \in \R), 
\end{equation}
because $A_q(z)$ is real, even and of exponential type. 
Therefore, 
\[
{\rm Re}\left( i\frac{A_q^\prime(z)}{A_q(z)} \right) 
= {\rm Re}\left(  \sum_{\substack{\rho \in \R \\ A_q(\rho)=0}} \frac{i(x-\rho) + y}{|z-\rho|^2} \right) 
= \sum_{\substack{\rho \in \R \\ A_q(\rho)=0}} \frac{y}{|z-\rho|^2} \quad (z=x+iy).
\]
Hence, for ${\rm Im}\, z>0$, 
\[
|E_q(z)|=|A_q(z)|\left|1+i\frac{A_q^\prime(z)}{A_q(z)} \right|>|A_q(z)|\left|1-i\frac{A_q^\prime(z)}{A_q(z)} \right|=|E_q^\sharp(z)|.
\]
\noindent
(2) Suppose that $E_q(z)$ is a function of the class HB, that is, 
$E_q(z)$ satisfies \eqref{HB} and has no real zeros. 
Then, by (1), $A_q(z)$ has only real zeros. 
If $A_q(z)$ has a multiple real zero, 
then $A_q(z)$ and $B_q(z)=-A_q^\prime(z)$ have a common real zero.  
Thus $E_q(z)=A_q(z)-iB_q(z)$ has a real zero, 
which is a contradiction. 
Hence $A_q(z)$ has only simple real zeros. 

The converse assertion follows from (1) and definition \eqref{def_B}. 
\end{proof}

Let $\underline{C}$ be the vector defined by \eqref{0418_1} for $\underline{c} \in \R^\ast \times \R^g$ and $q>1$. 
Then the exponential polynomial $E(z)$ of \eqref{0418_2} 
is equal to $E_q(z)$ of \eqref{def_E}. By Lemma \ref{lem_401}, all roots of $P_g(x)$ are simple and on $T$ 
if and only if $E_q(z)$ is a function of the class HB. 
Therefore, we obtain Theorem \ref{thm_1} by applying Theorems \ref{thm_01} and \ref{thm_02} to \eqref{0423_8}.  
However it remains to prove that $\delta_n(\underline{c})$ is independent of the choice of $q>1$. 
This is proved in the following proposition. 

\begin{proposition} \label{0503_5} 
$\delta_n(\underline{c}) \in \Q(c_0,\cdots,c_g)$ for every $0 \leq n \leq 2g$. 
\end{proposition}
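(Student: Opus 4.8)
The plan is to split the statement into two parts: for a fixed base $q>1$, (i) $\delta_n(\underline{c})$ is a rational function of $(c_0,\dots,c_g)$ over $\Q$, and (ii) $\delta_n(\underline{c})$ does not depend on $q$. Part (i) is immediate: by \eqref{0423_7} the quantity $\Delta_n(\underline{C})$ is a ratio of two polynomials in the entries of $\underline{C}$ with coefficients in $\Z$, so $\gamma_n(\underline{C}):=\Delta_{n-1}(\underline{C})\Delta_n(\underline{C})$ lies in $\Q(C_g,\dots,C_{-g})$; substituting the entries $C_k=c_{g-|k|}(1-k\log q)$ read off from \eqref{0418_1}, which are polynomials in $\log q$ over $\Q[c_0,\dots,c_g]$, exhibits $\Delta_n(\underline{C})$, and hence $\delta_n(\underline{c})$, as an element of $\Q(c_0,\dots,c_g)(\log q)$. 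The content of the proposition is that this rational function of $\log q$ is actually constant.

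For (ii) the key point is the scaling relation that $(\log q)\,\gamma_n(\underline{C}(q))$ is independent of $q$ for every $1\le n\le 2g$; here $\gamma_n(\underline{C})=\Delta_{n-1}(\underline{C})\Delta_n(\underline{C})$ depends only on the coefficient vector, not on the base used to carry out the construction of Sections \ref{section_2}--\ref{section_4}. To prove it, fix $q,q'>1$ and run that construction with the single base $q$ for the two vectors $\underline{C}(q)$ and $\underline{C}(q')$ coming from \eqref{0418_1}. A direct computation from \eqref{0418_1} shows that the exponential polynomials $\sum_{k=-g}^{g}C_k(q)q^{ikz}$ and $\sum_{k=-g}^{g}C_k(q')q^{ikz}$ have the same even part (in the sense of \eqref{def_A}) $A(z)=\sum_{k}c_{g-|k|}q^{ikz}$, while their odd parts (in the sense of \eqref{def_B}) differ by the scalar factor $\lambda:=\log q'/\log q$. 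Now suppose $E,E'$ are two exponential polynomials of the form \eqref{0418_2} with the same $A$-part and with $B'=\lambda B$. Then the triple $(\gamma(a)/\lambda,\ A(a,z),\ \lambda B(a,z))$ satisfies the system \eqref{0330_1} with the boundary data \eqref{0418_4} attached to $E'$: indeed $(A,\lambda B)$ solves \eqref{0330_1} with $\gamma$ replaced by $\gamma/\lambda$, the value at $a=1$ is $(A(z),\lambda B(z))=(A'(z),B'(z))$, and the value at $a=q^{g}$ is unchanged because $B(z)$ vanishes at $z=0$ and hence $E'(0)=A'(0)=A(0)=E(0)$. Since the locally constant function $\gamma$ together with the solution $(A(a,z),B(a,z))$ is uniquely determined by \eqref{0330_1} and \eqref{0418_4} (as follows, e.g., from the product formula of Corollary \ref{cor_602} together with the uniqueness of that factorization, or from the explicit inductive construction; cf.\ also Proposition \ref{prop_601}), this scaled triple is the one produced by the construction for $E'$, so $\gamma'_n=\gamma_n/\lambda$ for all $n$. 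Taking $\lambda=\log q'/\log q$ and recalling $\gamma'_n=\gamma_n(\underline{C}(q'))$, $\gamma_n=\gamma_n(\underline{C}(q))$ gives $(\log q')\gamma_n(\underline{C}(q'))=(\log q)\gamma_n(\underline{C}(q))$. This is an identity between rational functions of $(c_0,\dots,c_g)$ and $\log q$; it has been verified on the nonempty Zariski-open set where $\sum_k c_{g-|k|}q^{ikz}$ has no multiple real zero and $\log q\ne0$, where Theorem \ref{thm_03} and hence the whole construction applies, so it holds identically.

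To finish, set $f_k(\underline{c}):=(\log q)\,\gamma_k(\underline{C}(q))$; by the above this is independent of $q$, and being a constant-in-$\log q$ specialization of the rational function $(\log q)\,\gamma_k(\underline{C})$ it lies in $\Q(c_0,\dots,c_g)$. From $\Delta_0=1$ and $\gamma_m=\Delta_{m-1}\Delta_m$ one obtains $\Delta_n=\gamma_1\gamma_3\cdots\gamma_n/(\gamma_2\gamma_4\cdots\gamma_{n-1})$ when $n$ is odd and $\Delta_n=\gamma_2\gamma_4\cdots\gamma_n/(\gamma_1\gamma_3\cdots\gamma_{n-1})$ when $n$ is even. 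Plugging in $\gamma_k(\underline{C}(q))=f_k(\underline{c})/\log q$: for even $n$ the powers of $\log q$ cancel, so $\delta_n(\underline{c})=\Delta_n(\underline{C}(q))=f_2f_4\cdots f_n/(f_1f_3\cdots f_{n-1})\in\Q(c_0,\dots,c_g)$; for odd $n$ exactly one factor of $\log q$ survives in the denominator, so $\Delta_n(\underline{C}(q))=f_1f_3\cdots f_n/\bigl((\log q)f_2f_4\cdots f_{n-1}\bigr)$ and $\delta_n(\underline{c})=g\log q\cdot\Delta_n(\underline{C}(q))=g\,f_1f_3\cdots f_n/(f_2f_4\cdots f_{n-1})\in\Q(c_0,\dots,c_g)$. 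The case $n=0$ is trivial, $\delta_0=\Delta_0=1$. The crux is the scaling relation of the second paragraph; the rest is bookkeeping with \eqref{0423_7}--\eqref{0423_8}.
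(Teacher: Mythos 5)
Your strategy is genuinely different from the paper's, and its core idea --- a scaling symmetry in $\log q$ --- is attractive. The computation that $\underline{C}(q)$ and $\underline{C}(q')$, both fed into exponential polynomials with the single base $q$, give the same $A$-part and $B$-parts differing by $\lambda=\log q'/\log q$ is correct; so is the verification that $(A,\lambda B,\gamma/\lambda)$ solves \eqref{0330_1} with the boundary data \eqref{0418_4} attached to $E'$, and the final bookkeeping with \eqref{0423_7}--\eqref{0423_8} matches the paper's formula \eqref{def_R}. The paper instead argues by induction through the recursion of Theorem \ref{0503_6}: using the explicit block structure of $P_k(m_k)^{-1}Q_k$ in Lemma \ref{lem_203} it shows that the first half of each vector $\Omega_n$ lies in $F=\Q(c_0,\dots,c_g)$ and the second half in $(\log q)F$, whence $(\log q)\gamma_n\in F$, and then concludes exactly as you do.

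The genuine gap is the uniqueness assertion on which your whole second paragraph rests: you need that a locally constant $\gamma$ on the partition $\{[q^{(n-1)/2},q^{n/2})\}_{n=1}^{2g}$, together with a solution of \eqref{0330_1}, is \emph{uniquely} determined by \eqref{0418_4}, so that your scaled triple must coincide with the one the construction attaches to $\underline{C}(q')$ and hence $\Delta_{n-1}(\underline{C}(q'))\,\Delta_{n}(\underline{C}(q'))=\gamma_n/\lambda$. This uniqueness is nowhere established in the paper --- it is essentially the uniqueness half of the inverse problem being solved --- and neither of your parenthetical justifications supplies it. Proposition \ref{prop_601} and Corollary \ref{cor_602} give only the existence of the product representation, not that the parameters $\gamma_1,\dots,\gamma_{2g}$ are recoverable from the left-hand side; and the ``explicit inductive construction'' route amounts to checking that the recursion \eqref{def_m1}--\eqref{def_v1} is equivariant under scaling the second half of $\Omega_n$ by $\lambda$ and $\gamma$ by $\lambda^{-1}$, which, via Lemma \ref{lem_203}, is precisely the induction the paper carries out --- at that point you have reproduced its proof rather than bypassed it. The gap is fillable along independent lines (determine $\gamma_1$ from the top-frequency coefficients of $A$ and $B$ as in \eqref{0503_3}, peel off one factor in Corollary \ref{cor_602}, and induct downward), but that argument has to be written out, with attention to the nondegeneracy hypotheses under which the relevant leading coefficients are nonzero.
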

\begin{proof} 
Let $\underline{C}$ be of \eqref{0418_1}, and put $F=\Q(c_0,\cdots,c_g)$. 
Firstly, we show that $\Omega_{n}(k) \in F$ for $1 \leq k \leq 2g-n+1$ 
and $(\log q)^{-1}\Omega_{n}(k) \in F$ for $2g-n+2 \leq k \leq 4g-2n+2$, 
because it implies $(\log q)\gamma_{n+1}(\underline{C}) \in F$ by  Theorem \ref{0503_6} and \eqref{def_m1}. 
We have $\Omega_{1}(k)\in F$ for $1 \leq k \leq 2g$ 
and  $(\log q)^{-1}\Omega_{n}(k) \in F$ for $2g+1 \leq k \leq 4g$ 
by the formula for $\Omega_{1}$ in the proof of Proposition \ref{0501_3}, 
together with \eqref{0418_1} and \eqref{0503_3}. 

Assume the above assertion for $\Omega_{n-1}$. 
Then $\gamma_{n}(\underline{C})=(\log q)^{-1}\mu_{n}$ 
for some $\mu_{n} \in F$, by Theorem \ref{0503_6} and \eqref{def_m1}. 
Applying the formula of $P_{k}(m_{k})^{-1}Q_{k}$ in Lemma \ref{lem_203} to $m_k=\gamma_{2g-k}$ and $k=2g-n$, 
we obtain 
$\Omega_{n}(k) \in F$ for $1 \leq k \leq 2g-n+1$ 
and $(\log q)^{-1}\Omega_{n}(k) \in F$ for $2g-n+2 \leq k \leq 4g-2n+2$,  
because of Theorem \ref{0503_6} and \eqref{def_v1}. 
Hence $(\log q)\gamma_{n}(\underline{C}) \in F$ 
for every $1 \leq n \leq 2g$, by induction. 

On the other hand, 
\begin{equation} \label{def_R}
\delta_{n}(\underline{c})
=
\begin{cases}
\displaystyle{ (g \log q)\cdot \gamma_1(\underline{C})
\prod_{j=1}^{J} \frac{\gamma_{2j+1}(\underline{C})}{\gamma_{2j}(\underline{C})}} & \text{if $n=2J+1\geq 1$}, \\
\displaystyle{\prod_{j=0}^{J} \frac{\gamma_{2j+2}(\underline{C})}{\gamma_{2j+1}(\underline{C})}} & \text{if $n=2J+2 \geq 2$}, 
\end{cases}
\end{equation}
by definition \eqref{0423_8} and Proposition \ref{0503_4}. 
This formula for $\delta_n(\underline{c})$ implies that $\delta_n(\underline{c}) \in F$ 
for every $1 \leq n \leq 2n$.
\end{proof}

\subsection{Proof of Theorem \ref{thm_2}.}  \label{section_8_2}

Using the function of \eqref{def_AB_g}, we define 
\begin{equation} \label{def_E2}
E_{q,\omega}(z) := A_q(z+i\omega), 
\end{equation}
\begin{equation}
A_{q,\omega}(z) := \frac{1}{2}(E_{q,\omega}(z)+E_{q,\omega}^\sharp(z)), \quad 
B_{q,\omega}(z) := \frac{i}{2}(E_{q,\omega}(z)-E_{q,\omega}^\sharp(z)).
\end{equation}
Then $A_{q,\omega}(z)$ and $B_{q,\omega}(z)$ are real entire functions satisfying 
$E_{q,\omega}(z) = A_{q,\omega}(z) - i B_{q,\omega}(z)$. 
We use the following lemma instead of Lemma \ref{lem_401}. 

\begin{lemma} \label{lem_2_1}
Let $E_{q,\omega}(z)$ be in \eqref{def_E2}. 
Then, all roots of $P_g(x)$ lie on $T$ 
if and only if 
$E_{q,\omega}(z)$ is a function of the class {\rm HB} 
for every $\omega>0$. 
\end{lemma}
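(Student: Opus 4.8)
The plan is to reduce the assertion to a statement about the single real entire function $A_q$ and then argue by Hadamard factorization, in the style of the proof of Lemma~\ref{lem_401}; the one genuinely new wrinkle is that $E_{q,\omega}$ is \emph{not} of the shape ``$A-iA'$'', so the inequality \eqref{HB} must be verified directly rather than quoted from Lemma~\ref{lem_401}.

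First I would make two elementary reductions. Since $A_q^\sharp=A_q$, one has $E_{q,\omega}^\sharp(z)=\overline{E_{q,\omega}(\bar z)}=\overline{A_q(\bar z+i\omega)}=A_q(z-i\omega)$; consequently $E_{q,\omega}$ lies in the class HB exactly when (a) $A_q$ has no zero on the horizontal line $\{\,{\rm Im}\,z=\omega\,\}$, and (b) $|A_q(z-i\omega)|<|A_q(z+i\omega)|$ for every $z\in\C_+$. On the other hand, by \eqref{def_AB_g} the preimages under $z\mapsto q^{iz}$ of the roots of $P_g$ (all lying in $\C^\ast$, since $P_g(0)=c_0\neq0$), counted with multiplicity, are precisely the zeros of $A_q$, and a root lies on $T$ if and only if its preimages are real; hence ``all roots of $P_g$ lie on $T$'' is equivalent to ``$A_q$ has only real zeros''. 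It therefore suffices to show that $A_q$ has only real zeros if and only if (a) and (b) hold for every $\omega>0$.

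For the direction ``$\Rightarrow$'' I would use that $A_q$ is a real, even function of Cartwright class (of exponential type and bounded on $\R$, since $A_q(x)=P_g(e^{ix\log q})$ there). When all its zeros $\rho$ are real it admits, exactly as in \eqref{H-factorization}, the Hadamard factorization $A_q(z)=C\,z^m\lim_{R\to\infty}\prod_{0<|\rho|\leq R}(1-z/\rho)$ with $C\in\R$, $m\in\Z_{\geq0}$, the evenness of $A_q$ killing the exponential factor. Writing $z=x+iy$ with $y>0$ and $\omega>0$, the modulus of each factor of the ratio $A_q(z-i\omega)/A_q(z+i\omega)$ is strictly less than $1$, because $(\rho-x)^2+(y-\omega)^2<(\rho-x)^2+(y+\omega)^2$ (and likewise $x^2+(y-\omega)^2<x^2+(y+\omega)^2$ for the $z^m$-factor) — this is just the inequality $-2y\omega<2y\omega$. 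Taking the product gives (b); and (a) is immediate since every zero of $A_q$ is real. For ``$\Leftarrow$'' I would argue contrapositively: if $A_q$ has a non-real zero then, since $A_q^\sharp=A_q$, it has a zero $z_0$ with $\omega_0:={\rm Im}\,z_0>0$, and then $E_{q,\omega_0}(z_0-i\omega_0)=A_q(z_0)=0$ with $z_0-i\omega_0\in\R$, so $E_{q,\omega_0}$ has a real zero and fails to be of the class HB; thus the right-hand condition is violated at $\omega=\omega_0$.

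The delicate points are only minor: one must invoke the correct form of the Hadamard/Cartwright factorization and note that for computing the \emph{modulus of the ratio} $A_q(z-i\omega)/A_q(z+i\omega)$ the conditionally convergent product may be handled termwise (each partial product being a genuine identity), and one must remember that, unlike in Lemma~\ref{lem_401}, here $B_{q,\omega}\neq-A_{q,\omega}'$, so that lemma cannot be applied verbatim; everything else is the bookkeeping with $F\mapsto F^\sharp$ and the shift $z\mapsto z\pm i\omega$ already carried out above.
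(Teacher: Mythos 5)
Your proposal is correct. The forward direction is essentially identical to the paper's: both reduce to the statement that $A_q$ has only real zeros, invoke the Cartwright/Hadamard factorization \eqref{H-factorization}, and bound the modulus of $A_q(z-i\omega)/A_q(z+i\omega)$ termwise by the elementary inequality $(x-\rho)^2+(y-\omega)^2<(x-\rho)^2+(y+\omega)^2$ for $y,\omega>0$, then note that $E_{q,\omega}$ has no real zeros because $A_q$ has no zeros off the real axis. (Your extra $z^m$ factor is a harmless refinement of \eqref{H-factorization}, and your remark that $B_{q,\omega}\neq -A_{q,\omega}'$, so Lemma \ref{lem_401} cannot be quoted verbatim, is exactly the right caution.)

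Your converse, however, takes a genuinely different and more elementary route. The paper argues: if $E_{q,\omega}$ is of class HB for every $\omega>0$, then by the Hermite--Biehler theorem the zeros of $A_{q,\omega}$ and $B_{q,\omega}$ are real, simple and interlacing, so in particular $A_{q,\omega}$ has only real zeros for each $\omega>0$, and then $A_q=\lim_{\omega\searrow 0}A_{q,\omega}$ has only real zeros by Hurwitz's theorem. You instead argue contrapositively: a non-real zero $z_0$ of $A_q$ (which may be taken with $\omega_0:=\mathrm{Im}\,z_0>0$ since $A_q^\sharp=A_q$) yields $E_{q,\omega_0}(\mathrm{Re}\,z_0)=A_q(z_0)=0$, a \emph{real} zero of $E_{q,\omega_0}$, which already disqualifies $E_{q,\omega_0}$ from the class HB as defined in \eqref{HB} together with the no-real-zeros requirement. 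This avoids both the interlacing theorem and the Hurwitz limit argument entirely, at the cost of leaning on the convention that membership in HB excludes real zeros; the paper's route is heavier but would survive under the weaker definition of HB in which only the strict inequality \eqref{HB} is imposed. Both arguments are valid proofs of the lemma as stated.
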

\begin{proof}
By definition \eqref{def_AB_g}, 
all roots of $P_g(x)$ lie on $T$ 
if and only if $A_q(z)$ has only real zeros. 
Suppose that $A_q(z)$ has only real zeros (allowing multiple zeros). 
Then $E_{q,\omega}(z)$ satisfies inequality \eqref{HB} for every $\omega>0$, 
because
\[
\aligned
\left| \frac{\overline{E_{q,\omega}(\bar{z})}}{E_{q,\omega}(z)} \right|^2 
& = 
\left| \frac{A_q(z-i\omega)}{A_q(z+i\omega)} \right|^2 = 
\prod_{\substack{\rho \in \R \\ A_q(\rho)=0}} \left| \frac{(x-\rho)+i(y-\omega)}{(x-\rho)+i(y+\omega)} \right|^2 \\
& = 
\prod_{\substack{\rho \in \R \\ A_q(\rho)=0}} \left( 1 - \frac{4 \omega y}{(x - \rho)^2 + (y+\omega)^2} \right) <1
\endaligned
\]
for $z=x+iy$ with $y>0$, by the factorization \eqref{H-factorization}. 
Moreover, $E_{q,\omega}(z)$ has no real zeros for every $\omega>0$, 
by definition \eqref{def_E2} and by assumption. 
Hence $E_{q,\omega}(z)$ is a function of the class ${\rm HB}$ for every $\omega>0$. 

Conversely, suppose that $E_{q,\omega}(z)$ is a function of the class ${\rm HB}$ for every $\omega>0$. 
Then all zeros of $A_{q,\omega}(z)$ and $B_{q,\omega}(z)$ are real, simple, 
and they interlace; see \cite[Chapter VII, Theorems 3, 5, p. 313]{Levin80}, 
but note the footnote of the first page).  
In particular, $A_{q,\omega}(z)$ has only real zeros for every $\omega>0$. 
Hence $A_q(z)=\lim_{\omega \searrow 0}A_{q,\omega}(z)$ has only real zeros 
by Hurwitz's Theorem (\cite[Theorem (1,5)]{Marden66}). 
\end{proof}

\noindent
{\bf Proof of necessity.} 
By Lemma \ref{lem_2_1}, $P_g(x)$ has a zero outside $T$
if and only if 
$E_{q,\omega}(z)$ is not a function of the class HB 
for some $\omega>0$.  
Hence it is sufficient to prove that 
$E_{q,\omega_0}(z)$ is not a function of the class HB 
if there exists $\omega_0>0$ 
such that  $\delta_{n}(\underline{c}\,; q^{\omega_0}) \leq 0$ 
or $\delta_{n}(\underline{c}\,; q^{\omega_0})^{-1} \leq 0$ 
for some $1 \leq n \leq 2g$. 
This is proved similarly to Theorem \ref{thm_01}.  
\hfill $\Box$
\medskip

\noindent
{\bf Proof of sufficiency.} 
Let $\omega>0$. Suppose that $\delta_{n}(\underline{c}\,;q^\omega) > 0$ 
and $\delta_{n}(\underline{c}\,;q^\omega)^{-1} > 0$
for every $1 \leq n \leq 2g$.  
Then we can prove that $E_{q,\omega}(z)$ is a function of the class HB 
in a way similar to the proof of Theorem \ref{thm_02}. 
Therefore, all roots of $P_g(x)$ lie on $T$, by Lemma \ref{lem_2_1}, 
if $\delta_{n}(\underline{c}\,;q^\omega) > 0$ 
and $\delta_{n}(\underline{c}\,;q^\omega)^{-1} > 0$
for every $1 \leq n \leq 2g$ and $\omega>0$.     
\hfill $\Box$

\subsection{Proof of Theorem \ref{thm_5}.}  
Using \eqref{def_m1} and \eqref{def_v1}, starting from the initial vectors 

\begin{equation} \label{0704_1}
\Omega_{0,\omega} =
\begin{bmatrix}
{\mathbf a}_{0,\omega} \\
{\mathbf a}_{0,\omega}
\end{bmatrix}
\quad \text{and} \quad 
\tilde{\Omega}_{0,\omega} =
\begin{bmatrix}
{\mathbf a}_{0,\omega} \\
\omega^{-1}{\mathbf a}_{0,\omega}
\end{bmatrix}, 
\end{equation}
respectively, where
\[ 
{\mathbf a}_{0,\omega}
= {}^{t}
\begin{bmatrix}
c_0 \, q^{g \omega} &
c_1 \, q^{(g-1)\omega} &
\cdots &
c_{g-1} \, q^{\omega} &
c_g &  
c_{g-1} \, q^{-\omega} & 
\cdots &
c_0 \, q^{-g\omega}
\end{bmatrix}, 
\]
we define $(\gamma_{n}(\underline{c}\,;q^\omega), \Omega_g(n))$ and 
$(\tilde{\gamma}_{n}(\underline{c}\,;q^\omega), \tilde{\Omega}_g(n))$ for $1 \leq n \leq 2g$. 
Then $\Omega_{0,\omega}$ is equal to the initial vector \eqref{0423_5} associated with \eqref{0503_8}. 
Therefore, 
\[
\delta_n(\underline{c}\,;q^\omega) = 
\begin{cases}
\displaystyle{
\frac{q^{g\omega}-q^{-g\omega}}{q^{g\omega}+q^{-g\omega}}\gamma_{1}(\underline{c}\,;q^\omega)
\prod_{j=1}^{J} \frac{\gamma_{2j+1}(\underline{c}\,;q^\omega)}{\gamma_{2j}(\underline{c}\,;q^\omega)}} & \text{if $n=2J+1 \geq 1$}, \\
\displaystyle{\prod_{j=0}^{J} \frac{\gamma_{2j+2}(\underline{c}\,;q^\omega)}{\gamma_{2j+1}(\underline{c}\,;q^\omega)}} & \text{if $n=2J+2 \geq 2$}, 
\end{cases}
\]
by definition \eqref{0503_9}, Theorem \ref{0503_6}, and Proposition \ref{0503_4}. 
On the other hand, 
$\tilde{\gamma}_{n}(\underline{c}\,;q^\omega)=\omega\cdot \gamma_{n}(\underline{c}\,;q^\omega)$ 
for every $1 \leq n \leq 2g$, by \eqref{def_m1}, \eqref{def_v1}, and definition \eqref{0704_1}. 
This implies 
$\delta_n(\underline{c}\,;q^\omega)=\tilde{\delta}_n(\underline{c}\,;q^\omega)$ 
for every $1 \leq n \leq 2g$, 
if we define $\tilde{\delta}_n(\underline{c}\,;q^\omega)$ by 
\[
\tilde{\delta}_n(\underline{c}\,;q^\omega) = 
\begin{cases}
\displaystyle{
\frac{1}{\omega} \frac{q^{g\omega}-q^{-g\omega}}{q^{g\omega}+q^{-g\omega}}\tilde{\gamma}_{1}(\underline{c}\,;q^\omega)
\prod_{j=1}^{J} \frac{\tilde{\gamma}_{2j+1}(\underline{c}\,;q^\omega)}{\tilde{\gamma}_{2j}(\underline{c}\,;q^\omega)}} & \text{if $n=2J+1 \geq 1$}, \\
\displaystyle{\prod_{j=0}^{J} \frac{\tilde{\gamma}_{2j+2}(\underline{c}\,;q^\omega)}{\tilde{\gamma}_{2j+1}(\underline{c}\,;q^\omega)}} & \text{if $n=2J+2 \geq 2$}.
\end{cases}
\]
Hence, for Theorem \ref{thm_5}, it is sufficient to prove that
\[
\lim_{q^\omega \searrow  1}\tilde{\delta}_n(\underline{c}\,;q^\omega)=\delta_n(\underline{c}) \quad (1 \leq n \leq 2g). 
\]
This equality follows from the formula  
\[
\lim_{q^\omega \searrow 1} \tilde{\Omega}_{1,\omega} 
= 
\begin{bmatrix}
{\mathbf a}_{1} \\
{\mathbf b}_{1}
\end{bmatrix} \quad 
\left(
\tilde{\Omega}_{1,\omega}:=
P_{2g-1}(\tilde{\gamma}_{1}(\underline{c}\,;q^\omega))^{-1}Q_{2g-1} \cdot \tilde{\Omega}_{0,\omega} \right), 
\]
by definitions of $\tilde{\gamma}_{n}(\underline{c}\,;q^\omega)$ and $\gamma_{n}(\underline{c}\,;\log q)$, 
where 
\[
{\mathbf a}_{1}
=\begin{bmatrix}
2c_0 \\
\frac{2g-1}{g}c_1 \\
\frac{2g-2}{g}c_2 \\
\vdots \\
\frac{g+1}{g}c_{g-1} \\
c_g \\
\frac{g-1}{g}c_{g-1} \\
\vdots \\
\frac{2}{g}c_2 \\
\frac{1}{g}c_1 \\
\end{bmatrix}, \quad 
{\mathbf b}_{1}
=
\begin{bmatrix}
2g c_0 \log q \\
(2g-1) c_1 \log q\\
(2g-2) c_2 \log q \\
\vdots \\
(g+1)c_{g-1} \log q \\
g c_g \log q \\
(g-1)c_{g-1} \log q \\
\vdots \\
2 c_2  \log q  \\
 c_1 \log q 
\end{bmatrix}
\] 
which are vectors of \eqref{0503_10} and \eqref{0503_11} with 
$\gamma_1=(C_{-g}+C_g)/(C_{-g}-C_g)$ associated with \eqref{0418_1}. 
Put
\[
\begin{bmatrix}
\tilde{{\mathbf a}}_{1,\omega} \\
\tilde{{\mathbf b}}_{1,\omega}
\end{bmatrix}
:= \tilde{\Omega}_{1,\omega} = 
P_{2g-1}(\tilde{\gamma}_{1}(\underline{c}\,;q^\omega))^{-1}Q_{2g-1}
\cdot \tilde{\Omega}_{0,\omega}.
\]
Then, using the formula of $P_{k}(m_{k})^{-1}Q_{k}$ in Lemma \ref{lem_203}, 
we have 
\[
\aligned
\tilde{{\mathbf a}}_{1,\omega}
& =\begin{bmatrix}
2\cosh(g\log q^\omega)c_0 \\
(\cosh((g-1)\log q^\omega)+\omega^{-1} \gamma\sinh((g-1)\log q^\omega))c_1 \\
(\cosh((g-2)\log q^\omega)+\omega^{-1} \gamma\sinh((g-2)\log q^\omega))c_2 \\
\vdots \\
(\cosh(\log q^\omega)+\omega^{-1} \gamma\sinh(\log q^\omega))c_{g-1} \\
c_g \\
(\cosh(\log q^\omega)-\omega^{-1} \gamma\sinh(\log q^\omega))c_{g-1} \\
\vdots \\
(\cosh((g-2)\log q^\omega)-\omega^{-1} \gamma\sinh((g-2)\log q^\omega))c_2 \\
(\cosh((g-1)\log q^\omega)-\omega^{-1} \gamma\sinh((g-1)\log q^\omega))c_1 
\end{bmatrix} \endaligned \]\[ \aligned
\tilde{{\mathbf b}}_{1,\omega}
& =
\begin{bmatrix}
2 \, \omega^{-1}\sinh(g\log q^\omega)c_0 \\
\gamma^{-1}(\cosh((g-1)\log q^\omega)+\omega^{-1} \gamma\sinh((g-1)\log q^\omega))c_1 \\
\gamma^{-1}(\cosh((g-2)\log q^\omega)+\omega^{-1} \gamma\sinh((g-2)\log q^\omega))c_2 \\
\vdots \\
\gamma^{-1}(\cosh(\log q^\omega)+\omega^{-1} \gamma\sinh(\log q^\omega))c_{g-1} \\
\gamma^{-1}c_g \\
\gamma^{-1}(\cosh(\log q^\omega)-\omega^{-1} \gamma\sinh(\log q^\omega))c_{g-1} \\
\vdots \\
\gamma^{-1}(\cosh((g-2)\log q^\omega)-\omega^{-1} \gamma\sinh((g-2)\log q^\omega))c_2 \\
\gamma^{-1}(\cosh((g-1)\log q^\omega)-\omega^{-1} \gamma\sinh((g-1)\log q^\omega))c_1 
\end{bmatrix}
\endaligned
\]
with
\[
\gamma=\tilde{\gamma}_{1}(\underline{c}\,;q^\omega)=\omega \frac{q^{g\omega}+q^{-g\omega}}{q^{g\omega}-q^{-g\omega}} = \omega \coth(g \log q^\omega). 
\]
Using 
\[
\aligned
\lim_{x \to 0^+}\Bigl[ \cosh((g-k)x)+\coth(gx)\sinh((g-k)x) \Bigr] &= \frac{2g-k}{g} \quad (0 \leq k \leq g), \\
\lim_{x \to 0^+}\Bigl[ \cosh((g-k)x)-\coth(gx)\sinh((g-k)x) \Bigr] &= \frac{k}{g} \quad (1 \leq k \leq g-1), \\
\lim_{x \to 0^+} \gamma^{-1}=\lim_{x \to 0^+} \frac{\log q}{x} \tanh(gx) = g \log q
\endaligned 
\]
for $x=\log q^\omega$, 
we obtain
\[
\lim_{q^\omega \searrow 1}
\begin{bmatrix}
\tilde{{\mathbf a}}_{1,\omega} \\
\tilde{{\mathbf b}}_{1,\omega}
\end{bmatrix}
=
\begin{bmatrix}
{\mathbf a}_{1} \\
{\mathbf b}_{1}
\end{bmatrix}.
\]
On the other hand, it is easy to see that 
\[
\lim_{q^\omega \searrow 1^+} \omega \frac{q^{g\omega}+q^{-g\omega}}{q^{g\omega}-q^{-g\omega}} = \frac{1}{g \log q}.
\]
Hence $\lim_{q^{\omega} \searrow  1} \delta_{n}(\underline{c}\,;q^{\omega})=\delta_{n}(\underline{c})$ for every $1 \leq n \leq 2g$. 
Because $\delta_{n}(\underline{c}\,;q^{\omega})$ is a rational function of $q^\omega$, 
we obtain the second formula of Theorem \ref{thm_5}.  \hfill $\Box$
%
%
\subsection{Remark on Theorem \ref{thm_5}.} 
%
%
We have 
\[
E_{q,\omega}(z) = A_q(z) - i \omega B_q(z) + O(\omega^2), 
\]
\[
A_{q,\omega}(z) = A_q(z) + O(\omega^2), \quad 
B_{q,\omega}(z) = \omega B_q(z) +O(\omega^3)
\]
as $\omega \to 0^+$ if $z$ lies in a compact subset of $\C$. 
Therefore, it seems that $E_{q,\omega}(z)$ is similar to $E_q(z)=A_q(z) - i B_q(z)$ for small $\omega>0$, 
but there is an obvious gap after the limit $\omega \to 0^+$ is taken. 
To resolve this gap, we consider 
\[
\tilde{E}_{q,\omega}(z) 
:= A_{q,\omega}(z) - \frac{i}{\omega} \, B_{q,\omega}(z). \\
\]
Then, 
\[
\aligned
\tilde{E}_{q,\omega}(z) 
&= A_q(z) - i\,B_q(z) + O(\omega^2) 
= E_q(z) + O(\omega^2),
\endaligned
\]
\[
\tilde{A}_{q,\omega}(z) 
:= \frac{1}{2}(\tilde{E}_{q,\omega}(z)+\tilde{E}_{q,\omega}^\sharp(z)) 
= A_{q,\omega}(z) 
= A_q(z) + O(\omega^2),
\]
\[
\tilde{B}_{q,\omega}(z)
:= \frac{i}{2}(\tilde{E}_{q,\omega}(z)-\tilde{E}_{q,\omega}^\sharp(z))
 = \frac{1}{\omega}B_{q,\omega}(z) 
 = B_q(z) +O(\omega^2)
\]
as $\omega \to 0^+$ if $z$ lies in a compact subset in $\C$. 
Hence we ``recovers'' $E_q(z)$ from $\tilde{E}_{q,\omega}(z)$ by taking the limit $\omega \to 0^+$. 
The initial vector $\tilde{\Omega}_{0,\omega}$ of \eqref{0704_1} is chosen 
to corresponds to $\tilde{A}_{q,\omega}(z)$ and $\tilde{B}_{q,\omega}(z)$. 
This is a reason Theorem \ref{thm_5} holds. 
However, despite this advantage, 
we chose $E_{q,\omega}(z)$, and not $\tilde{E}_{q,\omega}(z)$ in Section \ref{section_8_2}.  
One reason is the simple formula $E_{q,\omega}(z)=A_q(z+i\omega)$. 
By comparison, $\tilde{E}_{q,\omega}(z)$ has a slight complicated form 
\[
\tilde{E}_{q,\omega}(z) 
= \frac{A_{q}(z+i\omega)+A_{q}(z-i\omega)}{2} + \frac{A_{q}(z+i\omega)-A_{q}(z-i\omega)}{2\omega}.
\]
We do not know whether an analogue of Lemma \ref{lem_2_1} holds for $\tilde{E}_{q,\omega}(z)$. 
However, if such analogue holds, 
we may obtain results for $\tilde{E}_{q,\omega}(z)$ analogous to $E_{q,\omega}(z)$. 

\subsection{Comparison with classical results} \label{section_8_5}

A necessary and sufficient condition for all roots of $P(x) \in \C[x]$ to lie on $T$ 
is that $P(x)$ be {\bf self-inversive} (i.e., $P(x)=x^{\deg P}\overline{P(1/\bar{x})}$) 
and that all roots of the derivative $P'(x)$ lie inside or on $T$ 
(Gauss--Lucas~\cite{Lucas74}, Schur~\cite{Schur17}, Cohn~\cite{Cohn22}). If $P(x)$ is a self-inversive polynomial, 
$P'(x)$ has no zeros on $T$ except at the multiple zeros of $P(x)$ 
(\cite[Lemma (45.2)]{Marden66}). 
Therefore, a necessary and sufficient condition for all roots of the self-inversive 
polynomial $P(x) \in \C[x]$ to be simple and on $T$ is that all the roots of $P'(x)$ lie inside $T$. 
The following classical result is quite useful for checking this condition. 
\begin{theorem} \label{tak} Let $Q(x)=a_0x^n+a_1x^{n-1}+\cdots+a_n$ be a complex polynomial of degree $n$. 
Let $D_n(Q)$ be the $2n \times 2n$ matrix
\[
D_n(Q) = 
\left[
\begin{array}{cccc|cccc}
a_0 & a_1 & \cdots & \cdots & a_n & & & \\
    & a_0 & a_1 & \cdots & a_{n-1} & a_n & & \\
    &     & \ddots & \ddots & \ddots & \ddots & \ddots & \\
    &     &  & a_0 & a_1 & \cdots & \cdots & a_n \\ \hline 
\bar{a}_n & \bar{a}_{n-1} & \cdots & \cdots &  \bar{a}_0 & & & \\
    & \bar{a}_n & \bar{a}_{n-1} & \cdots & \bar{a}_1 & \bar{a}_0 & & \\
    &     & \ddots & \ddots & \ddots & \ddots & \ddots& \\
    &     &     & \bar{a}_n & \bar{a}_{n-1} & \cdots & \cdots & \bar{a}_0 
\end{array}
\right]
\]
which is  the the resultant of $Q$ and $Q^\sharp$. 
For $k=2,\dots,n$, 
given the $2k \times 2k$ matrix $D_k(Q)$, 
define the $2(k - 1) \times 2(k - 1)$ matrix $D_{k-1}(Q)$ 
by deleting the $k$-th and $2k$-th rows and columns of $D_k(Q)$. 
In particular,
\[
D_2(Q) = \left[ 
\begin{array}{cc|cc}
a_0 & a_1 & a_n &  \\
    & a_0 & a_{n-1} & a_n  \\ \hline 
\bar{a}_n & \bar{a}_{n-1} & \bar{a}_0 &  \\
    & \bar{a}_n & \bar{a}_1 & \bar{a}_0 
\end{array}
\right], \quad 
D_1(Q) = 
\begin{bmatrix}
a_0 & a_n \\ \bar{a}_n & \bar{a}_0
\end{bmatrix}.
\]
All roots of $Q(x)$ lie inside $T$ 
if and only if $\det D_k>0$ for every $1 \leq k \leq n$.  
\end{theorem}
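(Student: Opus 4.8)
The statement is the classical Schur--Cohn criterion presented in its ``nested resultant'' form; a full proof is given in \cite[Section 10]{Takagi} and in \cite{Marden66}, and the plan below only records its structure. Write $Q^\sharp(x):=x^n\,\overline{Q(1/\bar x)}=\bar a_n x^n+\bar a_{n-1}x^{n-1}+\cdots+\bar a_0$, so that $D_n(Q)=D_n$ is the Sylvester resultant matrix of the pair $(Q,Q^\sharp)$ and $\det D_n(Q)=\mathrm{Res}(Q,Q^\sharp)$.

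The plan is to argue by induction on $\deg Q=n$, the engine being Cohn's reduction lemma \cite{Cohn22}. First I would introduce
\[
Q_1(x):=\frac{\bar a_0\,Q(x)-a_n\,Q^\sharp(x)}{x},
\]
a polynomial of degree $\leq n-1$ (the numerator has constant term $\bar a_0 a_n-a_n\bar a_0=0$ and leading coefficient $|a_0|^2-|a_n|^2$). The lemma to establish is: $Q$ has all its zeros in $|x|<1$ and none on $T$ if and only if $|a_0|>|a_n|$ and $Q_1$ is of degree exactly $n-1$ with all its zeros in $|x|<1$ and none on $T$. The proof is the standard Rouch\'e argument: on $T$ one has $|Q^\sharp(x)|=|Q(x)|$, so when $|a_0|>|a_n|$ the function $a_n Q^\sharp/(\bar a_0 Q)$ has modulus $<1$ at every point of $T$ where $Q$ does not vanish, and hence $xQ_1=\bar a_0 Q-a_n Q^\sharp$ has, inside $T$, the same number of zeros as $\bar a_0 Q$; the converse direction, and the traceability of zeros on $T$ from $Q$ to $xQ_1$ and back, are handled the same way.

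Next I would turn the reduction into a recursion for the determinants. The point is that the deletion rule ``remove the $n$th and $2n$th rows and the $n$th and $2n$th columns'' applied to $D_n(Q)$, followed by the elementary row operations that realise the combination $\bar a_0 Q-a_n Q^\sharp$, transforms $D_n(Q)$ into a matrix that is, up to the explicit nonzero factor $|a_0|^2-|a_n|^2$, exactly $D_{n-1}(Q_1)$; and crucially the same holds one level down, so that the matrices produced by iterating the deletion rule on $D_n(Q)$ coincide with the $D_k$'s of the successive Schur--Cohn reducts. Using the staircase shape of the Sylvester matrix together with Sylvester's identity for minors, this yields
\[
\det D_k(Q)=\varepsilon_k\,\bigl(|a_0|^2-|a_n|^2\bigr)\,\det D_{k-1}(Q_1)\qquad(1\leq k\leq n)
\]
whenever $|a_0|\not=|a_n|$, with $\varepsilon_k=\pm1$ an explicit sign depending only on $k$. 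Feeding this into the induction hypothesis applied to $Q_1$ (of degree $n-1$) and bookkeeping the signs $\varepsilon_k$ then produces precisely the chain of inequalities $(-1)^k\det D_k(Q)>0$ for $1\leq k\leq n$, the base case $n=1$ reducing to the direct evaluation $\det D_1(Q)=|a_0|^2-|a_1|^2$.

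I expect the main obstacle to be exactly this middle step: one must verify that the deletion rule is compatible with the Schur--Cohn reduction \emph{simultaneously at every level}, not merely at the top, and that the nonvanishing conditions $\det D_k\not=0$ propagate faithfully so that the ``no zeros on $T$'' clause is correctly recorded through the induction. The Rouch\'e lemma and the base case are routine, so once the determinant recursion is in place the theorem follows by a finite induction.
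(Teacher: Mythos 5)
The paper does not actually prove this statement: its ``proof'' is a pointer to Problems 4 and 5 in \S 75 of \cite{Takagi} and to the Schur--Cohn criterion in \cite[\S 43]{Marden66}. Your plan --- Cohn's reduction $Q_1(x)=(\bar a_0Q(x)-a_nQ^\sharp(x))/x$, a Rouch\'e argument on $T$ using $|Q^\sharp|=|Q|$ there, and a determinant recursion linking $\det D_k(Q)$ to $\det D_{k-1}(Q_1)$ --- is exactly the classical route those references take, so there is no divergence of method to discuss. The Rouch\'e lemma and the reduction of the zero-counting problem from $Q$ to $Q_1$ are fine as you describe them.

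The genuine gap is the one you half-acknowledge and then defer: the determinant recursion $\det D_k(Q)=\varepsilon_k\,(|a_0|^2-|a_n|^2)\,\det D_{k-1}(Q_1)$ is asserted, not proved, and neither the signs $\varepsilon_k$ nor the exponent of the factor $|a_0|^2-|a_n|^2$ (which need not be $1$, and whose parity matters because this factor can be negative in the ``only if'' direction) are determined. That this is not routine bookkeeping is shown by your own base case: you compute $\det D_1(Q)=|a_0|^2-|a_n|^2$, but all roots of $Q$ inside $T$ forces $|a_n|=|a_0|\prod|r_j|<|a_0|$, hence $\det D_1>0$, whereas the criterion demands $(-1)^1\det D_1>0$, i.e.\ $\det D_1<0$. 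So either the statement carries a hidden sign convention (e.g.\ the two $k\times k$ block rows of $D_k$ taken in the other order, which multiplies the determinant by $(-1)^{k^2}=(-1)^k$ --- note the paper's own $g=2$ example writes $-\det D_1(P_2')=(4c_0-c_1)(4c_0+c_1)$, the \emph{negative} of the determinant of the displayed matrix) or your induction closes with the wrong signs. Until you fix the convention, prove the recursion with its exact sign and exponent, and verify that the row/column deletion rule really reproduces the $D_{k}$'s of the reduced polynomial at \emph{every} level rather than only at $k=n$ (where one can appeal to resultant identities), the argument does not yet establish the theorem; as it stands it would be more honest to do what the paper does and cite \cite{Takagi} and \cite[\S 43]{Marden66} outright.
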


For a proof, see 
\cite[Section 75, Problems 4, 5]{Takagi} and 
the Schur-Cohn criterion \cite[Section 43, Theorem (43,1)]{Marden66}, 
together with \cite[Section 43, Exercise 2 and Section 45, Exercise 3]{Marden66}.
\medskip

Applying Theorem \ref{tak} to $Q(x)=P_g^\prime(x)$, we find that 
all roots of $P_g(x)$ are simple and on $T$ 
if and only if $\det D_n(P_g')>0$ for every $1 \leq n \leq 2g-1$. 
For $g=2$, we have 
\[
\aligned
 \det D_1(P_2') & = (4c_0 - c_1)(4c_0 + c_1) \\
 \det D_2(P_2') & = 4(8c_0^2-2c_1^2+4c_0c_2)(8c_0^2+c_1^2-4c_0c_2) \\
 \det D_3(P_2') & = 16(2c_0+2c_1+c_2)(2c_0-2c_1+c_2)(8c_0^2+c_1^2-4c_0c_2)^2.
\endaligned
\]
On the other hand, 
\[
\aligned
\delta_2(P_2) = \frac{4c_0+c_1}{4c_0-c_1}, \quad 
\delta_3(P_2) = \frac{8c_0^2-2c_1^2+4c_0c_2}{8c_0^2+c_1^2-4c_0c_2}, \quad 
\delta_4(P_2) = \frac{2c_0+2c_1+c_2}{2c_0-2c_1+c_2}, 
\endaligned
\]
where $\delta_n(P_g)=\delta_n(\underline{c})$ for corresponding $\underline{c}$. 

As in the above examples, it is expected that 
$\delta_{n+1}(P_g)$ and $\det D_n(P_g')$ have the same sign for every $1 \leq n \leq 2g-1$ 
even if $P_g(x)$ has a root outside $T$. 
We do not touch such a problem here 
but mention that Theorems \ref{thm_1}, \ref{tak} for self-reciprocal polynomials 
are the same level from a computational point of view, because $\delta_1(P_g)=1$, in general, 
and a computation of $\det(E^+(\underline{C}) \pm E^{-}(\underline{C})J_n)$ 
is reduced to a computation of a determinant of a size $n$ matrix: 
\[
\det(E^+(\underline{C}) \pm E^{-}(\underline{C})J_n)=C_{-g}^{2g+1-n}\det([E^+(\underline{C}) \pm E^{-}(\underline{C})J_n]_{\nwarrow n}).
\]
That is, essentially, $\delta_{n+1}(P_g)$ is a determinant of a square matrix of size $n+1$. 
Therefore, it seems plausible that the criterion of Theorem \ref{thm_1} can be deduced from the known criterion using some linear algebra identities. 
However, the author does not have an idea how to realize such argument. 
\medskip

As mentioned in \cite{Marden66} and \cite{Takagi}, 
an origin of Theorem \ref{tak} is in work of Hermit and Hurwitz. 
They related the distribution of roots of polynomials 
with the signature of quadratic forms. 
Concerning the approach of the present paper, 
the reproducing kernel \eqref{lem_602_1} of the de Branges space $B(E)$ may play 
a role of quadratic forms. 

\subsection{Concluding remarks.} 

Corollary \ref{cor_602} and arguments of Subsections \ref{section_5_1}, \ref{section_5_2}, and \ref{section_8_1}, 
show that a self-reciprocal polynomial $P_g(x)$ of \eqref{def_Pg} has 
only simple zeros on $T$ if and only if 
there exists $2g$ positive real numbers $\gamma_1,\cdots,\gamma_{2g}$ such that 
\[
P_g(x) 
= \frac{P_g(1)}{2^{2g}} 
[\,1\,~\,0\,]
\prod_{n=1}^{2g}
\begin{bmatrix}
(x+1) & i\gamma_{n}(x-1)\\
-i\gamma_{n}^{-1}(x-1) & (x+1)
\end{bmatrix}
\begin{bmatrix}
1 \\ 0
\end{bmatrix},
\]
since $P_g(1)=E_q(0)=A_q(0)$. 
Compare this with the factorization  
\[
P_g(x) 
= P_g(0) \prod_{j=1}^{g}(x^2-2\lambda_j x+1) \quad (\lambda_j \in \C).
\]
As described in the introduction and in Section \ref{section_7}, 
we have at least two simple algebraic algorithms for calculating $\gamma_1,\cdots,\gamma_{2g}$ from coefficients $c_0,\cdots,c_g$, 
but it is impossible to calculate $\lambda_1,\cdots,\lambda_{g}$, since the Galois group of a general self-reciprocal polynomial $P_g(x)$  
is isomorphic to ${\frak S}_g \ltimes (\Z/2\Z)^g$. 
In addition, it is understood that the positivity of $\gamma_1,\cdots,\gamma_{2g}$ is equivalent to the positivity of a Hamiltonian, 
but a plausible meaning of $|\lambda_j|<1$ and $\lambda_i \not= \lambda_j$ ($i \not= j$) is not clear. 
\smallskip

We now comment on two important classes of self-reciprocal polynomials. 

The first one is the class of zeta functions of smooth projective curves $C/{\mathbb F}_q$ of genus $g$:
$Z_C(T)=Q_C(T)/((1-T)(1-qT))$,
where $Q_C(T)$ is a polynomial of degree $2g$ satisfying the functional equation $Q_C(T)=(q^{1/2} T)^{2g} Q_C(1/(qT))$. 
Hence $P_C(x)=Q_C(q^{-1/2}x)$ is a self-reciprocal polynomial of degree $2g$ with real coefficients. 
Weil~\cite{Weil45} proved that all roots of $P_C(x)$ lie on $T$ 
as a consequence of Castelnuovo's positivity for divisor classes on $C \times C$. 

The second one is the class of polynomials $P_A(x)$ 
attached to $n \times n$ real symmetric matrices $A=(a_{i,j})$ 
with $|a_{i,j}| \leq 1$ for every $1 \leq i<j \leq n$ (no condition is imposed on the diagonal):
$P_A(x) = \sum_{I \sqcup J =\{1,2,\cdots,n\}} x^{|I|} \prod_{i \in I, j \in J} a_{i,j}$, 
where $I \sqcup J$ means a disjoint union. 
Polynomials $P_A(x)$ are obtained as the partition function of a ferromagnetic Ising model
and are self-reciprocal polynomials of degree $n$ with real coefficients. 
The fact that all roots of any $P_A(x)$ lie on $T$ 
is known as the  Lee-Yang Circle Theorem~\cite{LeeYang52}. 
Ruelle~\cite{Ruelle71} extended this result and characterized the polynomials $P_A(x)$ in terms of multi-affine polynomials 
being symmetric under certain involution on the space of multi-affine polynomials ~\cite{Ruelle10}. 

It seems that a discovery of arithmetical, geometrical, or physical interpretation 
of the positivity of $\gamma_1,\cdots,\gamma_{2g}$ or $H(a)$ 
(for some restricted class of polynomials) is quite an interesting and important problem. 
We hope that our formula of $\gamma_1,\cdots,\gamma_{2g}$ contributes to such philosophical interpretation. 

%


\bigskip \noindent
\\
Department of Mathematics,  
Tokyo Institute of Technology \\
2-12-1 Ookayama, Meguro-ku, 
Tokyo 152-8551, JAPAN  \\
Email: {\tt msuzuki@math.titech.ac.jp}

\end{document}